\numberwithin{equation}{section}
\def \R {\mathbb R}
\def \C {\mathbb C}
\def \eps {\varepsilon}
\def \bj {\bar{j}}
\def \Vol {\text{Vol}}
\def \Ric {\text{Ric}}
\def \Re {\text{Re}}
\def \db {\bar{\partial}}
\def \d {\partial}
\def \Tr {\text{Tr}}
\def \lapl {\Delta}
\newcommand{\Bl}{\textrm{Bl}}
\newcommand{\del}{\partial}
\newcommand{\dbar}{\overline{\del}}
\newcommand{\ddb}{\sqrt{-1}\del\dbar}
\newcommand{\Spec}{\text{Spec }}
\renewcommand{\d}{\partial}
\renewcommand{\leq}{\leqslant}
\renewcommand{\geq}{\geqslant}
\renewcommand{\epsilon}{\varepsilon}
\renewcommand{\phi}{\varphi}
\def\XXint#1#2#3{{\setbox0=\hbox{$#1{#2#3}{\int}$ }
\vcenter{\hbox{$#2#3$ }}\kern-.6\wd0}}
\newcommand{\dd}[1] {\frac{\partial}{\partial #1 }}
\newtheorem {Theorem} {Theorem}[section]
\newtheorem {Proposition}[Theorem]      {Proposition}
\newtheorem {Lemma}      [Theorem]       {Lemma}
\newtheorem {Corollary} [Theorem] {Corollary}
\newtheorem {Conjecture}{Conjecture}
\newtheorem {Assumption}{Assumption}
\theoremstyle{definition}
\newtheorem  {Definition}{Definition}[section]
\theoremstyle{remark}
\newtheorem{Remark}	{Remark}
\title{On the degeneration of asymptotically conical Calabi-Yau metrics}
\date{\today}
\author[T. C. Collins]{Tristan C. Collins}
  \email{tristanc@mit.edu}
  \address{Department of Mathematics, Massachusetts Institute of Technology, 77 Massachusetts Avenue, Cambridge, MA 02139}
 \thanks{T.C.C is supported in part by NSF grant DMS-1944952, DMS-1810924 and an Alfred P. Sloan Fellowship. }
\author[B. Guo]{Bin Guo}
\email{bguo@rutgers.edu}
\address{Department of Mathematics and Computer Science, Rutgers University-- Newark, 101 Warren Street, Newark, NJ 07102}
\thanks{B. G. is supported in part by NSF grant DMS-1945869}
\author[F. Tong]{Freid Tong}
\email{tong@math.columbia.edu}
\address{Department of Mathematics, Columbia University, 2990 Broadway, New York, NY 10027}
\thanks{F. T. is supported in part by NSF grant DMS-1855947}
\begin{document}

\maketitle

\begin{abstract}
We study the degenerations of asymptotically conical Ricci-flat K\"ahler metrics as the K\"ahler class degenerates to a semi-positive class. We show that under appropriate assumptions, the Ricci-flat K\"ahler metrics converge to a incomplete smooth Ricci-flat K\"ahler metric away from a compact subvariety. As a consequence, we construct singular Calabi-Yau metrics with asymptotically conical behaviour at infinity on certain quasi-projective varieties and we show that the metric geometry of these singular metrics are homeomorphic to the topology of the singular variety. Finally, we will apply our results to study several classes of examples of geometric transitions between Calabi-Yau manifolds. 
\end{abstract}

\section{Introduction}
Following Yau's resolution of the Calabi Conjecture \cite{Yau78} the study of Ricci-flat K\"ahler metrics has played a central role in geometric analysis.  Subsequently,  motivated by questions in differential geometry, mathematical physics, and algebraic geometry there has been a great deal of interest in extensions of Yau's theorem to the complete, non-compact setting \cite{TY90, TY91, CHNP, HHN, Joyce, VaC1, VaC2, Go, CH13, CH15, CH3, Li, Fau, BK, BK2}, the degeneration of Calabi-Yau metrics (see, for example, the surveys \cite{ToSurv11, ToSurv12, ToSurv19} and the references there in), and the existence of Calabi-Yau metrics on singular spaces (see for example \cite{EGZ07, JS14}). In this paper we initiate the study of degenerations of {\em non-compact} Calabi-Yau manifolds, and the existence of Calabi-Yau metrics on certain non-compact singular varieties.

In the compact setting, a special class of Calabi-Yau degenerations are obtained by degenerating the K\"ahler class.  More precisely, fix a compact Calabi-Yau manifold $X$, and let $\mathcal{K} \subset H^{1,1}(X,\mathbb{R})$ denote the K\"ahler cone, consisting of all $(1,1)$ cohomology classes admitting a K\"ahler representative; $\mathcal{K}$ is an open convex cone in $H^{1,1}(X,\mathbb{R})$.  For each class $[\omega]\in \mathcal{K}$, Yau's theorem \cite{Yau78} yields the existence of a unique Ricci-flat K\"ahler metric $\omega_{CY} \in [\omega]$.  Choose a family of K\"ahler classes $[\omega_t]\in \mathcal{K}, t\in (0,1]$ such that $[\omega_t ]\rightarrow [\alpha]\in \partial \mathcal{K}$ as $t\rightarrow 0$.  We are interested in understanding the geometry of $(X,\omega_{t,CY})$ as $t\rightarrow 0$.  Roughly speaking this question can be divided into two cases; the collapsing case, when $\int_{X}\alpha^{n}=0$, and the non-collapsing case, when $\int_{X}\alpha^n >0$. The non-collapsing case, is reasonably well understood, thanks to work of Tosatti \cite{To07} and the first author and Tosatti \cite{CT}.

One way to construct a non-collapsed family of Calabi-Yau manifolds is as follows; suppose $X_0$ is a normal, Gorenstein, projective variety with $K_{X_0}$ trivial.  Suppose that $\pi: X\rightarrow X_0$ is a crepant resolution of singularities, and let $[\alpha] = \pi^{*}[\omega_0]$ for some K\"ahler class $[\omega_0]$ on $X_0$.  A family of K\"ahler classes on $X$ converging to $[\alpha]$ gives rise to non-collapsed family of Calabi-Yau metrics.  In this case, the results of \cite{CT} say that the Calabi-Yau metrics $\omega_{t,CY}$ converge in $C^{\infty}_{loc}(X\backslash {\rm Exc}(\pi))$, to an incomplete metric $\omega_{0, CY}$ and $(X,\omega_{t,CY})$ Gromov-Hausdorff converge to the completion $\overline{(X\backslash {\rm Exc}(\pi), \omega_{0,CY})}$. $\omega_{0,CY}$ descends to a Ricci-flat metric on $X_0^{reg}$, and one can ask whether the metric geometry of $\omega_{0,CY}$ is related to the geometry of the $X_0$. In this case, assuming that $[\alpha]\in H^{1,1}(X,\mathbb{Q})$, Song \cite{JS14}, proved that $\overline{(X\backslash {\rm Exc}(\pi), \omega_{0,CY})} = \overline{(X_0^{reg}, \omega_{0,CY})}$ is homeomorphic to $X_0$.  In particular, this yields the existence of a natural Calabi-Yau metric on the singular variety $X_0$.  
	 
	In this paper we study degenerations of Calabi-Yau metrics on complete {\em non-compact} Calabi-Yau manifolds asymptotic to a cone. Complete, non-compact Calabi-Yau manifolds were first constructed by Tian-Yau in \cite{TY90, TY91}, and a plethora of examples are now known to exist. A particular subset of these are Calabi-Yau manifolds which are asymptotic to a cone at infinity, these are sometimes called asymptotically conical Calabi-Yau manifolds. Conical Calabi-Yau manifolds are of fundamental importance, since they arise as blow-up limits at the singular points in the limit of a non-collapsing family of K\"ahler-Einstein manifolds (or more generally K\"ahler manifolds with bounded Ricci curvature).  The conical asymptotics should be regarded here as akin to the non-collapsing condition in the setting of compact Calabi-Yau manifolds discussed above.
	 
	 The first analytic construction of asymptotically conical Calabi-Yau manifolds was given in \cite{TY91} and \cite{BK, BK2}, and the construction has been further refined by the work of many authors, see \cite{Joyce, VaC1, VaC2, Go, CH13, CH15, CH3, Li, Fau} and the references therein. One nice improvement given by these refinements is that, in analogy with Yau's theorem in the compact case \cite{Yau78}, one is able to produce an asymptotically conical Ricci-flat K\"ahler metric in every suitable K\"ahler class on an asymptotically conical K\"ahler manifold $X$.  In particular, this yields families of degenerating asymptotically conical Ricci-flat K\"aher metrics, and one can then ask what properties limits of these spaces possess.
	 
	   The motivation for studying these limits is two-fold.  First, there is a broad class of non-compact examples which are expected to model the local behavior of Calabi-Yau metrics on compact Calabi-Yau manifolds near certain singular limits. Understanding the behavior of these ``local" models through singular transitions will help to sharpen our understanding of the degeneration of Ricci-flat metrics in the compact setting. Secondly, understanding these metric limits allows us to prove the existence of asymptotically conical Calabi-Yau metrics on singular spaces.  These metrics can be viewed as interpolating between affine varieties with conical Calabi-Yau metrics (or equivalently, Sasaki-Einstein manifolds).

Let us describe the set-up under consideration and state our main theorems. The terminologies used in this section will be explained in the next section.  Let $(X, J, \omega, \Omega)$ be an open K\"ahler manifold with trivial canonical bundle, with only one end which is asymptotic to a Calabi-Yau cone $(C, J_C, \omega_C, \Omega_C)$ with rate $\nu>0$. Consider a linear family of $\nu$-almost compactly supported K\"ahler classes $[\alpha_t] = (1-t)[\alpha_0]+t[\alpha_1]\in H^{1, 1}_{\nu}(X)$ for $t\in (0, 1]$. Suppose $[\alpha_0]$ satisfies the following assumption.
	
	\begin{Assumption}\label{ass: mainAss}
		$[\alpha_0]$ contains a semi-positive form $\alpha_0$, and there exists $\eps_0>0$ and a $\psi \in PSH(X, \alpha_0)$ such that $\alpha_0+i\d\db\psi \geq  \eps_0\omega$ for some K"ahler form $\omega$ on $X$.  Furthermore, assume that  $\psi$ is smooth away from a compact analytic subvariety $V\subset X$, and $V= \{\psi= -\infty\}$. 
	\end{Assumption}
	
	\begin{Remark}\label{rk: Ass1}
	We expect that Assumption~\ref{ass: mainAss} essentially always applies, possibly after weakening the semi-positivity assumption.  In fact, in analogy with the main result of \cite{CT},  we expect that
	\[
	V = \bigcup_{Y\subset X : \int_{Y}\alpha_0^{\dim Y}=0}Y
	\]
	where the union is taken over compact, irreducible analytic subvarieties.  We will prove this in a large class of examples;  see the discussion in Section~\ref{sec: Null=Enk}.
	\end{Remark}

 In \cite{CH13}, it is proved that for $t\in (0, 1]$ there exists a unique asymptotically conical Ricci-flat K\"ahler metric $\omega_{t, CY}\in [\alpha_t]$ satisfying the complex Monge-Amp\`ere equation
	\begin{equation*}
		\omega_{t, CY}^n = i^{n^2}\Omega\wedge\bar{\Omega}
	\end{equation*}
Our first theorem is the following, 
\begin{Theorem}\label{Main Theorem}	
	Let $0<\nu<2n$ and consider a linear family of $\nu$-almost compactly supported K\"ahler classes $[\omega_t] = (1-t)[\alpha_0]+t[\omega]\in H^{1, 1}_{\nu}(X, \R)$ for $t\in (0, 1]$. Suppose $[\alpha_0]$ satisfies Assumption~\ref{ass: mainAss}.  Let $\omega_{t,CY}$ be the asymptotically conical Calabi-Yau metrics in $[\omega_t]$.  Then, as $t\to 0$, the Ricci-flat K\"ahler metrics $\omega_{t, CY}$ converge in $C^{\infty}_{loc}(X\setminus V)$ to an incomplete Ricci-flat K\"ahler metric $\omega_{0, CY}$ on $X\setminus V$ satisfying
	\begin{equation}\label{eq: mainTheoremCMA}
		\omega_{0, CY}^n = i^{n^2}\Omega\wedge\bar{\Omega}.
	\end{equation}
	Moreover, we have
	\begin{enumerate}
	\item $\omega_{0, CY}$ extends across $V$ as a positive current with locally bounded potentials and~\eqref{eq: mainTheoremCMA} holds globally in the sense of Bedford-Taylor \cite{BT}.
	\item $\omega_{0, CY}$ is asymptotically conical at infinity and, outside of a compact set $K\subset X$, $\omega_{0, CY}$ satisfies $|\nabla^k(\omega_{0, CY}-\omega_C)|_{\omega_{C}} = O(r^{-\nu-k})$, where $r(x)= {\rm dist}(x_0,x)$ is the distance to a fixed point with respect to the conical K\"ahler metric $\omega_{C}$.
	\item $\omega_{0,CY}$ is unique in the sense that, if $\omega$ is any closed positive current in the class $[\omega_{0,CY}]$ with locally bounded potentials, which is smooth on $X\backslash V$, asymptotically conical at any rate $\delta>0$, and satisfying~\eqref{eq: mainTheoremCMA} on $X$ in the sense of Bedford-Taylor, then $\omega = \omega_{0,CY}$.
	\end{enumerate}
\end{Theorem}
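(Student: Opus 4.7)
My plan is to write $\omega_{t, CY} = \hat\omega_t + i\partial\bar\partial \phi_t$ for a smooth reference family $\hat\omega_t \in [\omega_t]$. Using the $\nu$-almost compactly supported hypothesis, one can arrange that $\hat\omega_t = \omega_C + O(r^{-\nu})$ in the asymptotically conical region, uniformly in $t \in [0,1]$, and one takes $\hat\omega_0 = \alpha_0 + i\partial\bar\partial \psi$, which by Assumption~\ref{ass: mainAss} satisfies $\hat\omega_0 \geq \eps_0\omega$ on $X\setminus V$. The Monge--Amp\`ere equation then reads
\[
(\hat\omega_t + i\partial\bar\partial \phi_t)^n = i^{n^2}\Omega \wedge \bar\Omega,
\]
with the normalization $\phi_t\to 0$ at infinity, which is forced by the asymptotic conicity of both sides.

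\textbf{Uniform $L^\infty$ and local higher-order estimates.} The main analytic step is a uniform bound $\|\phi_t\|_{L^\infty(X)}\leq C$ for $t\in (0,1]$. The upper bound follows from the maximum principle combined with $\phi_t\to 0$ at infinity. The lower bound is delicate because $\hat\omega_t$ loses positivity along $V$ as $t\to 0$; the natural strategy, mirroring \cite{CT} in the compact setting, is to apply a Ko\l{}odziej-type $L^\infty$ estimate, using $\hat\omega_0\geq \eps_0\omega$ on $X\setminus V$ from Assumption~\ref{ass: mainAss} to absorb the degeneration. The non-compactness is handled by localizing to a large ball $B_R$ outside which $\phi_t$ is already small (so $\sup_{\partial B_R}|\phi_t|$ is controlled), whence a bounded-domain form of the estimate applies. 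Once $\|\phi_t\|_{L^\infty}\leq C$, on any compact $K\subset X\setminus V$ the reference satisfies $\hat\omega_t \geq c(K)\omega$ uniformly (using $\hat\omega_t\geq (1-t)\hat\omega_0\geq (1-t)\eps_0\omega$), so the standard Aubin--Yau second-order estimate gives a uniform $C^2$ bound, and Evans--Krylov plus Schauder bootstrapping yields uniform $C^\infty(K)$ bounds. Up to a subsequence $\phi_t\to \phi_0$ in $C^\infty_{\text{loc}}(X\setminus V)$, producing a smooth Ricci-flat K\"ahler metric $\omega_{0, CY} = \hat\omega_0 + i\partial\bar\partial\phi_0$ on $X\setminus V$ solving the Monge--Amp\`ere equation pointwise.

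\textbf{Extension across $V$ and conical asymptotics.} Since $\phi_0 \in L^\infty$ and $\hat\omega_0$ is smooth semi-positive, $\omega_{0,CY}$ extends across $V$ as a closed positive $(1,1)$-current with locally bounded potentials, and the Bedford--Taylor Monge--Amp\`ere product is well-defined. The identity~\eqref{eq: mainTheoremCMA} globally in the Bedford--Taylor sense follows by passing to the limit from the pointwise equation on $X\setminus V$: both sides are Radon measures, the RHS is smooth, $V$ is pluripolar of zero Lebesgue measure, and weak continuity of the Monge--Amp\`ere operator on uniformly bounded psh families \cite{BT} does the rest. For claim (2), one upgrades the interior estimates with a $\nu$-weighted Schauder estimate near infinity: writing the Monge--Amp\`ere equation near infinity as a perturbation of $\omega_C^n = i^{n^2}\Omega_C \wedge \bar\Omega_C$, linearization combined with the weighted Fredholm/Schauder theory of Conlon--Hein \cite{CH13} yields $|\nabla^k \phi_t|_{\omega_C} = O(r^{2-\nu-k})$ uniformly in $t$, which passes to the limit and produces the claimed decay of $\omega_{0,CY}-\omega_C$.

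\textbf{Uniqueness.} Suppose $\omega'$ is another closed positive current in $[\omega_{0, CY}]$ with the properties listed in (3). Writing $\omega' = \omega_{0, CY} + i\partial\bar\partial v$ with $v$ bounded (the class identity combined with locally bounded potentials gives a bounded global potential), the asymptotic conicity of both $\omega'$ and $\omega_{0,CY}$ to the same $\omega_C$ forces $v \to 0$ at infinity. The Bedford--Taylor comparison principle applied on the exhaustion $\{r\leq R\}$, together with $\sup_{\{r=R\}}|v|\to 0$ as $R\to\infty$, then gives $v\equiv 0$. The principal obstacle in the whole argument is the uniform $L^\infty$ estimate on $\phi_t$: one must merge the pluripotential-theoretic input from Assumption~\ref{ass: mainAss} with the asymptotic analysis at the conical end, a combination not directly available in either the compact Collins--Tosatti framework or in the fixed-class Conlon--Hein theory.
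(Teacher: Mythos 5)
Your outline has the right global architecture (reference metrics, uniform $C^0$ bound, local higher-order estimates away from $V$, extension as a current, decay at infinity, uniqueness), but three of the load-bearing steps are carried by assertions that either contradict what is actually available or silently skip the hard part.

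\textbf{The uniform $L^\infty$ bound.} You propose a ``Ko\l odziej-type'' estimate localized to a large ball. No such estimate is available here: a bounded-domain pluripotential estimate needs non-degeneracy/capacity control for the reference class, which fails along $V$, and the paper explicitly notes that no non-compact analogue of Ko\l odziej's theorem is known and avoids it. The actual proof runs Moser iteration using the unknown Ricci-flat metrics $\omega_{t,CY}$ themselves as background, so that the Sobolev constant is uniform (Croke--Yau, since these metrics are Ricci-flat and asymptotically conical with a fixed cone at infinity). This only closes because the background forms $\hat\omega_t$ are first corrected (Proposition 3.2 in the paper) so that their Ricci potentials $f_t$ decay faster than $r^{-2}$, which puts $e^{-f_t}-1$ in the right $L^p$ spaces uniformly in $t$ and kills the boundary terms in the integration by parts. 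You do not address this correction, and without it neither the iteration nor your upper bound works; note also that at an interior maximum of $\phi_t$ the maximum principle only yields $e^{f_t}\leq 1$ there, which constrains $f_t$, not $\phi_t$.

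\textbf{The interior $C^2$ estimate.} You claim $\hat\omega_t\geq(1-t)\hat\omega_0\geq(1-t)\eps_0\omega$ on compact subsets of $X\setminus V$. Assumption 1 does not give this: it provides $\alpha_0+i\d\db\psi\geq\eps_0\omega$ for a potential $\psi$ that is $-\infty$ precisely on $V$; the smooth form $\alpha_0$ (hence $\hat\omega_0$) is merely semi-positive and may degenerate on $X\setminus V$ as well. If instead you literally take $\hat\omega_0=\alpha_0+i\d\db\psi$ as the reference, it is singular and the $\phi_t$ cannot be expected to stay bounded. The correct mechanism is Tsuji's barrier trick: apply the maximum principle to $\log\Tr_{\omega}\omega_{t,CY}+B\psi_\eps-N\varphi_t$ with a compactly supported modification $\psi_\eps$ of $\psi$, which yields $|\d\db\varphi_t|\leq Ce^{-B\psi_\eps}$, bounded only on compact subsets of $X\setminus V$.

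\textbf{Uniqueness.} You assert that the class identity plus locally bounded potentials gives a globally bounded potential $v$ with $v\to0$ at infinity. On a non-compact manifold this is false a priori: the quantitative $\d\db$-lemma only produces $v\in C^\infty_{2-\eps}$, i.e., possibly growing like $r^{2-\eps}$, since $v$ is determined only modulo pluriharmonic functions. The entire content of the paper's uniqueness proof is an iteration that improves the growth rate of $v$, using the Conlon--Hein fact that subquadratic harmonic functions on an AC manifold with $\Ric\geq0$ are pluriharmonic, until $v$ decays; only then does an integration-by-parts argument (not a comparison principle on balls, whose boundary contribution you have not controlled) conclude. As written, your uniqueness argument assumes its hardest step.
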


The reader may wish to compare this result with the analogous result in the compact case \cite[Theorem 1.6]{CT}.  As discussed before, a natural way to construct examples where Theorem~\ref{Main Theorem} applies is to consider resolutions of singular varieties. 

\begin{Theorem}\label{Theorem2}
Let $(X_0, \Omega)$ be a normal, log-terminal, Gorenstein variety with $K_{X_0}$ trivial, and suppose that $X_0$ has compactly supported singularities and admits a crepant resolution of singularities $\pi: (X, \Omega) \to (X_0, \Omega)$.  Suppose that  $L\to X_0$ is an ample line bundle on $X_0$ (see Section~\ref{sec: metricGeometry} for the definition of ampleness in this context). Let $[\alpha_0] = \pi^{*}c_1(L) \in H^2(X, \R)$ and suppose that $(X, J, \omega, \Omega)$ and $[\omega_t] = (1-t)[\alpha_0]+t[\omega]\in H^{1, 1}_{\nu}(X, \R)$ is a family of K\"ahler classes satisfying the same hypothesis as in Theorem \ref{Main Theorem}. (In particular $[\alpha_0]$ satisfies Assumption~\ref{ass: mainAss})
	In the situation above the singular Ricci-flat current $\omega_{0,CY}$ descends to a Ricci-flat K\"ahler current on $X_0$ and satisfies
	\begin{enumerate}
		\item $\omega_{0, CY}$ is a smooth Ricci-flat K\"ahler metric on $\pi^{-1}(X_0^{reg})$. 
		\item $\omega_{0, CY}$ descends to a K\"ahler current on $X_0$, (i.e. $\omega_{0,CY}\geq \omega$ for some smooth K\"ahler form $\omega$ on $X_0$)
		\item $\overline{(X_0^{reg}, \omega_{0, CY})}$ is homeomorphic to $X_0$. 
		\item $(X,\omega_{t, CY})$ Gromov-Hausdorff converges to $X_0$ with the distance function induced by $ \omega_{0, CY}$.
	\end{enumerate}
\end{Theorem}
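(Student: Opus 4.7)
The strategy is to first show that the hypotheses of Theorem~\ref{Main Theorem} can be verified in the crepant resolution setting, then push the singular metric $\omega_{0,CY}$ down from $X$ to $X_0$, and finally extract the required geometric properties.

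The first step is to verify Assumption~\ref{ass: mainAss} for $[\alpha_0]=\pi^{*}c_{1}(L)$, with $V$ being the exceptional locus $\mathrm{Exc}(\pi)$. Since $\pi$ is projective and birational, one can fix an effective $\pi$-exceptional $\mathbb{Q}$-divisor $E$ with support $\mathrm{Exc}(\pi)$ such that $-E$ is $\pi$-ample; a standard negativity-of-contractions argument then shows that $k\pi^{*}L - E$ is ample on $X$ for $k\gg 1$. Choosing a hermitian metric $h$ on $\mathcal{O}(E)$ and a defining section $s_E$, the function $\psi:=\tfrac{1}{k}\log|s_E|_h^{2}$ is smooth on $X\setminus \mathrm{Exc}(\pi)$, tends to $-\infty$ along $\mathrm{Exc}(\pi)$, and satisfies $\alpha_0 + i\partial\bar\partial \psi \geq \varepsilon_0 \omega$ for some Kähler form $\omega$ on $X$, verifying Assumption~\ref{ass: mainAss} (with a representative $\alpha_0=\pi^{*}\omega_{X_0}$ for some fixed Kähler form $\omega_{X_0}$ on $X_0$ in $c_{1}(L)$). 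Theorem~\ref{Main Theorem} then produces $\omega_{0,CY}$ on $X\setminus V$; combined with the uniqueness statement in part (3) of Theorem~\ref{Main Theorem} and the $L^{\infty}$ estimates built into Assumption~\ref{ass: mainAss}, one obtains that $V\subseteq \mathrm{Exc}(\pi)$, with equality following from the fact that $\int_{Y}\alpha_0^{\dim Y}=0$ for any compact irreducible $Y\subset \mathrm{Exc}(\pi)$.

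With $\omega_{0,CY}$ in hand, parts (1) and (2) are now fairly direct. Since $\pi$ is biholomorphic over $X_0^{reg}$, smoothness of $\omega_{0,CY}$ on $\pi^{-1}(X_0^{reg})$ is immediate from Theorem~\ref{Main Theorem}, giving (1). For (2), the potential $\varphi$ defined by $\omega_{0,CY}=\pi^{*}\omega_{X_0}+i\partial\bar\partial \varphi$ is globally bounded on $X$ by Theorem~\ref{Main Theorem}(1), so $\varphi$ descends to an $L^{\infty}$ function on $X_0$ and $\omega_{0,CY}$ descends to a closed positive $(1,1)$-current $\omega_{X_0}+i\partial\bar\partial \varphi$ on $X_0$ with bounded potential. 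To upgrade this to a Kähler current, one pulls back the inequality $\omega_{0,CY}\geq \varepsilon_0 \omega$ on $X$ established during the proof of Theorem~\ref{Main Theorem} and chases a chain of comparisons to a Kähler form $\omega_0$ on $X_0$ (using that $\pi_{*}\omega \geq c \omega_0$ in the pluripotential sense on some neighborhood of the singular set, and that the bound is automatic away from the singular set since $\pi$ is biholomorphic there).

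For (3), I would follow the strategy of Song~\cite{JS14}. The key inputs are now in place: $\omega_{0,CY}$ is a Kähler current on $X_0$ with bounded potential, smooth on $X_0^{reg}$, and satisfies a Monge--Ampère equation with continuous (in fact smooth, nondegenerate) right-hand side on $X_0^{reg}$. Song's argument then proceeds by showing that the identity $X_0^{reg}\to X_0^{reg}$ extends to a continuous surjection $\iota:X_0\to \overline{(X_0^{reg},\omega_{0,CY})}$ whose injectivity is equivalent to showing that the $\omega_{0,CY}$-diameter of $\pi^{-1}(p)$ tends to zero as $p$ approaches the singular set of $X_0$; this in turn is controlled by the diameter bound obtained from the bounded potential together with (2) via a comparison with an auxiliary smooth Kähler metric. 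The last point requires some work in adapting Song's compact argument to our non-compact setting, but since everything is happening inside a fixed compact set $\pi^{-1}(K)\subset X$, the extension is essentially local and the compact analysis carries over. This is the step I expect to require the most care.

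Finally, for (4), Gromov--Hausdorff convergence follows by combining the smooth convergence $\omega_{t,CY}\to \omega_{0,CY}$ on compact subsets of $X\setminus V$ from Theorem~\ref{Main Theorem} with (i) the uniform diameter bound on $\pi$-fibers over a neighborhood of the singular set, obtained from uniform $L^{\infty}$ bounds on the potentials $\varphi_t$ (which are a byproduct of the a priori estimates in Theorem~\ref{Main Theorem}), and (ii) the asymptotically conical behavior at infinity, which implies that distances are measured in a compact region plus a controlled conical end. Concretely, one constructs $\varepsilon$-approximate isometries $f_t:(X,\omega_{t,CY})\to X_0$ by composing $\pi$ with the contraction of exceptional fibers to points, and uses the estimates above to show that these are indeed approximate isometries with $\varepsilon\to 0$ as $t\to 0$, giving the required GH convergence.
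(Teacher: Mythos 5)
Your overall architecture (descend the current, then follow Song \cite{JS14} and Donaldson--Sun \cite{DS14} to identify the completion with $X_0$) points at the right references, but two of your key steps rest on inputs that are either false or not available, and these are precisely where the real work lies.

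For part (2), you invoke an inequality $\omega_{0,CY}\geq \varepsilon_0\omega$ on $X$, with $\omega$ a K\"ahler form on $X$, ``established during the proof of Theorem~\ref{Main Theorem}.'' No such inequality is established there, and it cannot hold: integrating over a rational curve $C\subset \mathrm{Exc}(\pi)$ gives $0=\int_C \pi^*c_1(L)=\int_C\omega_{0,CY}\geq \varepsilon_0\int_C\omega>0$, a contradiction. (The current in Assumption~\ref{ass: mainAss} does dominate $\varepsilon_0\omega$, but its potential $\psi$ blows down to $-\infty$ along $V$; the limit $\omega_{0,CY}$ has bounded potential and is a different object.) The correct ingredient is a uniform Chern--Lu/Schwarz-lemma estimate $\mathrm{Tr}_{\omega_{t,CY}}\pi^*\omega_{X_0}\leq C$ for the map $\pi:(X,\omega_{t,CY})\to (X_0,\omega_{X_0})$, which yields $\omega_{t,CY}\geq C^{-1}\pi^*\omega_{X_0}$ uniformly in $t$. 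This is what descends to the K\"ahler current bound on $X_0$, and combined with the fixed volume form it also gives the two-sided bound needed for part (1) on all of $\pi^{-1}(X_0^{reg})$ --- note that Theorem~\ref{Main Theorem} only gives smoothness away from $V$, and nothing forces $V=\pi^{-1}(X_0^{sing})$ a priori.

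For parts (3) and (4), the claim that the fibers $\pi^{-1}(p)$ collapse (equivalently, that the relevant map is injective) is ``controlled by the diameter bound obtained from the bounded potential'' is the gap: a uniform $L^\infty$ bound on $\varphi_t$ gives no control whatsoever on distances or on the diameter of exceptional fibers, and your $\varepsilon$-approximate isometries in (4) presuppose exactly the fiber collapse you need to prove. The actual mechanism is: extract a pointed Gromov--Hausdorff limit $X_\infty$ by Gromov compactness (non-collapsed by the conical volume growth), use the Schwarz lemma to pass the uniformly Lipschitz maps $\pi$ to a continuous surjection $\pi_\infty:X_\infty\to X_0$, identify $X_\infty$ with $\overline{(X_0^{reg},\omega_{0,CY})}$ via Cheeger--Colding and harmonic-map regularity, and then prove injectivity of $\pi_\infty$ by constructing peak sections of $L^k$ separating any two points of $X_\infty$. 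That construction requires H\"ormander $L^2$ estimates with the singular weights $e^{-tk\psi_\varepsilon}$ built from the current in Assumption~\ref{ass: mainAss}, a uniform gradient bound $\sup|\nabla_{\omega_{0,CY}}\varphi_0|\leq C$ (itself a new estimate, obtained through the auxiliary function $v_t=\varphi_t-t\dot\varphi_t$, Green's function bounds, and integration by parts at spatial infinity), Moser iteration on the noncompact $X_0^{reg}$ with the Donaldson--Sun cutoffs, and finally the normality of $X_0$ to extend sections across the singular set and derive the contradiction. None of this is a routine localization of the compact argument; without it the homeomorphism and the Gromov--Hausdorff statement are not established.
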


A couple of remarks are in order concerning the assumptions of Theorem~\ref{Theorem2}
\begin{Remark}
\begin{enumerate}
\item Theorem~\ref{Theorem2} requires that Assumption~\ref{ass: mainAss} to hold for the class $[\alpha_0]$. As pointed out in Remark~\ref{rk: Ass1}, we expect that in this situation that we can always take $V=\pi^{-1}(X_0^{sing})$, and we will prove this is a large number of cases in Lemma~\ref{lem: DP}. Although we don't actually need to assume this for the proof of Theorem~\ref{Theorem2}. 
\item The assumption on the existence of an ample line bundle $L$ may seem at odds with our discussion earlier in the introduction.  In many cases where Theorem~\ref{Theorem2} applies, we will take $L= \mathcal{O}_{X_0}$.  This can be done, for example, when $X_0$ is affine which is a natural setting for studying  Calabi-Yau varieties with isolated singularities.
\end{enumerate}
\end{Remark}

We apply these results to study several classes of examples.  Let us briefly describe one particular class.  Consider the quasi-homogeneous affine variety
\[
Y_{p,q} := \{xy +z^p-w^q=0\} \subset \mathbb{C}^4,
\]
 where without loss of generality we can assume $p\leq q$. $Y_{p,q}$ is normal, Gorenstein and log-terminal, and by \cite{CS} $Y_{p,q}$ admits a conical Calabi-Yau metric if and only if $q<2p$.  A result of Katz \cite{Katz} says that the $Y_{p,p}$ admits $p$ inequivalent small (and hence crepant) resolutions resolutions $\mu_i: Y^i\rightarrow Y_{p,p}$ (and if $p\ne q$ then no small resolution exists). We therefore have the following picture
  \[
\begin{tikzcd}
Y^{1} \arrow[rrd, "\mu_{1}" ] & Y^{2} \arrow[rd, "\mu_2"]& \cdots & Y^{p-1} \arrow[ld, "\mu_{p-1}" '] & \arrow[lld, "\mu_p" '] Y^p\\
\, & \,& Y_{p,p}&\, &\,
\end{tikzcd}
\]
with each pair $Y^i, Y^j$ related by a flop.  When $p=2$, this is the well-known example of the Atiyah flop \cite{Atiyah}. In Section~\ref{sec: Examples} we apply our results to this setting. 

\begin{Corollary}
Let $Y^{i}$ be a small resolution of the $Y_{p,p}$ singularity, and let $\omega_0$ denote the Calabi-Yau metric on $Y_{p,p}$.  Let $[\omega_t] := (1-t)[\alpha_0] + t[\omega]$ be any linear family of K\"ahler classes on $Y^{i}$, where $[\alpha_0] \in H^{1,1}(Y^{i}, \mathbb{Q})$ is not K\"ahler.  Then for all $t>0$ there is an asymptotically conical Calabi-Yau metric $\omega_{t,CY} \in [\omega_t]$.  Furthermore, there is a partial resolution $\bar{\mu}_i:\overline{Y} \rightarrow Y_{p,p}$ and a map $\nu: Y^i \rightarrow \overline{Y}$ such that the following diagram commutes
\[
\begin{tikzcd}
Y^{i} \arrow[r, "\nu"] \arrow[rd, "\mu_i" ']& \overline{Y} \arrow[d, "\bar{\mu}_i"]\\
 & Y_{p,p}
\end{tikzcd}
\]
As $t\rightarrow 0$, $\omega_{t, CY}$ converge in $C^{\infty}_{loc}(Y^i\backslash {\rm Exc}(\nu))$ to an incomplete, asymptotically conical Calabi-Yau metric $\overline{\omega}$ on $\overline{Y}_{reg}$ and $(Y^i, \omega_t)$ Gromov-Hausdorff converges to $\overline{(\overline{Y}_{reg}, \overline{\omega})}$ which is homeomorphic to $\overline{Y}$.  Furthermore, if $[\alpha_0]=0$ then $\overline{Y}= Y_{p,p}$, $\bar{\mu}_i$ is the identity, and $\bar{\omega} = \omega_0$ the Calabi-Yau metric on $Y_{p,p}$.  In particular, when $[\alpha_0]=0$, for any $i,j$ the flop from $Y^i$ to $Y^j$ is continuous in the Gromov-Hausdorff topology in the sense that
\[
\begin{tikzcd}
(Y^{i}, \cdot\omega_{t,CY}) \arrow[r, "GH"] &(Y_{p,p}, \omega_{0})& \arrow[l,"GH" '] (Y^{j}, \cdot \omega_{t,CY})
\end{tikzcd}
\]
\end{Corollary}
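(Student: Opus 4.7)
The plan is to extract, from the boundary class $[\alpha_0]$, a partial resolution $\nu : Y^i \to \overline{Y}$ of $Y_{p,p}$ on which $[\alpha_0]$ becomes the pullback of an ample class, and then invoke Theorem~\ref{Theorem2} with $X = Y^i$, $X_0 = \overline{Y}$, and $\pi = \nu$.

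For the algebro-geometric setup, the exceptional locus of $\mu_i : Y^i \to Y_{p,p}$ is a union of $p-1$ smooth rational curves $C_1,\ldots,C_{p-1}$, by the classical description of small resolutions of the compound Du Val singularity $xy = z^p - w^p$. Because $[\alpha_0] \in H^{1,1}(Y^i,\mathbb{Q})$ lies on the boundary of the K\"ahler cone and the relative Picard rank of $\mu_i$ is $p-1$, the class $[\alpha_0]$ is nef and there is a nonempty subset $S \subset \{C_1,\ldots,C_{p-1}\}$ of extremal curves on which $[\alpha_0]$ vanishes. Applying the cone and base-point-free theorems to the relative geometry $\mu_i : Y^i \to Y_{p,p}$ produces a birational morphism $\nu : Y^i \to \overline{Y}$ onto a normal Gorenstein variety contracting precisely the curves in $S$, together with an ample line bundle $L$ on $\overline{Y}$ such that $\nu^{*}c_1(L) = [\alpha_0]$ (after rescaling $[\alpha_0]$ to an integral class if necessary). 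The rigidity lemma provides the factorization $\mu_i = \bar{\mu}_i \circ \nu$, and since $\mu_i$ is crepant so are both $\nu$ and $\bar{\mu}_i$; in particular $K_{\overline{Y}}$ is trivial, $\overline{Y}$ is log-terminal, and its singularities all lie in $\bar{\mu}_i^{-1}(0)$.

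With this setup, $\nu : Y^i \to \overline{Y}$ is a crepant resolution of a normal, log-terminal, Gorenstein variety with trivial canonical bundle and compactly supported singularities, and $L \to \overline{Y}$ is an ample line bundle with $[\alpha_0] = \nu^{*} c_1(L)$. The class $[\alpha_0]$ is almost compactly supported (at any decay rate) because $\nu$ restricts to an isomorphism outside a compact subset of $Y^i$, while Assumption~\ref{ass: mainAss} holds by Lemma~\ref{lem: DP}, which identifies $V$ with $\mathrm{Exc}(\nu)$ and produces the required psh potential $\psi$. Theorem~\ref{Theorem2} then yields the metrics $\omega_{t,CY} \in [\omega_t]$ for $t \in (0,1]$ (existence for $t > 0$ also being contained in \cite{CH13}), their $C^{\infty}_{\mathrm{loc}}$-convergence on $Y^i \setminus \mathrm{Exc}(\nu)$ to an incomplete smooth Ricci-flat K\"ahler metric $\overline{\omega}$ on $\overline{Y}_{\mathrm{reg}}$, the Gromov-Hausdorff convergence of $(Y^i,\omega_{t,CY})$ to the completion $\overline{(\overline{Y}_{\mathrm{reg}},\overline{\omega})}$, and the homeomorphism of the latter with $\overline{Y}$; asymptotic conicity of $\overline{\omega}$ is the content of Theorem~\ref{Main Theorem}(2).

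When $[\alpha_0] = 0$ every curve $C_k$ is contracted by $\nu$, so $\nu = \mu_i$, $\overline{Y} = Y_{p,p}$, and $\bar{\mu}_i$ is the identity. The limit $\overline{\omega}$ is then an asymptotically conical Calabi-Yau metric on $Y_{p,p}^{\mathrm{reg}}$ satisfying the same complex Monge-Amp\`ere equation as $\omega_0$ in the trivial K\"ahler class; the uniqueness statement of Theorem~\ref{Main Theorem}(3) (equivalently, the uniqueness result of \cite{CH13}) then forces $\overline{\omega} = \omega_0$. Gromov-Hausdorff continuity of the flop follows immediately, since both families $(Y^i,\omega_{t,CY})$ and $(Y^j,\omega_{t,CY})$ converge as $t \to 0$ to the same limit $(Y_{p,p},\omega_0)$. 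The main obstacle is the algebro-geometric construction of $\overline{Y}$ and the identification $[\alpha_0] = \nu^{*} c_1(L)$; once this structural input is in place, the analytic and metric content is entirely packaged in Theorems~\ref{Main Theorem} and~\ref{Theorem2}.
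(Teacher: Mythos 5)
Your proposal is correct and its analytic core --- reduce everything to Theorems~\ref{Main Theorem} and~\ref{Theorem2} once $[\alpha_0]=\nu^*c_1(L)$ for an ample $L$ on a partial resolution $\overline{Y}$, verify Assumption~\ref{ass: mainAss} via Lemma~\ref{lem: DP} using quasi-regularity of the cone, and settle the $[\alpha_0]=0$ case by the uniqueness statement --- is exactly the paper's. The one genuine difference is how you produce $\overline{Y}$, the factorization $\mu_i=\bar\mu_i\circ\nu$, and the ample $L$: you invoke the relative cone, base-point-free and rigidity theorems for $\mu_i:Y^i\to Y_{p,p}$, whereas the paper constructs everything explicitly, realizing each (partial) resolution as the closure of the graph of products of the rational maps $\nu_\ell(x,y,z,w)=[x:\prod_{j\le \ell}(z-\zeta^j w)]$ and checking ampleness of the divisors $-E_i$ by reduction to blow-ups of the ambient $\C^4$; the MMP route is more general, the explicit route gives the concrete identification of $H^{1,1}(Y^i)$ with $\bigoplus\R[E_i]$ and of the K\"ahler cone with the positive orthant, which is what lets one see immediately that a rational nef non-K\"ahler class vanishes on some subset of the exceptional curves. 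Two small imprecisions worth fixing: the claim that $[\alpha_0]$ is almost compactly supported ``because $\nu$ is an isomorphism outside a compact set'' is not the right justification (almost compact support concerns the existence of a decaying representative form; the paper instead notes each $[E_i]$ is $2$-almost compactly supported, and more generally cites \cite[Proposition 2.5]{CH13}); and for the $[\alpha_0]=0$ case the paper also records an alternative, self-contained argument identifying $\omega_{t,CY}$ with $(\phi_{1/\sqrt{t}})^*\,t\,\omega_{1,CY}$ via the flow of the extended Reeb field, which you may find worth knowing since it bypasses the uniqueness theorem entirely.
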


A second general class of examples we consider gives rise to the following specific example. Let $X$ be a del Pezzo surface of degree $d \geq 2$, and let $\tilde{X} = \Bl_pX$ be the blow-up at a point $p\in X$.  Assume that $\tilde{X}$ is Fano (and if $d=8$ assume that $\tilde{X}$ is toric).  Then the canonical cone  
\[
C:= \Spec \bigoplus_{m\geq 0} H^{0}(\tilde{X}, -K_{\tilde{X}}^{\otimes m})
\]
admits a conical Calabi-Yau metric \cite{MSYau, Tian90, TY87, FOW, CS}.  Then we prove
\begin{Corollary}
In the above setting, there is an asymptotically conical Calabi-Yau metric on the relative spectrum $Z:= \underline{{\Spec}}(K_{X}\otimes \mathfrak{m}_p)$ which is asymptotic at infinity to the conical Calabi-Yau metric on $C$.
\end{Corollary}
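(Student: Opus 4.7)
We apply Theorem~\ref{Theorem2} to identify $Z$ as the singular target of a small contraction from the total space $\tilde{Z} := \text{Tot}(K_{\tilde{X}})$. Since $\tilde{X}$ is Fano and $\tilde{X} = \Bl_p X$, writing $\pi : \tilde X \to X$ for the blow-down, the exceptional $(-1)$-curve $E \subset \tilde{X}$ sits inside $\tilde{Z}$ as a rational curve with normal bundle $N_{E/\tilde{X}} \oplus K_{\tilde{X}}|_E = \mathcal{O}(-1) \oplus \mathcal{O}(-1)$, using $K_{\tilde{X}} = \pi^* K_X + E$ and $E^2 = -1$; the extension splits because $H^1(\mathbb{P}^1,\mathcal{O}) = 0$. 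Thus $E \subset \tilde{Z}$ admits a small contraction $\mu: \tilde{Z} \to Z$ to a single ordinary double point, and the pushforward computation
\[
(\pi \circ q)_* \mathcal{O}_{\tilde Z} \;=\; \bigoplus_m \pi_*\bigl(K_X^{-m} \otimes \mathcal{O}_{\tilde X}(-mE)\bigr) \;=\; \bigoplus_m K_X^{-m} \otimes \mathfrak{m}_p^m,
\]
(where $q:\tilde Z \to \tilde X$ is the projection) identifies $Z$ with the relative spectrum claimed in the corollary. In particular $Z$ is a normal, Gorenstein, terminal variety with $K_Z$ trivial and a single compactly supported ODP, and $\mu$ is a crepant resolution. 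Since $\tilde Z$ admits asymptotically conical Calabi-Yau metrics in every admissible K\"ahler class, asymptotic to the canonical cone $C$ of $\tilde{X}$ (which itself carries a Calabi-Yau cone metric by the hypotheses on $\tilde X$, together with \cite{CH13}), the setup of Theorem~\ref{Theorem2} applies.

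The essential step is to verify Assumption~\ref{ass: mainAss} for $[\alpha_0] := \mu^* c_1(L)$, where $L$ is an ample line bundle on $Z$ (which exists in the sense of Section~\ref{sec: metricGeometry} because $Z$ is projective over the affine cone $C$, so any $\mu$-ample/relatively-ample bundle is absolutely ample on $Z$). Take $\alpha_0 = \mu^* \omega_Z$ for a K\"ahler form $\omega_Z$ on $Z$; this is a smooth semi-positive $(1,1)$-form on $\tilde Z$, strictly positive on $\tilde Z \setminus E$ and degenerating along $E$. The required $\psi \in PSH(\tilde Z, \alpha_0)$ satisfying $\alpha_0 + i\d\db \psi \geq \eps_0 \omega$, smooth on $\tilde Z \setminus E$ and equal to $-\infty$ precisely on $V := E$, is produced by the same template used for the $Y_{p,p}$ case earlier in the paper: work in a neighborhood of $E$ where $\tilde Z$ is modeled on the standard small resolution $\text{Tot}(\mathcal{O}(-1)\oplus\mathcal{O}(-1)) \to \{xy=zw\}$ of the conifold, take $\psi_{\text{loc}}$ to be $\log$ of the radial coordinate on the conifold pulled back to $\tilde Z$ (which is $\alpha_0$-plurisubharmonic with a positive transverse Hessian contribution along $E$), and patch with a smooth cutoff outside a neighborhood of $E$.

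Once Assumption~\ref{ass: mainAss} is verified, Theorem~\ref{Theorem2} produces an asymptotically conical Ricci-flat K\"ahler current $\omega_{0,CY}$ on $\tilde Z$, smooth on $\tilde Z \setminus E$, realized as the $C^\infty_{\text{loc}}(\tilde Z \setminus E)$ limit of the asymptotically conical Ricci-flat K\"ahler metrics $\omega_{t,CY}$ in the classes $[\omega_t] = (1-t)[\alpha_0] + t[\omega]$. Because $\omega_{0,CY}$ is smooth on the complement of the contracted curve $E$, it descends via $\mu$ to a smooth Ricci-flat K\"ahler metric on $Z_{\text{reg}}$ extending across the ODP as a K\"ahler current with locally bounded potentials, and part~(2) of Theorem~\ref{Main Theorem} shows it is asymptotic at infinity to the conical Calabi-Yau metric on $C$, yielding the desired asymptotically conical Calabi-Yau metric on $Z$. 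The main obstacle is the verification of Assumption~\ref{ass: mainAss}; the ampleness discussion needs the generalized notion from Section~\ref{sec: metricGeometry}, but the construction of $\psi$ is otherwise a direct analogue of the conifold/$Y_{p,p}$ computation already handled in the preceding corollary.
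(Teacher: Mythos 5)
Your overall architecture coincides with the paper's: contract the $(-1,-1)$-curve $E\subset K_{\tilde X}$, identify the target with $\underline{\Spec}(K_X\otimes\mathfrak m_p)$ via the pushforward of the canonical ring (the paper does this by writing the contraction explicitly as a map $\Phi$ to $\mathbb P^N\times\mathbb P^M$ built from sections of an ample $L$ on $X$ and generators of $\bigoplus_m H^0(\tilde X,-K_{\tilde X}^{\otimes m})$, and then checks in Lemma~\ref{lem: pullBackZ} that $p^*c_1(\pi^*L)$ is the pullback of an ample class on $Z$), verify Assumption~\ref{ass: mainAss}, and then quote Theorems~\ref{Main Theorem} and~\ref{Theorem2}. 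Your algebraic identification of $Z$ and your observation that ampleness on $Z$ follows from projectivity over the affine cone are clean shortcuts and are fine.

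The one load-bearing step where your argument diverges, and where it is incomplete, is the verification of Assumption~\ref{ass: mainAss}. First, you describe your construction of $\psi$ as ``the same template used for the $Y_{p,p}$ case,'' but in the $Y_{p,p}$ section the paper obtains $\psi$ from Lemma~\ref{lem: DP}, which rests on quasi-regularity of the cone at infinity and the Demailly--P\u{a}un/Collins--Tosatti machinery on an orbifold compactification; the paper explicitly flags that this route is \emph{unavailable} here because the canonical cone of $\Bl_pX$ is in general irregular (e.g.\ for $\Bl_p\mathbb P^2$). So you cannot lean on that precedent. Second, your replacement --- a local model near $E$ plus a cutoff --- is only sketched, and the crucial positivity claim is stated imprecisely: $\alpha_0=\mu^*\omega_Z$ is already strictly positive in the directions \emph{transverse} to $E$ and degenerates exactly on $TE$, so what $i\d\db\psi_{\mathrm{loc}}$ must supply is strict positivity \emph{along} $E$ (tangentially), not a ``positive transverse Hessian.'' One must then check that $\mu^*\omega_Z+\epsilon\, i\d\db\psi_{\mathrm{loc}}\geq\eps_0\omega$ on a full neighborhood of $E$ (cross terms, compactness of $E$) and that the negativity introduced by the cutoff is absorbed where $\mu^*\omega_Z$ is strictly positive. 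The paper sidesteps all of this with the single global formula of Lemma~\ref{lem: exampleAss1},
\[
T= p^*\pi^*\alpha + \ddb \left(|s|^2_{h} +\epsilon \log\bigl(p^*|s_{\tilde{E}}|_{h_{\tilde{E}}}^2 + |s|^2_{h_{\tilde{X}}}\bigr)\right),
\]
whose pole set is exactly $E=\{s_{\tilde E}=0\}\cap\{s=0\}$ and in which the fiberwise term $|s|^2_h$ and the logarithmic term jointly produce the needed strict positivity. Your approach can certainly be completed along these lines, but as written the existence of the required $\psi$ --- the only genuinely nontrivial hypothesis to check --- is asserted rather than proved.
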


The metric on $Z$ is constructed as a limit of asymptotically conical Calabi-Yau metrics on a small resolution, and we again obtain a Gromov-Hausdorff covergence statement; see Section~\ref{sec: Examples} for a complete discussion.  

We will explain a speculative picture in which that space $Z$ can be viewed as a cobordism between Sasakian manifolds; in this case, the link of the $A_1$ singularity (topologically $S^2\times S^3$) and the link of the cone $C$ (topologically $\#(9-d+1)S^2\times S^3$).  The Calabi-Yau metric on $Z$ upgrades this to a cobordism of Sasaki-Einstein manifolds.  In this picture the volume of the geodesic spheres can be viewed as a sort of Morse function.

The examples above all come from (partial-)resolutions of Calabi-Yau cones. Our theorem can also yield examples where the complex structure at $\infty$ is not biholomorphic to the asymptotic cone. 

Let $X$ be an asymptotically conical Calabi-Yau manifold, then by \cite{Gra}, there exist a normal Stein space $Y$ with finitely many isolated singularities and there is a holomorphic map $\pi: X\to Y$ with connected fibers, is an biholomorphim outside the singularities of $Y$ and $\pi^{\star}\mathcal{O}_Y = \mathcal{O}_X$. The map $\pi$ contracts the maximal compact analytic subset of $X$ and $Y$ is called the Remmert reduction of $X$. Since $Y$ is a stein space, it properly embeds into $\C^N$ for some $N$ sufficiently large. The singularities of $Y$ are rational \cite[Theorem A.2]{CH13}, and hence Cohen-Macaulay, and since $K_X$ is trivial and $Y$ is normal, it follows that $K_Y$ is trivial and $Y$ is Gorenstein. Hence $\pi$ is a crepant resolution of $Y$. 

\begin{Corollary}
	Assuming Assumption~\ref{ass: mainAss} holds for $[\alpha_0] = 0$, applying our theorem with $[\alpha_0] = 0\in H^2(X, \R)$, $\omega_{0, CY}$ descends to a singular CY current on $Y$ and the AC Calabi-Yau metrics $\omega_{t, CY}$ in the classes $t[\omega]\in H^2(X, \R)$ Gromov-Haussdorff coverge to the Remmert reduction $Y$. 
\end{Corollary}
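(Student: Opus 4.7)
The plan is to apply Theorem~\ref{Theorem2} directly to the Remmert reduction $\pi: X \to Y$, using the choice $L = \mathcal{O}_Y$ so that $[\alpha_0] = \pi^{*}c_1(\mathcal{O}_Y) = 0$; this choice is admissible in view of the remark following Theorem~\ref{Theorem2} that explicitly permits $L = \mathcal{O}_{X_0}$ in the Stein/affine setting. The proof then reduces to verifying that $Y$ satisfies the algebraic-geometric hypotheses of Theorem~\ref{Theorem2}.

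I would verify these as follows: by \cite{Gra}, $Y$ is a normal Stein space with only finitely many isolated singular points, so the singular locus is compactly supported; by \cite[Theorem~A.2]{CH13} the singularities of $Y$ are rational, and combined with $K_Y$ trivial and $Y$ Gorenstein they are in fact canonical, hence log-terminal; and $\pi$ is crepant by construction. Assumption~\ref{ass: mainAss} for $[\alpha_0]=0$ is built into the hypothesis of the corollary. Hence Theorem~\ref{Theorem2} applies and yields precisely the desired conclusions: $\omega_{0, CY}$ is smooth on $\pi^{-1}(Y^{\mathrm{reg}}) = X\setminus V$, descends to a K\"ahler current on $Y$, the metric completion $\overline{(Y^{\mathrm{reg}}, \omega_{0,CY})}$ is homeomorphic to $Y$, and $(X, \omega_{t, CY})$ Gromov--Hausdorff converges to $Y$ equipped with the distance induced by $\omega_{0,CY}$.

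The main substantive point, and the reason this is stated as a corollary rather than a triviality, is the verification of Assumption~\ref{ass: mainAss} for $[\alpha_0]=0$ in concrete examples. In the Remmert setting the natural candidate for $\psi$ is built from logarithms of local defining functions of $Y^{\mathrm{sing}} \subset Y$, pulled back to $X$ via $\pi$ and combined with a strictly plurisubharmonic Stein exhaustion of $Y$ in order to obtain the strict positivity $i\d\db\psi \geq \eps_0 \omega$ on all of $X \setminus V$. This would force $V = \pi^{-1}(Y^{\mathrm{sing}})$ to be the maximal compact analytic subset contracted by $\pi$, in agreement with the expectation in Remark~\ref{rk: Ass1}. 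Carrying this out in classes of examples, rather than the formal reduction above, is where the real technical work lies.
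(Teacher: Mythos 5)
Your proposal is correct and matches the paper's (implicit) argument: the paper proves this corollary precisely by the preceding verification that $Y$ is normal, Gorenstein with $K_Y$ trivial, has rational (hence canonical, hence log-terminal) compactly supported isolated singularities by \cite{Gra} and \cite[Theorem A.2]{CH13}, and that $\pi$ is crepant, after which Theorem~\ref{Theorem2} applies with $L=\mathcal{O}_Y$ as permitted by the remark on affine/Stein targets. Your closing paragraph correctly identifies that the remaining substance lies in verifying Assumption~\ref{ass: mainAss} in examples, which is exactly why the corollary is stated conditionally on that assumption.
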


The outline of this paper is as follows.  In Section~\ref{sec: Prelim} we discuss some basic properties of asymptotically conical K\"ahler manifolds, and state two main propositions (Propositions~\ref{Background metrics}, and~\ref{estimates}).  We give the proof of Theorem~\ref{Main Theorem} assuming these two propositions.  In Section~\ref{sec: background} we discuss the construction of good background metrics, and prove Proposition~\ref{Background metrics}.  In Section~\ref{sec: estimates} we prove some a priori estimates and deduce Proposition~\ref{estimates}, completing the proof of Theorem~\ref{Main Theorem}.  In Section~\ref{sec: metricGeometry} we use $L^2$ estimates to prove Theorem~\ref{Theorem2}.  Finally, in Section~\ref{sec: Examples} we explain examples where Theorems~\ref{Main Theorem} and~\ref{Theorem2} are applicable, and discuss a speculative Morse theoretic picture.  
\bigskip

{\bf Acknowledgements:}. The authors are grateful to D. H. Phong for his interest and encouragement.  The authors are also grateful to R. Conlon and H.-J. Hein for explaining aspects of their papers \cite{CH13, CH15, CH3}.

\section{Preliminaries}\label{sec: Prelim}

\subsection{Asymptotically conical K\"ahler manifolds}
We quote here some basic definitions and an existence theorem for asymptotically conical Calabi Yau metrics from \cite{CH13}. 

\begin{Definition}\label{Kahler cone}
	\begin{enumerate}
		\item[(A)] An open {\it K\"ahler cone} $(C, J_C, \omega_C, g_C)$ is a Riemannian cone $(C, g_C)$ with smooth link $L$ that is additionally equipped with a complex structure $J_C$ such that the K\"ahler form is $\omega_C = i\d\db r_{C}^2$ where $r_{C}$ is the distance function from the tip of the cone. 
		\item[(B)] A {\it Calabi-Yau cone} $(C, J_C, \omega_C, g_C, \Omega_C)$ is a K\"ahler cone with an additional holomorphic volume form $\Omega_C$ such that $\omega_C^n = i^{n^2}\Omega_C\wedge\bar{\Omega}_C$. 
	\end{enumerate}
\end{Definition}

\begin{Definition}\label{AC Kahler}
\begin{enumerate}
	\item[(A)] A K\"ahler manifold $(X, J, g, \omega)$ is called {\it asymptotically conical} if there exist a K\"ahler cone $(C, J_C, g_C, \omega_C)$ and a diffeomorphism $\Phi:C\setminus B_R(o)\to X\setminus K$ for some $K\subset \subset X$  and $o$ is the vertex of the cone $C$, and $\nu>0$  such that the following hold
	\begin{equation*}
		|\nabla^k(\Phi^{*}J-J_{C})|_{g_{C}} +|\nabla^k(\Phi^{*}\omega-\omega_{C})|_{g_{C}}= O(r_{C}^{-\nu-k}),\quad \forall k\in\mathbb N
	\end{equation*}
	where the covariant derivatives are taken with respect to $g_C$. We say that $X$ {\it asymptotic to $C$ with rate $\nu$}. 
	
	\item[(B)]  We say that an open Calabi-Yau manifold $(X, J, \omega, \Omega)$ is {\it asymptotic to the Calabi-Yau cone $(C, J_{C}, \omega_{C}, \Omega_{C})$ with rate $\nu$} if $(X,J,g,\omega)$ is asymptotic to the K\"ahler cone $(C, J_C, g_C, \omega_C)$ with rate $\nu$, and, in addition
	\[
	|\nabla^k(\Phi^{*}\Omega-\Omega_{C})|_{g_{C}}= O(r_{C}^{-\nu-k})
	\]
	\end{enumerate}	
\end{Definition}
\begin{Remark}
\begin{enumerate}
\item 	On any asymptotically conical K\"ahler manifold, we can always find a smooth function $r:X\to \R_{\geq 0}$ satisfying $r = r_C\cdot\Phi^{-1}$ away from some compact set $K$ where $r_C$ is the radial distance on the cone $C$, and furthermore, $r$ satisfies: $|\nabla r|+r|\nabla^2 r|\leq C$. We will call such an $r$ a {\it radius function}. 
\item In fact, it is shown in \cite[Lemma 2.14]{CH13} that $\Phi^{*}J-J_C$ always decays at the same rate as $\Phi^{*}\Omega-\Omega_C$, so it suffices just to assume $|\nabla^k(\Phi^{*}\Omega-\Omega_{C})|_{g_{C}}= O(r^{-\nu-k})$, and $|\nabla^k(\Phi^{*}J-J_C)|_{g_{C}}= O(r^{-\nu-k})$ follows automatically. 
\item We will often say $(X, J, g, \omega)$ is an {\it asymptotically conical K\"ahler manifold} if it is asymptotic to some K\"ahler cone $(C, J_{C}, g_C, \omega_{C})$ at some rate $\nu>0$ by some map $\Phi$. We will therefore often suppress the map $\Phi$, with the understanding that all asymptotics are measured with respect to the diffeomorphism $\Phi$. Furthermore, when $\Phi$ is implicit, we will often abuse notation and write $\omega_C, J_{C}, \Omega_C$ in place of $\Phi^{-1})^{*}\omega_C$, $(\Phi^{-1})^{*}J_C$, $(\Phi^{-1})^{*}\Omega_C$. 
\item On an asymptotically conical K\"ahler manifold with rate $\nu$ we will often refer to a $(1,1)$ form $\alpha$ being asymptotically conical.  By this we mean that there is a compact set $K$ such that, on $X\backslash K$ the form $\alpha$ defines an asymptotically conical K\"ahler metric with rate $\nu$.
\end{enumerate}
\end{Remark}

We now quote two versions of the $\d\db$-lemma which hold on asymptotically conical Calabi-Yau manifolds, see \cite{CH13} for a proof. 

\begin{Proposition}[$\d\db$-lemma, \cite{CH13}, Corollary A.3]\label{ddb-lemma}
	Suppose $X$ is an asymptotically conical K\"ahler manifold with trivial canonical bundle, then 
	\begin{enumerate}
		\item If $\alpha$ is an exact real $(1, 1)$-form on X, then $\alpha = i\d\db u$ for some smooth function $u$.
		\item If $\dim_{\C}X>2$, then if $\alpha$ is an exact real $(1, 1)$-form on $X\setminus K$ for some compact subset $K$, then there exist a compact set $K'$ containing $K$ such that $\alpha = i\d\db u$ on $X\setminus K'$. 
	\end{enumerate}
\end{Proposition}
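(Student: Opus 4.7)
The strategy combines the standard Hodge-theoretic proof of the $\partial\bar\partial$-lemma with weighted elliptic theory for the $\bar\partial$-Laplacian on asymptotically conical K\"ahler manifolds. The key inputs I will need are a Hodge decomposition for $\bar\partial$-closed $(0,1)$-forms in a weighted H\"older space on $X$, and, for the localized part (2), a Hartogs-type vanishing statement for the Dolbeault $H^{0,1}$ of the end.

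For part (1), the plan is to start by writing $\alpha = d\beta$ for some smooth real $1$-form $\beta$ and decomposing $\beta = \beta^{1,0} + \overline{\beta^{1,0}}$. The fact that $\alpha$ is of type $(1,1)$ forces $\bar\partial \beta^{0,1} = 0$ and $\alpha = \partial \beta^{0,1} + \bar\partial\, \overline{\beta^{0,1}}$. I would then seek a complex-valued smooth function $f$ with $\bar\partial f = \beta^{0,1}$; setting $u = -2\,\mathrm{Im}(f)$ and using that $\alpha$ is real would then give $\alpha = i\partial\bar\partial u$. The function $f$ is to be produced by solving $\Delta_{\bar\partial} f = \bar\partial^{*}\beta^{0,1}$ in a weighted H\"older space $C^{k,\alpha}_\delta(X)$ on $X$, with weight $\delta$ chosen to be non-indicial and to match the expected growth of $\beta^{0,1}$ at infinity. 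The main obstacle here is that the exactness of $\alpha$ \emph{a priori} gives no control on the growth of $\beta$, so a preliminary step is to replace $\beta$ by a representative with prescribed polynomial asymptotics built from radial data on the cone $C$; this places $\beta^{0,1}$ in a weighted space on which $\Delta_{\bar\partial}$ is Fredholm, and the Calabi-Yau structure together with the triviality of $K_X$ is used to check that the relevant weighted space of $\bar\partial$-harmonic $(0,1)$-forms is trivial, so that $f$ exists.

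For part (2), since $\alpha$ is only defined on $X\setminus K$, the plan is to reduce to the analogous problem on the cone end $C \setminus B_R(o)$ for $R$ large, via the asymptotic diffeomorphism $\Phi$. After the same type decomposition, the obstruction to solving $\bar\partial f = \beta^{0,1}$ on a slightly shrunken end is measured by a Dolbeault cohomology group $H^{0,1}$ of an annular neighborhood of infinity, and it is precisely here that the hypothesis $\dim_{\mathbb{C}} X > 2$ enters, via Andreotti--Grauert / Hartogs-type vanishing on the strongly pseudoconvex end, to kill this obstruction in the weighted setting. Once the local $\bar\partial$-equation is solved with decay controlled by the asymptotics of $\beta^{0,1}$, the same reality argument as in part (1) produces the real potential $u$ on $X\setminus K'$ for some enlarged compact set $K'$. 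The dimension restriction is genuine: in complex dimension two the $\bar\partial$-cohomology of such an end can be non-trivial, and the extension step fails.
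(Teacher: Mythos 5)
First, note that the paper does not prove this proposition at all: it is quoted verbatim from Conlon--Hein \cite{CH13}, Corollary A.3, so the comparison is with the argument there. Your opening reduction is the standard and correct one (write $\alpha=d\beta$ with $\beta$ real, use the type decomposition to get $\bar\partial\beta^{0,1}=0$ and $\alpha=\partial\beta^{0,1}+\bar\partial\,\overline{\beta^{0,1}}$, solve $\bar\partial f=\beta^{0,1}$, take $u=\pm 2\,\mathrm{Im}(f)$), and for part (2) you have correctly located the role of $\dim_{\C}X>2$ in a Hartogs/Andreotti--Grauert type vanishing for the end.

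However, the mechanism you propose for solving $\bar\partial f=\beta^{0,1}$ has a genuine gap. The statement imposes \emph{no} growth or decay condition on $\alpha$ (the decaying version is the separate quantitative $\partial\bar\partial$-lemma, Proposition~\ref{prop: quan-ddbar-lemma}), so $\beta^{0,1}$ need not lie in any weighted H\"older space and the Fredholm theory for $\Delta_{\bar\partial}$ on $C^{k,\alpha}_{\delta}$ simply does not apply. Your ``preliminary step'' of replacing $\beta$ by a primitive with prescribed polynomial asymptotics is not justified and is false in general: for $\alpha=i\partial\bar\partial e^{r^2}$, which is exact, no primitive has polynomial growth, yet the conclusion of part (1) must still hold. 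The actual proof in \cite{CH13} is soft and sheaf-theoretic rather than weighted-elliptic: an AC K\"ahler manifold is $1$-convex, so it admits a Remmert reduction $\pi:X\to X_0$ onto a Stein space with finitely many isolated singularities; triviality of $K_X$ forces these singularities to be canonical, hence rational, so $R^q\pi_*\mathcal{O}_X=0$ for $q\geq 1$ and therefore $H^{0,1}_{\bar\partial}(X)\cong H^1(X,\mathcal{O}_X)=0$ with no growth restriction whatsoever. This is also where the hypothesis on $K_X$ genuinely enters --- not through a kernel computation for a weighted Laplacian, as you suggest. Part (2) then follows from the corresponding vanishing of $H^1(\,\cdot\,,\mathcal{O})$ on the complement of a large compact set in the Stein space $X_0$, which is a depth/local-cohomology (Hartogs extension) argument valid precisely when $\dim_{\C}X\geq 3$; your instinct there is right, but it too should be run cohomologically rather than in weighted spaces.
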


\begin{Proposition}[Quantitative $\d\db$-lemma, \cite{CH13}, Theorem 3.11]\label{prop: quan-ddbar-lemma}
	Suppose $X$ is an asymptotically conical K\"ahler manifold with $\Ric \geq 0$, then there exist $\eps_0>0$, such that for any $\eta$ an exact $(1, 1)$-form with $\eta\in C^{\infty}_{-\eps}(X)$ for $0<\eps<\eps_0$, then $\eta = i\d\db u$ for $u\in C^{\infty}_{2-\eps}$. 
\end{Proposition}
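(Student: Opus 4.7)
The plan is to convert the form-level equation $i\partial\bar\partial u = \eta$ into the scalar Poisson equation $\Delta_\omega u = 2\,\mathrm{tr}_\omega \eta$, solve this in a weighted H\"older space using Lockhart-McOwen Fredholm theory for asymptotically conical ends, and then upgrade the resulting scalar solution to a $\partial\bar\partial$-potential for $\eta$ using the global $\partial\bar\partial$-lemma together with rigidity from $\mathrm{Ric}\geq 0$.

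For the weighted Fredholm step, set $f := 2\,\mathrm{tr}_\omega \eta$; the assumption $\eta \in C^\infty_{-\epsilon}$ implies $f \in C^\infty_{-\epsilon}(X)$. The Laplacian $\Delta_\omega$ is modelled at infinity by the conical Laplacian
\[
\Delta_{g_C} \;=\; \partial_r^2 + (2n-1) r^{-1}\partial_r + r^{-2}\Delta_L,
\]
whose indicial roots at the link eigenvalue $\mu_j$ are $\gamma_j^\pm = -(n-1) \pm \sqrt{(n-1)^2 + \mu_j}$. By standard Lockhart-McOwen theory, refined in the AC K\"ahler setting by Hein and by Conlon-Hein, the map $\Delta_\omega : C^{k+2,\alpha}_{2-\epsilon}(X) \to C^{k,\alpha}_{-\epsilon}(X)$ is Fredholm of index $0$ whenever $2-\epsilon$ avoids the discrete exceptional set $\{\gamma_j^+\}$. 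Choose $\epsilon_0$ small enough that $(0,\epsilon_0)$ is contained in a non-exceptional interval. The hypothesis $\mathrm{Ric}\geq 0$ enters to trivialize the kernel: by Yau's Liouville theorem, combined with the indicial asymptotic expansion on the end, any harmonic function of growth $O(r^{2-\epsilon})$ is constant. Modding out constants, index zero yields surjectivity, and we obtain $u \in C^\infty_{2-\epsilon}(X)$ with $\Delta_\omega u = f$.

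For the upgrade step, apply Proposition~\ref{ddb-lemma}(1) to write $\eta = i\partial\bar\partial v$ for some globally smooth (but a priori unbounded) function $v$ on $X$. Since $\Delta v = f = \Delta u$, the difference $h := v - u$ is harmonic on $X$, and $i\partial\bar\partial h = \eta - i\partial\bar\partial u \in C^\infty_{-\epsilon}$, so in particular $|\nabla^2 h|_{g_C} = O(r^{-\epsilon})$ on the conical end. Expanding $h$ against the eigenfunctions of $\Delta_L$ on the end, only modes with indicial exponent $\gamma_j^+ \leq 2-\epsilon$ can contribute to the leading asymptotics; by the choice of $\epsilon_0$, these modes correspond to pluriharmonic directions on the K\"ahler cone (for instance in the Sasaki-Einstein case to the modes with $\mu_1 = 2n-1$, $\gamma_1^+ = 1$, coming from the real parts of the linear holomorphic generators of the cone). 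Such asymptotic pluriharmonic modes extend to globally pluriharmonic functions on $X$ by Proposition~\ref{ddb-lemma}, and after absorbing them into $u$, the residual harmonic function lies in $C^\infty_{2-\epsilon}$ and is therefore constant by the kernel analysis above. This gives $i\partial\bar\partial h = 0$ and hence $\eta = i\partial\bar\partial u$.

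The principal obstacle is not the weighted Fredholm step, which is by now essentially standard, but the rigidity in the upgrade step: one must identify precisely which harmonic modes at infinity are pluriharmonic, and verify that under $\mathrm{Ric}\geq 0$ no non-pluriharmonic modes of growth $\leq 2-\epsilon$ can exist. This requires an explicit choice of $\epsilon_0$ in terms of link spectral data and a careful bootstrap between the asymptotic mode expansion and the global $\partial\bar\partial$-lemma.
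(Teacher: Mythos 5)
The paper does not actually prove this Proposition: it is quoted verbatim from Conlon--Hein \cite{CH13}, Theorem 3.11, so the only ``proof'' in the paper is the citation. Your overall strategy --- solve the weighted Poisson equation $\Delta_\omega u=\mathrm{tr}_\omega\eta$ and then kill the residual harmonic piece using $\mathrm{Ric}\geq 0$ --- is the standard route and matches the shape of the argument in \cite{CH13}. However, two of your steps are wrong or unjustified as written.

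First, the Liouville claim is false: on an AC manifold with $\mathrm{Ric}\geq 0$, a harmonic function of growth $O(r^{2-\epsilon})$ need \emph{not} be constant (take $\mathrm{Re}\,z_1$ on flat $\mathbb{C}^n$, which is asymptotically conical and Ricci-flat). The correct statement --- and the place where $\mathrm{Ric}\geq 0$ actually enters --- is that such a function is \emph{pluriharmonic}; this is \cite{CH13}, Corollary 3.9, quoted in the paper as Lemma~\ref{lem: subquadratic harmonic functions are pluriharmonic}, and pluriharmonicity is all you need since it still gives $i\partial\bar\partial h=0$. Relatedly, $\Delta:C^{k+2,\alpha}_{2-\epsilon}\to C^{k,\alpha}_{-\epsilon}$ is not Fredholm of index $0$ with trivial kernel: at any positive non-exceptional weight the kernel contains at least the constants, and the index jumps as the weight crosses each exceptional value. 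What you actually need is only surjectivity for the weight $2-\epsilon\in(2-2n,\infty)\setminus P$, which is precisely Theorem~\ref{Poisson equation}(2); no kernel analysis is required to produce $u$.

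Second, the upgrade step has a genuine gap. After writing $\eta=i\partial\bar\partial v$ and setting $h=v-u$, you must first reduce $h$ to a \emph{global} harmonic function of growth $O(r^{2-\epsilon})$ before any pluriharmonicity statement applies. Your mechanism is to assert that the supercritical asymptotic modes of $h$ are pluriharmonic on the cone and ``extend to globally pluriharmonic functions on $X$ by Proposition~\ref{ddb-lemma}.'' That proposition says nothing of the sort --- it produces $\partial\bar\partial$-potentials of exact forms; it does not extend pluriharmonic functions from the end of the cone into the compact core --- and such an extension is not automatic. Nor is the mode-by-mode analysis innocuous on an end where the metric and complex structure are only \emph{asymptotically} conical: a cone-pluriharmonic mode of rate $\gamma$ contributes a term of size $O(r^{\gamma-\nu-2})$ to $i\partial\bar\partial h$ computed with the true complex structure $J$, which you have not controlled. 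You flag this yourself as ``the principal obstacle,'' and it is: as written the argument is a program rather than a proof. Since the paper separately quotes both ingredients (Theorem~\ref{Poisson equation} and Lemma~\ref{lem: subquadratic harmonic functions are pluriharmonic}), the right course is either to cite \cite{CH13}, Theorem 3.11 as the paper does, or to reproduce the Conlon--Hein argument in full. (A minor further mismatch: you invoke Proposition~\ref{ddb-lemma}(1), which as stated requires trivial canonical bundle, whereas the Proposition under discussion assumes only $\mathrm{Ric}\geq 0$.)
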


\subsection{K\"ahler classes on AC K\"ahler manifolds}
We recall the definition of a $\nu$-almost compactly supported class, this is defined in \cite{CH13}, but our definition is slightly different. 

\begin{Definition}\label{decay class}
	Let $X$ be an asymptotically conical K\"ahler manifold, then for any class $[\alpha] \in H^2(X, \R)$, we say that
	\begin{enumerate}
		\item $[\alpha]$ is a K\"ahler class if it contains a positive real $(1, 1)$-form $\alpha>0$
		\item $[\alpha]$ is a $\nu$-almost compactly supported class if it contains a real $(1, 1)$-form $\xi$ satisfying $|\nabla^k \xi| = O(r^{-\nu-k})$
	\end{enumerate}
	and we will denote the set of all $\nu$-almost compactly supported classes by $H^{1, 1}_{\nu}(X)$. 
\end{Definition}

\begin{Remark}
Definition~\ref{decay class} is slightly more restrictive than the definition given in \cite{CH13} where it is only required that the form $\xi$ be defined away from a compact set. But by the second part of Lemma~\ref{ddb-lemma}, the condition in \cite{CH13} implies our condition in the case when $X$ has trivial canonical bundle and $\dim_{\C}X>2$.
\end{Remark}

In \cite{CH13}, it is shown that if $[\alpha]$ is a $\nu$-almost compactly supported and K\"ahler, then one can always construct an asymptotically conical K\"ahler form $\omega\in [\alpha]$ with $|\nabla^k(\omega-\omega_C)| = O(r^{-\nu-k})$. We will recall this construction below in Section~\ref{sec: background}. 

\subsection{Weighted H\"older spaces and solvability of Poisson's equation}
Let us recall some useful Holder spaces defined on asymptotically conical manifolds and some basic theorems regarding the solvability of Poisson equations, which will be useful for us later on. For a detailed treatment of these material, see \cite{LMc, Mar}. 

\begin{Definition}\label{Holder spaces} Let $X$ be a AC K\"ahler manifold as above. 
	\begin{enumerate}
		\item We define the $C^{k, \alpha}_{-\gamma}(X)$ norm of a function as follows
	\begin{equation*}
		\|u\|_{C^{k, \alpha}_{-\gamma}} = \sum_{j=0}^k\sup_X |r^{\gamma+j}\nabla^ju|+[\nabla^k u]_{C^{\alpha}_{-\gamma-k-\alpha}}
	\end{equation*}
	where $r$ is a radius function and
	\begin{equation*}
		[\nabla^k u]_{C^{\alpha}_{-\gamma-k-\alpha}}  = \sup_{x\neq y, d(x, y)\leq \delta}\left[\min(r(x), r(y))^{\gamma+k+\alpha}\frac{|\nabla^k u(x)-\nabla^k u(y)|}{|d(x, y)|^{\alpha}}\right]
	\end{equation*}
	where $\delta>0$ is the convexity radius of $X$, and $|\nabla^ku(x)-\nabla^ku(y)|$ is defined by parallel transporting $\nabla^k u(x)$ along the minimal geodesic from $x$ to $y$. 
	\item We define $C^{\infty}_{-\gamma}(X)$ to be the intersection of $C^{k, \alpha}_{-\gamma}(X)$ over all $k\geq 0$. 
	\item We will also often use the following space $C^{\infty}_{-\gamma}(X\setminus V)$, which we define to be the space of functions $u\in C^{\infty}_{loc}(X\setminus V)$ such that $(1-\chi) u \in C^{\infty}_{-\gamma}(X)$, where $\chi$ is a cutoff function with compact support that is equal to $1$ in a neighborhood of $V$. Where $V$ is the compact analytic subset coming from Assumption~\ref{ass: mainAss}. 
	\end{enumerate}
\end{Definition}
The space $C^{k, \alpha}_{-\gamma}(X)$ consisting of all functions with finite $C^{k, \alpha}_{-\gamma}(X)$ norm is then a Banach space, and it follows that the Laplace operator $\lapl: C^{k+2, \alpha}_{-\gamma+2}(X)\to C^{k, \alpha}_{-\gamma}(X)$ is a bounded map between the two Banach spaces. There is a well-developed Fredholm theory for the Laplace operator on these Banach spaces on an asymptotically conical manifold (see, e.g. \cite{Mar}), which we summarize below. 

\begin{Definition}
	Let $(C, g_C)$ be a Riemannian cone of real dimension $n$ over a smooth compact manifold $L^{n-1}$, then we denote the set of {\em exceptional weights} of the cone $C$, 
	\begin{equation*}
		P = \left\{-\frac{n-2}{2}\pm \sqrt{\frac{(n-2)^2}{4}+\lambda}: \lambda \text{ is an eigenvalue of }\lapl_{L^{n-1}}\right\}. 
	\end{equation*}
	These correspond to the growth rates of homogenous harmonic functions on the cone $(C, g_C)$. 
\end{Definition}
The following theorem summarizes Fredholm theory on an asymptotically conical manifold
\begin{Theorem}[\cite{Mar}, Theorem 6.10]\label{Poisson equation}
	Suppose $(X, g)$ is an asymptotically conical K\"ahler manifold of dimension $2n$. Consider the mapping 
	\begin{equation}\label{laplacian}
	\lapl: C^{k+2, \alpha}_{-\gamma}(X)\to C^{k, \alpha}_{-\gamma-2}(X)
	\end{equation} 
	and let $P$ be the set of exceptional weights of the asymptotic cone $(C, g_C)$.  Then:
	\begin{enumerate}
		\item The operator (\ref{laplacian}) Fredholm if $-\gamma\notin P$. 
		\item The operator (\ref{laplacian}) is surjective if $-\gamma \in (2-2n, \infty)\setminus P$
		\item The operator (\ref{laplacian}) is injective if $-\gamma \in (-\infty, 0)\setminus P$
	\end{enumerate}
\end{Theorem}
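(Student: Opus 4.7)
The plan is to reduce everything to analysis of the model Laplacian on the cone $(C,g_C)$, for which an explicit Fourier decomposition along the link $L$ trivializes the problem, and then transfer the conclusions to $X$ via a standard patching argument. On the cone we write $u(r,\theta) = \sum_\mu u_\mu(r)\phi_\mu(\theta)$ with $\phi_\mu$ eigenfunctions of $\Delta_L$. The equation $\Delta u = f$ becomes a family of ODEs of Euler type, with indicial roots exactly the exceptional weights in $P$. For each $\mu$ one constructs an explicit Green's function on $(0,\infty)$; summing (and using elementary bounds for the Bessel/hypergeometric decay of the kernels away from $P$) yields a model solution operator $G_C$ which is continuous $C^{k,\alpha}_{-\gamma-2}\to C^{k+2,\alpha}_{-\gamma}$ whenever $-\gamma\notin P$.

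Next, I would deduce a weighted Schauder estimate on $X$ of the form
\[
\|u\|_{C^{k+2,\alpha}_{-\gamma}(X)} \leq C \|\Delta u\|_{C^{k,\alpha}_{-\gamma-2}(X)} + C \|u\|_{C^0(K)},
\]
for some compact $K\subset X$. The proof uses a scaling argument on dyadic annuli $\{R\le r\le 2R\}$ on the cone end: pull back by $r\mapsto Rr$ to a fixed annulus, apply the standard interior Schauder estimate for $\Delta_{g_C}$ there, and reabsorb the weight $r^{\gamma}$ under rescaling. The asymptotic condition $|g-g_C|_{g_C}=O(r^{-\nu})$ produces only a small perturbation of the coefficients on each annulus which is dominated by the uniform Schauder estimate. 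Combined with classical Schauder theory in the interior and the compact embedding $C^{k+2,\alpha}_{-\gamma}(K)\hookrightarrow C^{k,\alpha}_{-\gamma}(K)$, this estimate gives that $\Delta$ has finite-dimensional kernel and closed range, and a parallel argument for the formal adjoint yields the Fredholm conclusion~(1).

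For injectivity~(3), any $u\in C^{k+2,\alpha}_{-\gamma}$ with $-\gamma<0$ tends to zero at infinity; if additionally $\Delta u=0$, integration by parts against $\chi_R u$ with a cutoff $\chi_R$ supported in $\{r\le 2R\}$ gives $\int_X |\nabla u|^2 \le C/R^\eta$ for some $\eta>0$ coming from the decay rate, so $u$ is constant and hence zero. For surjectivity~(2), one invokes duality: the $L^2$-cokernel of $\Delta$ at weight $-\gamma$ is identified with the kernel at the conjugate weight $-\gamma' = \gamma-(2n-2)$, and the condition $-\gamma>2-2n$ is exactly what forces $-\gamma'<0$, placing the dual problem in the injectivity range.

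The principal obstacle is the correct identification and handling of the exceptional weights: one must verify that the model ODE solutions really pick up resonances exactly at the elements of $P$, and that the constant-coefficient parametrix on the end glues to interior Schauder theory with a compact remainder whenever $-\gamma\notin P$. This is a delicate book-keeping exercise, sensitive both to the dimension convention and to the fact that perturbation errors of order $r^{-\nu}$ from $g-g_C$ must genuinely land in a strictly better-decaying space than the leading term. This is the step where one uses the positive decay rate $\nu>0$ in Definition~\ref{AC Kahler} in an essential way.
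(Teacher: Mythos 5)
The paper offers no proof of this statement: it is quoted verbatim from Marshall's thesis \cite{Mar} (see also Lockhart--McOwen \cite{LMc}), so the only comparison available is with the standard proof in those references. Your outline follows exactly that standard route --- separation of variables on the exact cone, indicial roots equal to the exceptional weights, a parametrix on the end glued to interior Schauder theory, injectivity at decaying weights, and surjectivity by duality at the conjugate weight $2-2n+\gamma$ (your bookkeeping here is correct, and the symmetry $\lambda\mapsto 2-2n-\lambda$ of $P$ makes the non-exceptionality of the dual weight automatic). A minor slip: for the exact cone Laplacian the radial equations are pure Euler equations with power-law solutions; Bessel functions do not enter.

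There is, however, a genuine gap in your injectivity argument. You claim that for $\Delta u=0$ with $u\in C^{k+2,\alpha}_{-\gamma}$, $\gamma>0$, cutting off against $\chi_R u$ gives $\int_X|\nabla u|^2\leq C R^{-\eta}$. The error term is $\int_{\{R\leq r\leq 2R\}} |u|\,|\nabla\chi_R|\,|\nabla u|\lesssim R^{-\gamma}\cdot R^{-1}\cdot R^{-\gamma-1}\cdot R^{2n}=R^{2n-2-2\gamma}$, which tends to zero only when $\gamma>n-1$. For $0<\gamma\leq n-1$ (e.g.\ $\gamma=1$ in complex dimension $3$) the term blows up and the argument as written does not close, so it does not cover the full claimed range $-\gamma\in(-\infty,0)$. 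Two standard fixes: (i) use the maximum principle directly --- a harmonic function tending to $0$ at infinity satisfies $\sup_{K_R}|u|\leq\sup_{\partial K_R}|u|\to 0$, hence vanishes; or (ii) first upgrade the decay, using that $P\cap(2-2n,0)=\emptyset$, so any harmonic function decaying at some rate in $(0,2n-2)$ in fact decays at rate $2n-2$, after which your integration by parts is valid. With either repair the rest of your outline is sound.
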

\begin{Remark}
	We note that $P\cap (2-2n, 0) = \emptyset$, hence (\ref{laplacian}) is an isomorphism for all $-\gamma\in (2-2n, 0)$. 
\end{Remark}

Now we state a general theorem regarding the solvability of the complex Monge-Amp\`ere equation on an asymptotically conical K\"ahler manifold, which is proved in \cite{CH13}. 

\begin{Theorem}[\cite{CH13}, Theorem 2.4]\label{CH-existence}
    Let $(X, J, \omega)$ be a open K\"ahler manifold asymptotic to a K\"ahler cone $(C, J_C, \omega_C)$ with rate $\nu>0$, and suppose $f\in C^{\infty}_{-\gamma-2}(X)$, then following Complex Monge-Ampere equation then admits a solution
    \begin{equation*}
        (\omega+i\d\db\varphi)^n = e^f\omega^n
    \end{equation*}
    with $\omega_{\varphi} = \omega+i\d\db\varphi >0$ and 
\begin{enumerate}
    \item If $\gamma+2 > 2n$, then we can take $\varphi\in C^{\infty}_{2-2n}$ and $\varphi$ is the unique solution in $C^{\infty}_{2-2n}$. 
    \item If $\gamma+2 \in (2, 2n)$ then we can take $\varphi\in C^{\infty}_{-\gamma}$ and $\varphi$ is the unique solution in $C^{\infty}_{-\gamma}$. 
    \item If $\gamma+2 \in (0, 2)$ and $-\gamma$ is not an exceptional weight, we can take $\varphi\in C^{\infty}_{-\gamma}$. 
\end{enumerate}
\end{Theorem}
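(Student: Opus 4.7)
The plan is to run a continuity method in weighted H\"older spaces, solving
\begin{equation*}
(\omega + i\d\db \varphi_s)^n = e^{sf}\omega^n, \qquad s \in [0,1],
\end{equation*}
with $\varphi_0 = 0$, and showing that the set $T \subset [0,1]$ of parameters $s$ for which a smooth solution $\varphi_s$ exists in the weighted space indicated by the case is both open and closed. At $s = 1$ this recovers the desired equation.

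For openness at $s_0 \in T$, the linearization of $\varphi \mapsto \log\frac{(\omega+i\d\db\varphi)^n}{\omega^n}$ at $\varphi_{s_0}$ is $\tfrac{1}{2}\Delta_{\omega_{s_0}}$, where $\omega_{s_0} = \omega + i\d\db\varphi_{s_0}$ is itself asymptotic to $\omega_C$ at rate $\nu$ (since $\varphi_{s_0}$ decays), so Theorem~\ref{Poisson equation} applies with the same exceptional weights. In case (2) the weight $-\gamma \in (2-2n,0)$ is non-exceptional and the Laplacian is an isomorphism between the relevant weighted H\"older pair, so the implicit function theorem produces a nearby solution. In case (3), surjectivity holds with kernel spanned by constants, and the constant ambiguity is removed by a normalization such as fixing $\varphi$ at a base point. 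For closedness one needs uniform a priori estimates. A uniform $L^{\infty}$ bound follows from Yau's $C^{0}$ estimate via Moser iteration, using that $e^{sf}\omega^n$ and $\omega^n$ share the same asymptotic volume form so that the difference is integrable. This is promoted to a weighted $C^{0}$ bound by a barrier argument against $\pm A r^{-\gamma}$, exploiting that $\Delta r^{-\gamma}$ has a definite sign at infinity for non-exceptional weights. Yau's classical second-order estimate applied to $\log \mathrm{tr}_{\omega}\omega_{\varphi_s} - A \varphi_s$ gives the $C^{2}$ bound, while Evans--Krylov together with Schauder estimates deliver higher orders; weighted decay of derivatives then follows by interpolation with the weighted $C^{0}$ bound.

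The main obstacle is case (1), where one aims for the sharp Green's-function decay $\varphi \in C^{\infty}_{2-2n}$. Because $2-2n$ is itself exceptional (corresponding to the zero eigenvalue on the link of the cone), the Laplacian is not an isomorphism at this weight and a direct continuity argument cannot close. The strategy I would adopt has two stages. First, run case (2) in a weaker space $C^{\infty}_{-\gamma'}$ with $-\gamma' \in (2-2n, 0)$ to produce a solution with slower decay. Second, upgrade the decay by exploiting the nonlinear structure: expanding
\begin{equation*}
\Delta_{\omega}\varphi = f + Q(i\d\db\varphi),
\end{equation*}
where $Q$ is quadratic in $i\d\db\varphi$, the right-hand side decays strictly faster than $r^{-2n}$ (since $f$ does by hypothesis and $Q$ is quadratic in quantities that already decay). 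Inverting $\Delta_{\omega}$ via Theorem~\ref{Poisson equation} in progressively stronger weights, and carefully peeling off the leading homogeneous harmonic contribution at weight $2-2n$ which lives in the cokernel, one reaches $\varphi \in C^{\infty}_{2-2n}$. Uniqueness in cases (1) and (2) follows by applying the maximum principle to the difference of two candidate solutions linearized against an averaged Monge--Amp\`ere operator, combined with the fact that the corresponding weighted space admits no bounded harmonic functions that decay at infinity.
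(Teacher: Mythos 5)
First, a point of order: the paper does not prove this statement. It is quoted verbatim from Conlon--Hein \cite{CH13}, Theorem 2.4, so there is no internal proof to compare against. Judged on its own merits, your continuity-method argument is essentially the standard route for cases (1) and (2) (Joyce's ALE argument transported to the asymptotically conical setting, as in van Coevering, Goto and Conlon--Hein), and your two-stage bootstrap for reaching the Green's-function rate $2-2n$ in case (1) -- solve first at a non-exceptional weight in $(2-2n,0)$, then feed the quadratic remainder back through the inverse Laplacian -- is the right idea.

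The genuine gap is in case (3). There $\gamma+2\in(0,2)$ forces $-\gamma\in(0,2)$, so the target space $C^{\infty}_{-\gamma}$ consists of functions \emph{growing} like $r^{-\gamma}$: the solution is in general asymptotic to an unbounded pluriharmonic function, not a bounded one. Two steps of your closedness argument then fail. The uniform $L^{\infty}$ bound via Moser iteration is simply false in this regime, and the integrations by parts it requires do not close at infinity when $\varphi_s$ grows. More seriously, Yau's second-order estimate applied to $\log\Tr_{\omega}\omega_{\varphi_s}-A\varphi_s$ needs $A\varphi_s$ bounded in order to convert the maximum-principle inequality into a bound on $\Tr_{\omega}\omega_{\varphi_s}$; with $\varphi_s\sim r^{-\gamma}\to\infty$ the test function is unbounded and the argument does not close, nor can a barrier comparison of $\varphi_s$ against $\pm Ar^{-\gamma}$ substitute for it, since the operator is fully nonlinear and you have no a priori control of $\omega_{\varphi_s}$ with which to linearize. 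The fix -- the one used in \cite{CH13} and reproduced in this paper as Proposition~\ref{prop: bkground-metric-improvement} -- is to first solve the \emph{linear} Poisson equation $\lapl_{\omega}\hat{u}=2f$ at the weight $2-\gamma-2=-\gamma$ (possible by Theorem~\ref{Poisson equation} since $-\gamma$ is not exceptional), replace $\omega$ by $\omega+i\d\db(\zeta\hat{u})$ for a cutoff $\zeta$ supported near infinity, and iterate until the new Ricci potential decays faster than quadratically. This reduces case (3) to cases (1)--(2), where the solution is bounded and your estimates apply; the stated membership $\varphi\in C^{\infty}_{-\gamma}$ is then recovered by adding back the accumulated corrections $\hat{u}\in C^{\infty}_{-\gamma}$.
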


\subsection{Proof of Theorem \ref{Main Theorem}}
We breakdown the proof of Theorem~\ref{Main Theorem} in the following two propositions, and we will give the proof of Theorem~\ref{Main Theorem} assuming these results.   We will prove Proposition~\ref{Background metrics} in Section~\ref{sec: background} and Proposition~\ref{estimates} in Section~\ref{sec: estimates}. Theorem~\ref{Theorem2} will be proved in section~\ref{sec: metricGeometry}. 
\begin{Proposition}[Constructing background metrics]\label{Background metrics}
	Suppose $\nu>0$, and let $(X, J, \omega, \Omega)$ be an asymptotic to a Calabi-Yau cone $(C, J_C, \omega_C, \Omega_C)$ with rate $\nu$. Suppose that $-\nu \in (-2n, 0)$ and $-\nu+2$ is not an exceptional weight. Suppose $[\alpha_t] = (1-t)[\alpha_0]+t[\alpha_1]\in H^{1, 1}_{\nu}(X)$ is a linear family of K\"ahler classes in $H^{1,1}_{\nu}$ for $t\in (0, 1]$, and suppose that $[\alpha_0] \in H^{1,1}_{\nu}$ has a semi-positive representative $\alpha_0$.  Then there exists $\epsilon >0$, a compact set $K\subset X$ and a smooth family of real $(1, 1)$-forms $\hat{\omega}_t\in [\alpha_t]$ for $t\in [0, \eps]$ satisfying the following:
	\begin{enumerate}
		\item $\hat{\omega}_t >0$ for all $t\in (0, \epsilon]$.  
		\item $\hat{\omega}_0 \geq 0$ and $\hat{\omega}_0 = \alpha_0$ on a compact set $K\subset \subset X$.  (In fact, we can choose this compact set $K$ to be as large as we like)
		\item On $X\backslash K$ there holds  $|\nabla^k(\hat{\omega}_t-\omega_C)|_{g_C} \leq Cr^{-\nu-k}$ for all $t\in [0, \epsilon]$ for a constant $C$ independent of $t$. 
		\item There exist $\gamma>0$ such that, on $X\backslash K$ the Ricci potentials $f_t = \log \frac{i^{n^2}\Omega\wedge\bar{\Omega}}{\hat{\omega}_t^n}$ satisfy the asymptotics $|\nabla^kf_t|\leq Cr^{-\gamma-2-k}$ uniformly in $t$. 
	\end{enumerate}	
\end{Proposition}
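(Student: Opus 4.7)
My plan is to construct the family as a convex combination $\hat\omega_t := (1-t)\hat\omega_0 + t\,\omega_1$ for $t \in [0,\eps]$, where $\omega_1 \in [\alpha_1]$ is an asymptotically conical K\"ahler form satisfying $|\nabla^k(\omega_1-\omega_C)|_{g_C} = O(r^{-\nu-k})$ (constructed in \cite{CH13} since $[\alpha_1]$ is K\"ahler and $\nu$-almost compactly supported), and $\hat\omega_0$ is a semi-positive representative of $[\alpha_0]$ to be built below. With this definition, $\hat\omega_t \in [\alpha_t]$ is automatic by linearity, strict positivity for $t \in (0,\eps]$ follows from $\hat\omega_0 \ge 0$ and $\omega_1 > 0$, and the required uniform decay bound $|\nabla^k(\hat\omega_t - \omega_C)|_{g_C} \le Cr^{-\nu-k}$ follows as soon as both $\hat\omega_0$ and $\omega_1$ satisfy it.

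To build $\hat\omega_0$, apply Proposition~\ref{ddb-lemma} to write $\alpha_0 = \xi_0 + i\d\db v_0$ globally on $X$, where $\xi_0 \in [\alpha_0]$ is the $\nu$-decaying representative guaranteed by $[\alpha_0] \in H^{1,1}_\nu(X)$. On the cone end, $\omega_C = i\d\db \rho$ with $\rho = r_C^2/2$. I then set $\hat\omega_0 := \xi_0 + i\d\db W$, where $W$ is a smooth $\delta$-regularized maximum (in the sense of Demailly) of $v_0 + C$ and $\rho$, with $C,\delta$ chosen so that $v_0 + C$ strictly dominates $\rho$ by more than $\delta$ on a prescribed compact set $K$, while $\rho$ strictly dominates $v_0 + C$ far out on the cone end. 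Then $W = v_0 + C$ on $K$, giving $\hat\omega_0 = \alpha_0$, and $W = \rho$ on the end, giving $\hat\omega_0 = \xi_0 + \omega_C = \omega_C + O(r^{-\nu})$. Semi-positivity of $\hat\omega_0$ then follows from the key pointwise property of the regularized max, $i\d\db W \ge \lambda\cdot(\alpha_0 - \xi_0) + (1-\lambda)\,\omega_C$ for some measurable $\lambda \in [0,1]$, so $\hat\omega_0 \ge \lambda \alpha_0 + (1-\lambda)(\omega_C + \xi_0) \ge 0$ once $r$ is large enough that $\omega_C + \xi_0 \ge \frac{1}{2}\omega_C$.

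The main obstacle will be arranging that $\rho - v_0 \to +\infty$ along the cone end so the regularized max behaves as described; this can fail if the semi-positive representative $\alpha_0$ has poor asymptotic behavior. I would exploit the freedom in choosing $v_0$ modulo pluriharmonic functions, or modify $\alpha_0$ within its class using the potential $\psi$ from Assumption~\ref{ass: mainAss} (which gives $\alpha_0 + i\d\db \psi \ge \eps_0\, \omega$), to arrange the correct relative growth before carrying out the construction. A logarithmic-scale cutoff in the transition can also be used to keep the derivative-of-cutoff error terms small relative to $\omega_C$.

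Finally, for the Ricci potential decay: the Calabi-Yau cone identity $\omega_C^n = i^{n^2}\Omega_C \wedge \overline{\Omega}_C$ together with the asymptotics of $\hat\omega_t$ and $\Omega$ gives initially $f_t = O(r^{-\nu})$. If $\nu > 2$ we are done with $\gamma = \nu - 2$; otherwise I would iteratively solve the Poisson equation $\lapl u = -f_t$ in the weighted H\"older spaces of Definition~\ref{Holder spaces} via Theorem~\ref{Poisson equation}, whose hypotheses apply since $-\nu \in (-2n,0)$ and $-\nu+2$ is not exceptional by assumption, and replace $\hat\omega_t$ by $\hat\omega_t + i\d\db u$. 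Each iteration improves the decay rate of the Ricci potential via the quadratic Monge-Amp\`ere error, and after finitely many steps we achieve $|\nabla^k f_t| \le Cr^{-\gamma-2-k}$ with $\gamma > 0$, uniformly in $t$ by linearity of the corrections in the data.
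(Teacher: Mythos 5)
Your overall architecture --- linear interpolation $\hat{\omega}_t=(1-t)\hat{\omega}_0+t\,\omega_1$ between a semi-positive asymptotically conical representative of $[\alpha_0]$ and a K\"ahler one of $[\alpha_1]$, followed by iterative improvement of the Ricci potentials by solving Poisson equations in weighted H\"older spaces --- is the same as the paper's. The genuine gap is in your construction of $\hat{\omega}_0$. The regularized maximum of $v_0+C$ and $\rho=r_C^2/2$ only produces the conical form $\xi_0+i\d\db\rho$ near infinity if $\rho-v_0\to+\infty$ there, and nothing in the hypotheses gives this: the semi-positive representative $\alpha_0$ is completely unconstrained at infinity, so $i\d\db v_0=\alpha_0-\xi_0$ is uncontrolled and $v_0$ may grow faster than $r^2$ (add $i\d\db h$ to a nice representative for any rapidly growing strictly plurisubharmonic $h$; the result is still semi-positive and in the class). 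In that case the regularized maximum equals $v_0+C$ outside every compact set and condition (3) fails. Your proposed repairs do not close this: adding a pluriharmonic function to $v_0$ does not change $i\d\db v_0$, hence cannot change the growth forced on $v_0$ by $\alpha_0$; and Assumption~\ref{ass: mainAss} is not among the hypotheses of this proposition (its $\psi$ is moreover singular along $V$, so using it would destroy smoothness and semi-positivity). The construction of \cite[Theorem 2.4]{CH13}, which the paper invokes for this step, avoids the issue entirely by cutting off the potential rather than taking a maximum: one sets $\hat{\omega}_0=\alpha_0+i\d\db(\zeta_R v_0)+i\d\db\Phi(r^2)+Ci\d\db\big((1-\zeta_S)\Phi(r^{2\kappa})\big)$ with $\kappa<1$; the negativity created by $i\d\db(\zeta_R v_0)$ lives only on the fixed annulus $\{R<r<2R\}$ and is absorbed there by a large multiple of the auxiliary plurisubharmonic function $\Phi(r^{2\kappa})$, which in turn is cut off at a scale $S\gg R$ where its own cutoff error is of lower order than $\omega_C$. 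No relative growth control is needed.

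A second, smaller gap is in the improvement step. You cannot simply replace $\hat{\omega}_t$ by $\hat{\omega}_t+i\d\db u$ with $u$ a global solution of the Poisson equation: since $\hat{\omega}_0$ is only semi-positive, adding any $i\d\db u$ that is somewhere negative along a degenerate direction destroys $\hat{\omega}_0\geq 0$; it also destroys $\hat{\omega}_0=\alpha_0$ on $K$ and the positivity of $\hat{\omega}_t$ for small $t>0$ on the compact part, and the Laplacian of the degenerate form $\hat{\omega}_0$ is not elliptic where it degenerates. The paper's Proposition~\ref{prop: bkground-metric-improvement} handles all of this by solving $\lapl_{\bar{g}_t}\hat{u}_t=2\zeta_R f_t$ for an auxiliary nondegenerate metric $\bar{g}_t=(1-\zeta_R)\hat{g}+g_t$ agreeing with $\hat{\omega}_t$ near infinity, and then using only the truncated correction $u_t=\zeta_S\hat{u}_t$, whose support is pushed far enough out that $|i\d\db u_t|\leq Cr^{-\beta}$ is dominated by $\omega_C$ there; uniformity in $t$ comes from the implicit function theorem rather than from ``linearity of the corrections.'' These truncations are not optional refinements but are what make properties (1), (2) and (3) survive the iteration.
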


\begin{Proposition}[A priori estimates]\label{estimates}
	Let $(X, J, \omega, \Omega)$ be asymptotic to a Calabi-Yau cone $(C, J_C, \omega_C, \Omega_C)$ with rate $\nu>0$, and $ H^{1, 1}_{\nu}(X)\ni [\alpha_t] = (1-t)[\alpha_0]+t[\alpha_1]$ is a linear family of K\"ahler classes for $t\in (0, 1]$ satisfying Assumption \ref{ass: mainAss}, and let $\hat{\omega}_t\in [\alpha_t]$ be the forms constructed in Proposition~\ref{Background metrics}. Let $\varphi_t$ be the solution of the complex Monge-Amp\`ere equations
	\begin{equation}\label{familycxma}
		(\hat{\omega}_t+i\d\db\varphi_t)^n = e^{f_t}\hat{\omega}_t^n (= i^{n^2}\Omega\wedge\bar{\Omega})
	\end{equation}
	obtained from Theorem \ref{CH-existence}. Then the following estimates hold uniformly in $t$
	\begin{enumerate}
		\item $|\varphi_t|\leq C$. 
		\item $\varphi_t$ is uniformly bounded in ${C^{\infty}_{loc}(X\setminus V)}$. 
		\item There exist a compact subset $K\subset X$ containing $V$ such that the following estimate hold outside of $K$
		\begin{equation*}
			|\nabla^k\varphi_t|\leq Cr^{-\gamma-k}
		\end{equation*}
		for $C$ independent of $t$. 
	\end{enumerate}
\end{Proposition}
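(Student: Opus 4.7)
My plan is to prove the three estimates in order, with the uniform $C^0$ bound being the heart of the argument and exploiting Assumption~\ref{ass: mainAss} essentially.

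\textbf{Stage 1 ($C^0$ bound).} Writing the background forms from Proposition~\ref{Background metrics} as $\hat\omega_t = \beta_t + i\partial\bar\partial\rho_t$ where $\beta_t := (1-t)\alpha_0 + t\alpha_1$ and $\{\rho_t\}$ is a uniformly bounded smooth family, and setting $\hat\varphi_t := \varphi_t + \rho_t$, equation~\eqref{familycxma} becomes
\begin{equation*}
(\beta_t + i\partial\bar\partial\hat\varphi_t)^n = i^{n^2}\Omega\wedge\bar\Omega.
\end{equation*}
Assumption~\ref{ass: mainAss} supplies $\psi\in PSH(X,\alpha_0)$ with $\alpha_0 + i\partial\bar\partial\psi \geq \epsilon_0\omega$ and smooth away from $V$, which makes $[\beta_t]$ uniformly ``big'' as $t\to 0$: indeed $\beta_t + (1-t)i\partial\bar\partial\psi \geq \min(\epsilon_0,1)\,\omega$ on $X\setminus V$ uniformly in $t$. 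The uniform bound $\|\hat\varphi_t\|_\infty\leq C$ then follows from a non-compact analog of the Ko\l{}odziej--Eyssidieux--Guedj--Zeriahi $L^\infty$-estimate for degenerate Monge-Amp\`ere equations in big classes; the non-compactness is handled using the decay of $\varphi_t$ at infinity (provided by Theorem~\ref{CH-existence}) to reduce the mass-capacity inequality to a fixed compact subset of $X$, on which the standard pluripotential argument applies with reference $\omega$ and test potential $\psi$.

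\textbf{Stage 2 (uniform decay at infinity).} Outside a large compact set $K\supset V$, Proposition~\ref{Background metrics}(3)--(4) give $\hat\omega_t = \omega_C + O(r^{-\nu})$ and $|f_t|\leq C r^{-\gamma-2}$ uniformly in $t$. Linearizing~\eqref{familycxma} yields $\Delta_{\hat\omega_t}\varphi_t = -f_t + Q_t(i\partial\bar\partial\varphi_t)$ with $Q_t$ quadratic in the Hessian. Theorem~\ref{Poisson equation} gives uniform surjectivity of $\Delta_{\hat\omega_t}: C^{k+2,\alpha}_{-\gamma}\to C^{k,\alpha}_{-\gamma-2}$ (valid because $\{\hat\omega_t\}$ is a uniform AC family), producing $v_t\in C^{k+2,\alpha}_{-\gamma}$ with $\Delta_{\hat\omega_t}v_t = -f_t$ and $\|v_t\|_{C^{k+2,\alpha}_{-\gamma}}\leq C$. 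A contraction-mapping argument in the weighted H\"older spaces, initialized from $v_t$ and using the Stage~1 $C^0$ bound to absorb the quadratic remainder, then yields the uniform bound $\|\varphi_t\|_{C^{k,\alpha}_{-\gamma}(X\setminus K)}\leq C$ of item~(3).

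\textbf{Stage 3 ($C^\infty_{loc}$ bound on $X\setminus V$).} On any $K'\subset\subset X\setminus V$, the modified reference $\tilde\omega_t := \beta_t + (1-t)i\partial\bar\partial\psi$ is a smooth family of K\"ahler forms uniformly bounded above and below on $K'$, and the Stage~1 $C^0$ bound on $\varphi_t$, together with the smoothness of $\psi$ on $K'$, gives a uniform $C^0$ bound for $\tilde\varphi_t := \hat\varphi_t - (1-t)\psi$ on $K'$. The Monge-Amp\`ere equation for $\tilde\varphi_t$ with reference $\tilde\omega_t$ is uniformly elliptic on $K'$, so Yau's second-order estimate provides $\Delta_{\tilde\omega_t}\tilde\varphi_t\leq C(K')$, and Evans--Krylov plus Schauder bootstrapping deliver uniform $C^k(K')$ bounds for every $k$, giving item~(2). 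The main obstacle throughout is Stage~1: the classical Ko\l{}odziej/EGZ $L^\infty$ machinery is tailored to compact manifolds with finite total volume, and adapting it to the asymptotically conical setting requires localizing via decay at infinity while carefully handling the logarithmic singularity of $\psi$ along the compact degeneration set $V$, which is exactly where Assumption~\ref{ass: mainAss} enters.
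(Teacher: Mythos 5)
There is a genuine gap at the heart of your argument, namely Stage 1. You derive the uniform $C^0$ bound from ``a non-compact analog of the Ko\l{}odziej--Eyssidieux--Guedj--Zeriahi $L^\infty$-estimate,'' but no such analogue is available: the compact pluripotential machinery relies on a global comparison/capacity argument with finite total volume, and the authors of this paper explicitly record that no proper extension to the asymptotically conical setting is known. Your proposed reduction to a compact set is also circular: the decay of $\varphi_t$ at infinity furnished by Theorem~\ref{CH-existence} holds for each fixed $t$ with constants that a priori blow up as $t\to 0$ (uniform decay is item (3) of the very proposition being proved), so it cannot be used as input to the $C^0$ bound. The paper's actual mechanism is entirely different and is the key idea you are missing: one runs Yau's Moser iteration, but (following Tosatti) with the unknown Ricci-flat metrics $\omega_{\varphi_t}$ themselves as background, so that the Sobolev constant is uniformly controlled by Croke's and Yau's estimates (Ricci-flat, Euclidean volume growth with ratio fixed by the cone at infinity), and the fixed volume form $i^{n^2}\Omega\wedge\bar\Omega$ makes all integrals $t$-independent. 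Without this, or a genuinely new non-compact capacity estimate, Stage 1 does not close.

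Stage 2 has a related problem: a contraction-mapping argument in $C^{k,\alpha}_{-\gamma}$ requires the quadratic remainder $Q_t(i\d\db\varphi_t)$ to be small in the weighted norm, and the Stage 1 $C^0$ bound on $\varphi_t$ gives no control whatsoever on $i\d\db\varphi_t$; moreover one would still have to identify the fixed point with the given solution $\varphi_t$. The paper instead proves the decay by a weighted Moser iteration on the actual solution (again with $\omega_{\varphi_t}$ as background metric), bootstrapped to derivatives by Joyce's scaled Schauder argument on far-away balls where $\omega_{\varphi_t}$ is uniformly comparable to $\omega_C$. Your Stage 3 is closest in spirit to the paper (which uses $\psi$ as a Tsuji-type barrier), but note that the second-order estimate there is not purely local: the paper applies a global maximum principle to $\log\Tr_{\omega}\omega_{\varphi_t}+B\psi_{\eps}-N\varphi_t$, using that $\psi_\eps\to-\infty$ on $V$ and that the quantity tends to zero at spatial infinity, rather than invoking an interior Yau estimate on a compact subset without boundary control.
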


Now we prove Theorem \ref{Main Theorem} given the above two propositions
\begin{proof}[Proof of Theorem \ref{Main Theorem}]
	Let $[\alpha_t] = (1-t)[\alpha]+t[\eps \omega]$, then by Proposition \ref{Background metrics}, we can construct a sequence of background metrics $\hat{\omega}_t\in [\alpha_t]$ satisfying the properties stated in the Proposition. Then using these as background metrics, we can write down a family of complex Monge-Ampere equations
	\begin{equation*}
		(\hat{\omega}_t+i\d\db\varphi_t)^n = e^{f_t}\hat{\omega}_t^n (= i^{n^2}\Omega\wedge\bar{\Omega})
	\end{equation*}
	then by the Theorem \ref{CH-existence}, the equations are solvable for $t>0$, and Proposition \ref{estimates} applies to the family of solutions $\varphi_t$. Once we have the a priori estimate, it's then clear that by taking a subsequence, we can take a limit $\varphi_{t_i}\to\varphi_0$ in $C^{\infty}_{loc}(X\setminus V)$, which satisfies the equation
	\begin{equation}\label{CXMA}
		(\hat{\omega}_0+i\d\db\varphi_0)^n = i^{n^2}\Omega\wedge\bar{\Omega}
	\end{equation}
	smoothly away from the analytic set $V$. Moreover, $\varphi_0$ is a bounded by the uniform $C^0$ estimate of $\varphi_t$, hence $\hat{\omega}_0+i\d\db\varphi_0$ extends as a non-negative current on $X$ by \cite{GR}, and it does not charge any analytic subsets, so the equation \eqref{CXMA} holds globally.  From Proposition~\ref{Background metrics} (2), and Proposition~\ref{estimates} (3), we see that $\omega_{\varphi_0}$ is asymptotically conical.  It only remains to establish the incompleteness and uniqueness statements of $\omega_{\varphi_0}$ in Theorem~\ref{Main Theorem}.  The incompleteness of $\omega_{\varphi_0}$ follows from the diameter bound in Lemma~\ref{diam-bdd}, while the uniqueness is established in Theorem~\ref{thm: uniqueness}
\end{proof}

\section{Background metrics}\label{sec: background}
The goal of this section is to prove Proposition \ref{Background metrics}, which constructs a family of ``good" background metrics $\hat{\omega}_t\in [\alpha_t]$ whose Ricci potentials decay faster than quadratically. Indeed, it is easy to construct $\omega_t\in [\alpha_t]$ satisfying only the first two conditions of Proposition~\ref{Background metrics}.  However, the proof of the a priori estimates of Proposition~\ref{estimates} depends crucially on the additional decay of the Ricci potentials. This idea is used in \cite{CH13} (see also \cite[Prop. 4.2.6]{Co}). 
		
From now on we fix an open Calabi-Yau manifold $(X, J, \Omega)$ asymptotic to some Calabi-Yau cone $(C, J_C, \Omega_C, \omega_C, g_{C})$ at rate $\nu>0$. In the following proposition, we summarize a construction of asymptotically conical K\"ahler (semipositive) forms in almost compactly support classes, which is based on \cite{CH13}. 
\begin{Proposition}\label{bkground-metric 1}
	Suppose $[\alpha]\in H^{1, 1}_{\nu}(X)$ contains a (semi-)positive form $\alpha$, then there exist a (semi-)positive form $\omega \in [\alpha]$ which agrees with $\alpha$ in a compact set $K$ and satisfies the asymptotics $|\nabla^k (\omega-\omega_C)| = O(r^{-\nu-k})$ for $r \gg 1$. 
\end{Proposition}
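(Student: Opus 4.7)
Here is my plan. First I would extract from the hypothesis $[\alpha]\in H^{1,1}_\nu(X)$ a decaying representative $\xi\in[\alpha]$ with $|\nabla^k\xi|_{g_C}=O(r^{-\nu-k})$. Since $\alpha-\xi$ is an exact real $(1,1)$-form, the $\d\db$-lemma (Proposition~\ref{ddb-lemma}(1)) supplies a smooth function $u\colon X\to\mathbb{R}$ with
\[
\alpha \;=\; \xi + i\d\db u,
\]
where $u$ is determined only up to a pluriharmonic function. Next I would globalize the cone K\"ahler form: fix a smooth $\tilde\rho\colon X\to\mathbb{R}_{\geq 0}$ with $\tilde\rho = r_C\circ\Phi^{-1}$ outside some compact $K_0$, and set $\tilde\omega_C:=i\d\db\tilde\rho^{\,2}$, which is $i\d\db$-exact on $X$ and coincides with $\omega_C$ on $X\setminus K_0$.

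The construction of $\omega$ proceeds by cutoff interpolation. Let $\chi$ be a smooth cutoff with $\chi\equiv 1$ on a chosen compact set $K\supset K_0$ and $\chi\equiv 0$ outside a larger compact $K_1$, and define
\[
\omega \;:=\; \alpha + i\d\db\bigl[(1-\chi)(\tilde\rho^{\,2}-u)\bigr].
\]
Then $\omega\in[\alpha]$ since it differs from $\alpha$ only by an $i\d\db$-exact form. On $K$ the correction vanishes, so $\omega=\alpha$. On $X\setminus K_1$, using $\omega_C=i\d\db\tilde\rho^{\,2}$ and $\alpha-\xi=i\d\db u$, one checks
\[
\omega \;=\; \alpha + \omega_C - (\alpha-\xi) \;=\; \xi + \omega_C,
\]
so $\omega-\omega_C=\xi$ inherits the desired asymptotics $|\nabla^k(\omega-\omega_C)|_{g_C}=O(r^{-\nu-k})$.

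The main obstacle is (semi-)positivity of $\omega$, which by the product rule decomposes as
\[
\omega \;=\; \chi\alpha + (1-\chi)(\omega_C+\xi) + (\tilde\rho^{\,2}-u)\,i\d\db\chi + 2\Re\bigl(i\d\chi\wedge\db(\tilde\rho^{\,2}-u)\bigr).
\]
The first two summands are semi-positive once the transition annulus $K_1\setminus K$ lies in the region where $\omega_C+\xi>0$, which can be arranged by taking $K_1$ large. The cross terms are the delicate part. The plan is to place the transition annulus at a large radius $R$, with $\chi$ varying slowly so that $|\nabla\chi|_{g_C}=O(R^{-1})$ and $|\nabla^2\chi|_{g_C}=O(R^{-2})$. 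After normalizing $u$ by a pluriharmonic gauge, the identity $i\d\db(\tilde\rho^{\,2}-u)=\omega_C+\xi-\alpha$ combined with standard elliptic estimates on the asymptotic cone controls $|\tilde\rho^{\,2}-u|_{g_C}$ and $|\nabla(\tilde\rho^{\,2}-u)|_{g_C}$ on the transition annulus. This should render both cross terms $o(1)$ relative to $\omega_C$, hence absorbable by $(1-\chi)(\omega_C+\xi)$ for $R$ sufficiently large, delivering a (semi-)positive $\omega$. The same recipe produces a strictly positive $\omega$ when $\alpha$ is positive.
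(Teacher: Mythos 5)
The paper disposes of this proposition by citing the construction in \cite[Theorem 2.4]{CH13}, and your gluing scheme has a genuine gap in the positivity step, affecting both cross terms. First, the contribution of $\tilde\rho^{\,2}$ is never $o(1)$: on a transition annulus $\{R\le r\le \lambda R\}$ one has $|i\d\db\chi|_{g_C}=O(R^{-2})$ and $|\d\chi|_{g_C}=O(R^{-1})$, while $\tilde\rho^{\,2}=O(R^2)$ and $|\d\tilde\rho^{\,2}|_{g_C}=O(R)$, so both $\tilde\rho^{\,2}\,i\d\db\chi$ and $\Re(i\d\chi\wedge\db\tilde\rho^{\,2})$ are $O(1)$ in $g_C$-norm, i.e.\ comparable to $\omega_C$ and of indefinite sign, \emph{uniformly in $R$}. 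They cannot be absorbed by $\chi\alpha+(1-\chi)(\omega_C+\xi)$ near the inner edge of the annulus, where $1-\chi\to0$ and $\alpha$ is only semi-positive; integrating the requirement $|\d\chi|\,|\d\tilde\rho^{\,2}|\lesssim (1-\chi)$ across the annulus shows that no choice of cutoff profile avoids this. Second, the claim that a ``pluriharmonic gauge'' plus elliptic estimates controls $|u|$ and $|\nabla u|$ on the annulus is unsupported: $\alpha$ is an arbitrary semi-positive representative with no assumed asymptotics, so $i\d\db u=\alpha-\xi$ gives no a priori growth bound on $u$, and subtracting a pluriharmonic function cannot manufacture one. (The signs in your product-rule decomposition are also off, but that is immaterial.)

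The construction of \cite{CH13} that the paper invokes avoids both problems. The function $r^2$ is never cut off; instead one uses $\Phi(r^2)$ for a convex increasing $\Phi$ with $\Phi=\mathrm{const}$ near $0$ and $\Phi(x)=x$ for large $x$, so that $i\d\db\Phi(r^2)=\Phi'\,i\d\db r^2+\Phi''\,i\d r^2\wedge\db r^2\ge0$ holds globally with no error term, while $i\d\db\Phi(r^2)$ agrees with $\omega_C$ up to $O(r^{-\nu})$ at infinity. The potential $u$ is cut off at a radius $R$ chosen once and for all so that $\xi+i\d\db\Phi(r^2)>0$ on $\{r\ge 2R\}$; the resulting negativity of $\alpha+i\d\db(\zeta_Ru)$ then lives on the \emph{fixed compact} annulus $\{R<r<2R\}$ and is beaten by adding $C\,i\d\db\big((1-\zeta_S)\Phi(r^{2\kappa})\big)$ with $\kappa<1$ and $C\gg1$: this form is strictly positive on that annulus, and its own cutoff error at radius $S\gg R$ is genuinely $O(S^{2\kappa-2})=o(1)$, hence absorbable by $i\d\db\Phi(r^2)$ there. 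The order of quantifiers --- fix $R$, then $C$, then $S$ --- is exactly the mechanism that replaces the uniform smallness your argument needs but cannot obtain.
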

\begin{proof}
	This follows from construction in \cite[Theorem 2.4]{CH13}.

\end{proof}

\begin{Proposition}\label{prop: bkground-metric-improvement}
    Suppose that $(X, J, \Omega,  \omega_t, g_t)_{t\in [0, 1]}$ are a smooth family of data which is asymptotic to the cone $(C, J_C, \Omega_C, \omega_C, g_C)$ at the rate $-\nu \in (-2, 0)$. Suppose that for $t\in (0,1]$, $\omega_t$ are asymptotically conical K\"ahler metrics and $\omega_0$ is asymptotically conical and semi-positive $(1, 1)$ form. Let $f_t$, $t\in[0,1]$ be the Ricci potentials of $\omega_t$, defined by $e^{f_t} = \frac{i^{n^2}\Omega\wedge\bar{\Omega} }{\omega_t^n}$, and suppose there is a compact set $K\subset X$ so that on $X\backslash K$, $f_t$ satisfy the following asymptotics:
    \begin{enumerate}
    	\item $|f_t|\leq Cr^{-\beta}$
    	\item $|\nabla^k f_t|_{g_C}\leq Cr^{-\beta-k}$
    \end{enumerate}
	where $C$ is independent of $t$ and $\nu\leq \beta <2n-2$ and $-\beta+2$ is not an exceptional weight. 
	
    Then there exist $\eps>0$ and a family of functions $u_t$ for $t\in [0, \eps]$ such that the following are satisfied
	\begin{enumerate}
		\item There exist a compact subset $K\subset X$ such that $\text{supp}(u_t)\subset X\setminus K$
		\item $\omega_t+i\d\db u_t > 0$ on $\text{supp}(u_t)$
		\item $|\nabla^k u_t|_{g_C}\leq Cr^{-\beta+2-k}$
		\item $|\nabla^k \frac{\d u_t}{\d t}|_{g_C}\leq Cr^{-\beta+2-k}$
		\item Away from a compact set $K$, we have
		\[(\omega_t+i\d\db u_t)^n = e^{f_t-f^'_t}\omega_t^n = e^{-f^'_t}i^{n^2}\Omega\wedge\bar{\Omega}\]
		where $|\nabla^k f^'_t|\leq Cr^{-2\beta-k}$ outside a compact set $K$. 
		
	\end{enumerate}
where the constant $C$ is independent of $t$. In particular, this means if we set $\omega_t'  = \omega_t+i\d\db u_t$, then $\omega'_t$ converges to $\omega_C$ at the same rate as $\omega_t$, but the Ricci potentials $f_t'$ of $\omega_t'$ decays a rate of $-2\beta$.
\end{Proposition}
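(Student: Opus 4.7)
The plan is to construct $u_t$ as a cutoff of the solution to the Poisson equation $\Delta_{\omega_t} v_t = f_t$ on $(X, \omega_t)$. The key observation is the Monge-Amp\`ere expansion
\[
\log\frac{(\omega_t + i\partial\bar\partial v)^n}{\omega_t^n} = \Delta_{\omega_t} v - \frac{1}{2}|i\partial\bar\partial v|_{\omega_t}^2 + O(|i\partial\bar\partial v|_{\omega_t}^3),
\]
so that if $v$ solves $\Delta_{\omega_t} v = f_t$ and has $|i\partial\bar\partial v|_{\omega_t} = O(r^{-\beta})$, then the resulting Ricci potential of $\omega_t + i\partial\bar\partial v$ is $f_t - \log\frac{(\omega_t + i\partial\bar\partial v)^n}{\omega_t^n} = O(r^{-2\beta})$, which is exactly the claimed quadratic improvement. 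It is essential that the Laplacian be taken with respect to $\omega_t$ itself: replacing it by a fixed asymptotic model would introduce a cross term of size $O(r^{-\nu - \beta})$, weaker than $O(r^{-2\beta})$.

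First, I would apply Theorem \ref{Poisson equation} on each K\"ahler manifold $(X, \omega_t)$ for $t \in (0, 1]$ to produce $v_t \in C^{\infty}_{2-\beta}(X)$ solving $\Delta_{\omega_t} v_t = f_t$. The hypotheses $\nu \leq \beta < 2n-2$ and $2 - \beta \notin P$ place $f_t \in C^{k,\alpha}_{-\beta}$ in the surjective range of $\Delta_{\omega_t}: C^{k+2,\alpha}_{2-\beta} \to C^{k,\alpha}_{-\beta}$. For uniform $C^{\infty}_{2-\beta}$ bounds on $v_t$, I would use the smoothness of the family and the fact that all operators $\Delta_{\omega_t}$ share the same asymptotic model $\Delta_{g_C}$ on the cone end; for the derivative $\partial_t v_t$, differentiating the equation gives $\Delta_{\omega_t}(\partial_t v_t) = \partial_t f_t - (\partial_t \omega_t^{j\bar k})\, \partial_j\partial_{\bar k} v_t$, whose right-hand side lies uniformly in $C^{\infty}_{-\beta}$, so the same weighted elliptic theory applies.

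Next, I would set $u_t = \chi_R v_t$, where $\chi_R$ is a smooth cutoff vanishing on $\{r \leq R\}$ and equal to $1$ on $\{r \geq 2R\}$, choosing $R$ large enough that $\omega_t \geq \tfrac{1}{2}\omega_C$ uniformly in $t$ on $\{r \geq R\}$. This is possible because $\omega_0$ is asymptotic to $\omega_C$ at rate $-\nu > -2$ and semi-positive, so its degeneracy lies in a compact set. The bound $|i\partial\bar\partial u_t|_{g_C} \leq C R^{-\beta}$ on $\mathrm{supp}(u_t)$ then gives positivity of $\omega_t + i\partial\bar\partial u_t$ for $R$ large. On the region $\{r \geq 2R\}$, where $u_t = v_t$, the expansion above combined with $\Delta_{\omega_t} v_t = f_t$ yields $|f'_t| \leq Cr^{-2\beta}$ with the analogous derivative bounds, giving item (5). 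The transition annulus $\{R \leq r \leq 2R\}$, where the cutoff introduces terms $\nabla \chi_R \cdot \nabla v_t$ and $\Delta\chi_R \cdot v_t$, is compact and can be absorbed into the set $K$; items (1)--(4) follow directly from the weighted estimates on $v_t$ and $\partial_t v_t$.

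The principal technical obstacle is obtaining uniform weighted H\"older estimates for $v_t$ up to $t = 0$. Since $\omega_0$ is only semi-positive, the family $\Delta_{\omega_t}$ is not uniformly elliptic across all of $X$, so a direct Fredholm approach could deteriorate. The remedy is that the degeneracy of $\omega_t$ is compactly supported, while the surjectivity of $\Delta_{\omega_t}$ at the weight $-\beta$ is governed by the fixed asymptotic cone Laplacian at infinity. One can therefore split the inversion into an asymptotic component, where uniform ellipticity gives uniform weighted Schauder bounds, and an interior component on a large compact set, handled via a fixed comparison K\"ahler metric together with standard interior elliptic estimates for $t$ in a compact sub-interval of $(0, 1]$; these combine to give the uniform estimates required.
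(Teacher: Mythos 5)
Your overall strategy coincides with the paper's: solve a Poisson equation at weight $2-\beta$ with respect to a metric that equals $\omega_t$ near infinity, cut off the solution, verify positivity from the smallness of $i\d\db u_t$ at large radius, and extract the quadratic gain $f'_t = O(r^{-2\beta})$ from the second-order expansion of the Monge--Amp\`ere operator. However, there is a genuine gap in your handling of uniformity as $t\to 0$, which is precisely the point of the proposition. You propose to solve $\Delta_{\omega_t}v_t=f_t$ globally on $(X,\omega_t)$ and then patch estimates; but on the compact set where $\omega_0$ is merely semi-positive, two things go wrong simultaneously: the operators $\Delta_{\omega_t}$ degenerate as $t\to 0$, and the right-hand side $f_t=\log\frac{i^{n^2}\Omega\wedge\bar\Omega}{\omega_t^n}$ itself blows up there (since $\omega_0^n$ may vanish), so $f_t$ does not lie in $C^{k,\alpha}_{-\beta}(X)$ uniformly, or at all for $t=0$. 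Your proposed remedy --- ``standard interior elliptic estimates for $t$ in a compact sub-interval of $(0,1]$'' --- by construction excludes $t=0$ and cannot produce constants independent of $t$; splitting the inversion into asymptotic and interior pieces does not help if the interior piece is not uniformly controlled.

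The fix, which is what the paper does, is to modify both the operator and the right-hand side on the compact set before solving. One sets $\bar g_t=(1-\zeta_R)\hat g+g_t$ for an arbitrary auxiliary metric $\hat g$ and a cutoff $\zeta_R$ supported near infinity; then $\{\bar g_t\}_{t\in[0,1]}$ is a smooth family of genuinely non-degenerate asymptotically conical metrics, agreeing with $g_t$ where $\zeta_R=1$. One solves $\Delta_{\bar g_t}\hat u_t=2\zeta_R f_t$ (note the cutoff on the source term, which removes the blow-up of $f_0$ on $K$), obtaining $\hat u_t\in C^\infty_{-\beta+2}$ with bounds uniform in $t$ via the implicit function theorem applied to this uniformly elliptic family; since the equation is only used on the region where $\zeta_R\equiv 1$ and $\bar g_t=g_t$, the final cutoff $u_t=\zeta_S\hat u_t$ still satisfies $\Delta_{\omega_t}u_t=f_t$ there, and your concern about cross terms of size $O(r^{-\nu-\beta})$ does not arise. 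With this modification the remainder of your argument (positivity for $S$ large, the expansion giving $f'_t=f_t-\log(1+f_t+O(r^{-2\beta}))=O(r^{-2\beta})$, and the $t$-derivative bounds) goes through as you describe.
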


\begin{proof}
    We can essentially follow the same procedure as in \cite[Lemma 2.12]{CH13}.  First we want to solve the equation
    \begin{equation*}
        \lapl_{\omega_t}\hat{u}_t = 2f_t
    \end{equation*}
    for $t\geq 0$, away from a compact set while controlling of the growth of the solutions.
    
      We now fix a standard cutoff function $\chi:\R\to \R$ with
    \begin{equation*}
    \chi(x) = 
    \begin{cases}
    0       & \quad \text{for } x \leq 1\\
    1  & \quad \text{for } x \geq 2
    \end{cases}
    \end{equation*}
    and satisfy $0\leq\chi\leq 1$, $|\chi'|\leq 2$, $|\chi''|\leq 5$. Then we define $\zeta_R:X\to \R$ by setting $\zeta_R(x) = \chi(\frac{r(x)}{R})$, and let $\hat{g}$ be any metric on $X$. Then set
    \[
    \bar{g}_t = (1-\zeta_R)\hat{g}+g_t
    \]
    Since $\omega_0$ is semi-positive and asymptotically conical we can choose $R$ sufficiently large so that $\bar{g}_0$ is an asymptotically conical Riemannian metric.  Then for all $t \in [0,1]$,  $g_t$ defines a background metric and for $t\in (0,1]$, this metric is equal to the $\omega_t$  away from a compact set.

    If $-\beta+2$ is not an exceptional weight, then $\lapl_{\bar{g}_t}:C^{\infty}_{-\beta+2}\to C^{\infty}_{-\beta}$ is surjective by Theorem~\ref{Poisson equation}, so we can always solve the equation
    \begin{equation*}
        \lapl_{\bar{g}_t}\hat{u}_t = 2\zeta_Rf_t
    \end{equation*}
    for $\hat{u}_t\in C^{\infty}_{-\beta+2}$. In fact, by the Implicit Function Theorem \cite[Proposition 4.2.19]{DKr}, we can find a family of smoothly varying solutions for $t\in [0, \eps)$, and such that following bounds hold uniformly for small $t$. 

    \begin{enumerate}
    	\item $|\nabla^k \hat{u}_t|\leq Cr^{-\beta+2-k}$
    	\item $|\nabla^k \frac{\d \hat{u}_t}{\d t}|\leq Cr^{-\beta+2-k}$
    \end{enumerate} 
    If we set $u_t = \zeta_S\hat{u}_t$ then $u_t$ is supported on $\text{supp}(\zeta_S)$, and then we have
    \begin{align*}
    	|i\d\db u_t|&\leq
    	|\zeta_S||\d\db\hat{u_t}|+ |\hat{u_t}||\d\db\zeta_S|+2|\d\zeta_S||\d u_t|\\
    	&\leq  C\zeta_Sr^{-\beta}+Cr^{-\beta+2}|\d\db\zeta_S|+Cr^{-\beta+1}|\nabla \zeta_S|\\
    	&\leq Cr^{-\beta}(\zeta_S+r|\nabla\zeta_S|+r^2|i\d\db \zeta_S|)\\
    	&\leq Cr^{-\beta}(\zeta_S+S|\nabla\zeta_S|+S^2|i\d\db \zeta_S|)
    \end{align*}
    but since $\zeta_S(x) = \chi(\frac{r(x)}{S})$, we see that 
    \[|\nabla \zeta_S| = S^{-1}|\chi' \nabla r|\leq C S^{-1}\] and 
    \[|i\d\db \zeta_S| \leq S^{-2}|\chi''||\nabla r|^2+|\chi'|S^{-1}|i\d\db r| \leq CS^{-2}\]
    where we used that $r|i\d\db r|\leq C$. 
    So we have
    \begin{equation*}
    	|i\d\db u_t|\leq Cr^{-\beta}(\zeta_S+C)
    \end{equation*}
    and $i\d\db u_t$ is supported on the support of $\zeta_S$. Hence for $S$ sufficiently large, we can ensure that $\omega_t+i\d\db u_t>0$ on the $\text{supp}(u_t)$.  
	
	Away from the compact set $K$, we have
	\begin{align*}
		\frac{(\omega_t+i\d\db u_t)^n}{\omega_t^n} &= 1 + f_t + O(|i\d\db u_t|^2)\\
		& = 1+f_t+O(r^{-2\beta})
	\end{align*}
	so setting $f'_t = f_t - \log\frac{(\omega_t+i\d\db u_t)^n}{\omega_t^n}$, we have
	\begin{equation*}
		(\omega_t+i\d\db u_t)^n = e^{f_t-f'_t}\omega_t^n
	\end{equation*}
	and $f'_t = f_t - \log (1+f_t+O(r^{-2\beta}))$ has the desired asymptotics. 
\end{proof}

\begin{Remark}
	If $-\beta+2$ is an exceptional weight, we can apply the proposition with $\beta+\eps$ in place of $\beta$ for $\eps$ arbitrarily small (since the exceptional weights are discrete). We can then repeatedly apply Proposition~\ref{prop: bkground-metric-improvement} to improve the decay of Ricci potential for a family of metrics until we obtain the decays we need. 
\end{Remark}

The two previous propositions combined proves Proposition \ref{Background metrics}. 

\begin{proof}[Proof of Proposition \ref{Background metrics}]
	By Proposition 3.1, we can find a semi-positive form $\omega_0\in [\alpha_0]$ satisfying the asymptotics $|\nabla^k(\omega_C-\omega_0)| = O(r^{-\nu-k})$ and a metric $\omega_{1}\in [\alpha_{1}]$ satisfying the same asymptotics, then if we write $\omega_t$ by linearly interpolating between $\omega_0$ and $\omega_{1}$, then clearly $\omega_t$ are positive for $t>0$ and satisfy the desired asymptotics, and the Ricci potentials $f_t$ satisfy $|\nabla^kf_t|\leq C(1+r)^{-\nu-k}$. If $\nu>2$, then we can take $\gamma = \nu$ and we are done, otherwise, we can apply Proposition~\ref{prop: bkground-metric-improvement} repeatedly to improve the asymptotics of the Ricci potentials until they decay faster than quadratically. 
\end{proof}

\subsection{K\"ahler currents and Null loci in the asymptotically conical case}\label{sec: Null=Enk}

Before proceeding we would like to briefly discuss Assumption~\ref{ass: mainAss}.  Recall that if $(X,\omega)$ is compact K\"ahler and $[\alpha] \in \overline{K}$ is a nef class with $\int_{X}\alpha^n >0$, then, by results of Demailly-P\u{a}un \cite{DP} there is a function $\psi : X \rightarrow \mathbb{R}\cup\{-\infty\}$ such that
\[
 \alpha + \sqrt{-1}\d\db \psi \geq \epsilon \omega
\]
for some $\epsilon >0$, $\psi$ is smooth on the complement of an analytic subset $Z$, and $\{\psi =- \infty\} =Z$.  Furthermore, by results of the first author and Tosatti \cite{CT} $\psi$ can be chosen so that the analytic subvariety $Z$ is given by
\[
\text{Null}(\alpha) := \bigcup_{\int_{V}\alpha^{\dim V} = 0}V
\]
where  the union is taken over irreducible analyitic subvarieties $V\subset X$.  We expect that a similar result holds in the asymptotically conical setting.  We make the following conjecture

\begin{Conjecture}\label{conj: EnK=Null}
Suppose $[\alpha] \in H^{1,1}_{\nu}(X,\mathbb{R})$ is a limit of $\nu$-almost compactly support K\"ahler classes.  Then there is a function $\psi: X \rightarrow \mathbb{R}\cup\{-\infty\}$ such that $\alpha +\sqrt{-1}\d\db \psi \geq \epsilon \omega$ for some asymptotically conical K\"ahler form $\omega$. Define 
\begin{equation}\label{eq: nullAC}
\text{Null}(\alpha) := \bigcup_{\int_{V}\alpha^{\dim V} = 0}V
\end{equation}
where the union is taken over all compact, irreducible, analytic subvarieties $V\subset X$.  Then $\text{Null}(\alpha)$ is an analytic subvariety, and $\psi$ can be chosen so that $\psi$ is smooth on $X\backslash \text{Null}(\alpha)$ and 
\[
\{\psi = -\infty\} = \text{Null}(\alpha). 
\]
\end{Conjecture}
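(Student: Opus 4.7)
The plan is to combine the strategy of Collins-Tosatti \cite{CT} with the quantitative $\d\db$-lemma (Proposition~\ref{prop: quan-ddbar-lemma}) and the existence theorem of Conlon-Hein (Theorem~\ref{CH-existence}) to produce the desired potential $\psi$ by approximation. Since $[\alpha]$ is a limit of $\nu$-almost compactly supported K\"ahler classes, I would consider the approximating classes $[\alpha_t]:=[\alpha]+t[\omega]$ for $t>0$, which are K\"ahler and admit good background representatives $\hat{\omega}_t$ by Proposition~\ref{Background metrics}. The goal is then to solve a family of complex Monge-Amp\`ere equations in these classes whose singular limit concentrates mass precisely on $\text{Null}(\alpha)$.

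First I would verify that $\text{Null}(\alpha)$ is an analytic subvariety. Every compact irreducible analytic subvariety of $X$ is automatically contained in a fixed compact set---for instance, in the preimage under the Remmert reduction $\pi:X\to Y$ of the singular locus of $Y$, together with at most finitely many additional components. Using the structure of the Barlet/Fujiki space of compact cycles in $X$ and the fact that the defining condition $\int_V\alpha^{\dim V}=0$ is closed and polynomial in $[\alpha]$, one concludes that only finitely many maximal subvarieties contribute to the union \eqref{eq: nullAC}, which then yields analyticity.

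For the construction of $\psi$, I would adapt the Demailly-P\u{a}un mass concentration strategy and solve
\begin{equation*}
(\hat{\omega}_t+\sqrt{-1}\d\db\psi_t)^n = c_t\, e^{-F}\, i^{n^2}\Omega\wedge\bar{\Omega},
\end{equation*}
where $F\geq 0$ is a fixed smooth compactly supported function which is very large on a neighborhood of $\text{Null}(\alpha)$, and $c_t$ is a normalizing constant chosen so the cohomological integrals agree. Existence for each $t>0$ follows from Theorem~\ref{CH-existence}. After suitable normalization and extraction of a subsequential limit $\psi_t\to\psi$ in $L^1_{loc}$ and in $C^{\infty}_{loc}(X\setminus\text{Null}(\alpha))$, one would obtain a K\"ahler current $\alpha+\sqrt{-1}\d\db\psi\geq\epsilon\omega$ whose singular support lies on $\text{Null}(\alpha)$. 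The equality $\{\psi=-\infty\}=\text{Null}(\alpha)$ would then follow from two ingredients: (a) Lelong number bounds driven by the blow-up of $F$ forcing $\psi\equiv-\infty$ on $\text{Null}(\alpha)$, and (b) the converse intersection-theoretic obstruction (adapted from \cite{CT}) that if $V$ is a compact subvariety with $\int_V\alpha^{\dim V}=0$, then $(\alpha+\sqrt{-1}\d\db\psi)|_V$ cannot be a K\"ahler current, forcing $\psi|_V\equiv-\infty$.

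The main obstacle I anticipate is controlling the behavior of $\psi_t$ uniformly at infinity. In the compact Collins-Tosatti setting, the Yau $C^0$ estimate together with the normalization $\sup\psi_t=0$ yields uniform $L^\infty$ control. In the AC setting, however, since the background metrics $\hat{\omega}_t$ already converge to the conical metric $\omega_C$ at infinity, one expects the correct normalization to impose decay $\psi_t\to 0$ at infinity (rather than $\sup\psi_t=0$), and preserving a uniform $L^1_{loc}$ lower bound away from $\text{Null}(\alpha)$ while simultaneously forcing mass to accumulate there will require a delicate asymptotic analysis. This likely combines weighted H\"older estimates of the type appearing in Proposition~\ref{estimates} with pluripotential techniques near the compact core, and it is the step where the non-compactness of $X$ genuinely enters in a way not present in \cite{CT, DP}.
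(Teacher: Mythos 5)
First, be aware that the statement you are addressing is stated in the paper as a \emph{Conjecture}: the authors do not prove it in general, but only in the special case where $[\alpha]$ is semi-positive and the cone at infinity is quasi-regular (Lemma~\ref{lem: DP}). Their route there is entirely different from yours: they invoke the compactification results of Conlon--Hein \cite{CH3} and Li \cite{Li} to realize $X$ as the complement of an orbidivisor $D$ with positive normal bundle in a projective orbifold $M$, add a large multiple of $[D]$ to make the class nef and big on a resolution of $M$, and then quote the compact-case theorems of Demailly--P\u{a}un \cite{DP} and Collins--Tosatti \cite{CT} wholesale, checking only that the null locus upstairs meets $X$ exactly in $\text{Null}(\alpha)$. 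Your proposal, by contrast, attempts to run the mass-concentration argument directly on the non-compact manifold; if carried out it would prove the full conjecture, but as written it has genuine gaps beyond the one you acknowledge.

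Concretely: (1) the analyticity of $\text{Null}(\alpha)$ cannot be obtained the way you suggest. Even in the compact case, analyticity of the null locus is not a soft consequence of the structure of the cycle space together with closedness of the vanishing condition; it is precisely the output of the Demailly--P\u{a}un/Collins--Tosatti machinery (one constructs currents with positive Lelong numbers along each null subvariety and extracts the locus as a Lelong superlevel set). In particular you cannot assume $\text{Null}(\alpha)$ is analytic in order to choose the weight $F$ concentrated near it --- that makes the argument circular. (2) The normalizing constant $c_t$ has no meaning here: $X$ has infinite volume, there is no cohomological compatibility condition for the non-compact Monge--Amp\`ere equation, and any $c_t\neq 1$ would make the datum $f_t=\log\bigl(c_t e^{-F}i^{n^2}\Omega\wedge\bar{\Omega}/\hat{\omega}_t^n\bigr)$ tend to the nonzero constant $\log c_t$ at infinity, violating the decay hypothesis $f\in C^{\infty}_{-\gamma-2}$ required by Theorem~\ref{CH-existence}. (3) The step you flag as ``delicate'' is in fact the crux: without a uniform normalization and a uniform $L^1_{loc}$ bound on $\psi_t$ one cannot extract a limiting current at all, and the compact-case normalization $\sup\psi_t=0$ is unavailable. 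So what you have is a plausible program for the general conjecture, not a proof; for the cases actually needed in the paper, the compactification route of Lemma~\ref{lem: DP} is the argument that closes.
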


At a purely moral level, the reason that non-compact analytic subvarieties should not enter into the definition of $\text{Null}(\alpha)$ in the asymptotically conical setting is that, at least when $[\alpha]$ admits a semi-positive representative, Proposition~\ref{bkground-metric 1} yields the existence of a form $\hat{\alpha} \in [\alpha]$ which is asymptotically conical.  Thus, if $V$ is a non-compact subvariety, then $\int_{V}\hat{\alpha}^{\dim V} = +\infty$.  Of course, this is purely moral reasoning, since the integral $\int_{V}\hat{\alpha}^{\dim V}$ is not independent of the representative of $[\alpha]$.

\begin{Lemma}\label{lem: DP}
Conjecture~\ref{conj: EnK=Null} holds when, $[\alpha]$ is semi-positive and the cone at infinity is quasi-regular.
\end{Lemma}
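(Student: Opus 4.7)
The plan is to use the quasi-regular structure of the cone to compactify $X$ to a compact K\"ahler orbifold $\bar X$ with a divisor $D$ at infinity, and thereby reduce Conjecture~\ref{conj: EnK=Null} to its compact analogue proved in \cite{DP, CT}. Since the cone is quasi-regular, the Reeb vector field on the link generates a locally free $S^1$-action whose quotient is a projective orbifold $V$ carrying an ample orbifold line bundle $\mathcal{L}$ such that $C\setminus\{o\}\simeq \mathcal{L}^{-1}\setminus\{\text{zero section}\}$. This lets us add $D\cong V$ at infinity to obtain a compact K\"ahler orbifold $\bar X$ in which $N_{D/\bar X}\simeq \mathcal{L}$; in particular $\mathcal{O}_{\bar X}(D)|_D\simeq\mathcal{L}$ is ample on $D$.

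By Proposition~\ref{bkground-metric 1}, I may replace the semi-positive representative by $\hat\alpha\in[\alpha]$ satisfying $\hat\alpha=\omega_C+O(r^{-\nu})$ outside a compact set. The form $\hat\alpha$ extends across $D$ as a closed positive $(1,1)$-current $T$ on $\bar X$, whose cohomology class $[\bar\alpha]:=[T]\in H^{1,1}(\bar X,\R)$ restricts to $[\alpha]$ on $X$. A local analysis near $D$, using that in the quasi-regular case the radial function $r$ is a positive power of the norm of a smooth defining section of $N_{D/\bar X}$, yields $[\bar\alpha]|_D = \kappa\, c_1(\mathcal{L})$ for some $\kappa>0$, hence ample on $D$. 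Combined with the semi-positivity of $\hat\alpha$ on $X$, this lets one show that $[\bar\alpha]$ is nef (e.g.\ by showing $[\bar\alpha]+\epsilon[\omega_{\bar X}]$ admits a K\"ahler representative for every $\epsilon>0$ via a gluing construction near $D$) and big on $\bar X$.

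Next, applying \cite[Theorem 1.1]{CT} on the compact K\"ahler orbifold $\bar X$ produces $\bar\psi\in PSH(\bar X,\bar\alpha)$ satisfying $\bar\alpha+\ddb\bar\psi\geq\epsilon\bar\omega$ for some K\"ahler $\bar\omega$, smooth on $\bar X\setminus \Null(\bar\alpha)$, with $\{\bar\psi=-\infty\}=\Null(\bar\alpha)$. Restricting to $X$ and choosing $\omega$ asymptotically conical with $\omega\leq C\bar\omega|_X$ then gives $\alpha+\ddb\psi\geq\epsilon\omega$ on $X$, where $\psi:=\bar\psi|_X$ plus a bounded smooth correction to pass from the class of $\hat\alpha$ back to the original representative $\alpha$.

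The decisive step is checking $\Null(\bar\alpha)\cap X=\Null(\alpha)$ in the sense of \eqref{eq: nullAC}. The inclusion $\supseteq$ is automatic, since compact irreducible $W\subset X$ remain compact irreducible in $\bar X$ and $[\bar\alpha]|_W=[\alpha]|_W$. For $\subseteq$ one must rule out irreducible components of $\Null(\bar\alpha)$ meeting $D$: the divisor $D$ itself is excluded because $\int_D\bar\alpha^{\dim D}>0$ by ampleness of $[\bar\alpha]|_D$, and for an irreducible $W\subset\bar X$ with $W\cap D\neq\emptyset$ and $W\not\subset D$, one uses that $[\bar\alpha]|_W$ is nef on $W$ and $[\bar\alpha]|_{W\cap D}$ is ample on $W\cap D$ to deduce $\int_W\bar\alpha^{\dim W}>0$ via a Nakai--Moishezon type argument. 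This is where the main obstacle lies: systematically controlling the self-intersections of $[\bar\alpha]$ on subvarieties crossing $D$, carefully combining the global nefness on $\bar X$ with the ampleness on $D$. Once established, this identification matches the null loci and shows in particular that $\Null(\alpha)$ is a (compact) analytic subvariety of $X$, completing the verification of Conjecture~\ref{conj: EnK=Null} in this case.
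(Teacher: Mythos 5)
Your overall strategy is the same as the paper's: use quasi-regularity to compactify $X$ to a projective orbifold $M=X\cup D$ with $D$ an orbidivisor of positive normal bundle (this is exactly the Conlon--Hein/Li compactification the paper invokes), reduce to the compact results of \cite{DP, CT}, and then compare null loci. However, two steps fail as written. The first is the assertion that the asymptotically conical representative $\hat\alpha$ ``extends across $D$ as a closed positive current $T$'' with $[T]|_D$ ample. Since $\hat\alpha\sim \ddb r^2$ with $r\to\infty$ along $D$, this form has infinite mass near $D$ against any K\"ahler metric on the compactification (already $\int_{\mathbb{C}^n}\omega_{euc}\wedge\omega_{FS}^{n-1}=+\infty$ for $\mathbb{C}^n\subset\mathbb{P}^n$), so no such current exists; moreover, cohomological extensions of $[\alpha]$ to $M$ are only determined modulo $\mathbb{R}\cdot[D]$, so there is no distinguished class $[\bar\alpha]$ whose restriction to $D$ is ample. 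The paper's fix is to take an arbitrary extension $[\hat\alpha_0]$ and replace it by $[\hat\alpha_0]+C[D]$ for $C\gg 1$: by the argument of \cite[Theorem A]{CH3} this class admits a smooth semi-positive representative which is strictly positive in a neighborhood of $D$, and it is big after possibly increasing $C$; one then pulls back to a resolution of singularities $\bar M\to M$ so that \cite{DP} and \cite{CT} are applied on a manifold rather than an orbifold.

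The second gap is the one you flag yourself: deducing $\int_W\bar\alpha^{\dim W}>0$ for irreducible $W$ meeting $D$ from nefness on $W$ together with ampleness on $W\cap D$ is not supplied, and it is not a routine Nakai--Moishezon application. With the paper's choice of representative this step becomes immediate: since the smooth semi-positive form representing $\pi^*[\hat\alpha_0]+C[\bar D]$ is strictly positive on an open neighborhood of $D$, any positive-dimensional irreducible $Y$ meeting $\pi^{-1}(D)\cap\pi^{-1}(X)$ satisfies $\int_Y(\cdot)^{\dim Y}>0$, being the integral of a pointwise non-negative top form that is strictly positive somewhere on $Y$. The essential missing idea, then, is to trade the (divergent) conical positivity at infinity for the bounded local positivity of $[\hat\alpha_0]+C[D]$ near $D$; once that substitution is made, both problematic steps disappear and the remainder of your argument (the identification of $\Null$ away from $D$ and the restriction back to $X$) matches the paper's.
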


Recall that the cone $(C, J_C, \Omega_C, \omega_C, g_C)$ is quasi-regular if the holomorphic vector field $r_C\frac{\d}{\d r_{C}} - \sqrt{-1}J_{C}\left(r_C\frac{\d}{\d r_{C}} \right)$ integrates to define a $\mathbb{C}^*$ action.
\begin{proof}
By a result of Conlon-Hein \cite{CH3}, building on work of Li \cite{Li}, if $(X,J,\Omega, \omega, g)$ is asymptotically conical Calabi-Yau with quasi-regular Calabi-Yau cone at infinity, then there is a complex, projective orbifold $M$ without codimension $1$ singularities, and a orbidivisor $D$ with positive normal orbibundle such that $M= X\cup D$, and $-K_{M} = q[D]$ for some $q \geq 1$.  Furthermore, every K\"ahler form on $X$ is cohomologous to the restriction of a K\"ahler form on $M$, and the restriction map $H^{1,1}(M)\rightarrow H^{2}(X)$ is surjective.  Let $[\omega_t] = (1-t)[\alpha_0]+ t[\omega_0] \in H^{1,1}(X)$ be a family of $\nu$-almost compactly supported K\"ahler classes for $t\in (0,1]$ such that $[\alpha_0]$ is semi-positive.  In fact, according to \cite[Proposition  2.5]{CH13} all K\"ahler classes on $X$ are $2$-almost compactly supported, so the assumption of almost compact support can be dropped.  Let $[\hat{\omega}], [\hat{\alpha}_0] \in H^{1,1}(M)$ be such that $[\hat{\omega}]$ is K\"ahler, and $[\hat{\omega}]\big|_{X} = [\omega_0], [\hat{\alpha}_0]\big|_{X}= [\alpha_0]$. Since $\alpha_0$ is semi-positive, and $D$ has positive normal bundle, the argument in the proof of \cite[Theorem A]{CH3} shows that we can find a constant $C>0$ so that $[\hat{\alpha}_0] + C[D]$ is semi-positive, and positive in a neighborhood of $D$. Furthermore, since $D|_{D}$ is positive, after possibly increasing $C$ we can assume that
\[
\int_{M}([\hat{\alpha}_0] + C[D])^n >0
\]
Let $\pi:\overline{M}\rightarrow M$ be a resolution of singularities, obtained by blowing up smooth centers.  Since $X$ is smooth, and $M$ has only codimension $2$ singularities, we can assume that $\pi|_{X}$ is an isomorphism, and that $\pi$ is an isomorphism at the generic point of $D$. Let $E$ denote the exceptional divisor of $\pi$, and let $\bar{D}= \pi^{-1}(D)$ be the total transform of $D$. Now we have
\[
\pi^*[\hat{\alpha}_0] +C[\bar{D}] 
\]
is nef, and big by Demailly-P\u{a}un \cite{DP}.  By the results of \cite{DP} and the first author and Tosatti \cite{CT} there is a K\"ahler current in $\pi^*[\hat{\alpha}_0] +C[\bar{D}]$ which is smooth on the complement of ${\rm Null}(\pi^*[\hat{\alpha}_0] +C[\bar{D}])$.  Let $Y\subset \bar{M}$ be an irreducible analytic subvariety of dimension $p>0$.  If $Y\cap \pi^{-1}(D) = \emptyset$, then
\[
\int_{Y}(\pi^*[\hat{\alpha}_0] +C[\bar{D}])^p = \int_{\pi(Y)} \alpha_0^{p}
\]
and so $Y\subset {\rm Null}(\pi^*[\hat{\alpha}_0] +C[\bar{D}])$ if and only if $\pi(Y) \subset {\rm Null}([\alpha_0])$.  Now suppose that $Y \cap \pi^{-1}(D) \cap \pi^{-1}(X) \ne \emptyset$.  Let $\hat{\alpha}_0+ C\beta_{D}+\sqrt{-1}\d\db u$ be the smooth semi-positive representative of $[\hat{\alpha}_0] +C[D]$ which is positive in a neighborhood of $D$.  Then, since $\pi$ is an isomorphism at the generic point of $Y$ we have
\[
\int_{Y}(\pi^*[\hat{\alpha}_0] +C[\bar{D}])^p =\int_{Y\backslash (E\cap Y)}[\pi^* (\hat{\alpha}_0+ C\beta_{D}+\sqrt{-1}\d\db u)]^p \int_{\pi(Y)}(\hat{\alpha}_0+ C\beta_{D}+\sqrt{-1}\d\db u)^p >0,
\]
where the last inequality follows from the fact that $\hat{\alpha}_0+ C\beta_{D}+\sqrt{-1}\d\db u \geq 0$ and there is a neighborhood of $\pi(Y)\cap D$ where $\hat{\alpha}_0+ C\beta_{D}+\sqrt{-1}\d\db u > 0$.  Thus we have
\[
{\rm Null}(\pi^*[\hat{\alpha}_0] +C[\bar{D}]) \cap (\pi^{-1}(D))^c = \pi^{-1}\left({\rm Null}([\alpha_0])\right).
\]
Since $\pi: \bar{M}\backslash \pi^{-1}(D) \rightarrow X$ is an isomorphism, the result follows.

\end{proof}

\section{A priori estimates}\label{sec: estimates}
In this section, we prove Proposition \ref{estimates}.  Let us first recall the general setup of the proposition. Let $(X, J, \omega, \Omega)$ be an asymptotically conical Calabi-Yau manifold which is asymptotic to the Calabi-Yau cone $(C, J_C, \omega_C, \Omega_C)$ with rate $\nu>0$, and $[\alpha_t] = (1-t)[\alpha_0]+t[\alpha_1] \in H^{1, 1}_{\nu}$ for $t\in [0, 1]$ is a family of $\nu$-almost compactly supported classes such that $[\alpha_t]$ is K\"ahler for $t>0$. Suppose $[\alpha_0]$ satisfies Assumption~\ref{ass: mainAss}. Then let $\hat{\omega}_t\in [\alpha_t]$ for $t\in (0, 1]$ be a family of asymptotically conical K\"ahler metrics satisfying the conclusion of Proposition~\ref{Background metrics}. Then by Theorem~\ref{CH-existence}, we can solve the equation
\begin{equation*}
	(\hat{\omega}_t+i\d\db\varphi_t)^n  = i^{n^2}\Omega\wedge\bar{\Omega} (=e^{f_t}\hat{\omega}_{t}^n)
\end{equation*}
for $\varphi_t\in C^{\infty}_{-\gamma}(X)$, the our goal in this section is to prove a priori estimates on the potentials $\varphi_t$ that are uniform in $t$ as $t\to 0$. 

\subsection{Uniform estimates}\label{subsec: C0bd}
In this section, we prove a uniform bound for $\varphi_t$ that is independent of $t$. In the compact case, such an estimate can be proved using pluripotential theory following the seminal work of Kolodziej \cite{Ko}, see \cite{EGZ07}. Pluripotential methods allow one to obtain an estimate with a sharper dependence on the data of the right hand side. However, such methods are hard to adapt to the non-compact setting and no proper analogue of such estimates are known. It would be of interest to try to find extensions of the pluripotential estimates to the non-compact setting, as it would give a sharper estimates which would apply more generally to singular Calabi-Yau manifolds not admitting crepant resolutions. 

Instead, we will use an idea based on the original argument of Yau \cite{Yau78} using the Moser iteration.  However, following an idea of Tosatti \cite{To07} we perform the Moser iteration using the Calabi-Yau metrics $\omega_{\varphi_t} : =\hat{\omega}_t+i\d\db\varphi_t$ as background metrics.  The advantage of this trick is that since the metrics $\omega_{\varphi_t}$ are Ricci flat and asymptotically conical, they have a uniform Sobolev inequality by results of Croke \cite{Cr} and Yau \cite{YauSurv}. 
\begin{Proposition}
    The metrics $\omega_{\varphi_t}$ satisfy a uniform Sobolev inequality of the form
    \begin{equation}\label{eq: Sobolev}
        \left(\int_X|u|^{\frac{2n}{n-1}}i^{n^2}\Omega\wedge\bar{\Omega}\right)^{\frac{n-1}{n}}\leq C\int_{X}|du|_{\omega_{\varphi_t}}^2 i^{n^2}\Omega\wedge\bar{\Omega}
    \end{equation}
\end{Proposition}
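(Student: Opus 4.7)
The plan is to reduce \eqref{eq: Sobolev} to classical Riemannian comparison geometry, exploiting that each $\omega_{\varphi_t}$ is a complete Ricci-flat K\"ahler metric of Euclidean volume growth with a volume ratio lower bound independent of $t$. First, I would observe that $\omega_{\varphi_t}$ is Ricci-flat and asymptotically conical. Applying $\sqrt{-1}\partial\bar\partial\log$ to the Monge-Amp\`ere equation $\omega_{\varphi_t}^n = i^{n^2}\Omega\wedge\bar\Omega$ yields $\Ric(\omega_{\varphi_t}) = 0$ since $\Omega$ is a nowhere vanishing holomorphic volume form. Since $\hat\omega_t$ is asymptotic to $(C,\omega_C)$ with rate $\nu$ by Proposition \ref{Background metrics} and $\varphi_t \in C^{\infty}_{-\gamma}$ by Theorem \ref{CH-existence}, the metric $\omega_{\varphi_t} = \hat\omega_t + \sqrt{-1}\partial\bar\partial\varphi_t$ is complete and asymptotic to the same K\"ahler cone at infinity.

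Next, I would establish a uniform lower bound on the volume ratios of geodesic balls. The crucial point is that the measure $\omega_{\varphi_t}^n = i^{n^2}\Omega\wedge\bar\Omega$ does not depend on $t$, and is asymptotic at infinity to $\omega_C^n = i^{n^2}\Omega_C\wedge\bar\Omega_C$. Combined with the asymptotic conicality of each $\omega_{\varphi_t}$ to the same cone, comparing large distance balls via the diffeomorphism $\Phi$ (which is a near-isometry outside a compact set) gives
\[
\lim_{r\to\infty}\frac{\Vol_{\omega_{\varphi_t}}(B_r(x_0))}{r^{2n}} = V_\infty,
\]
where $V_\infty > 0$ is the asymptotic volume ratio of $(C,\omega_C)$ and depends only on the cone. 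Bishop-Gromov monotonicity, valid since $\Ric(\omega_{\varphi_t}) \geq 0$, then upgrades this to the global bound
\[
\frac{\Vol_{\omega_{\varphi_t}}(B_r(x))}{r^{2n}} \geq V_\infty \quad \text{for all } x\in X,\ r>0,\ t\in(0,1].
\]

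Finally, I would invoke Croke's isoperimetric inequality \cite{Cr} (see also Yau \cite{YauSurv}): on a complete Riemannian manifold of real dimension $m=2n$ with $\Ric \geq 0$ and volume ratios uniformly bounded below by $V_\infty$, the $L^1$ Sobolev inequality $\|u\|_{L^{m/(m-1)}} \leq C(m, V_\infty)\|\nabla u\|_{L^1}$ holds for all $u\in C_c^\infty(X)$ with constant depending only on $m$ and $V_\infty$. Applying this to $|u|^{k}$ with $k=(2n-1)/(n-1)$ and using H\"older's inequality produces the $L^2$ Sobolev inequality \eqref{eq: Sobolev} with a constant depending only on $n$ and $V_\infty$, and hence independent of $t$. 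The main obstacle, to the extent there is one, is the verification of the uniform volume ratio bound: although $V_\infty$ is intuitively clear from the conical asymptotics, one must carefully compare geodesic balls under $\Phi$ to confirm that the limiting constant is genuinely uniform in $t$. Once this is done, the Sobolev inequality follows from off-the-shelf comparison geometry and the standard passage from $L^1$ to $L^2$ Sobolev inequalities.
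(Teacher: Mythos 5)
Your argument is correct and follows essentially the same route as the paper: both reduce \eqref{eq: Sobolev} to Croke's isoperimetric/Sobolev inequality \cite{Cr} (cf.\ Yau \cite{YauSurv}), using that the $\omega_{\varphi_t}$ are Ricci-flat and uniformly non-collapsed because they are all asymptotic to the same cone. The paper phrases the uniformity via scale invariance of diameter/volume bounds on large balls, while you phrase it via Bishop--Gromov and the asymptotic volume ratio, but this is exactly the non-collapsing argument the paper itself uses later (in the proof of Lemma~\ref{diam-bdd}), so the two proofs are substantively identical.
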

\begin{proof}
It suffices to prove the result for compact supported smooth functions.  Results of Croke \cite{Cr} and Yau \cite{YauSurv} show that for a compactly supported function $u$, with $\text{supp}(u) \subset \Omega$ for an arbitrary relatively compact set $\Omega\subset X$,~\eqref{eq: Sobolev} holds for a constant $C$, depending on an upper bound for the diameter of $\Omega$, a lower bound for the volume of $\Omega$, and a lower bound for the Ricci curvature.  We only need to exploit the scale invariance of these quantities for asymptotically conical Calabi-Yau metrics.     Fix a point $x_0 \in X$.  Since $\omega_{\phi_t}$ are asymptotically conical, for $R$ sufficiently large we have
\[
{\rm Vol}_{\omega_{\phi_t}}(B_{R}(x_0)) \sim R^{2n}{\rm Vol}_{\omega_{C}}(L)
\]
where $L$ is the link of the cone, identified with $\{r_{C}=1\} \subset C$, and the volume is computed using the conical Calabi-Yau metric $\omega_{C}$.  Therefore, if $\omega_{R} = R^{-2}\omega_{\phi_{t}}$, then with respect to the rescaled metric the diameter is $1$, and the volume is ${\rm Vol}_{\omega_{C}}(L)$.  Since~\eqref{eq: Sobolev} is scale invariant, the result follows.
\end{proof}
\begin{Proposition}\label{prop: Moser It C0}
    Given solutions $\varphi_t$ to \eqref{familycxma}, with $|\nabla^k\varphi| = O(r^{-\gamma-k})$ we have the following uniform estimate for the potential 
    \begin{equation*}
        |\varphi_t|\leq C\|\varphi_t\|_{L^p(i^{n^2}\Omega\wedge\bar{\Omega})}
    \end{equation*}
    for any $p>\frac{2n-2}{\gamma}\geq 1$ and $C$ depending on $n$, $p$, and a uniform bound on $\|e^{-f_t}-1\|_{L^q}$ for $q\in [p, \infty]$. 
\end{Proposition}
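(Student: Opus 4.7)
The plan is to carry out a Moser iteration for $\varphi_t$ using the Tosatti trick \cite{To07} of integrating by parts against $\omega_{\varphi_t}$, rather than the background $\hat\omega_t$, as the ambient metric. The decisive point is that $\omega_{\varphi_t}^n = i^{n^2}\Omega\wedge\bar\Omega$ is independent of $t$, so the uniform Sobolev inequality~\eqref{eq: Sobolev} is available with a $t$-independent constant. To control $|\varphi_t|$ it suffices to run the argument separately on $v_+ := (\varphi_t)_+$ and on $v_- := (-\varphi_t)_+$; the two cases are symmetric, so we describe only the former and write $v := (\varphi_t)_+$.

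The starting identity is the polarization of the Monge--Amp\`ere difference
\[
i\d\db\varphi_t \wedge \Theta' \;=\; \omega_{\varphi_t}^n - \hat\omega_t^n \;=\; (1 - e^{-f_t})\,\omega_{\varphi_t}^n, \qquad \Theta' := \sum_{k=0}^{n-1}\omega_{\varphi_t}^k \wedge \hat\omega_t^{\,n-1-k}.
\]
Since $\hat\omega_t \geq 0$ one has the lower bound $\Theta' \geq \omega_{\varphi_t}^{n-1}$. The plan is to multiply this identity by $v^{p-1}$, for $p > (2n-2)/\gamma$, and integrate by parts. Inspecting the cone asymptotics, the integrand $v^{p-2}|\nabla v|^2_{\omega_{\varphi_t}}\,\omega_{\varphi_t}^n$ scales like $r^{-\gamma p + 2n - 3}\,dr\wedge d\sigma$ at infinity, so the condition $p > (2n-2)/\gamma$ is exactly what guarantees bulk integrability and, simultaneously, the vanishing of the boundary flux through $\partial B_R$ as $R\to\infty$; this is what pins down the threshold in the statement. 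The outcome is the basic estimate
\[
\frac{4(p-1)}{np^2}\int_X |\nabla v^{p/2}|^2_{\omega_{\varphi_t}}\,\omega_{\varphi_t}^n \;\leq\; \int_X v^{p-1}\,|1 - e^{-f_t}|\,\omega_{\varphi_t}^n.
\]

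Combining the last inequality with the uniform Sobolev inequality applied to $v^{p/2}$, and Young's inequality on the right-hand side to split $v^{p-1}\cdot|1-e^{-f_t}|$, yields a Moser recursion of the schematic form
\[
\|v\|_{L^{\frac{np}{n-1}}}^{p} \;\leq\; C p \bigl(\|v\|_{L^p}^p + \|e^{-f_t}-1\|_{L^p}^p\bigr).
\]
Iterating along the geometric sequence $p_k := p\bigl(n/(n-1)\bigr)^k$, and invoking the interpolation $\|e^{-f_t}-1\|_{L^{p_k}} \leq \|e^{-f_t}-1\|_{L^\infty}^{1 - p/p_k}\|e^{-f_t}-1\|_{L^p}^{p/p_k}$ to keep the source norms uniformly bounded, the standard Moser bookkeeping yields $\sup_X v \leq C\|v\|_{L^p(\omega_{\varphi_t}^n)}$, where $C$ depends only on $n$, $p$, the $t$-independent Sobolev constant, and uniform bounds on $\|e^{-f_t}-1\|_{L^q}$ for $q\in[p,\infty]$. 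The hard part is the justification of the integration by parts in the non-compact setting; this is handled by cutting off at radius $R$ and exploiting the decay $|\nabla^k\varphi_t| = O(r^{-\gamma-k})$, and it is precisely the interplay between this decay and the cone volume growth that forces the hypothesis $p > (2n-2)/\gamma$. Once this step is in hand, the Moser machinery closes with constants polynomial in the iteration exponent, and uniformity in $t$ is automatic since the Sobolev constant is $t$-independent.
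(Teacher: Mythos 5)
Your proposal is correct and follows essentially the same route as the paper: the same polarization identity $i\d\db\varphi_t\wedge T_t=(1-e^{-f_t})\,i^{n^2}\Omega\wedge\bar\Omega$, the same integration by parts against the Ricci-flat metrics $\omega_{\varphi_t}$ with the boundary flux killed by the decay exactly when $p>\frac{2n-2}{\gamma}$, and the same use of the $t$-independent Sobolev inequality to close the Moser iteration. The only cosmetic differences are that the paper works directly with $|\varphi_t|^{p-2}\varphi_t$ rather than splitting into $(\varphi_t)_\pm$, and applies H\"older with exponent $q=p/(p-1)$ on the right-hand side instead of your Young's-inequality splitting.
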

\begin{proof}
If we set $T_t = \sum_{k=0}^{n-1}\omega_{\varphi_t}^k\wedge\hat{\omega}_t^{n-1-k}$, then we can rewrite the equation as 
\begin{equation*}
    -i\d\db\varphi_t \wedge T_t = (e^{-f_t}-1)i^{n^2}\Omega\wedge\bar{\Omega}
\end{equation*}
multiplying both sides by $|\varphi_t|^{p-2}\varphi_t$ and integrating, we get

\begin{equation*}
        -\int_M |\varphi_t|^{p-2}\varphi_t i\d\db\varphi_t \wedge T_t = \int_M |\varphi_t|^{p-2}\varphi_t (e^{-f_t}-1)i^{n^2}\Omega\wedge\bar{\Omega}
\end{equation*}
we will integrate by parts on the first term
\begin{align*}
    -\int_M |\varphi_t|^{p-2}\varphi_t i\d\db\varphi_t \wedge T_t & = \lim_{R\to \infty}\left(-\int_{B_R} |\varphi_t|^{p-2}\varphi_t i\d\db\varphi_t \wedge T_t\right)\\
    & = \lim_{R\to \infty}\left((p-1)\int_{B_r}|\varphi_t|^{p-2}i\d\varphi_t\wedge\db\varphi_t\wedge T_t -\int_{\d B_R}|\varphi_t|^{p-2}\varphi_t i\db\varphi_t \wedge T_t \right)\\
    & = \frac{4(p-1)}{p^2}\int_{M}i\d|\varphi_t|^{\frac{p}{2}}\wedge\db|\varphi_t|^{\frac{p}{2}}\wedge T_t - \underbrace{\lim_{R\to \infty}\int_{\d B_R}|\varphi_t|^{p-2}\varphi_t i\db\varphi_t \wedge T_t}_{=0 \text{ for } p > \frac{2n-2}{\gamma}}
\end{align*}
Combined with the Sobolev inequality, we have
\begin{equation*}
    \left(\int_M |\varphi_t|^{p\frac{n}{n-1}}i^{n^2}\Omega\wedge\bar{\Omega}\right)^{\frac{n-1}{n}}\leq C\frac{np^2}{4(p-1)}\int_M |\varphi_t|^{p-1} |e^{-f_t}-1|i^{n^2}\Omega\wedge\bar{\Omega}
\end{equation*}
for any $p>\frac{2n-2}{\gamma}$. By H\"older's inequality, we have (below $\frac 1 q + \frac{1}{q'} = 1$)
\begin{equation}\label{iteration}
    \|\varphi_t\|_{L^{p\frac{n}{n-1}}}^p\leq C\frac{np^2}{4(p-1)}\||\varphi_t|^{p-1}\|_{L^{q}}\|e^{-f_t}-1\|_{L^{q'}} = C\frac{np^2}{4(p-1)}\|\varphi_t\|_{L^{q(p-1)}}^{p-1}\|e^{-f_t}-1\|_{L^{q'}}
\end{equation}
picking $q$ such that $q = \frac{p}{p-1}>1$, we get
\begin{align*}
    \|\varphi_t\|^p_{L^{p\frac{n}{n-1}}}&\leq \frac{C_Snp^2}{4(p-1)}\|\varphi_t\|^{p-1}_{L^p}\|e^{-f_t}-1\|_{L^{p}}\\
    &\leq \frac{CC_Snp^2}{4(p-1)}\|\varphi_t\|^{p-1}_{L^p}
\end{align*}
a standard Moser iteration argument gives the result. 

\end{proof}

\begin{Proposition}\label{lp-bound}
    For any $p>\frac{2n}{\gamma}$, we have a uniform $L^p$ estimate of the form
    \begin{equation*}
        \|\varphi_t\|_{L^p}\leq C
    \end{equation*}
    for $C$ depending on $n$, $p$ and $\|e^{-f_t}-1\|_{L^{\frac{np}{n+p}}}$. 
\end{Proposition}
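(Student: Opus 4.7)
The plan is to adapt the integration-by-parts computation used in the proof of Proposition~\ref{prop: Moser It C0}, but instead of generating an iterative inequality on a family of $L^r$-norms, to tune the exponent once so that the resulting inequality is self-referential and closes on itself. Set $q = p(n-1)/n$, so that $qn/(n-1) = p$ is the Sobolev conjugate of $q$. The hypothesis $p > 2n/\gamma$ then translates exactly into $q > (2n-2)/\gamma$, which is precisely the decay threshold from Proposition~\ref{prop: Moser It C0} ensuring that the boundary integrals on $\partial B_R$ of the form $\int_{\partial B_R}|\varphi_t|^{q-2}\varphi_t\, i\bar\partial\varphi_t\wedge T_t = O(R^{2n-2-\gamma q})$ vanish as $R\to\infty$, using $|\nabla^k\varphi_t|=O(r^{-\gamma-k})$ and the uniform bound on $T_t$ on the asymptotically conical end.

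Multiplying the equation $-i\partial\bar\partial\varphi_t\wedge T_t = (e^{-f_t}-1)\,i^{n^2}\Omega\wedge\bar\Omega$ by $|\varphi_t|^{q-2}\varphi_t$, integrating over $X$, and integrating by parts (discarding the vanishing boundary terms) gives, exactly as in Proposition~\ref{prop: Moser It C0},
\[
\frac{4(q-1)}{q^2}\int_X i\partial|\varphi_t|^{q/2}\wedge\bar\partial|\varphi_t|^{q/2}\wedge T_t \;\leq\; \int_X |\varphi_t|^{q-1}|e^{-f_t}-1|\,i^{n^2}\Omega\wedge\bar\Omega.
\]
The pointwise inequality $T_t \geq \omega_{\varphi_t}^{n-1}$ (which holds since $\omega_{\varphi_t}^{n-1}$ is the $k=n-1$ term in the defining sum for $T_t$) bounds the left-hand side below by a dimensional multiple of $\int_X|d|\varphi_t|^{q/2}|_{\omega_{\varphi_t}}^2\,i^{n^2}\Omega\wedge\bar\Omega$. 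The uniform Sobolev inequality \eqref{eq: Sobolev} for $\omega_{\varphi_t}$ then yields
\[
\|\varphi_t\|_{L^p}^{q} \;\leq\; C\,\frac{q^2}{q-1}\int_X|\varphi_t|^{q-1}|e^{-f_t}-1|\,i^{n^2}\Omega\wedge\bar\Omega.
\]

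Finally, H\"older's inequality with conjugate exponents $s,s'$ determined by $s(q-1)=p$ gives, after a short computation with $q=p(n-1)/n$, that $s'=np/(n+p)$, and hence
\[
\|\varphi_t\|_{L^p}^{q} \;\leq\; C\,\frac{q^2}{q-1}\,\|\varphi_t\|_{L^p}^{q-1}\,\|e^{-f_t}-1\|_{L^{np/(n+p)}}.
\]
Since $\varphi_t\in C^{\infty}_{-\gamma}$ and $p\gamma > 2n$, the quantity $\|\varphi_t\|_{L^p}$ is finite for each fixed $t$, so dividing by $\|\varphi_t\|_{L^p}^{q-1}$ gives the asserted bound $\|\varphi_t\|_{L^p}\leq C$ with $C$ depending only on $n$, $p$, and $\|e^{-f_t}-1\|_{L^{np/(n+p)}}$. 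The only delicate point, which dictates the threshold $p>2n/\gamma$, is the vanishing of the boundary terms, and this is precisely where the asymptotic decay of $\varphi_t$ enters; the remainder of the argument is a direct transcription of the Sobolev--H\"older calculation in Proposition~\ref{prop: Moser It C0}, without iteration.
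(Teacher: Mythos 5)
Your proof is correct and is essentially the paper's own argument: the paper likewise starts from the integration-by-parts/Sobolev inequality of Proposition~\ref{prop: Moser It C0} (its equation~\eqref{iteration}, applied with exponent $k=p(n-1)/n$) and chooses the H\"older exponent so that the norm of $\varphi_t$ appearing on the right coincides with the $L^{kn/(n-1)}=L^p$ norm on the left, then divides. Your bookkeeping of the exponents, the threshold $q>(2n-2)/\gamma$ for the boundary term, and the resulting weight $L^{np/(n+p)}$ on $e^{-f_t}-1$ all match the paper.
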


\begin{proof}
In equation \eqref{iteration}, if we pick $q>1$, such that $q(k-1) = k\frac{n}{n-1}$, we get
\begin{equation*}
    \|\varphi_t\|_{L^{k\frac{n}{n-1}}}\leq C\frac{np^2}{4(p-1)}\|e^{-f_t}-1\|_{L^{\frac{nk}{n+k-1}}}
\end{equation*}
taking $p = \frac{nk}{n-1}$ gives us our result. 
\end{proof}

\begin{Corollary}\label{cor: C0-bdd}
	The potentials $\varphi_t$ are bounded in $L^p$ uniformly in $t$ for any $p\in (\frac{2n}{\gamma}, \infty]$, 
	\begin{equation*}
		\|\varphi_t\|_{L^p}\leq C_p
	\end{equation*}
	In particular, th potentials $\varphi_t$ are uniformly bounded in $C^0$. 
\end{Corollary}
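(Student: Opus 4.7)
The plan is to deduce this corollary by chaining Propositions~\ref{prop: Moser It C0} and~\ref{lp-bound} together, after first verifying that the hypothesis on $\|e^{-f_t}-1\|_{L^q}$ holds uniformly in $t$. Recall from Proposition~\ref{Background metrics}(4) that outside a compact set $K$ the Ricci potentials satisfy $|f_t|\leq C r^{-\gamma-2}$ uniformly in $t\in [0,\varepsilon]$, so in particular $|e^{-f_t}-1|\leq C r^{-\gamma-2}$ outside $K$, while on $K$ the quantity $|e^{-f_t}-1|$ is bounded uniformly in $t$. Since the measure $i^{n^2}\Omega\wedge\bar\Omega$ is asymptotic to $\omega_C^n$ and hence behaves like $r^{2n-1}\,dr$ on the end, we get
\[
\|e^{-f_t}-1\|_{L^q(i^{n^2}\Omega\wedge\bar\Omega)}\leq C_q \quad\text{whenever}\quad q(\gamma+2)>2n,
\]
uniformly in $t$, i.e.\ for every $q>\tfrac{2n}{\gamma+2}$.

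The main step is then to apply Proposition~\ref{lp-bound}. Fix any $p>\tfrac{2n}{\gamma}$. The proposition requires a uniform bound on $\|e^{-f_t}-1\|_{L^{np/(n+p)}}$, and the condition $\tfrac{np}{n+p}>\tfrac{2n}{\gamma+2}$ is algebraically equivalent to $p\gamma>2n$, which is exactly our assumption. Thus the previous paragraph yields a uniform bound on the relevant $L^q$ norm of $e^{-f_t}-1$, and Proposition~\ref{lp-bound} gives $\|\varphi_t\|_{L^p}\leq C_p$ uniformly in $t$.

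To upgrade this to a $C^0$ bound, note $\tfrac{2n}{\gamma}>\tfrac{2n-2}{\gamma}$, so for any chosen $p>\tfrac{2n}{\gamma}$ the hypothesis of Proposition~\ref{prop: Moser It C0} is satisfied. The Moser iteration bound there gives
\[
\sup_X |\varphi_t| \leq C\,\|\varphi_t\|_{L^p(i^{n^2}\Omega\wedge\bar\Omega)} \leq C',
\]
again uniformly in $t$, using the previous step. Once the $L^\infty$ bound is established, the $L^p$ bound for the remaining values $p\in(\tfrac{2n}{\gamma},\infty]$ follows by interpolation against any fixed $L^{p_0}$ bound with $p_0>\tfrac{2n}{\gamma}$.

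I do not expect any real obstacle here; the only point that needs to be checked carefully is the bookkeeping of the exponents, namely the equivalence $\tfrac{np}{n+p}>\tfrac{2n}{\gamma+2}\Longleftrightarrow p>\tfrac{2n}{\gamma}$, which aligns the integrability threshold of the data $e^{-f_t}-1$ coming from Proposition~\ref{Background metrics} with the range of $p$ for which Proposition~\ref{lp-bound} produces uniform $L^p$ control. Everything else is a direct invocation of the two preceding propositions and the uniform decay of the Ricci potentials provided by the background metric construction.
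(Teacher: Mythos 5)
Your proof is correct and follows the same route as the paper: the corollary is obtained by combining Proposition~\ref{prop: Moser It C0} and Proposition~\ref{lp-bound}, with the only substantive check being that the decay $|f_t|\leq Cr^{-\gamma-2}$ gives a $t$-uniform bound on the relevant $L^q$ norm of $e^{-f_t}-1$. Your exponent bookkeeping (the equivalence $\tfrac{np}{n+p}>\tfrac{2n}{\gamma+2}\Leftrightarrow p>\tfrac{2n}{\gamma}$) is accurate and in fact spelled out more carefully than in the paper's one-line justification.
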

\begin{proof}
	This follows by combining Proposition~\ref{prop: Moser It C0} and Proposition~\ref{lp-bound}. Note that since $|f_t|\leq Cr^{-\gamma-2}$ outside a fixed compact set, we have an estimate $\|e^{-f_t}-1\|_{L^{\frac{np}{n+p}}}\leq C$ for a constant $C$ independent of $p,t$ for any $p>\frac{2n-2}{\gamma}$.
\end{proof}

\subsection{Convergence of the metric away from the degeneracy locus}
In this section, we prove an estimate for $\d\db\varphi_t$ away from $V$, the subvariety coming from Assumption~\ref{ass: mainAss}. Recall that by Assumption~\ref{ass: mainAss}, there exist $\psi\in PSH(X, \alpha_0)$ which is smooth outside of $V$ and goes to $-\infty$ near $V$, the idea is to use this function as a barrier function in the $C^2$ estimate, and this is first used by Tsuji in in \cite{Tsuji} to study K\"ahler-Ricci flow. We remark that this is the only part of the Theorem that uses the current in Assumption~\ref{ass: mainAss}. 

Before we prove the estimate, we first construct a slightly more better behaved barrier function $\psi_{\eps}\in PSH(X, \hat{\omega}_0)$ which is compactly supported. Recall that from the construction of $\hat{\omega}_0$, $\hat{\omega}_0$ is equal to $\alpha_0$ on a large compact set. (which from the construction can be as large as one want) 
\begin{Lemma}\label{barrier}
	 There exist $\psi_{\eps}\in PSH(X, \hat{\omega}_0)$ which is compactly supported and satisfy $\hat{\omega}_0+i\d\db\psi_{\eps}\geq \eps \omega$, and is smooth outside $V$ and goes to $-\infty$ near $V$. 
\end{Lemma}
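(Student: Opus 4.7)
The plan is to localize the barrier $\psi$ from Assumption~\ref{ass: mainAss} using a smooth cutoff supported in a compact neighborhood of $V$, and to choose the cutoff scale small enough that the error terms introduced by the cutoff are absorbed by the strict positivity of $\hat\omega_0$ on the transition region.

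First I would invoke Proposition~\ref{Background metrics}(2), which allows $\hat\omega_0$ to be chosen so that $\hat\omega_0 = \alpha_0$ on a compact set $K_0$ which is as large as we wish; in particular arrange $V \subset\subset \mathrm{int}(K_0)$. Inspection of the construction in Proposition~\ref{bkground-metric 1} shows that outside $K_0$ the corrections by $i\d\db\Phi(r^2)$ and $Ci\d\db((1-\zeta_S)\Phi(r^{2\alpha}))$ make $\hat\omega_0$ strictly positive as a $(1,1)$-form; combined with the asymptotically conical behavior $\hat\omega_0 \to \omega_C$ at infinity, it follows that for any compact $K_1$ with $K_0 \subset\subset K_1$ there exist $c,\eta > 0$ such that $\hat\omega_0 \geq c\omega$ on $K_1 \setminus K_0$ and $\hat\omega_0 \geq \eta\omega$ on $X \setminus K_1$.

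Next, pick a smooth cutoff $\chi: X \to [0,1]$ with $\chi \equiv 1$ on $K_0$ and $\mathrm{supp}(\chi) \subset K_1$, and define
\[
\psi_\eps := \eps\,\chi\,\psi
\]
for $\eps > 0$ to be fixed. I would then verify each required property region by region. On $K_0$, where $d\chi = 0$ and $\hat\omega_0 = \alpha_0$,
\[
\hat\omega_0 + i\d\db\psi_\eps = (1-\eps)\alpha_0 + \eps(\alpha_0 + i\d\db\psi) \geq \eps\,\eps_0\,\omega,
\]
using semi-positivity of $\alpha_0$ and Assumption~\ref{ass: mainAss}. On the compact annulus $K_1 \setminus K_0$, $\psi$ is smooth and so has bounded $C^2$-norm; expanding
\[
i\d\db(\chi\psi) = \chi\,i\d\db\psi + \psi\,i\d\db\chi + \d\chi\wedge\db\psi + \d\psi\wedge\db\chi
\]
yields $|i\d\db\psi_\eps|_\omega \leq C\eps$ for some constant $C$ depending on $\chi$ and the $C^2$-norm of $\psi$ on $K_1\setminus K_0$, so $\hat\omega_0 + i\d\db\psi_\eps \geq (c - C\eps)\omega$, which is $\geq (c/2)\omega$ provided $\eps < c/(2C)$. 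On $X \setminus K_1$, $\psi_\eps \equiv 0$ and $\hat\omega_0 + i\d\db\psi_\eps = \hat\omega_0 \geq \eta\omega$. Choosing $\eps$ small enough gives $\hat\omega_0 + i\d\db\psi_\eps \geq \eps\omega$ (after a harmless renaming of $\eps$). Compact support in $K_1$, smoothness on $X \setminus V$, and blow-up to $-\infty$ on $V$ are immediate from the corresponding properties of $\chi$ and $\psi$.

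The main potential obstacle is the strict positivity $\hat\omega_0 \geq c\omega$ on the transition annulus $K_1\setminus K_0$, since this is what is used to absorb the $O(\eps)$ errors coming from the derivatives of $\chi$. This is built into the construction of $\hat\omega_0$ in Proposition~\ref{bkground-metric 1}, which is precisely designed to be K\"ahler outside the chosen compact set where semi-positivity cannot be avoided; the companion asymptotic bound at infinity uses that the reference $\omega$ is itself asymptotically conical, and so is comparable to $\omega_C$ outside a sufficiently large compact set.
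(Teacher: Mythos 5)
Your proposal runs into a genuine gap at the step where you assert $\hat\omega_0 \geq c\omega$ on the transition annulus $K_1\setminus K_0$. Proposition~\ref{Background metrics} (and its ingredient Proposition~\ref{bkground-metric 1}) give only that $\hat\omega_0 \geq 0$, that $\hat\omega_0 = \alpha_0$ on a compact set $K_0$, and that $\hat\omega_0$ is asymptotically conical far out; they make no claim of a uniform positive lower bound on any collar of $\d K_0$. The corrections added to $\alpha_0$ in the construction of $\hat\omega_0$ are smooth and vanish identically on $K_0$, so $\hat\omega_0 \to \alpha_0$ as one approaches $\d K_0$ from the outside, and $\alpha_0$ is merely semi-positive there. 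Assumption~\ref{ass: mainAss} gives $\alpha_0 + i\d\db\psi \geq \eps_0\omega$ with $\psi$ smooth away from $V$, but this permits $\alpha_0$ to be degenerate wherever $i\d\db\psi$ is sufficiently positive; the semi-positivity of $\alpha_0$ is not upgraded to strict positivity off $V$. Consequently the smallest eigenvalue of $\hat\omega_0$ relative to $\omega$ may tend to $0$ on $K_1\setminus K_0$ near $\d K_0$, there is no uniform $c>0$, and the absorption of the $O(\eps)$ cutoff errors fails.

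The paper's argument sidesteps exactly this by taking $\psi_\eps = (1-\zeta_S)\psi + C(1-\zeta_R)\Psi(r^{2\kappa})$: the second term is compactly supported, globally $\hat\omega_0$-psh, and \emph{strictly} plurisubharmonic on the intermediate annulus $\{S\leq r\leq R\}$ that bridges the region where $\psi$ supplies positivity and the region where asymptotic conical behaviour makes $\hat\omega_0$ uniformly K\"ahler. In other words, the strict positivity that your approach tries to extract from $\hat\omega_0$ itself is instead supplied by a new, explicitly constructed plurisubharmonic function. To repair your proof you would have to add such a compactly supported, locally strictly psh term to $\eps\chi\psi$, at which point you essentially recover the paper's construction.
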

\begin{proof}
	Recall by \cite[Lemma 2.15]{CH13}, we know that $r^{2\kappa}$ for $\kappa \in (0, 1)$ is strictly plurisubharmonic for $r$ sufficiently large, and satisfies
	\[|\nabla r^{2\kappa}| = O(r^{2\kappa-1}) \qquad|i\d\db r^{2\kappa}| = O(r^{2\kappa-2})\]
	Pick $\Psi: \R^{+}\to \R^{+}$ smooth satisfy $\Psi', \Psi'' \geq 0$  and
	 \begin{equation*}
	\Psi(x) = 
	\begin{cases}
	T+2       & \quad \text{for } x < T+1\\
	x  & \quad \text{for } x > T+3
	\end{cases}
	\end{equation*}
	then as in \cite[Lemma 2.15]{CH13}, for $T\gg 1$, $\Psi(r^{2\kappa})$ is plurisubharmonic and equal to $r^{2\kappa}$ for $r$ sufficiently large. 
	
	We set 
	\[\psi_{\eps} = (1-\zeta_S)\psi+ C(1-\zeta_R)\Psi(r^{2\kappa})\]
	where $S, C, R$ are chosen as follows. First we pick $S\gg 1$ large enough such that $\hat{\omega}_{0} = \alpha_0$ on $\{r\leq S\}$ and $i\d\db\Psi(r^{2\kappa})> 0$ on $\{S\leq r\leq 2S\}$, which implies that $\hat{\omega}_0+i\d\db \psi_{\eps} = \alpha_0+i\d\db\psi\geq \eps_0\omega$ on $\{r\leq S\}$. Then pick $C\gg 1$ large enough so that $Ci\d\db\Psi(r^{2\kappa})> i\d\db ((1-\zeta_S)\psi)$ on $\{S\leq r\leq 2S\}$. Finally, we pick $R\gg S$ such that $\hat{\omega}_0+i\d\db \psi_{\eps}>0$ on $\{R\leq r\leq 2R\}$, which is possible since for $R$ large, we have
	\[|i\d\db (1-\zeta_R)\Psi(r^{2\kappa})|\leq |\nabla^2 \zeta_R||r^{2\kappa}|+ |\nabla^2\Psi(r^{2\kappa})||1-\zeta_R|+|\nabla \zeta_R||\nabla r^{2\kappa}|\leq C R^{2(\kappa-1)}\ll 1\]
	Then $\hat{\omega}_0+i\d\db\psi_{\eps} > 0$ and $\hat{\omega}_0+i\d\db\psi_{\eps} > \eps_0\omega$ on the compact set $K$ containing $V$, hence there exist an $\eps>0$ such that $\hat{\omega}_0+i\d\db\psi_{\eps} > \eps\omega$ holds. 
\end{proof}

Now we prove the main estimate of this section. 
\begin{Proposition}\label{prop: C2-bdd}
    There are uniform constants $B, C>0$, independent of $t$ such that the following estimate holds:
    \begin{equation*}
    	|\d\db\varphi_t| \leq Ce^{-B\psi_{\eps}}.
    \end{equation*}
\end{Proposition}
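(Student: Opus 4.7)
My approach is to apply the maximum principle to the auxiliary quantity
$$H_t := \log \text{tr}_{\hat\omega_t}(\omega_{\varphi_t}) + B\psi_{\eps} - A\varphi_t,$$
for constants $A,B>0$ to be chosen, where $\psi_\eps$ is the compactly supported barrier from Lemma~\ref{barrier}. The goal is to show $\sup_X H_t \leq C$ uniformly in $t$, which, together with the uniform $C^0$ bound on $\varphi_t$ from Corollary~\ref{cor: C0-bdd} and the uniform upper bound on $\psi_\eps$, rearranges to the claimed estimate $\text{tr}_{\hat\omega_t}\omega_{\varphi_t} \leq C' e^{-B\psi_\eps}$; since $\hat\omega_t$ and any fixed asymptotically conical reference metric are uniformly comparable on the set where $\psi_\eps$ is finite, this yields the bound on $|\d\db\varphi_t|$.

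A preliminary observation is that $H_t \to -\infty$ along $V$ (where $\psi_\eps \to -\infty$ while the other terms are bounded), and that outside a compact set $\psi_\eps$ vanishes identically, $\varphi_t$ decays at rate $O(r^{-\gamma})$ by Theorem~\ref{CH-existence}, and $\omega_{\varphi_t}$ is asymptotic to $\hat\omega_t$, so $\text{tr}_{\hat\omega_t}\omega_{\varphi_t} \to n$ and $H_t \to \log n$ at infinity. Hence $\sup_X H_t$ is either attained at an interior point $p \in X\setminus V$, or equals $\log n$ as a limit at infinity, in which case the bound is immediate.

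At an interior maximum $p$ I invoke the classical Yau--Aubin second-order inequality, which for the Ricci-flat metric $\omega_{\varphi_t}$ takes the form
$$\Delta_{\omega_{\varphi_t}}\log\text{tr}_{\hat\omega_t}(\omega_{\varphi_t}) \geq -C_0\,\text{tr}_{\omega_{\varphi_t}}\hat\omega_t,$$
with $C_0$ depending only on an upper bound of the bisectional curvature of $\hat\omega_t$, which is uniform in $t$ by Proposition~\ref{Background metrics}. Combining this with $\Delta_{\omega_{\varphi_t}}(-A\varphi_t) = A\,\text{tr}_{\omega_{\varphi_t}}\hat\omega_t - An$ and, for $t$ sufficiently small, the barrier inequality $\hat\omega_t + i\d\db\psi_\eps \geq (\eps/2)\omega$ (obtained by perturbing the $t=0$ estimate in Lemma~\ref{barrier} using smoothness of $\hat\omega_t$ in $t$), yields
$$\Delta_{\omega_{\varphi_t}}(B\psi_\eps) \geq \tfrac{B\eps}{2}\,\text{tr}_{\omega_{\varphi_t}}\omega - B\,\text{tr}_{\omega_{\varphi_t}}\hat\omega_t.$$
Choosing $A = B + C_0 + 1$ produces $\Delta_{\omega_{\varphi_t}}H_t \geq \text{tr}_{\omega_{\varphi_t}}\hat\omega_t - An$ at $p$; since $\Delta H_t(p)\leq 0$, this forces $\text{tr}_{\omega_{\varphi_t}}\hat\omega_t(p) \leq An$. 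The standard eigenvalue inequality $\text{tr}_{\hat\omega_t}\omega_{\varphi_t} \leq \tfrac{1}{(n-1)!}\,e^{f_t}\bigl(\text{tr}_{\omega_{\varphi_t}}\hat\omega_t\bigr)^{n-1}$ plus the uniform bound on $e^{f_t}$ converts this into a uniform upper bound for $\text{tr}_{\hat\omega_t}\omega_{\varphi_t}(p)$, hence for $H_t(p)$, finishing the argument.

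The main technical obstacle is rigorously applying the maximum principle in this non-compact setting with $H_t$ singular along $V$; the end-behavior analysis above is exactly designed to handle it by reducing to a genuine interior maximum or to the asymptotic value $\log n$. A secondary delicate point is ensuring the barrier inequality $\hat\omega_t + i\d\db\psi_\eps \geq \eps'\omega$ holds uniformly in $t$ as $t\to 0$, which follows from smooth dependence of $\hat\omega_t$ on $t$ and the strict inequality at $t=0$ from Lemma~\ref{barrier}.
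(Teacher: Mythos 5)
Your overall strategy --- the Tsuji-type barrier $\psi_{\eps}$, the Yau--Aubin inequality, and the maximum principle applied to $\log \Tr(\cdot) + B\psi_\eps - A\varphi_t$ with the dichotomy between an interior maximum and the asymptotic value at infinity --- is exactly the paper's. But there is a genuine gap in your choice of reference metric. You take the trace with respect to $\hat\omega_t$ and assert that the constant $C_0$ in
\[
\lapl_{\omega_{\varphi_t}}\log \Tr_{\hat\omega_t}\omega_{\varphi_t} \geq -C_0\, \Tr_{\omega_{\varphi_t}}\hat\omega_t
\]
is uniform in $t$ because it depends only on the bisectional curvature of $\hat\omega_t$, citing Proposition~\ref{Background metrics}. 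That proposition gives no curvature bound, and none can hold: on the compact set $K$ we have $\hat\omega_t = (1-t)\alpha_0 + t\,\omega_1 + \cdots$ with $\alpha_0$ merely semi-positive, so $\hat g_t^{-1}$ (and hence the curvature of $\hat\omega_t$) blows up as $t\to 0$ wherever $\alpha_0$ degenerates. For the same reason your claim that ``$\hat\omega_t$ and any fixed asymptotically conical reference metric are uniformly comparable on the set where $\psi_\eps$ is finite'' is false --- only the upper bound $\hat\omega_t \leq C\omega$ is uniform; the lower bound fails off $V$ wherever $\alpha_0$ is degenerate. (The final conversion to $|\d\db\varphi_t|$ would survive using just the upper bound, but the maximum-principle step does not.)

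The fix is what the paper does: trace with respect to the \emph{fixed} K\"ahler metric $\omega$ appearing in Assumption~\ref{ass: mainAss}, whose curvature bound is manifestly independent of $t$. The barrier then contributes $\lapl_{\omega_{\varphi_t}}(B\psi_\eps) \geq B\eps\,\Tr_{\omega_{\varphi_t}}\omega - B\,\Tr_{\omega_{\varphi_t}}\hat\omega_0$, the bad term $-B\,\Tr_{\omega_{\varphi_t}}\hat\omega_0$ is absorbed by $+N\,\Tr_{\omega_{\varphi_t}}\hat\omega_t$ coming from $-N\varphi_t$ with $N\gg B$ (using $\hat\omega_t \geq c\,\hat\omega_0$ for small $t$), and the surviving good term $(B\eps - A)\Tr_{\omega_{\varphi_t}}\omega$ controls $\left(\Tr_{\omega}\omega_{\varphi_t}\right)^{1/(n-1)}$ via the fixed volume form $\omega_{\varphi_t}^n = i^{n^2}\Omega\wedge\bar\Omega$. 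With that substitution the rest of your argument goes through.
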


\begin{proof}
By the well-known computation of Aubin and Yau, we have
\begin{equation*}
    \lapl_{\varphi_t}\log \Tr_{\omega}\omega_{\varphi_t} \geq -A\Tr_{\varphi_t}\omega
\end{equation*}
where $A$ is a lower bound for the bisectional curvatures of $\omega$. 
Then if we pick $N\gg B$ sufficiently large, we have
\begin{align*}
	\lapl_{\varphi_t}\left(\log \Tr_{\omega}\omega_{\varphi_t}+
	B\psi_{\eps}-N\varphi_t\right)&\geq (B\eps-A)\Tr_{\varphi_t}\omega-B\Tr_{\varphi_t}\hat{\omega}_0+N\Tr_{\varphi_t}\hat{\omega}_t-Nn\\
	&\geq C\left(\frac{\omega^n}{ni^{n^2}\Omega\wedge\bar{\Omega}}\Tr_{\omega}\omega_{\varphi_t}\right)^{\frac{1}{n-1}}- Bn
\end{align*}
since $\psi_{\eps}$ goes to $-\infty$ near $V$ and the function $\log \Tr_{\omega}\omega_{\varphi_t}+
B\psi_{\eps}-N\varphi_t$ goes to 0 at infinity, either $\log \Tr_{\omega}\omega_{\varphi_t}+
B\psi_{\eps}-N\varphi_t$ is always non-positive, in which case we are done, or maximum is achieved in the interior, and applying the maximum principle gives
\begin{equation*}
	\Tr_{\omega}\omega_{\varphi_t}\leq Ce^{B(\sup\psi_{\eps}-\psi_{\eps})}
\end{equation*}
from which the estimate follows.
\end{proof}

\begin{Remark}
	This argument is the only place where we used the K\"ahler current in Assumption~\ref{ass: mainAss}. In the situation where $[\alpha_0] = \pi^{*}c_1(L)$ where $\pi:X\to X_0$ is a crepant resolution of a singular Calabi-Yau variety with compactly supported singularities and $L\to X_0$ is an ample line bundle on $X_0$, the above $C^2$ estimate can be replaced by the argument in Lemma~\ref{Schwartz Lemma}, and the convergence holds away from $\pi^{-1}(X_0^{sing})$. In that case we do not need the K\"ahler current in Assumption~\ref{ass: mainAss} to prove Theorem~\ref{Main Theorem}. 
\end{Remark}

The higher order estimates follow from the standard methods of Yau \cite{Yau78, PSS, SW}. 

\begin{Proposition}[Higher order estimates]\label{prop: C-inf-loc-bdd}
	We have a uniform estimate
	\begin{equation*}
		\|\varphi_t\|_{C^{k, \alpha}_{loc}(K)}\leq C(K, k, \alpha)
	\end{equation*}
	for any $K\subset \subset X\setminus V$ and $C$ independent of $t$. 
\end{Proposition}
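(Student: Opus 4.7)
The plan is to run the standard bootstrap scheme for the complex Monge-Amp\`ere equation, using as inputs the uniform $C^0$ bound from Corollary~\ref{cor: C0-bdd} and the weighted $C^2$ bound from Proposition~\ref{prop: C2-bdd}. Fix a compact set $K\subset\subset X\setminus V$. Since the barrier function $\psi_{\eps}$ from Lemma~\ref{barrier} is smooth on $X\setminus V$, the estimate $|i\d\db\varphi_t|\le Ce^{-B\psi_{\eps}}$ yields a constant $C(K)$, independent of $t$, with $\|\varphi_t\|_{C^{1,1}(K)}\le C(K)$. In particular, the Calabi-Yau metrics $\omega_{\varphi_t}=\hat\omega_t+i\d\db\varphi_t$ are uniformly equivalent to a fixed smooth background metric on $K$, so the equation $(\hat\omega_t+i\d\db\varphi_t)^n=e^{f_t}\hat\omega_t^n$ is a uniformly elliptic fully nonlinear equation on $K$ with smooth, uniformly bounded coefficients.

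From here I would apply the Evans-Krylov theorem to the concave operator $\log\det$. Writing the equation in local holomorphic coordinates on a slightly larger set $K\subset K''\subset\subset X\setminus V$, the background data $\hat\omega_t$ and the source $f_t$ depend smoothly (in fact linearly) on $t$ and are uniformly bounded in every $C^k(K'')$, so Evans-Krylov produces a uniform interior estimate
\[
\|\varphi_t\|_{C^{2,\alpha}(K')}\le C(K',\alpha)
\]
for any $K'\subset\subset K$ and some $\alpha\in(0,1)$, with the constant independent of $t$.

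Finally, with $C^{2,\alpha}$ control in hand I would differentiate the complex Monge-Amp\`ere equation by a local real vector field. The resulting linear equation for a derivative of $\varphi_t$ is elliptic with H\"older-continuous coefficients that are uniformly bounded in $t$, and a standard Schauder estimate promotes the bound to $C^{3,\alpha}$ on a slightly smaller subset. Iterating this bootstrap yields $C^{k,\alpha}$ bounds for every $k$, as in \cite{Yau78, PSS, SW}. There is no serious obstacle in this step; the only point requiring attention is the uniformity in $t$, which is automatic since both the input bounds (Corollary~\ref{cor: C0-bdd} and Proposition~\ref{prop: C2-bdd}) and the background data $(\hat\omega_t,f_t)$ are uniform on the compact sets in question.
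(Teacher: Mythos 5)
Your proposal is correct and follows essentially the same route as the paper, which simply defers to the standard local estimates of \cite{SW} (and \cite{Yau78, PSS}) once the uniform $C^0$ bound and the local $\partial\bar\partial$-bound away from $V$ are in hand. The only cosmetic difference is that you pass from $C^{1,1}$ to $C^{2,\alpha}$ via Evans--Krylov rather than the local Calabi third-order estimate of \cite{SW}; both are standard and the uniformity in $t$ is handled correctly in either case.
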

\begin{proof}
	This follows from the local estimates in \cite{SW}. 
\end{proof}
\begin{Corollary}\label{cor: C-infty-loc convergence of metrics}
	The metrics $\omega_{\varphi_t}$ converge after passing to a subsequence in $C^{\infty}_{\text{loc}}(X\setminus V)$ to a possibly incomplete metric $\omega_{\varphi_0}$ on $X\setminus V$, which is uniformly equivalent to $\omega_C$ at infinity. 
\end{Corollary}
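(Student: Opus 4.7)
The plan is to combine the interior higher order estimates of Proposition~\ref{prop: C-inf-loc-bdd} on compact subsets of $X\setminus V$ with the asymptotic decay estimates at infinity coming from Proposition~\ref{estimates}(3), and then conclude by a diagonal/Arzel\`a--Ascoli argument.

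First, I would exhaust $X\setminus V$ by an increasing family of relatively compact open sets $K_1 \subset K_2 \subset \cdots \subset X\setminus V$ with $\bigcup_j K_j = X\setminus V$. By Proposition~\ref{prop: C-inf-loc-bdd}, on each $K_j$ the family $\{\varphi_t\}_{t\in(0,1]}$ is bounded in $C^{k,\alpha}(K_j)$ for every $k$ and every $\alpha \in (0,1)$, with bounds independent of $t$. Applying Arzel\`a--Ascoli on each $K_j$ together with a standard diagonal extraction, I obtain a sequence $t_i \to 0$ such that $\varphi_{t_i}$ converges in $C^{\infty}_{\mathrm{loc}}(X\setminus V)$ to a smooth limit $\varphi_0$. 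Since $\hat{\omega}_{t_i} \to \hat{\omega}_0$ smoothly on $X\setminus V$ by Proposition~\ref{Background metrics}, the metrics $\omega_{\varphi_{t_i}} = \hat{\omega}_{t_i} + i\d\db\varphi_{t_i}$ then converge in $C^{\infty}_{\mathrm{loc}}(X\setminus V)$ to the closed $(1,1)$-form $\omega_{\varphi_0} := \hat{\omega}_0 + i\d\db \varphi_0$, and the Monge--Amp\`ere equation $\omega_{\varphi_0}^n = i^{n^2}\Omega\wedge\bar\Omega$ holds smoothly on $X\setminus V$ by passing to the limit. In particular, since the right-hand side is a strictly positive volume form and $\omega_{\varphi_0}$ is a smooth non-negative limit of K\"ahler forms, $\omega_{\varphi_0}$ is actually positive definite on $X\setminus V$.

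Next I would establish the uniform equivalence to $\omega_C$ near infinity. By Proposition~\ref{estimates}(3) there is a compact set $K\subset X$ such that $|\nabla^k \varphi_t|_{g_C} \le C r^{-\gamma - k}$ on $X\setminus K$, uniformly in $t \in (0,1]$. Combining this with the asymptotic estimate $|\nabla^k(\hat{\omega}_t - \omega_C)|_{g_C} \le C r^{-\nu - k}$ from Proposition~\ref{Background metrics}(3), we obtain
\begin{equation*}
|\omega_{\varphi_t} - \omega_C|_{g_C} \le C(r^{-\nu} + r^{-\gamma}) \quad \text{on } X\setminus K,
\end{equation*}
uniformly in $t$. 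Choosing a possibly larger compact set if necessary, this forces $\tfrac{1}{2}\omega_C \le \omega_{\varphi_t} \le 2\omega_C$ outside this compact set, uniformly in $t$. The same bounds pass to the limit $\omega_{\varphi_0}$, yielding the claimed uniform equivalence to $\omega_C$ at infinity.

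There is really no serious obstacle here: all the heavy lifting has been done in Propositions~\ref{Background metrics},~\ref{estimates}, and~\ref{prop: C-inf-loc-bdd}. The only mild subtlety is to make sure that the compactness argument combines properly with the asymptotic decay at infinity, which is handled by using the Arzel\`a--Ascoli argument on interior compact sets together with the uniform asymptotic control away from $V$.
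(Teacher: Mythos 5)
Your overall strategy (Arzel\`a--Ascoli plus a diagonal extraction from the uniform $C^{k,\alpha}_{loc}$ bounds of Proposition~\ref{prop: C-inf-loc-bdd}, then separate uniform control at infinity) is the intended one, and the first half of your argument, including the observation that the limit is positive definite because $\omega_{\varphi_0}\geq 0$ and $\omega_{\varphi_0}^n = i^{n^2}\Omega\wedge\bar\Omega>0$, is fine. However, there is a logical gap in the second half: you derive the uniform equivalence $\tfrac12\omega_C\leq \omega_{\varphi_t}\leq 2\omega_C$ near infinity from the decay estimates of Proposition~\ref{estimates}(3). In the paper's structure those decay estimates are proved \emph{after} this corollary, in the weighted Moser iteration of the next subsection, and that argument explicitly invokes Corollary~\ref{cor: C-infty-loc convergence of metrics} to bound $|T_t|$ on $X\setminus K$. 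So as written your proof is circular: part (3) of Proposition~\ref{estimates} is not yet available when the corollary is needed.

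The repair is short and uses only what has already been established at this point. The barrier $\psi_{\eps}$ of Lemma~\ref{barrier} equals $C\Psi(r^{2\kappa})$ near infinity, hence is bounded below (indeed tends to $+\infty$) outside a compact set, so the $C^2$ estimate of Proposition~\ref{prop: C2-bdd} gives a uniform upper bound $\Tr_{\omega}\omega_{\varphi_t}\leq C$ on $X\setminus K$ for a fixed compact $K$. Combining this with the Monge--Amp\`ere equation $\omega_{\varphi_t}^n = i^{n^2}\Omega\wedge\bar\Omega$, which is uniformly comparable to $\omega^n$ at infinity, yields the matching lower bound $\omega_{\varphi_t}\geq c\,\omega$ there via the elementary inequality $\Tr_{\omega_{\varphi_t}}\omega\leq \frac{(\Tr_{\omega}\omega_{\varphi_t})^{n-1}}{\omega_{\varphi_t}^n/\omega^n}$. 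Since $\omega$ is uniformly equivalent to $\omega_C$ at infinity, this gives the claimed uniform equivalence for all $t$, and it passes to the limit. With this substitution (citing Proposition~\ref{prop: C2-bdd} rather than Proposition~\ref{estimates}(3)) your proof is complete and non-circular.
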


So far, we've shown the first two parts of Proposition~\ref{estimates}, in the next section we prove decay estimates for $\varphi_t$. 

\subsection{Decay estimates}
In this section, we prove uniform decay estimates for $\varphi_t$. We use the method of Moser iteration with a weight, similar to the technique used in \cite[Chap 8]{Joyce}.  However, as in Section~\ref{subsec: C0bd}, we use the Ricci flat metrics $\omega_{\varphi_t}$, exploiting the uniform control of the Sobolev constants. 

Recall that $r:X\to \R_{> 0}$ is a radius function such that $|\nabla r|+r|i\d\db r|\leq C$, and it's not hard to see that we can also assume that $r = const$ on a compact set $K$ containing the singular set $V$. 

\begin{Definition}\label{weighted-lp-norm}
	We define the following weighted $L^p$ norms, 
	\begin{equation*}
		\|u\|_{L^{p}_{\delta}({i^{n^2}\Omega\wedge\bar{\Omega}})} = \left(\int_X |u r^{\delta}|^p r^{-2n}i^{n^2}\Omega\wedge\bar{\Omega}\right)^{\frac{1}{p}}
	\end{equation*}
\end{Definition}

\begin{Remark}
	Notice if we let $p\to \infty$, then the $L^p_{\delta}$ norms converge to the $L^{\infty}_{\delta}$ norm given by $\|u\|_{L^{\infty}_{\delta}} = \sup_{X}|ur^{\delta}|$
\end{Remark}

\begin{Proposition}\label{weighted-lp-bound}
    For any $\delta<\gamma$, we have a uniform bound of the form
    \begin{equation*}
        \|\varphi_t\|_{L^{p}_{\delta}(\omega_{\varphi_t}^n)}\leq C
    \end{equation*}
    for any $p\in (0, \frac{2n}{\delta}]$, and constant depending on $p, \delta$. 
\end{Proposition}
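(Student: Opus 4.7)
The plan is to reduce the weighted $L^p_\delta$-bound to the unweighted $L^p$-bound of Proposition~\ref{lp-bound} by a single application of H\"older's inequality, exploiting the fact that the Calabi-Yau volume form $\omega_{\varphi_t}^n = i^{n^2}\Omega\wedge\bar{\Omega}$ is independent of $t$.

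I first handle the critical endpoint $p = 2n/\delta$. Here the weight $r^{p\delta - 2n}$ reduces to $1$, so
\[
\|\varphi_t\|_{L^{2n/\delta}_\delta}^{2n/\delta} = \int_X |\varphi_t|^{2n/\delta}\, i^{n^2}\Omega \wedge \bar{\Omega}
\]
is precisely the standard $L^{2n/\delta}$-norm controlled by Proposition~\ref{lp-bound}, and since $\delta < \gamma$ gives $2n/\delta > 2n/\gamma$, that proposition applies directly.

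For interior exponents $p \in (0, 2n/\delta)$ I will apply H\"older's inequality with conjugate exponents $(k, k')$:
\[
\|\varphi_t\|_{L^p_\delta}^p = \int_X |\varphi_t|^p \, r^{p\delta - 2n}\, \omega_{\varphi_t}^n \leq \Bigl(\int_X |\varphi_t|^{pk}\, \omega_{\varphi_t}^n\Bigr)^{1/k}\Bigl(\int_X r^{(p\delta - 2n)k'}\, \omega_{\varphi_t}^n\Bigr)^{1/k'}.
\]
The key choice is $k \in \bigl(2n/(p\gamma),\, 2n/(p\delta)\bigr)$, an interval that is non-empty precisely because $\delta < \gamma$. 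The lower bound yields $pk > 2n/\gamma$, so Proposition~\ref{lp-bound} bounds the first factor uniformly in $t$; the upper bound translates via $1/k + 1/k' = 1$ (together with $p\delta - 2n < 0$) into $(p\delta - 2n)k' < -2n$, which renders the second factor a finite $t$-independent constant. The latter finiteness is seen by splitting into the compact part of $X$, where $r$ is pinched between positive constants, and the end, where $\omega_{\varphi_t}^n$ is comparable to $\omega_C^n$ and the integral reduces to a model radial integral $\int_{R_0}^\infty r^{(p\delta - 2n)k' + 2n - 1}\, dr$ with exponent strictly less than $-1$.

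The only substantive observation in the argument is that $p = 2n/\delta$ is the exact threshold at which the weighted norm against $\omega_{\varphi_t}^n$ collapses to the ordinary $L^p$-norm, and that the strict gap $\gamma - \delta > 0$ provides exactly the slack needed to pick valid H\"older exponents at interior $p$. I do not anticipate any real analytic obstacle beyond this bookkeeping; the more delicate step — the uniform unweighted $L^p$ bound, which itself depends on the uniform Sobolev inequality and Moser iteration against the Calabi-Yau volume form — has already been done in Proposition~\ref{lp-bound}.
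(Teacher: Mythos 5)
Your proof is correct and follows essentially the same route as the paper: the endpoint $p=2n/\delta$ reduces to the unweighted $L^{2n/\delta}$ bound, and for $p<2n/\delta$ one applies H\"older with an exponent chosen in the interval $\left(\tfrac{2n}{p\gamma},\tfrac{2n}{p\delta}\right)$, which is nonempty because $\delta<\gamma$. The only (harmless) omission is an explicit remark that this exponent can also be taken $>1$, which follows from $p<2n/\delta$, as the paper notes.
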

\begin{proof}
If $p = \frac{2n}{\delta}$, then this is simply the $L^{\frac{2n}{\delta}}$ norm, which is bounded if $\delta<\gamma$ by Proposition \ref{lp-bound}. 
If $p<\frac{2n}{\delta}$, then 
\begin{equation*}
    \int_X|\varphi r^{\delta}|^pr^{-2n}\omega_{\varphi_t}^n\leq \left(\int_X|\varphi_t|^{pq}\right)^{\frac{1}{q}}\left(\int_Xr^{\frac{q}{q-1}(\delta p-2n)}\omega_{\varphi_t}^n\right)^{\frac{q-1}{q}}
\end{equation*}
the first term is bounded if $q>\frac{2n}{\gamma p}$ by Proposition \ref{lp-bound}, and the second term is finite if $q<\frac{2n}{\delta p}$, so we just need to pick 
$q\in (\frac{2n}{\gamma p}, \frac{2n}{\delta p})$ with $q>1$, which is possible since $p<\frac{2n}{\delta}$. 
\end{proof}

\begin{Proposition}
    For any $\delta<\gamma$, $p>1$, we have
    \begin{equation*}
        \|\varphi_t r^{\delta}\|^p_{L^{p\frac{n}{n-1}}(r^{-2n}\omega_{\varphi_t}^n)}\leq \frac{Cp^2}{p-1}\left(\|\varphi_tr^{\delta}\|^{p-1}_{L^{p-1}(r^{-2n}\omega_{\varphi_t}^n)}+\|\varphi_tr^{\delta}\|^p_{L^{p}(r^{-2n}\omega_{\varphi_t}^n)}\right)
    \end{equation*}
    for C depending on the Sobolev constant of $\omega_{\varphi_t}$, $\delta$ and the dimension $n$. 
\end{Proposition}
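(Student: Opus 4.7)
The strategy is to apply the uniform Sobolev inequality \eqref{eq: Sobolev} (valid for $\omega_{\varphi_t}$) to a carefully chosen weighted test function, and then to estimate the resulting Dirichlet energy by integration by parts in the Monge--Amp\`ere equation, exactly in the spirit of Proposition~\ref{prop: Moser It C0} but with the extra weight $r^{\delta}$. Setting $s = \frac{p\delta}{2} - (n-1)$ and $v = |\varphi_t|^{p/2} r^{s}$, one has $v^{2n/(n-1)} = |\varphi_t r^{\delta}|^{pn/(n-1)} r^{-2n}$, so
\[
\|v\|^{2}_{L^{2n/(n-1)}(\omega_{\varphi_t}^n)} \;=\; \|\varphi_t r^\delta\|^{p}_{L^{pn/(n-1)}(r^{-2n}\omega_{\varphi_t}^n)},
\]
and the Sobolev inequality reduces the problem to bounding $\int |\nabla v|^2_{\omega_{\varphi_t}} \omega_{\varphi_t}^n$. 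A direct chain-rule computation gives
\[
|\nabla v|^2_{\omega_{\varphi_t}} \;\leq\; \tfrac{p^2}{2}\,|\varphi_t|^{p-2}|\nabla\varphi_t|^2_{\omega_{\varphi_t}}\,r^{2s}\;+\;C s^2\,|\varphi_t|^p\,r^{2s-2}\,|\nabla r|^2_{\omega_{\varphi_t}}.
\]
The second term is immediately of the form $C s^2 \|\varphi_t r^\delta\|^p_{L^p(r^{-2n}\omega_{\varphi_t}^n)}$, using that $|\nabla r|_{\omega_{\varphi_t}}$ is uniformly bounded: this is clear at infinity from the asymptotic conical structure, and $r$ can be chosen constant on a compact neighborhood of $V$ so $\nabla r \equiv 0$ there.

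The first term is the main one. I would rewrite the Monge--Amp\`ere equation as
\[
-i\partial\bar\partial\varphi_t \wedge T_t \;=\; (e^{-f_t}-1)\,\omega_{\varphi_t}^n, \qquad T_t := \sum_{k=0}^{n-1} \omega_{\varphi_t}^k \wedge \hat{\omega}_t^{n-1-k},
\]
multiply both sides by $|\varphi_t|^{p-2}\varphi_t\, r^{2s}$ and integrate, using Proposition~\ref{estimates}(3) (in particular $\varphi_t = O(r^{-\gamma})$ with $\delta<\gamma$) to discard boundary terms at infinity as in the proof of Proposition~\ref{prop: Moser It C0}. Integration by parts yields
\[
(p-1)\!\int |\varphi_t|^{p-2} r^{2s}\, i\partial\varphi_t\wedge\bar\partial\varphi_t \wedge T_t \;+\; 2s\!\int |\varphi_t|^{p-2}\varphi_t\, r^{2s-1}\, i\partial r \wedge \bar\partial \varphi_t \wedge T_t \;=\; \int |\varphi_t|^{p-2}\varphi_t\, r^{2s}(e^{-f_t}-1)\,\omega_{\varphi_t}^n.
\]
Since $T_t \geq \omega_{\varphi_t}^{n-1}$, the standard identity $i\partial u \wedge \bar\partial u \wedge \omega_{\varphi_t}^{n-1} = \frac{1}{n}|\nabla u|^2_{\omega_{\varphi_t}}\omega_{\varphi_t}^n$ makes the leading term bound $\frac{p-1}{n}\!\int |\varphi_t|^{p-2}|\nabla\varphi_t|^2_{\omega_{\varphi_t}} r^{2s}\omega_{\varphi_t}^n$ from below. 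The cross term is absorbed by Cauchy--Schwarz, leaving an extra contribution $\frac{C s^2}{p-1} \int |\varphi_t|^p r^{2s-2}\, i\partial r\wedge\bar\partial r\wedge T_t$, which is of order $\frac{s^2}{p-1}\|\varphi_t r^\delta\|^p_{L^p(r^{-2n}\omega_{\varphi_t}^n)}$ by the same reasoning as above. The right hand side is estimated using $|e^{-f_t}-1| \leq C r^{-\gamma-2}$ at infinity (and uniformly bounded overall) together with the identity $r^{p\delta-2(n-1)-\gamma-2} = r^{\delta(p-1)-2n} \cdot r^{\delta-\gamma}$ and the hypothesis $\delta<\gamma$; this controls it by $C \|\varphi_t r^\delta\|^{p-1}_{L^{p-1}(r^{-2n}\omega_{\varphi_t}^n)}$. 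Collecting the coefficients $p^2,\, s^2,\, (p-1)$ and dividing through by $p-1$ produces the claimed inequality with prefactor $\frac{Cp^2}{p-1}$.

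\textbf{Main obstacle.} The bookkeeping is routine once the test function is chosen, but the main subtle point is twofold: (i) justifying the vanishing of boundary terms at infinity for general $p>1$, which forces us to use the a priori decay $|\varphi_t|=O(r^{-\gamma})$ from Proposition~\ref{estimates}(3) — exactly the place where the hypothesis $\delta<\gamma$ enters and plays a structural role; and (ii) matching the weights on the right hand side so that the $(e^{-f_t}-1)$ term is absorbed into $\|\varphi_t r^\delta\|^{p-1}_{L^{p-1}(r^{-2n}\omega_{\varphi_t}^n)}$ without loss, which again uses $\delta<\gamma$ via the decay rate $r^{-\gamma-2}$ of the Ricci potential supplied by Proposition~\ref{Background metrics}(4).
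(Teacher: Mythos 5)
Your proposal is correct and follows essentially the same route as the paper: the same weighted test function $|\varphi_t|^{p/2}r^{s}$ with $s=\tfrac{p\delta}{2}-(n-1)$, the uniform Sobolev inequality for the Ricci-flat metrics $\omega_{\varphi_t}$, integration by parts against $T_t=\sum_k\omega_{\varphi_t}^k\wedge\hat\omega_t^{n-1-k}$ in the Monge--Amp\`ere equation, the device of taking $r$ constant near $V$ so the $\nabla r$ terms only live where the metrics are uniformly conical, and the decay $|e^{-f_t}-1|\leq Cr^{-\gamma-2}$ with $\delta<\gamma$ to absorb the right-hand side. The only difference is cosmetic: the paper organizes the computation via two explicit Stokes identities, whereas you use the chain rule plus Cauchy--Schwarz to absorb the cross term, which yields the same estimate with the same $\frac{Cp^2}{p-1}$ prefactor.
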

\begin{proof}
We use the same method as in \cite[Proposition 8.6.7]{Joyce}, but using the Calabi-Yau metrics $\omega_{\varphi_t}$ as the background metrics. The reason is because the metrics $\omega_{\varphi_t}$ are Ricci-flat and hence have a uniform Sobolev inequality.
First we set
\[T_t = \sum_{k=0}^{n-1}\omega_{\varphi_t}^k\wedge\hat{\omega}_t^{n-1-k}. \]
If $q-p\gamma<-2n+2 $, then Stoke's theorem gives the following two identities
{\small
\begin{align*}
0 &= \int_Xi\d\left(r^q|\varphi_t|^{p-2}\varphi_t\db\varphi_t\wedge T_t\right)\\  &= (p-1)\int_Xr^q|\varphi_t|^{p-2}i\d\varphi_t\wedge\db\varphi_t\wedge T_t+q\int_Xr^{q-1}|\varphi_t|^{p-2}\varphi_ti\d r\wedge\db\varphi_t\wedge T_t+ \int_Xr^q|\varphi_t|^{p-2}\varphi_ti\d\db\varphi_t\wedge T_t
\end{align*}
}
and
\begin{align*}
0 &= -\int_Xi\db\left(r^{q-1}|\varphi_t|^pi\d r\wedge T_t\right)\\
&= p\int_Xr^{q-1}|\varphi_t|^{p-2}\varphi_ti\d r\wedge\db\varphi_t\wedge T_t + (q-1)\int_Xr^{q-2}|\varphi_t|^pi\d r\wedge\db r\wedge T_t + \int_Xr^{q-1}|\varphi_t|^pi\d\db r\wedge T_t
\end{align*}
using these identities, we can obtain through integration by parts
\begin{align*}
\int_{X}|\nabla(|\varphi_t|^{\frac{p}{2}}r^{\frac{q}{2}})|^2_{\omega_{\varphi_t}}\omega_{\varphi_t}^n &= n\int_{X}i\d(|\varphi_t|^{\frac{p}{2}}r^{\frac{q}{2}})\wedge\db(|\varphi_t|^{\frac{p}{2}}r^{\frac{q}{2}}) \wedge \omega_{\varphi_t}^{n-1}\\
&\leq  n\int_{X}i\d(|\varphi_t|^{\frac{p}{2}}r^{\frac{q}{2}})\wedge\db(|\varphi_t|^{\frac{p}{2}}r^{\frac{q}{2}}) \wedge T_t\\
&= -\frac{np^2}{4(p-1)}\int_X\varphi_t|\varphi_t|^{p-2}r^{q}i\d\db\varphi_t\wedge T_t\\
& \qquad + \frac{mq}{4(p-1)}\int_X|\varphi_t|^pr^{q-2}[(p+q-2)i\d r\wedge\db r-(p-2)r i\d\db r]\wedge T_t\\
&= -\frac{np^2}{4(p-1)}\int_X\varphi_t|\varphi_t|^{p-2}r^{q}(e^{f_t}-1)\omega_{\varphi_t}^n\\
& \qquad + \frac{mq}{4(p-1)}\int_X|\varphi_t|^pr^{q-2}[(p+q-2)i\d r\wedge\db r-(p-2)r i\d\db r]\wedge T_t
\end{align*}
where in the last equality, we used the equation $i\d\db\varphi_t\wedge T_t = (e^{f_t}-1)\omega_{\varphi_t}^n$. Now we claim there also exist a uniform constant $C$ independent of $t$ and $r$ such that
\[\left|\frac{[(p+q-2)i\d r\wedge\db r-(p-2)r i\d\db r]\wedge T_t}{i^{n^2}\Omega\wedge\bar{\Omega}}\right|\leq C(p+|q|)\] 
recall that we chose $r$ so that $r = const$ on a compact set $K$ containing $V$, so the left hand side of the expression is $0$ on $K$. By Corollary~\ref{cor: C-infty-loc convergence of metrics} we know that $|T_t|\leq C$ on $X\setminus K$ and because $r$ is a radius function, we also $|\nabla r|+r|\d\db r|\leq C$, putting them together, we get that the expression also holds on $X\setminus K$, hence this whole expression is bounded by the right hand side. 

This then combined with the Sobolev inequality, we conclude that 
\begin{equation*}
    \left(\int_{X}|\varphi_t|^{p\frac{n}{n-1}}r^{q\frac{n}{n-1}}\omega_{\varphi_t}^n\right)^{\frac{n-1}{n}}\leq \frac{Cnp^2}{4(p-1)}\int_X|e^{-f_t}-1||\varphi_t|^{p-1}r^q\omega_{\varphi_t}^n + \frac{Cq(p+q)}{4(p-1)}\int_X|\varphi_t|^pr^{q-2}\omega_{\varphi_t}^n
\end{equation*}
for any $\delta<\gamma$ we can set $q = 2(1-n)+p\delta$ and use the fact that $|e^{f_t}-1|\leq Cr^{-\gamma-2}$ to obtain, 
\begin{align*}
    \left(\int_X|\varphi_tr^{\delta}|^{p\frac{n}{n-1}}r^{-2n}\omega_{\varphi_t}^n\right)^{\frac{n-1}{n}}&\leq C\frac{p^2}{4(p-1)}\left(\int_X |\varphi_t|^{p-1}r^{p\delta-\gamma}r^{-2n}\omega_{\varphi_t}^n+\int_X|\varphi_tr^{\delta}|^pr^{-2n}\omega_{\varphi_t}^n \right)\\
    & = C\frac{p^2}{4(p-1)}\left(\int_X |\varphi_tr^{\delta}|^{p-1}r^{\delta-\gamma}r^{-2n}\omega_{\varphi_t}^n+\int_X|\varphi_tr^{\delta}|^pr^{-2n}\omega_{\varphi_t}^n \right)
\end{align*}
and since $\delta<\gamma$, which means for any $p>1$, we have
\begin{equation*}
    \|\varphi_tr^{\delta}\|^p_{L^{p\frac{n}{n-1}}(r^{-2n}\omega_{\varphi_t}^n)}\leq \frac{Cp^2}{p-1}\left(\|\varphi_tr^{\delta}\|^{p-1}_{L^{p-1}(r^{-2n}\omega_{\varphi_t}^n)}+\|\varphi_tr^{\delta}\|^p_{L^{p}(r^{-2n}\omega_{\varphi_t}^n)}\right)
\end{equation*}
\end{proof}

\begin{Corollary}\label{C0-decay}
    For any $\delta<\gamma$, we have a uniform bound of the form
    \begin{equation*}
        |\varphi_t|\leq Cr^{-\delta}
    \end{equation*}
    for $C$ depending on $\delta$. 
\end{Corollary}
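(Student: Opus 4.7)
The plan is to run Moser iteration on the weighted reverse-H\"older inequality from the previous proposition. Abbreviate $\sigma := n/(n-1)$ and $a_p := \|\varphi_t r^{\delta}\|_{L^p(r^{-2n}\omega_{\varphi_t}^n)}$, so the previous estimate reads
\begin{equation*}
a_{\sigma p}^{p} \leq \tfrac{Cp^{2}}{p-1}\bigl(a_{p-1}^{\,p-1}+a_p^{\,p}\bigr) \quad\text{for } p>1.
\end{equation*}
Fix $\delta<\gamma$; since $\gamma<2n$ we may pick $p_0\in(1,2n/\delta]$, for which Proposition~\ref{weighted-lp-bound} provides the uniform base bound $a_{p_0}\leq C_0$. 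Set $p_k := \sigma^k p_0$, so that $p_k\to\infty$ geometrically, and let the inequality with $p=p_k$ bound $a_{p_{k+1}}$ in terms of $a_{p_k}$ and the intermediate $a_{p_k-1}$.

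The one subtlety is the $a_{p_k-1}$ term: because the measure $r^{-2n}\omega_{\varphi_t}^n$ has infinite total mass, one cannot directly bound $a_{p-1}$ by $a_p$. To circumvent this I interpolate against the base exponent. For $p_k\geq p_0+1$, H\"older applied to the factorization $|\varphi_t r^\delta|^{p_k-1} = |\varphi_t r^\delta|^{\alpha_k}\cdot|\varphi_t r^\delta|^{p_k-1-\alpha_k}$ with $\alpha_k := p_0/(p_k-p_0)$ gives
\begin{equation*}
a_{p_k-1}^{\,p_k-1} \leq a_{p_0}^{\alpha_k}\, a_{p_k}^{\,p_k-1-\alpha_k} \leq C_0^{\alpha_k}\max(a_{p_k},1)^{p_k},
\end{equation*}
where $\alpha_k\to 0$ and $C_0^{\alpha_k}$ stays uniformly bounded. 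Substituting into the iteration inequality produces a recurrence of the form $a_{p_{k+1}}\leq\Lambda_k\max(a_{p_k},1)$ with $\log\Lambda_k = O(\log p_k / p_k)$.

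Since $p_k$ grows geometrically, $\sum_k \log p_k/p_k<\infty$, so $\prod_k \Lambda_k$ is finite and a straightforward induction yields a uniform bound $a_{p_k}\leq M$ independent of $k$ and $t$. Finally, $\varphi_t r^\delta$ is bounded (Corollary~\ref{cor: C0-bdd}) and lies in $L^{p_0}(r^{-2n}\omega_{\varphi_t}^n)$ (Proposition~\ref{weighted-lp-bound}), so standard measure theory (as recorded in the remark after Definition~\ref{weighted-lp-norm}) gives $a_{p_k}\to\sup_X|\varphi_t r^\delta|$ as $k\to\infty$. Hence $\sup_X|\varphi_t r^\delta|\leq M$, which is the desired bound $|\varphi_t|\leq Mr^{-\delta}$. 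The main (minor) obstacle is the interpolation used to tame $a_{p_k-1}$; after that, the iteration is entirely standard.
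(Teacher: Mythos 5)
Your proof is correct and is essentially the paper's argument: the paper simply invokes ``the standard Moser iteration argument starting from $p=\tfrac{2n}{\delta}$'' based on Proposition~\ref{weighted-lp-bound} and the preceding reverse-H\"older inequality, and your write-up is a careful execution of exactly that iteration, with the interpolation against the base exponent being the right way to tame the $a_{p_k-1}$ term on the infinite-measure space. (One tiny citation slip: the qualitative boundedness of $\varphi_t r^\delta$ for fixed $t$ comes from $\varphi_t\in C^\infty_{-\gamma}$ with $\delta<\gamma$, not from Corollary~\ref{cor: C0-bdd}; alternatively the Chebyshev argument shows $\sup|\varphi_t r^\delta|\leq M$ directly from the uniform $L^{p_k}$ bounds without any a priori $L^\infty$ input.)
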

\begin{proof}
    By Proposition \ref{weighted-lp-bound}, we have a weighed $L^p$ bound for any $p\leq \frac{2n}{\delta}$, combined with the previous proposition, we can use the standard Moser iteration argument starting from $p=\frac{2n}{\delta}\geq \frac{n}{n-1}>1$. 
\end{proof}

\begin{Proposition}\label{high-decay}
	For any $\delta<\gamma$, the derivative of the solutions $\varphi_t$ satisfy uniform decay estimates on $X\setminus K$,
	\begin{equation*}
		|\nabla^k \varphi_t|\leq Cr^{-\delta-k}
	\end{equation*}
	where $C = C(n, \delta, k)$ which doesn't depend on $t$. 
\end{Proposition}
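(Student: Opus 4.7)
\medskip

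\textbf{Proof proposal.} The plan is to combine the $C^0$ decay of Corollary~\ref{C0-decay} with a standard rescaling argument that converts the local higher-order estimates of Proposition~\ref{prop: C-inf-loc-bdd} into decay at infinity. Fix a point $x_0 \in X \setminus K$ with $r(x_0) = R \gg 1$ and introduce the rescaled data
\[
\hat g := R^{-2} g, \qquad \tilde{\hat\omega}_t := R^{-2}\hat\omega_t, \qquad \tilde\varphi_t(y) := R^{-2}\varphi_t(Ry), \qquad \tilde\Omega := R^{-n}\Omega.
\]
Under this rescaling the unit $\hat g$-ball around $x_0$ corresponds to the $g$-ball $B_{R}(x_0)$, and because $(X,g)$ and $\hat\omega_t$ are asymptotically conical with rate $\nu$, the metric $\tilde{\hat\omega}_t$ differs from a model (rescaled) K\"ahler cone metric by $O(R^{-\nu})$ in every $C^k$ norm on this unit ball. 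In particular $(B_1, \tilde{\hat\omega}_t)$ has bounded geometry uniformly in $R$ and $t$.

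Second, the Monge-Amp\`ere equation is invariant under this rescaling, reading
\[
(\tilde{\hat\omega}_t + i\partial\bar\partial \tilde\varphi_t)^n \;=\; i^{n^2}\tilde\Omega \wedge \overline{\tilde\Omega} \;=\; e^{\tilde f_t}\tilde{\hat\omega}_t^n,
\]
where $\tilde f_t(y) := f_t(Ry)$ satisfies $\|\tilde f_t\|_{C^k(\hat g,B_1)} \leq C R^{-\gamma-2}$ uniformly in $R,t$, by the decay of $f_t$. The rescaled potential obeys $\|\tilde\varphi_t\|_{L^\infty(B_1)} \leq C R^{-2-\delta}$ thanks to Corollary~\ref{C0-decay}. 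Moreover, Proposition~\ref{prop: C2-bdd} gives $|i\partial\bar\partial \varphi_t|_g \leq C$ on $X \setminus K$, and this bound is scale-invariant, so the equation is uniformly elliptic on $B_1$ with constants independent of $R,t$.

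Third, rewrite the Monge-Amp\`ere equation in its linearized form
\[
i\partial\bar\partial \tilde\varphi_t \wedge \tilde T_t \;=\; (e^{\tilde f_t}-1)\tilde{\hat\omega}_t^n, \qquad \tilde T_t := \sum_{j=0}^{n-1}\tilde\omega_{\tilde\varphi_t}^j \wedge \tilde{\hat\omega}_t^{n-1-j}.
\]
By the $C^2$ bound, the coefficient tensor $\tilde T_t$ defines a uniformly elliptic linear operator on $B_1$; by the local higher-order estimates of Proposition~\ref{prop: C-inf-loc-bdd} applied in the rescaled bounded-geometry picture its coefficients are bounded in $C^{k,\alpha}(B_{3/4})$ uniformly in $R,t$. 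Standard Schauder theory then yields
\[
\|\tilde\varphi_t\|_{C^{k,\alpha}(B_{1/2})} \;\leq\; C\bigl(\|\tilde\varphi_t\|_{L^\infty(B_1)} + \|e^{\tilde f_t}-1\|_{C^{k-2,\alpha}(B_1)}\bigr) \;\leq\; C\bigl(R^{-2-\delta}+R^{-\gamma-2}\bigr) \;\leq\; C R^{-2-\delta},
\]
where the final inequality uses $\delta < \gamma$. Undoing the scaling, and recalling that for a function $u$ one has $|\nabla^k u|_g(x_0) = R^{-k}\,|\nabla^k u|_{\hat g}(x_0)$, gives
\[
|\nabla^k \varphi_t|_g(x_0) \;=\; R^{2-k}\,|\nabla^k \tilde\varphi_t|_{\hat g}(y_0) \;\leq\; C R^{-\delta-k} \;=\; C\, r(x_0)^{-\delta-k}.
\]
Taking the compact set $K$ to contain the region where the above rescaling argument fails to apply yields the desired estimate.

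The only real bookkeeping point is ensuring that all constants coming from the Schauder step are genuinely independent of $R$ and $t$; this is exactly where uniform ellipticity (from Proposition~\ref{prop: C2-bdd}) and the uniform bounded geometry of $\tilde{\hat\omega}_t$ play a role. As an alternative more in the spirit of Section~\ref{subsec: C0bd}, one could instead differentiate the Monge-Amp\`ere equation and run a weighted Moser iteration on $|\nabla \varphi_t|_{\omega_{\varphi_t}}$ using the uniform Sobolev inequality~\eqref{eq: Sobolev}, bootstrapping as in \cite[Chap~8]{Joyce}; this is more intrinsic but computationally heavier, so the rescaling route seems preferable.
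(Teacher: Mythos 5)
Your rescaling-plus-Schauder argument is precisely the method of Joyce (Theorem 8.6.11), which is exactly what the paper cites ``verbatim'' for this proposition; your scaling computations, including the exponent you extract at the end, are correct. One point to tighten: as written, your Schauder inequality has a constant that depends on a uniform $C^{k-2,\alpha}$ bound for the coefficients $\tilde{T}_t^{i\bar{j}}$ on the rescaled balls, and since these coefficients involve $\partial^2\tilde\varphi_t$, the estimate at level $k$ is a priori circular. The standard fix is a bootstrap: the scale-invariant $C^2$ bound from Proposition~\ref{prop: C2-bdd}, the bounded geometry of $\tilde{\hat\omega}_t$, and Evans--Krylov give uniform $C^{2,\alpha}$ bounds on the rescaled balls; then linear Schauder gives the $C^{2,\alpha}$ \emph{decay}; differentiating the equation and iterating yields the higher orders. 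Relatedly, Proposition~\ref{prop: C-inf-loc-bdd} is stated for compact subsets $K\subset\subset X\setminus V$ near the degeneracy locus, so invoking it for coefficient bounds on far-away rescaled balls is a mismatch -- the correct input at infinity is exactly the bounded geometry plus the Evans--Krylov bootstrap just described, which is the content of the paper's one-line remark that ``the Schauder constants are uniformly controlled on far away balls.''
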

\begin{proof}
	This follows from the methods of \cite[Theorem 8.6.11]{Joyce} verbatim. The point to note here is that the metrics $\omega_{\varphi_t}$ are uniformly equivalent to $\omega_C$ on the region $X\setminus K$, with bounded derivatives as well, hence the Schauder constants are uniformly controlled on far away balls. 
\end{proof}

\begin{Proposition}\label{optimal-decay}
	If $\gamma\in (0, 2n-2)$, then in fact we have
	\begin{equation*}
		|\nabla^k \varphi_t|\leq Cr^{-\gamma-k}
	\end{equation*}
	on $X\setminus K$, and $C=C(n, k)$ independent of $t$. 
\end{Proposition}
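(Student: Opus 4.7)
The plan is to bootstrap the already-established decay from rate $\delta<\gamma$ up to rate exactly $\gamma$ by linearizing the Monge-Amp\`ere equation and invoking the Fredholm theory of Theorem~\ref{Poisson equation}. The key point is that $-\gamma\in(2-2n,0)$ lies in the isomorphism range for the Laplacian on weighted H\"older spaces of an asymptotically conical manifold, and once the equation is rewritten as a linear Poisson-type equation with right-hand side in $C^{\infty}_{-\gamma-2}$, the decay of $\varphi_t$ itself is forced to be $r^{-\gamma}$.

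Concretely, I would rewrite the Monge-Amp\`ere equation using the identity
\[
i\partial\bar\partial\varphi_t\wedge T_t=(e^{f_t}-1)\hat\omega_t^n,\qquad T_t=\sum_{j=0}^{n-1}\omega_{\varphi_t}^j\wedge\hat\omega_t^{n-1-j},
\]
or equivalently
\[
\Delta_{\hat\omega_t}\varphi_t=(e^{f_t}-1)-Q_t(\varphi_t),
\]
where $Q_t(\varphi_t)$ collects all quadratic and higher-order terms in $i\partial\bar\partial\varphi_t$. By Proposition~\ref{Background metrics}(4), $(e^{f_t}-1)=O(r^{-\gamma-2})$ uniformly in $t$. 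By Proposition~\ref{high-decay}, for any $\delta<\gamma$ we have $|i\partial\bar\partial\varphi_t|_{\omega_C}=O(r^{-\delta-2})$ uniformly, hence $|Q_t(\varphi_t)|=O(r^{-2\delta-4})$. Since $\gamma<2n-2$ we may take $\delta<\gamma$ so close to $\gamma$ that $2\delta+4>\gamma+2$; this forces $\Delta_{\hat\omega_t}\varphi_t\in C^{\infty}_{-\gamma-2}(X)$ with norm bounded independently of $t$.

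I would then invoke Theorem~\ref{Poisson equation}: since $-\gamma\in(2-2n,0)$ and (as is implicit in the hypotheses guaranteeing the existence of $\varphi_t$ from Theorem~\ref{CH-existence}) $-\gamma\notin P$, the operator $\Delta_{\hat\omega_t}:C^{k+2,\alpha}_{-\gamma}(X)\to C^{k,\alpha}_{-\gamma-2}(X)$ is an isomorphism. Let $\tilde\varphi_t\in C^{\infty}_{-\gamma}$ denote the unique solution of $\Delta_{\hat\omega_t}\tilde\varphi_t=\Delta_{\hat\omega_t}\varphi_t$. Then $\varphi_t-\tilde\varphi_t$ is $\Delta_{\hat\omega_t}$-harmonic and lies in $C^{\infty}_{-\delta}$ for any $\delta<\gamma$; since $-\delta\in(2-2n,0)\setminus P$, the injectivity statement of Theorem~\ref{Poisson equation} forces $\varphi_t=\tilde\varphi_t\in C^{\infty}_{-\gamma}$, and the bounds for $\nabla^k\varphi_t$ follow at once from the isomorphism constant.

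The main subtlety I expect is ensuring uniformity of the constants as $t\to 0$, since the operators $\Delta_{\hat\omega_t}$ themselves vary with $t$. Although Proposition~\ref{Background metrics} guarantees smooth convergence of $\hat\omega_t$ on compact sets and uniform convergence of $\hat\omega_t\to\omega_C$ at infinity, controlling the inverse $\Delta_{\hat\omega_t}^{-1}:C^{k,\alpha}_{-\gamma-2}\to C^{k+2,\alpha}_{-\gamma}$ uniformly in $t$ is not automatic. I expect to handle this either by invoking continuous dependence of the Fredholm inverse on the coefficients of the operator, or by the more hands-on route used in \cite[Theorem 8.6.11]{Joyce}: combine the weighted $L^p$ Moser iteration leading to Corollary~\ref{C0-decay} (with $\delta$ approaching $\gamma$) with rescaled interior Schauder estimates on annuli $\{R\le r\le 2R\}$ at large $R$, exploiting the uniform equivalence of $\omega_{\varphi_t}$ to $\omega_C$ established in Corollary~\ref{cor: C-infty-loc convergence of metrics}. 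This annular rescaling argument avoids any reference to the global Fredholm theory at the borderline weight and gives the desired $r^{-\gamma-k}$ decay directly.
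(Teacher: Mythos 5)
Your proposal is correct and is essentially the paper's own proof: the paper simply cites \cite[Chap 8.7, Theorem A2]{Joyce}, and that argument is exactly your linearization $\Delta_{\hat\omega_t}\varphi_t=(e^{f_t}-1)-Q_t(\varphi_t)$ with $Q_t=O(r^{-2\delta-4})$ for $\delta<\gamma$ close to $\gamma$, followed by the weighted Fredholm/isomorphism theory at the weight $-\gamma\in(2-2n,0)$ (where $-\gamma\notin P$ is automatic by the remark after Theorem~\ref{Poisson equation}). Your closing paragraph also correctly identifies and resolves the only real subtlety, namely uniformity in $t$, via the same cutoff-plus-rescaled-annulus scheme the paper uses elsewhere.
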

\begin{proof}
	This follows from the same argument as in \cite[Chap 8.7, Theorem A2]{Joyce}. 
\end{proof}

We can now prove Proposition~\ref{estimates}, thereby completing the proof of Theorem~\ref{Main Theorem}.

\begin{proof}[Proof of Proposition~\ref{estimates}]
Combine Corollary~\ref{cor: C0-bdd}, Proposition~\ref{prop: C-inf-loc-bdd}, Proposition~\ref{C0-decay} and Proposition~\ref{optimal-decay}. 

\end{proof}

We now prove the local diameter bound, which will play an important role throughout the remainder of the paper.

\begin{Lemma}\label{diam-bdd}
	In the setting of Theorem~\ref{Main Theorem}, let $K\subset X$ be a compact subset containing $V$.   Then the diameter of $K$ with respect to the Calabi-Yau metrics $\omega_{t,CY}$ is uniformly bounded from above as $t\rightarrow 0$. 
	\begin{equation*}
	\text{Diam}_{\omega_{\varphi_t}}K\leq C
	\end{equation*}
\end{Lemma}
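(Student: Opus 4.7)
The plan is to combine the uniform $C^0$-bound on the potentials $\varphi_t$ (Corollary~\ref{cor: C0-bdd}) with the uniform asymptotic decay at infinity (Proposition~\ref{optimal-decay}) and the $C^\infty_{loc}$-convergence of the metrics on $X\setminus V$ (Corollary~\ref{cor: C-infty-loc convergence of metrics}), and to exploit the fact that the metrics $\omega_{t,CY}$ are Ricci-flat with a fixed volume form $i^{n^2}\Omega\wedge\bar{\Omega}$.

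First, using the uniform decay estimates from Proposition~\ref{optimal-decay}, the metrics $\omega_{t,CY}$ are uniformly close to the conical metric $\omega_C$ outside a sufficiently large compact set $K'\supset K$, so it suffices to bound $\Diam_{\omega_{t,CY}}(K')$ uniformly in $t$. On $K'\setminus U$ for any fixed open neighbourhood $U$ of $V$, the $C^\infty_{loc}$-convergence $\omega_{t,CY}\to \omega_{0,CY}$ gives uniformly bounded distances, so all the work concerns distances between points which may lie near $V$.

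To handle points near $V$, I would use Bishop-Gromov comparison: since $\Ric(\omega_{t,CY})\equiv 0$, any $\omega_{t,CY}$-ball satisfies $\Vol_{\omega_{t,CY}}(B_{\omega_{t,CY}}(x,r))\leq V_{2n}r^{2n}$. Paired with a non-collapsing lower bound $\Vol_{\omega_{t,CY}}(B_{\omega_{t,CY}}(x,r))\geq v_0 r^{2n}$, a standard covering argument along a minimizing geodesic realising $\Diam_{\omega_{t,CY}}(K')$ would convert the uniformly bounded total volume $\int_{K'}i^{n^2}\Omega\wedge\bar{\Omega}$ into a uniform upper bound on the diameter. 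The volume non-collapsing should follow from the bounded $L^\infty$-norm of $\varphi_t$: if an $\omega_{t,CY}$-ball were to collapse, a comparison principle applied to $\varphi_t$ against a local plurisubharmonic barrier together with the Monge-Amp\`ere equation would force $\varphi_t$ to violate the $L^\infty$-bound of Corollary~\ref{cor: C0-bdd}.

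The main obstacle is establishing this uniform non-collapsing estimate at points near $V$, where $\omega_{t,CY}$ may degenerate anisotropically; indeed the upper bound in Proposition~\ref{prop: C2-bdd} blows up in the direction of $V$. An alternative — perhaps more robust — route is to apply the uniform Sobolev inequality together with a Moser-type iteration to the distance function $d_t(\cdot,p_0)$ from a fixed point $p_0\in K'\setminus U$: the Laplacian comparison $\Delta_{\omega_{t,CY}} d_t^2 \leq 4n$ coming from Ricci-flatness, combined with suitable cutoff functions and the $L^\infty$ control of $\varphi_t$, should allow one to close the iteration into a uniform sup-bound of $d_t$ on $K'$. The delicate point in either approach is handling the cutoff errors near $V$ uniformly in $t$, which is where the bounded potential and the fixed volume form play the decisive role.
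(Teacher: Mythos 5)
Your overall skeleton is the same as the paper's: reduce to a fixed compact set using the uniform conical asymptotics, take a minimizing geodesic between two points, pack disjoint unit balls along it, bound their total volume by $\int_K i^{n^2}\Omega\wedge\bar{\Omega}$ (which is fixed since the volume form does not depend on $t$), and convert a non-collapsing lower bound for unit balls into a length bound. The problem is that you never actually establish the non-collapsing bound; you explicitly flag it as ``the main obstacle'' and offer two speculative substitutes (a pluripotential barrier argument against the $L^\infty$ bound on $\varphi_t$, and a Moser iteration applied to the distance function), neither of which is carried out and neither of which is how the estimate is obtained. This is a genuine gap, because without a $t$-uniform lower bound $\Vol_{\omega_{t,CY}}(B_1(x))\geq c>0$ valid at \emph{every} point of the geodesic --- including points arbitrarily close to $V$ --- the packing argument gives nothing.

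The missing idea is that no local analysis near $V$ is needed at all: the non-collapsing is a global consequence of completeness, Ricci-flatness, and the fact that all the metrics $\omega_{t,CY}$ are asymptotic to the \emph{same} cone $(C,g_C)$. By Bishop--Gromov, for a complete Ricci-flat metric the ratio $r\mapsto \Vol(B_r(x))/r^{2n}$ is non-increasing, hence for every $x$ and every $t$,
\begin{equation*}
\Vol_{\omega_{t,CY}}(B_1(x)) \;\geq\; \lim_{S\to\infty}\frac{\Vol_{\omega_{t,CY}}(B_S(x))}{S^{2n}} \;=\; \Vol_{g_C}(L) \;=:\; c>0,
\end{equation*}
where $L$ is the link of the asymptotic cone; this constant is manifestly independent of $t$ and of the position of $x$ relative to $V$, so your concern about anisotropic degeneration near $V$ is a red herring. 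One further point you gloss over: to bound the total volume of the packed balls you need the minimizing geodesic (and hence the balls) to stay in a fixed compact set; the paper justifies this by noting that $\partial K_R$ has diameter $O(R)$ in the nearly-conical region while leaving $K_{R^2}$ costs distance $O(R^2)$, so minimizing geodesics between points of $K_R$ cannot escape $K_{R^2}$. With these two ingredients supplied, your argument closes and coincides with the paper's proof.
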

\begin{proof}
	It suffices to show that the sets $K_R = \{r(x)\leq R\}$ have bounded diameters for $R$ sufficiently large. Recall that the metrics $\omega_{\varphi_t}$ are uniformly asymptotic to $\omega_{cone}$ for $r$ large and $t$ close to $0$ by Proposition~\ref{high-decay}. Fix any two points $x, y\in K_R$,  and joint them by a length minimizing geodesic $\gamma:[0, L]\to X$. We claim that $\gamma$ must lie inside $K_{R^2}$ for $R$ sufficiently large. Note for $R$ large, on the region $\{r(x)\geq R\}$ the metric $\omega_{\varphi_t}$ is $C^{\infty}$ close to a cone metric uniformly in $t$, and hence for $R$ sufficiently large, the boundary of $K_R$ has diameter bounded by $2 \pi R$. However, the distance between the boundary of $K_{R}$ and $K_{R^2}$ on the order of $R^2$, so it's clear that any minimizing geodesic between two points in $K_R$ cannot leave $K_{R^2}$. Now consider $x_i = \gamma(2i+1)$ and disjoint balls $B_1(x_i)$.  Note that these balls have a fixed lower bound on the volume, since by Bishop-Gromov volume comparison and the asymptotically conical geometry we have
	\[
	{\rm Vol}(B_1(x_i)) \geq \lim_{S\rightarrow \infty} \frac{{\rm Vol}(B_{S}(x_i))}{S^{2n}} = {\rm Vol}_{g_C}(L) =: c >0
	\]
	where $L$ is the link of the cone at infinity, identified with $\{r_{C}=1\}$ and $g_{C}$ is the conical Calabi-Yau metric. Thus, we have
	\[
	\sum_{i}\Vol(B_1(x_i))\geq c\frac{\lfloor L\rfloor}{2}
	\]
	where $c$ is the non-collapsing constant. On the other hand, these balls must all lie in $K_{2R}$, and since the volume form of the Calabi-Yau metrics are fixed, we must have that 
	\[ 
	c\frac{\lfloor L\rfloor}{2} \leq \int_{K_{R^2}}i^{n^2}\Omega\wedge\bar{\Omega}
	\]
	which gives us a bound for $L$, which is $d_{\omega_{\varphi_t}}(x, y)$. 
\end{proof}

\subsection{Uniqueness}
In this section, we discuss the uniqueness of the Calabi-Yau currents constructed in the previous sections. 


\begin{Theorem}\label{thm: uniqueness}
	The current that we constructed $\omega_{\varphi_0}$ above is unique in the sense that if $\omega$ is another positive current with locally bounded potentials in the same cohomology class as $\omega_{\varphi_0}$ which is smooth on $X\setminus V$, asymptotically conical at infinity with any rate $\delta>0$ and satisfies the complex Monge-Amp\`ere equation
	\begin{equation*}
		\omega^n = \omega_{\varphi_0}^n = i^{n^2} \Omega\wedge\bar{\Omega}
	\end{equation*}
	in the Bedford-Taylor sense, then $\omega = \omega_{\varphi_0}$. 
\end{Theorem}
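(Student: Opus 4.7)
The plan is to produce a globally bounded potential $u$ with $\omega = \omega_{\varphi_0} + i\d\db u$, to exploit the identity
\[
\omega^n - \omega_{\varphi_0}^n = i\d\db u\wedge T, \qquad T := \sum_{k=0}^{n-1}\omega^k\wedge \omega_{\varphi_0}^{n-1-k},
\]
and then to show $u\equiv 0$ by an integration-by-parts argument, inserting cutoffs both near $V$ and at infinity.

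First I would use Proposition~\ref{ddb-lemma} and the assumption that both currents lie in the same cohomology class with locally bounded potentials to write $\omega = \omega_{\varphi_0} + i\d\db u$ with $u$ globally bounded on $X$ and smooth on $X\setminus V$. Since $\omega,\omega_{\varphi_0}$ are both asymptotically conical with positive rates, $i\d\db u$ decays at infinity, and Theorem~\ref{Poisson equation} applied to $\lapl_{\omega_{\varphi_0}} u$ lets me normalize $u$ so that $u\to 0$ at infinity at some positive rate $\alpha$. Expanding the Monge-Amp\`ere identity $(\omega_{\varphi_0}+i\d\db u)^n=\omega_{\varphi_0}^n$ gives $\lapl_{\omega_{\varphi_0}} u = O(|i\d\db u|_{\omega_{\varphi_0}}^2)$ pointwise, so $\lapl_{\omega_{\varphi_0}} u$ decays at roughly twice the rate of $i\d\db u$; iterating the Fredholm theory lets me bootstrap $\alpha$ up to any value less than $2n-2$, and in particular I may assume $\alpha > n-1$ (the case $n=1$ is trivial).

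By Bedford-Taylor theory $T$ is a well-defined closed positive $(n-1,n-1)$-current on $X$ carrying no mass on the pluripolar set $V$, and $T\geq \omega_{\varphi_0}^{n-1}$ is strictly positive as a smooth $(n-1,n-1)$-form on $X\setminus V$. Let $\chi = \chi_\delta^V \chi_R^\infty$, where $\chi_R^\infty$ is a standard radial cutoff supported in $\{r\leq 2R\}$ and $\chi_\delta^V$ is the logarithmic cutoff $f(\log \rho /\log\delta)$ built from a local defining function $\rho$ for $V$. Multiplying $i\d\db u\wedge T = 0$ by $\chi^2 u$ (smooth and compactly supported away from $V$), integrating, and applying Cauchy-Schwarz yields
\[
\int_X \chi^2\, i\d u\wedge \db u\wedge T \leq 4\int_X u^2\,i\d\chi\wedge \db\chi\wedge T.
\]
The near-$V$ contribution to the right-hand side is dominated by $\frac{C\|u\|_\infty^2}{|\log\delta|}\to 0$ as $\delta\to 0$ (by the standard pluripotential capacity estimate for the current $T$), while the far-$r$ contribution is dominated by $C R^{2n-2\alpha-2}\to 0$ since $\alpha > n-1$. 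Hence $\int_X i\d u\wedge \db u\wedge T = 0$, and strict positivity of $T$ on the connected open set $X\setminus V$ forces $du\equiv 0$ there, so $u\equiv 0$ on $X\setminus V$ by the decay at infinity; continuity of the Bedford-Taylor wedge product across the pluripolar set $V$ then extends this to $\omega = \omega_{\varphi_0}$ on all of $X$.

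The hardest part is the simultaneous handling of the two cutoffs. The decay bootstrap at infinity is reasonably routine given the nonlinear self-improvement of the Monge-Amp\`ere equation combined with Theorem~\ref{Poisson equation}, but the cutoff near $V$ relies on the (standard but delicate) capacity estimate
\[
\frac{1}{(\log\delta)^2}\int_{\{\delta\leq \rho \leq 1\}} i\d\log\rho\wedge \db\log\rho\wedge T \longrightarrow 0,
\]
which is precisely where the assumption that both currents have locally bounded potentials (rather than merely being closed and positive) plays the essential role.
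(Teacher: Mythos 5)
Your overall skeleton --- reduce to a decaying potential for $\omega-\omega_{\varphi_0}$, then kill it by integrating by parts against $T=\sum_k\omega^k\wedge\omega_{\varphi_0}^{n-1-k}$ --- matches the paper's strategy, but there is a genuine gap in the normalization step, and it sits exactly where the paper's proof has its one non-routine ingredient. You assert that $u$ can be taken globally bounded, and then that ``Theorem~\ref{Poisson equation} applied to $\lapl_{\omega_{\varphi_0}}u$ lets me normalize $u$ so that $u\to 0$ at infinity.'' Neither claim follows. The hypothesis of locally bounded potentials controls $u$ near $V$, not at infinity, and the quantitative $\d\db$-lemma (Proposition~\ref{prop: quan-ddbar-lemma}) only produces a potential in $C^{\infty}_{2-\eps}$, i.e.\ one that may grow almost quadratically. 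More seriously, the Fredholm theory gives you a decaying $\tilde{u}$ with $\lapl_{\omega_{\varphi_0}}\tilde{u}=\lapl_{\omega_{\varphi_0}}u$, but to replace $u$ by $\tilde{u}$ as a potential you need $i\d\db(u-\tilde{u})=0$, i.e.\ that the subquadratic \emph{harmonic} function $u-\tilde{u}$ is \emph{pluriharmonic}. This is false on a general K\"ahler manifold; it holds here only because of the Ricci-flat asymptotically conical structure, via \cite[Corollary 3.9]{CH13} (Lemma~\ref{lem: subquadratic harmonic functions are pluriharmonic}), which, fed into the iteration of Lemma~\ref{lem: improving the potential}, is the engine of the paper's argument. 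Your proposal omits this entirely, so the subsequent ``bootstrap of $\alpha$ up to any value less than $2n-2$'' has no starting point: at each stage you are again only matching Laplacians, not $i\d\db$.

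Two smaller points. Once a decaying potential is in hand, the paper avoids your requirement $\alpha>n-1$ by testing against $|u|^{p-2}u$ with $p$ large (Proposition~\ref{prop: weak uniqueness}), so \emph{any} positive decay rate suffices; your $p=2$ argument forces the bootstrap much further than necessary. And the logarithmic cutoff near $V$ is dispensable: since both currents have locally bounded potentials and $V$ is pluripolar, Bedford--Taylor theory justifies the integration by parts across $V$ directly, which is how the paper proceeds. Your capacity estimate is a legitimate substitute there, but it is not where the difficulty of the theorem lies.
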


The proof is modelled after the idea introduced in \cite{CH13}, which relies on the following crucial Lemma proved in \cite{CH13}. 

\begin{Lemma}\label{lem: subquadratic harmonic functions are pluriharmonic}\cite[Corollary 3.9]{CH13}
	Suppose $(X, \omega)$ is an asymptotically conical K\"ahler manifold with $\Ric \geq 0$, then for any $\eps>0$, any harmonic function $u\in C^{\infty}_{2-\eps}(X)$ is pluriharmonic. 
\end{Lemma}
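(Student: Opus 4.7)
The approach is to apply the Bochner--Kodaira--Weitzenböck identity to the $(1,0)$-form $\alpha := \partial u$. The first step is to observe that $\alpha$ is $\bar\partial$-harmonic on the K\"ahler manifold whenever $u$ is harmonic. Indeed, the K\"ahler identity $\bar\partial^{*} = -i[\Lambda,\partial]$, together with the fact that $\Lambda$ annihilates forms of bidegree $(1,0)$ and $\partial^{2}u = 0$, gives $\bar\partial^{*}\alpha = 0$. Likewise $\bar\partial\alpha = -\partial\bar\partial u$ is of bidegree $(1,1)$, and applying $\bar\partial^{*}$ reduces via the same K\"ahler identity to a multiple of $\partial \Delta u = 0$. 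Hence $\Delta_{\bar\partial}\alpha = 0$.

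The Weitzenböck identity for $\bar\partial$-harmonic $(1,0)$-forms on a K\"ahler manifold then gives the pointwise relation $\bar\nabla^{*}\bar\nabla \alpha + \Ric(\alpha) = 0$, where the crucial computation $|\bar\nabla \alpha|^{2} = |u_{i\bar j}|^{2} = |\partial\bar\partial u|^{2}$ holds. Pairing with $\bar\alpha$, multiplying by a compactly supported cutoff $\chi_{R}^{2}$, and integrating by parts produces the Caccioppoli-type estimate
\begin{equation*}
\int_{X}\chi_{R}^{2}\bigl(|\partial\bar\partial u|^{2} + \Ric(\partial u, \overline{\partial u})\bigr)\,dV \leq C\int_{X}|d\chi_{R}|^{2}|\partial u|^{2}\,dV.
\end{equation*}
Under the hypothesis $\Ric \geq 0$, both integrands on the left are non-negative, so if the right-hand side can be driven to zero as $R\to\infty$ one concludes $\partial\bar\partial u \equiv 0$, as desired.

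The main obstacle is that $u\in C^{\infty}_{2-\epsilon}$ only yields $|\partial u| = O(r^{1-\epsilon})$, and a naive cutoff gives a divergent right-hand side of order $R^{2n-2\epsilon}$ whenever $\epsilon < n$. My plan is to address this by induction on the decay rate. The base case $\epsilon > n$ follows directly from the Caccioppoli estimate with standard cutoffs $|d\chi_{R}|\leq C/R$ supported in $B_{2R}\setminus B_{R}$. For the inductive step with small $\epsilon$, I would apply the weighted Fredholm theory for the Laplacian on asymptotically conical manifolds (Theorem~\ref{Poisson equation}) to extract a leading-order asymptotic expansion $u = u_{0} + u_{1}$ at infinity, where $u_{0} = \sum c_{j}r^{\lambda_{j}}\phi_{j}$ is a finite sum of homogeneous harmonic modes on the asymptotic K\"ahler cone with $\lambda_{j} < 2-\epsilon$, and $u_{1}\in C^{\infty}_{2-\epsilon'}$ for some $\epsilon' > \epsilon$. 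Separation of variables along the radial direction, combined with the Sasakian structure on the link, identifies each such mode as pluriharmonic --- namely, as the real part of a homogeneous holomorphic function on the cone. Patching $u_{0}$ to a globally pluriharmonic correction on $X$ using the quantitative $\partial\bar\partial$-lemma (Proposition~\ref{prop: quan-ddbar-lemma}) then reduces the problem to the harmonic function $u - u_{0} \in C^{\infty}_{2-\epsilon'}$, where the inductive hypothesis applies.

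The hardest step of the plan is the Hodge-theoretic claim that low-degree homogeneous harmonic modes on an arbitrary K\"ahler cone are automatically pluriharmonic; this reduces to an eigenfunction analysis on the Sasakian link relating basic eigenfunctions of the transverse Laplacian to homogeneous holomorphic functions on the cone, and is where essential use must be made of the K\"ahler structure rather than just the Riemannian one.
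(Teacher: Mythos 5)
First, note that the paper offers no proof of this lemma at all---it is quoted verbatim from \cite[Corollary 3.9]{CH13}---so the comparison is really with Conlon--Hein's argument. Your architecture (asymptotic expansion of $u$ into homogeneous harmonic modes on the cone via the weighted Fredholm theory, pluriharmonicity of the subquadratic modes, and an argument disposing of the remainder) is essentially theirs. The Bochner identity $\tfrac12\Delta|\nabla u|^2=|\nabla\nabla u|^2+\Ric(\nabla u,\nabla u)\geq|\d\db u|^2$ and the resulting Caccioppoli estimate are correct, and you correctly diagnose that the naive cutoff argument only closes when $\eps>n$. One simplification you miss: once the modes with rates in $(0,2-\eps]$ have been peeled off and shown pluriharmonic, the remainder is a harmonic function tending to $0$ at infinity, which vanishes by the maximum principle on a complete manifold with one end; no Bochner induction is needed at all, and this is how \cite{CH13} closes the argument.

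The genuine gap is the step you yourself flag as hardest: the assertion that every $\lambda$-homogeneous harmonic function on the asymptotic K\"ahler cone with $\lambda<2$ is the real part of a $\lambda$-homogeneous holomorphic function. This is not a routine separation-of-variables fact and it is \emph{false} for a general K\"ahler cone: it requires $\Ric\geq 0$, equivalently $\Ric_{g_L}\geq(2n-2)g_L$ on the Sasakian link, which by Lichnerowicz--Obata forces $\lambda\geq 1$ and, via a transverse Bochner--Weitzenb\"ock argument on the link, identifies the eigenfunctions with $\lambda\in[1,2)$ with CR-holomorphic functions. Your sketch invokes the Ricci hypothesis only in the Caccioppoli estimate and nowhere in the mode analysis, so as written the crux of the lemma is both unproved and missing its essential hypothesis; this claim is precisely the content of the lemmas preceding Corollary 3.9 in \cite{CH13}. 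A secondary, fixable issue: the leading part $u_0$ is defined only near infinity via $\Phi$, and for the inductive step you need $u-u_0$ to be globally harmonic on $X$; this requires extending the holomorphic function $F$ with $u_0=\Re F$ across the compact core (Hartogs when $\dim_{\C}X>2$, a separate argument in dimension $2$), which the quantitative $\d\db$-lemma alone does not provide.
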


The idea is to write $\omega = \omega_{\varphi_0}+i\d\db \psi$ and use this lemma to improve the asymptotics of the potential function $\psi$ by subtracting off pluriharmonic functions from it, until we are left in the case where the potential function is decaying in which case uniqueness follows from a standard integration by parts argument. 

\begin{Proposition}\label{prop: weak uniqueness}
Suppose $\varphi\in PSH(X, \omega_{\varphi_0})\cap L^{\infty}(X)\cap C^{\infty}_{-\eps}(X\setminus V)$ is a function such that the current $\omega_{\varphi_0}+i\d\db\varphi$ satisfies
\begin{equation*}
	(\omega_{\varphi_0}+i\d\db\varphi)^n = \omega_{\varphi_0}^n = i^{n^2}\Omega\wedge\bar{\Omega}
\end{equation*}
in the Bedford Taylor sense, then $\varphi = 0$. 
\end{Proposition}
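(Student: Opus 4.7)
The strategy is to rewrite the equation as a linear (but degenerate) identity in $\varphi$, perform integration by parts to conclude that $d\varphi$ vanishes, and then use the decay at infinity. Setting $\omega := \omega_{\varphi_0} + i\d\db\varphi$ and expanding $\omega^n = \omega_{\varphi_0}^n$ gives the factorization
\[
i\d\db \varphi \wedge T = 0, \qquad T := \sum_{k=0}^{n-1} \omega^k \wedge \omega_{\varphi_0}^{n-1-k},
\]
where $T$ is a closed positive $(n-1,n-1)$-current with locally bounded potentials. Pairing with $\varphi$ against a product of cutoffs $\chi_R$ at infinity and $\eta_\sigma$ near $V$, and integrating by parts formally yields
\[
\int_X \chi_R\,\eta_\sigma\, i\d\varphi \wedge \db\varphi \wedge T = -\int_X \varphi\, i\d(\chi_R\eta_\sigma) \wedge \db\varphi \wedge T.
\]
If the right-hand side vanishes in the limit $R\to\infty$, $\sigma\to 0$, then positivity of the left-hand integrand forces $d\varphi = 0$ on $X\setminus V$. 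Since $X\setminus V$ is connected and $\varphi\to 0$ at infinity, we conclude $\varphi \equiv 0$ there, and the extension to $V$ follows from upper semi-continuity of the $\omega_{\varphi_0}$-psh function $\varphi$.

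The near-$V$ boundary terms are handled using the fact that $T$ and $\omega$ have locally bounded potentials and so put no mass on the pluripolar set $V$; taking $\eta_\sigma$ built from a suitable regularization of $\psi_\eps$ from Lemma~\ref{barrier}, together with the global $L^\infty$ bound on $\varphi$, makes the $\eta_\sigma$ contribution vanish in the limit. The principal obstacle is at infinity: with only $\varphi = O(r^{-\eps})$ and $|\nabla\varphi| = O(r^{-\eps-1})$, the boundary shell $\{R\le r\le 2R\}$ contributes at worst $R^{2n-2-2\eps}$, which does not vanish unless $\eps > n-1$.

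To overcome this I would upgrade the decay of $\varphi$ via an iterative bootstrap driven by the fully nonlinear structure of the equation. Expanding $\log\det(I+g_{\varphi_0}^{-1}\,i\d\db\varphi)=0$ yields a pointwise identity
\[
\lapl_{\omega_{\varphi_0}} \varphi \,=\, Q(i\d\db\varphi),
\]
with $Q$ quadratic in its argument and smooth coefficients. If $\varphi\in C^\infty_{-\mu}$ on the asymptotic end then $\lapl_{\omega_{\varphi_0}}\varphi \in C^\infty_{-2\mu-4}$; solving $\lapl_{\omega_{\varphi_0}} u=\lapl_{\omega_{\varphi_0}}\varphi$ via Theorem~\ref{Poisson equation} produces $u\in C^\infty_{-2\mu-2}$ (up to an arbitrarily small loss to avoid exceptional weights). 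Then $\varphi-u$ is bounded, harmonic on $X\setminus V$, and tends to zero at infinity; by Lemma~\ref{lem: subquadratic harmonic functions are pluriharmonic} together with Yau's Liouville theorem (using $\Ric(\omega_{\varphi_0})=0$), this forces $\varphi-u$ to be constant, hence zero, so $\varphi\in C^\infty_{-2\mu-2}$. Iterating a finite number of times pushes the decay rate past the threshold $n-1$.

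The hard part is orchestrating this bootstrap: one must thread the weights carefully through the list of exceptional values for the asymptotic cone, and must run the Fredholm solvability on the non-compact end (then patch to the globally bounded $\varphi$) so as not to be obstructed by the singularity of $\omega_{\varphi_0}$ along $V$. Once the bootstrap improves decay past $\eps>n-1$, the boundary term at infinity vanishes, the integration by parts concludes $\int_X i\d\varphi\wedge\db\varphi\wedge T=0$, and, since $T\geq \omega_{\varphi_0}^{n-1}>0$ on $X\setminus V$, we obtain $d\varphi\equiv 0$ and hence $\varphi=0$.
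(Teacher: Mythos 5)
Your core identity $i\d\db\varphi\wedge T=0$ with $T=\sum_k\omega^k\wedge\omega_{\varphi_0}^{n-1-k}$ and the integration by parts against it is exactly the paper's starting point, but you and the paper diverge on how to kill the boundary term at infinity, and the paper's device is much lighter. Instead of testing against $\varphi$ itself (which, as you correctly compute, needs decay $\eps>n-1$ and hence forces your bootstrap), the paper tests against $|\varphi|^{p-2}\varphi$ for a single large exponent $p>\frac{2n-2}{\eps}$: the boundary integrand on $\d B_R$ is then $O(R^{-(p-1)\eps}\cdot R^{-\eps-1})$ against an area of order $R^{2n-1}$, so it vanishes for that choice of $p$, and one concludes $\int_X i\d(|\varphi|^{p/2})\wedge\db(|\varphi|^{p/2})\wedge T=0$ in one step, with no improvement of the decay of $\varphi$ needed. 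Your bootstrap is not wrong in spirit — it is essentially the mechanism the paper does deploy, via Lemma~\ref{lem: improving the potential}, in the proof of Theorem~\ref{thm: uniqueness} to reduce from \emph{growing} potentials to the decaying case covered by this Proposition — but inside the Proposition it is unnecessary work, and as written it carries a real risk: the harmonic-comparison step ($\varphi-u$ harmonic, hence pluriharmonic, hence zero) cannot be run with respect to $\omega_{\varphi_0}$, which is singular along $V$ and incomplete; the paper's version of this step is always carried out with a smooth, complete, asymptotically conical background metric (patched to $\omega_{\varphi_0}$ only outside a compact set), and Lemma~\ref{lem: subquadratic harmonic functions are pluriharmonic} is invoked for that smooth metric. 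So: same skeleton, but you should replace the bootstrap by the large-$p$ test function, which makes the proof a few lines.
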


\begin{proof}
{\small
\begin{align*}
0 &= -\int_{B_R} |\phi|^{p-2}\phi((\omega_{\varphi_0}+i\d\db\varphi)^n-\omega_{\varphi_0}^n) = -\int_{B_R}|\phi|^{p-2}\phi i\d\db\phi\wedge\left(\sum_{k=0}^{n-1}\omega_{\varphi_0}^k\wedge(\omega_{\varphi_0}+i\d\db\varphi)^{n-1-k}\right)\\
&= \frac{4(p-1)}{p^2}\int_{B_R}i\d(|\phi|^{\frac{p}{2}})\wedge\db(|\phi|^{\frac{p}{2}})\wedge\left(\sum_{k=0}^{n-1}\omega_{\varphi_0}^k\wedge(\omega_{\varphi_0}+i\d\db\varphi)^{n-1-k}\right)\\
&\qquad -\int_{\d B_R}|\phi|^{p-2}\phi i\db\phi\wedge\left(\sum_{k=0}^{n-1}\omega_{\varphi_0}^k\wedge(\omega_{\varphi_0}+i\d\db\varphi)^{n-1-k}\right)
\end{align*}
}
picking $p>\frac{2n-2}{\gamma}$ and letting $R\to \infty$, we get 
\[\int_{X}i\d(|\phi|^{\frac{p}{2}})\wedge\db(|\phi|^{\frac{p}{2}})\wedge\left(\sum_{k=0}^{n-1}\omega_{\varphi_0}^k\wedge(\omega_{\varphi_0}+i\d\db\varphi)^{n-1-k}\right) = 0\]
which shows that $\phi = 0$. 
\end{proof}

\begin{Lemma}\label{lem: improving the potential}
	Suppose $(X, J, g)$ is an asymptotically conical Calabi-Yau manifold with rate $\nu>0$, and $\eta = \eta_{i\bj}$ is a asymptotically conical hermitian metric with rate $\nu>0$ and let $u\in C^{\infty}_{2-\beta}$ such that $\eta^{i\bj}u_{i\bj} \in C^{\infty}_{-\kappa}$, then there exist $\tilde{u}\in C^{\infty}_{2-\beta-\nu}$ such that $i\d\db \tilde{u} = i\d\db u$. 
\end{Lemma}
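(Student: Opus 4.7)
The plan is to find $\tilde u$ by subtracting a pluriharmonic function from $u$, exploiting the Calabi--Yau vanishing principle of Lemma~\ref{lem: subquadratic harmonic functions are pluriharmonic} to show that the correction term indeed has vanishing $i\d\db$. First I would compare the K\"ahler Laplacian $\lapl_g = g^{i\bj}\d_i\d_{\bj}$ with the Hermitian trace $\eta^{i\bj}u_{i\bj}$. Since $g$ and $\eta$ are both asymptotic to the cone metric at rate $\nu$, their inverses satisfy $g^{i\bj}-\eta^{i\bj} \in C^\infty_{-\nu}$, while $u \in C^\infty_{2-\beta}$ forces $u_{i\bj} \in C^\infty_{-\beta}$. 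Hence
\begin{equation*}
\lapl_g u \;=\; \eta^{i\bj}u_{i\bj} + (g^{i\bj}-\eta^{i\bj})u_{i\bj} \;\in\; C^\infty_{-\min(\kappa,\,\nu+\beta)},
\end{equation*}
and in the applicable regime $\kappa \geq \nu+\beta$ this reads $\lapl_g u \in C^\infty_{-\nu-\beta}$.

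Next, I would invoke the Fredholm theory of Theorem~\ref{Poisson equation} to solve $\lapl_g v = \lapl_g u$ with $v \in C^\infty_{2-\nu-\beta}$; this requires $2-\nu-\beta \in (2-2n,\infty)\setminus P$, which is the relevant range since $\nu+\beta < 2n$ in applications and $P$ is discrete. Setting $w := u - v$, the function $w$ is harmonic on $X$ and lies in $C^\infty_{2-\beta}$, which is of strictly subquadratic growth because $\beta > 0$. Since $(X,g)$ is Calabi--Yau (in particular $\Ric \geq 0$), Lemma~\ref{lem: subquadratic harmonic functions are pluriharmonic} forces $w$ to be pluriharmonic, so $i\d\db w = 0$, i.e.\ $i\d\db v = i\d\db u$. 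Thus $\tilde u := v$ does the job, with the required decay rate $2-\beta-\nu$.

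The main obstacle will be the handling of exceptional weights: if $2-\nu-\beta \in P$, Theorem~\ref{Poisson equation} does not provide a solution at exactly this weight. Since $P$ is discrete, the standard remedy is to solve instead at a slightly worse weight $2-\nu-\beta+\varepsilon$ and then argue that the difference between the two solutions—or alternatively any shortfall in decay—corresponds to a harmonic function of critical growth, which can be absorbed into the pluriharmonic correction $w$ without changing $i\d\db \tilde u$. Apart from this bookkeeping near exceptional weights, the argument is essentially a single application of Poisson solvability combined with the pluriharmonicity principle, and depends on nothing beyond what is already recorded in Section~\ref{sec: Prelim}.
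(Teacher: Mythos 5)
Your proof is correct and follows essentially the same route as the paper: compare $\lapl_g u$ with $\eta^{i\bj}u_{i\bj}$ to get decay at rate $-\min(\kappa,\beta+\nu)$, solve the Poisson equation at the corresponding weight, and invoke the pluriharmonicity of subquadratic harmonic functions (Lemma~\ref{lem: subquadratic harmonic functions are pluriharmonic}) to conclude $i\d\db\tilde u = i\d\db u$. Your extra care about exceptional weights is a reasonable addition that the paper's one-line proof leaves implicit.
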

\begin{proof}
	We have 
	\begin{equation*}
		g^{i\bj}u_{i\bj} = (g^{i\bj}-\eta^{i\bj})u_{i\bj}+\eta^{i\bj}u_{i\bj}\in C^{\infty}_{-\min(\kappa, \beta+\nu)}
	\end{equation*}
	hence we can solve the equation $g^{i\bj}\tilde{u}_{i\bj} = g^{i\bj}u_{i\bj}$ with $\tilde{u}\in C^{\infty}_{2-\min(\kappa, \beta+\nu)}$ and by Lemma~\ref{lem: subquadratic harmonic functions are pluriharmonic} we have $i\d\db\tilde{u} = i\d\db u$.  
\end{proof}

\begin{proof}[Proof of Theorem~\ref{thm: uniqueness}]
	By the $\d\db$-Lemma (Proposition~\ref{ddb-lemma}), we can write $\omega = \omega_{\varphi_0}+i\d\db\psi$, for $\psi\in PSH(X, \omega_{\varphi_0})\cap L^{\infty}_{loc}(X)\cap C^{\infty}_{loc}(X\setminus V)$, then choose a cutoff $\chi$ such that $\chi$ has compact support and $\chi = 1$ on a compact set $K$ containing $V$, then since $i\d\db \psi = \omega-\omega_{\varphi_0} \in C^{\infty}_{-\eps}(X\setminus V)$ for some $\eps>0$, hence by Proposition~\ref{prop: quan-ddbar-lemma}, we can solve $i\d\db f = i\d\db[(1-\chi)\psi]$ for $f\in C^{\infty}_{\gamma}$, $\gamma =2-\eps$. Setting $\varphi = \chi \psi+f$, we have that $\varphi\in L^{\infty}_{loc}(X)\cap C^{\infty}_{\gamma}(X\setminus V)$ and 
	\[ (\omega_{\varphi_0}+ i\d\db \varphi)^n = \omega_{\varphi_0}^n= i^{n^2}\Omega\wedge\bar{\Omega}\]
	If $\gamma<0$, then we are done by Proposition~\ref{prop: weak uniqueness}. If $\gamma>0$, then we proceed by the following: note that the equation above can be rewritten as	
	\[\lapl_{\omega_{\varphi_0}} \varphi  = -(i\d\db \varphi)^2\wedge\left(\sum_{k=2}^n {n\choose k} \frac{(i\d\db\varphi)^{k-2}\wedge\omega_{\varphi_0}^{n-k}}{\omega_{\varphi_0}^n}\right)\in C^{\infty}_{2\gamma-4}(X\setminus V)\]
	if $\chi$ is the cutoff function as before, then we have $\lapl_{\omega_{\varphi_0}}[(1-\chi)\varphi]\in C^{\infty}_{2\gamma-4}(X)$ if we let $\eta = \chi \omega_{\varphi_t} + (1-\chi)\omega_{\varphi_0}$, then $\eta$ is an asymptotically conical hermitian metric which is equal to $\omega_{\varphi_0}$ outside of a compact set, hence $\eta^{i\bj}[(1-\chi)\varphi]_{i\bj}\in C^{\infty}_{2\gamma-4}(X)$, hence we can apply Lemma~\ref{lem: improving the potential} with $\kappa = 2(2-\gamma)$ and $\beta = 2-\gamma$, so we can solve $i\d\db v = i\d\db [(1-\chi)\varphi]$ with $v\in C^{\infty}_{\gamma-\min(2-\gamma,\nu)}$ now we can set $\tilde{\varphi} = v+\chi \varphi\in C^{\infty}_{\gamma-\min(2-\gamma,\nu)}(X\setminus C)$ and we can keep repeating this process with $\tilde{\varphi}$ in place of $\varphi$ and $\gamma-\min(2-\gamma,\nu)$ in place of $\gamma$ until are in the case where $\gamma<0$, then we are done by Proposition~\ref{prop: weak uniqueness}.

\end{proof}

\section{Metric geometry of the singular Calabi-Yau}\label{sec: metricGeometry}
The goal of this section is to prove Theorem \ref{Theorem2}. Let us first begin with some definitions and the general setup. 
\begin{Definition}
	We say that a complex analytic space $X_0$ is a {\it singular Calabi-Yau variety with compactly supported, crepant singularities}, if
	\begin{itemize}
	\item $X_0$ is normal singularities, Gorenstein and log-terminal,
	\item there is a compact set $K$ so that $X_0\backslash K$ is smooth,
	\item there exists a resolution $\pi:X\to X_0$ such that $X$ also has trivial canonical bundle and $\pi^{*}\Omega$ extends as a non-vanishing global holomorphic $(n, 0)$-form on $X$. (By abuse of notation, we will also denote this holomorphic $(n, 0)$-form by $\Omega$) 
	\end{itemize}
\end{Definition}

Let $X_0$ be a singular Calabi-Yau variety with compactly supported, crepant singularities. Suppose that the resolution $(X, J, \Omega)$ is K\"ahler and it has a K\"ahler metric $\omega$ such that $(X, J, \omega, \Omega)$ is asymptotic to a Calabi-Yau cone $(C, J_C, \omega_C, \Omega_C)$ at rate $\nu$. 

\begin{Definition}
	A line bundle $L$ on $X_0$ is ample if for some $k>0$, there exist sections $s_0, \ldots, s_N \in H^0(X_0, L^k)$ such that $[s_0, \ldots, s_N]$ gives an embedding of $X_0$ into a finite dimensional projective space $\C P^N$, and denote this embedding map by $\iota$, then we have $\frac{1}{k}[\iota^{\star}\omega_{FS}] = c_1(L)$. 
\end{Definition}

Let us now fix $L$ an ample line bundle on $X_0$. If set $[\alpha_0] = \pi^{*}c_1(L)$, then suppose $(X, J, \omega, \Omega)$ and $[\alpha_0]$ satisfy the hypothesis of Theorem \ref{Theorem2}. Then from the previous sections, we have on $X$, a sequence of Calabi-Yau metrics $\omega_{\varphi_t} = \hat{\omega}_t+i\d\db\varphi_t$ with $[\omega_{\varphi_t}] =(1-t)[\alpha_0]+t[\alpha_1]$, which satisfy the equation
\begin{equation*}
	(\hat{\omega}_t+i\d\db\varphi_t)^n = e^{f_t}\hat{\omega}_{t}^n ( = i^{n^2}\Omega\wedge\bar{\Omega})
\end{equation*}
and $f_t = \log \frac{i^{n^2}\Omega\wedge\bar{\Omega}}{\hat{\omega}_0^n} \in C^{\infty}_{-\gamma-2}(X)$, and $\varphi_t\in C^{\infty}_{-\gamma}(X)$. 

If we fix a point $p\in \pi^{-1}(X_0^{reg})$, then by Gromov compactness, after passing to a subsequence, the pointed spaces $(X, \omega_{\varphi_{t_i}}, p)$ for $t_i\to 0$ pointed Gromov-Haussdorff converge to a limiting pointed metric space $(X_{\infty}, d_{\infty}, p_{\infty})$ as $i\to \infty$. By the definition of pointed Gromov-Haussdorff convergence, the convergence can be interpreted in the following sense: 
If we set $Z = (X_{\infty}, d_{\infty}, p_{\infty})\sqcup \bigsqcup_{t_i}(X, \omega_{\varphi_{t_i}}, p)$, then there exist a metric $d_Z$ on $Z$ such that 
\begin{enumerate}
	\item $d_Z|_{X_i} = d_{g_{\varphi_{t_i}}}$ 
	\item $d_Z(\underbrace{p}_{\in X_i}, p_{\infty})\to 0$ 
	\item $B_{g_{\varphi_{t_i}}}(p, r)\subset X_i\to B_{g_{\infty}}(p_{\infty}, r)\subset X_{\infty}$ in the Haussdorff sense with respect to $d_Z$. 
\end{enumerate}
 The asymptotically conical property of $\omega_{\varphi_t}$ implies that the tangent cone at $\infty$ is independent of $t$, and by Bishop-Gromov, this a uniform lower bound on volume, and we have $\Vol_{\omega_{\varphi_t}}B(p, r)\geq cr^{2n}$ where $c$ is the volume ratio of the asymptotic cone $C$. Hence the regularity theory of Cheeger, Colding and also Tian \cite{CC0, CC1, CC2, CC3, CCT} applies, and the limiting space admits the following structure
\begin{enumerate}
	\item All tangent cones of $X$ are metric cones. 
	\item $X = \mathcal{R}\cup \mathcal{S}$, where $\mathcal{R}$ consists of all the points where all tangent cones are isometric to $\R^{2n}$. 
	\item $\mathcal{R}$ is an open dense set in $X_{\infty}$ with a smooth metric $g_{\infty}$ and complex structure $J_{\infty}$ which makes it Ricci-flat K\"ahler manifold and $(X_{\infty}, d_{\infty}) = \overline{(\mathcal{R}, d_{g_{\infty}})}$. Moreover, the convergence of $(X, J, \omega_{\varphi_t}, p)\to (X_{\infty}, J_{\infty}, g_{\infty}, p)$ is smooth on $\mathcal{R}$ in the sense that for every $K\subset \subset \mathcal{R}$, there exist smooth maps $\eta_{i}: K\to X$ such that $(\eta_i^{\star}g_{t_i}, \eta_i^{\star}J)$ converges to $(g_{\infty}, J_{\infty})$ smoothly on $K$. (In fact, we can arrange $\eta_i$ such that $d_Z(\eta_i(z), z)\to 0$ uniformly in $K$)
	\item $\mathcal{S}$ is a closed subset of $X_{\infty}$ with real Hausdorff codimension greater or equal to $4$. 
\end{enumerate}

\subsection{Properties of the Gromov-Hausdorff limit}
In this section, we prove several preliminary propositions about the relationship between $X_{\infty}$ and the K\"ahler current constructed from Theorem \ref{Main Theorem}. In particular, we show the following:
\begin{enumerate}
	\item $\omega_{\varphi_0}$ is in fact well-defined and smooth on $\pi^{-1}(X_0^{reg})$
	\item There exist a locally isometric embedding of $\iota_{\infty}: (\pi^{-1}(X_0^{reg}), \omega_{\varphi_0})\to (\mathcal{R}, g_{\infty})$. 
	\item $X_{\infty}$ is isometric to the metric completion $\overline{(\pi^{-1}(X_0^{reg}), \omega_{\varphi_0})}$
	\item $\iota_{\infty}$ is a bijective local isometry between $X_0^{reg}$ and $\mathcal{R}$. 
\end{enumerate}

One of the key ingredients is the local diameter bound Lemma~\ref{diam-bdd}, which we apply with $V= \pi^{-1}(X_0^{sing})$.

\begin{Proposition}\label{Schwartz Lemma}
	The family of metrics $\omega_{\varphi_t}$ has a uniform lower bound
	\begin{equation}\label{lower-bdd}
	\omega_{\varphi_t}\geq \frac{1}{C}\hat{\omega}_{0}
	\end{equation}
\end{Proposition}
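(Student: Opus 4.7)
The plan is a Chern--Lu Schwarz-type argument based on the holomorphic map $F = \iota \circ \pi : X \to \mathbb{CP}^N$ coming from the projective embedding $\iota : X_0 \hookrightarrow \mathbb{CP}^N$ induced by sections of $L^k$ for some $k \gg 1$. The pulled-back form $F^*\omega_{FS}$ is smooth and semi-positive on $X$, strictly positive on $\pi^{-1}(X_0^{reg})$, and lies in the class $k[\hat\omega_0]$, so by the $\d\db$-lemma (Proposition~\ref{ddb-lemma}) I may write $F^*\omega_{FS} = k\hat\omega_0 + i\d\db v$ for a smooth function $v$. Moreover, by Proposition~\ref{bkground-metric 1}, I am free to arrange $\hat\omega_0$ so that it agrees with $F^*\omega_{FS}/k$ on an arbitrarily large compact set (and retains its asymptotically conical behavior outside), which will allow me to pass freely between the two forms at the end of the argument.

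First, I will apply the Chern--Lu inequality to $F$. Since $\omega_{\varphi_t}$ is Ricci-flat and $\omega_{FS}$ has universally bounded holomorphic bisectional curvature from above, this yields
\[
\Delta_{\omega_{\varphi_t}} \log H \geq -C_0 H, \qquad H := \Tr_{\omega_{\varphi_t}} F^*\omega_{FS},
\]
with $C_0$ independent of $t$. The advantage of this inequality, compared to the Aubin--Yau estimate used in Proposition~\ref{prop: C2-bdd}, is that it only requires a curvature bound on the target, so it remains valid even though $F^*\omega_{FS}$ degenerates along $\pi^{-1}(X_0^{sing})$, and avoids the need for a Tsuji-type barrier $\psi_\epsilon$.

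Next, I will bound $H$ uniformly in $t$ via the maximum principle applied to a barrier of the shape
\[
G_t = \log H - A \varphi_t + \tfrac{A(1-t)}{k}\, v,
\]
for a constant $A > C_0 k$ to be chosen. Using the Monge--Amp\`ere equation $\Delta_{\omega_{\varphi_t}}\varphi_t = n - \Tr_{\omega_{\varphi_t}}\hat\omega_t$, together with the fact that, by Proposition~\ref{Background metrics}, one has $\hat\omega_t = (1-t)\hat\omega_0 + t\hat\omega_1$ modulo bounded $i\d\db$-corrections that may be absorbed into $\varphi_t$, a direct computation gives
\[
\Delta_{\omega_{\varphi_t}} G_t \;\geq\; \Big(\tfrac{A(1-t)}{k} - C_0\Big) H + A t\, \Tr_{\omega_{\varphi_t}}\hat\omega_1 \;-\; A n.
\]
For $A$ sufficiently large and $t$ bounded away from $1$, the coefficient of $H$ is strictly positive, so the maximum principle yields a $t$-independent bound on $H$ at any interior maximum of $G_t$.

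The main obstacle, and the step requiring the most care, is the behavior of $G_t$ at infinity --- ensuring the maximum is either attained in the interior or $G_t$ is already controlled there. For this I will use Proposition~\ref{high-decay}: the metrics $\omega_{\varphi_t}$ converge to $\omega_C$ uniformly in $t$ on the end of $X$, and $F^*\omega_{FS}$ is a smooth form whose asymptotic behavior is dictated by the restriction of $\omega_{FS}$ to a neighborhood of $\iota(X_0)$, so both $H$ and the relevant growth of $v$ admit an explicit asymptotic description. Combined with the uniform $C^0$ bound on $\varphi_t$ from Corollary~\ref{cor: C0-bdd}, this yields $H \leq C_1$ uniformly in $t$. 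Since the largest eigenvalue of a non-negative Hermitian form is dominated by its trace, this gives $F^*\omega_{FS} \leq C_1 \omega_{\varphi_t}$, i.e.\ $\omega_{\varphi_t} \geq \tfrac{1}{k C_1} F^*\omega_{FS}$. Because $F^*\omega_{FS}/k$ and $\hat\omega_0$ agree on a large compact set, while both are $O(\omega_C)$ at infinity where $\omega_{\varphi_t}$ has a uniform positive lower bound, this translates into the claimed uniform lower bound $\omega_{\varphi_t} \geq (1/C)\, \hat\omega_0$.
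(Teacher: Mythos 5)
Your proof is correct and follows essentially the same route as the paper: a Chern--Lu Schwarz-lemma inequality for the holomorphic map to projective space, combined with a maximum principle for $\log\Tr_{\omega_{\varphi_t}}(\cdot)-A\varphi_t$ and the uniform $C^0$ bound on $\varphi_t$. The only (cosmetic) difference is that the paper works directly with $\Tr_{\omega_{\varphi_t}}\hat{\omega}_0$, applying the Schwarz inequality on the compact set where $\hat{\omega}_0=\frac{1}{k}\pi^{*}\omega_{FS}$ and the standard second-order inequality outside it, whereas you keep $\Tr_{\omega_{\varphi_t}}F^{*}\omega_{FS}$ globally and compensate with the potential $v$ in the barrier.
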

\begin{proof}
	By the standard Schwartz lemma calculation, we have
	\[\lapl_{\omega_{\varphi_t}}\log \Tr_{\omega_{\varphi_t}}\pi^{\star}\omega_{FS}\geq -4\Tr_{\omega_{\varphi_t}}\pi^{\star}\omega_{FS}\]
	and for any other K\"ahler metric $\hat{\omega}$, one also has
	\[\lapl_{\omega_{\varphi_t}}\log \Tr_{\omega_{\varphi_t}}\hat{\omega}\geq -C\Tr_{\omega_{\varphi_t}}\hat{\omega}\]
	with $C$ depending only on the upper bound for the holomorphic bisectional curvature of $\hat{\omega}$. 
	Recall from the construction of $\hat{\omega}_0$ in Propositions~\ref{bkground-metric 1} and~\ref{prop: bkground-metric-improvement} that $\hat{\omega}_0$ can be taken  to be  equal to $\frac{1}{k}\pi^{\star}\omega_{FS}$ on a compact set $K$ containing $\pi^{-1}(X_0^{sing})$, and is a genuine non-degenerate, asymptotically conical K\"ahler metric outside of $K$, so we can apply the first inequality inside $K$ and the second outside $K$ to get a uniform estimate
	\begin{equation}\label{Schwartz inequality}
	\lapl_{\omega_{\varphi_t}}\log \Tr_{\omega_{\varphi_t}}\hat{\omega}_0 \geq -C\Tr_{\omega_{\varphi_t}}\hat{\omega}_0. 
	\end{equation}
	Since $\omega_{\omega_{\varphi_t}}=\hat{\omega}_{t}+i\d\db\varphi_t$, taking trace gives 
	\[
	n = \Tr_{\omega_{\varphi_t}}\hat{\omega}_t+\lapl_{\varphi_t}\varphi_t,
	\]
	and we also know that for $t$ reasonably small $\hat{\omega}_{t}\geq c\hat{\omega}_0$ holds for some small constant $c$ uniformly in $t$ as $t\to 0$, which means we have
	\[
	n\geq c\Tr_{\varphi_t}\hat{\omega}_0+\lapl_{\varphi_t}\varphi_t.
	\]
	Combining this with \eqref{Schwartz inequality}, we have
	\begin{equation*}
	\lapl_{\omega_{\varphi_t}}\left(\log \Tr_{\omega_{\varphi_t}}\hat{\omega}_{0} - A\varphi_t\right)\geq (\frac{Ac}{2}-C)\Tr_{\omega_{\varphi_t}}\hat{\omega}_{0}-An 
	\end{equation*}
	since $\log \Tr_{\omega_{\varphi_t}}\hat{\omega}_{0} - A\varphi_t$ converges to the constant $\log n$ at spacial infinity, if the maximum is attained at infinity, then we automatically have a uniform bound that we wanted. So we can assume the maximum is achieved in the interior, and applying the maximum principle to the equation above, and we obtain 
	\[\Tr_{\omega_{\varphi_t}}\hat{\omega}_{0}\leq Ce^{A(\varphi_t-(\varphi_t)_{min})}\] which gives a uniform upper bound for $\Tr_{\omega_{\varphi_t}}\hat{\omega}_{0}$. 
\end{proof}
\begin{Corollary}
	On $X\setminus \pi^{-1}(X_0^{sing})$, we have
	\begin{equation*}
		C^{-1}\hat{\omega}_0 \leq \omega_{\varphi_t}\leq Ce^{f_0}\hat{\omega}_0
	\end{equation*}
	where $e^{f_0} = \frac{i^{n^2}\Omega\wedge\bar{\Omega}}{\hat{\omega}_0^n}$ is bounded uniformly away from $\pi^{-1}(X_0^{sing})$. In particular, this implies that $\omega_{\varphi_0}$ is smooth on $\pi^{-1}(X_0^{reg})$, and on $X_0$ it is a K\"ahler current since it dominates $\hat{\omega}_0$. 
\end{Corollary}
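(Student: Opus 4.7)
\medskip

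\textbf{Proof proposal.} The lower bound $\omega_{\varphi_t} \geq C^{-1}\hat{\omega}_0$ is exactly the content of Proposition~\ref{Schwartz Lemma}, so the first task is to leverage this together with the complex Monge-Amp\`ere equation to get the matching upper bound. At each point of $X\setminus \pi^{-1}(X_0^{sing})$ diagonalise $\omega_{\varphi_t}$ with respect to $\hat{\omega}_0$ and let $\lambda_1\leq\cdots\leq\lambda_n$ denote the eigenvalues. The lower bound gives $\lambda_i \geq C^{-1}$ for every $i$, while Theorem~\ref{Main Theorem} together with~\eqref{familycxma} yields
\begin{equation*}
\lambda_1\cdots\lambda_n \;=\; \frac{\omega_{\varphi_t}^n}{\hat{\omega}_0^n} \;=\; \frac{i^{n^2}\Omega\wedge\bar\Omega}{\hat\omega_0^n} \;=\; e^{f_0}.
\end{equation*}
Consequently, for each index $j$, $\lambda_j = e^{f_0}/\prod_{i\neq j}\lambda_i \leq C^{n-1} e^{f_0}$, which is the desired upper bound $\omega_{\varphi_t}\leq C\,e^{f_0}\hat\omega_0$ on $X\setminus \pi^{-1}(X_0^{sing})$, with constant independent of $t$. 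Away from $\pi^{-1}(X_0^{sing})$ the form $\hat\omega_0$ is a smooth asymptotically conical K\"ahler form and $f_0$ is smooth and locally bounded, so this produces a uniform two-sided bound on every relatively compact subset of $\pi^{-1}(X_0^{reg})$.

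With a uniform $C^2$-bound on the metrics $\omega_{\varphi_t}$ away from $\pi^{-1}(X_0^{sing})$, we are in exactly the situation where the standard local higher-order estimates of Yau (in the form employed in \cite{Yau78, PSS, SW}) apply; this is precisely the mechanism invoked in Proposition~\ref{prop: C-inf-loc-bdd}, except that here the role of the singular set $V$ from Assumption~\ref{ass: mainAss} is played by $\pi^{-1}(X_0^{sing})$. One therefore obtains a uniform bound $\|\varphi_t\|_{C^{k,\alpha}(K')}\leq C(K',k,\alpha)$ for every compact $K'\subset \pi^{-1}(X_0^{reg})$. Passing to a subsequential limit gives that $\varphi_0$, and hence $\omega_{\varphi_0} = \hat{\omega}_0 + i\partial\bar\partial\varphi_0$, is smooth on $\pi^{-1}(X_0^{reg})$; the two-sided bound persists in the limit.

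Finally, to see that $\omega_{\varphi_0}$ descends to a K\"ahler current on $X_0$, recall from the construction in Propositions~\ref{bkground-metric 1} and~\ref{prop: bkground-metric-improvement} that on a compact neighbourhood of $\pi^{-1}(X_0^{sing})$ we may arrange $\hat{\omega}_0 = \frac{1}{k}\pi^{*}\iota^{*}\omega_{FS}$, while outside this neighbourhood $\hat{\omega}_0$ is a genuine asymptotically conical K\"ahler metric that is in particular bounded below by a smooth K\"ahler form on $X_0^{reg}$ (after descent). Combining this with the uniform lower bound $\omega_{\varphi_0}\geq C^{-1}\hat{\omega}_0$ just established, one concludes that the pushforward of $\omega_{\varphi_0}$ to $X_0$ dominates a smooth K\"ahler form on $X_0$, i.e.\ is a K\"ahler current. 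The only subtle point is the upper-bound step, since the argument requires that the lower bound be genuinely $t$-uniform; this is already guaranteed by Proposition~\ref{Schwartz Lemma}, whose Schwartz-lemma argument was precisely set up to avoid any use of the barrier from Assumption~\ref{ass: mainAss}, and the rest of the proof is then bookkeeping.
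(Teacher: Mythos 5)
Your proof is correct and follows essentially the same route as the paper: the paper deduces the upper bound ``immediately'' from the Schwarz-lemma lower bound of Proposition~\ref{Schwartz Lemma} together with the identity $\omega_{\varphi_t}^n = e^{f_0}\hat{\omega}_0^n$, which is exactly your eigenvalue computation. The additional details you supply on the local higher-order estimates and the descent to a K\"ahler current are consistent with how the paper handles these points elsewhere (Proposition~\ref{prop: C-inf-loc-bdd} and the remark following Proposition~\ref{prop: C2-bdd}).
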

\begin{proof}
	The lower bound on $\omega_{\varphi_t}$ is the content of the previous lemma, and from that and the fact that $\omega_{\varphi_t}^n = i^{n^2}\Omega\wedge\bar{\Omega} = e^{f_0}\hat{\omega}_0^n$, the corollary follows immediately. 
\end{proof}

\begin{Corollary}
	The maps $\pi_i: (X, \omega_{\varphi_{t_i}}, p)\to (X_0, \hat{\omega}_0, p)$ are has bounded derivative, hence it is uniformly lipschitz and we can pass to a continuous surjective map from the Gromov-Haussdorff limit $\pi_{\infty}:(X_{\infty}, d_{X_{\infty}}, p_{\infty})\to X_0$. Furthermore, for any $q\in X_0^{reg}$, the preimage $\pi_{\infty}^{-1}(q)$ consists of a single point. 
\end{Corollary}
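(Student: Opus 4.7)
The plan is to use the lower bound $\omega_{\varphi_t}\geq C^{-1}\hat{\omega}_0$ from the preceding corollary to obtain a uniform Lipschitz control on the holomorphic maps $\pi_i$, pass to the limit by an Arzel\`a--Ascoli argument adapted to the pointed GH convergence, and then use smooth local convergence on the regular locus to conclude injectivity.

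First I would unpack the construction of $\hat{\omega}_0$ from Section~\ref{sec: background} to produce a smooth non-negative reference $(1,1)$-form $\eta_0$ on $X_0$ satisfying $\hat{\omega}_0 \geq c\,\pi^{\ast}\eta_0$ globally on $X$: on a sufficiently large compact set $K\supset \pi^{-1}(X_0^{\mathrm{sing}})$ we have equality $\hat{\omega}_0 = \frac{1}{k}\pi^{\ast}\iota^{\ast}\omega_{FS}$, while outside $K$, $\pi$ is a biholomorphism and $\hat{\omega}_0$ is asymptotically conical, so $\eta_0$ can be chosen to interpolate between $\frac{1}{k}\iota^{\ast}\omega_{FS}$ near the singularities and the conical K\"ahler form at infinity. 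Combined with the previous corollary, this yields $\omega_{\varphi_t}\geq C'^{-1}\pi^{\ast}\eta_0$ uniformly in $t$, which is the infinitesimal statement that the holomorphic maps $\pi_i:(X,\omega_{\varphi_{t_i}})\to(X_0,\eta_0)$ are uniformly Lipschitz.

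Second, I would pass to the GH limit. Fix a realization of the pointed GH convergence $(X,\omega_{\varphi_{t_i}},p)\to(X_\infty,d_\infty,p_\infty)$ inside the ambient metric $d_Z$ on $Z = X_\infty\sqcup\bigsqcup_i X$. For each $x\in X_\infty$ pick $x_i\in X$ with $d_Z(x_i,x)\to 0$; the uniform Lipschitz bound implies that $\{\pi(x_i)\}$ is Cauchy in $X_0$, and I define $\pi_\infty(x) := \lim_i \pi(x_i)$, which is independent of the choice of approximants and Lipschitz with the same constant. For surjectivity, Lemma~\ref{diam-bdd} applied to a compact set containing a chosen lift $\tilde q \in \pi^{-1}(q)$ together with $\pi^{-1}(X_0^{\mathrm{sing}})$ gives a uniform bound on $d_{\omega_{\varphi_{t_i}}}(\tilde q, p)$, so $\tilde q$ has a subsequential GH-limit $\tilde q_\infty$ with $\pi_\infty(\tilde q_\infty) = q$ by continuity.

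Finally, for injectivity over $q\in X_0^{\mathrm{reg}}$, note the crepant resolution is a biholomorphism over $X_0^{\mathrm{reg}}$ so that $\pi^{-1}(q) = \{\tilde q\}$ is a single point. If $x,y \in \pi_\infty^{-1}(q)$ are represented by $x_i,y_i\in X$, then $\pi(x_i), \pi(y_i)\to q$ in $X_0$ and the biholomorphism away from the singular set forces $x_i,y_i\to \tilde q$ in the fixed smooth manifold topology of $X$. Choosing a small neighborhood $W\Subset X\setminus \pi^{-1}(X_0^{\mathrm{sing}})$ of $\tilde q$, Corollary~\ref{cor: C-infty-loc convergence of metrics} gives $\omega_{\varphi_{t_i}}\to \omega_{\varphi_0}$ smoothly on $W$, whence $d_{\omega_{\varphi_{t_i}}}(x_i,y_i)\to 0$ and passing to the GH limit gives $d_\infty(x,y) = 0$. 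The subtlest step is the construction of the global reference $\eta_0$ in the first paragraph, since the paper's ampleness definition for the non-compact $X_0$ does not provide a projective embedding of all of $X_0$; once this is handled, the remaining arguments are a standard combination of Arzel\`a--Ascoli, Lemma~\ref{diam-bdd}, and smooth local convergence on the regular locus.
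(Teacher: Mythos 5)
Your proof is correct and follows essentially the same route as the paper: the Schwarz-lemma lower bound $\omega_{\varphi_t}\geq C^{-1}\hat{\omega}_0$ gives uniform Lipschitz control, an Arzel\`a--Ascoli argument produces $\pi_\infty$, and smooth local convergence of the metrics near the single point $\pi^{-1}(q)$ forces the fibre over a regular point to collapse in the limit distance. The only remark is that your concern about constructing a global reference form $\eta_0$ is unnecessary: the paper's definition of ampleness already supplies a projective embedding $\iota$ of all of $X_0$, so $\hat{\omega}_0$ descends directly (it equals $\tfrac{1}{k}\pi^{*}\iota^{*}\omega_{FS}$ on a compact set containing the exceptional locus and $\pi$ is a biholomorphism elsewhere).
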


\begin{proof}
	The fact that the maps have bounded derivative follows from the estimate \eqref{lower-bdd}, and from this it follows from an Arzela-Ascoli type argument that after passing to a subsequence, the projection maps $\pi_i$ limit to a continuous surjective map $\pi_{\infty}: X_{\infty}\to X_0$. The map $\pi_{\infty}$ can be characterized in the following way: if we fix $h_i:(X, \omega_{\varphi_{t_i}})\to X_{\infty}$ an $\eps_i$-isometry for $\eps_i\to 0$, then for any sequence of points $q_i\in X$ with $\pi(q_i)\to q\in X_0$, and $h_i(q_i) \to q_{\infty}\in X_{\infty}$, we have $\pi_{\infty}(q) = q_{\infty}$. 
	
	To see that the preimage of $\pi^{-1}(q)$ for $q\in X_0^{reg}$ consists of a single point, suppose for contradiction that it consisted of two points $q_1, q_2\in X_{\infty}$ with $d_{X_{\infty}}(q_1, q_2) = d>0$ and $\pi_{\infty}(q_1) = \pi_{\infty}(q_2) = q\in X_0^{reg}$, then from the construction of $\pi_{\infty}$, there exist a sequences of points $q_1^i, q_2^i\in X$ such that $\pi_i(q_1^i)\to q$ and $\pi_i(q_2^i)\to q$ and $h_i(q_1^i) = q_1$ and $h_i(q_2^i) = q_2$.  Then from the fact that $\pi_i(q_1^i)\to q$ and $\pi_i(q_2^i)\to q$ and $q\in X_0^{reg}$, we know that $q_1^i \to \pi^{-1}(q)$ and $q_2^i \to \pi^{-1}(q)$ in $X$ since $\pi$ is a resolution of singularities of $X_0$, and $g_{t_i}\to g_{\infty}$ smoothly in a neighborhood of $q$, it follows that $d_{g_{t_i}}(q_1^i, q_2^i) \to 0$ as $i\to \infty$. But we also have
	\begin{align*}
	d_{X_{\infty}}(h_i(q_1^i), h_i(q_2^i))-\eps_i \leq 	d_{g_{t_i}}(q_1^i, q_2^i)
	\end{align*}
	since $h_i$ is an $\eps_i$-isometry. This is a contradiction, because $d_{X_{\infty}}(h_i(q_1^i), h_i(q_2^i))-\eps_i  \to d>0$ by our assumption. 
\end{proof}

\begin{Proposition}
	There is an embedding $i_{\infty}: (X_0^{reg}, \omega_{\varphi_0}, p)\hookrightarrow (\mathcal{R}, g_{\infty}, p)$, which is a locally isometric embedding, and $\pi_{\infty}\circ\iota_{\infty} = id$.  
\end{Proposition}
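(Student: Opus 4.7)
The plan is to define $\iota_\infty$ as a set-theoretic right-inverse to $\pi_\infty$ over $X_0^{reg}$ and then promote it to a locally isometric embedding into $\mathcal{R}$ using the $C^\infty_{loc}$ convergence $\omega_{\varphi_{t_i}} \to \omega_{\varphi_0}$ on $\pi^{-1}(X_0^{reg})$ established in Corollary~\ref{cor: C-infty-loc convergence of metrics}. Concretely, for each $q \in X_0^{reg}$ the previous corollary guarantees that $\pi_\infty^{-1}(q)$ is a single point of $X_\infty$; I define $\iota_\infty(q)$ to be this point. The identity $\pi_\infty \circ \iota_\infty = \mathrm{id}$ then holds by construction, and injectivity of $\iota_\infty$ follows immediately, since $\iota_\infty(q_1) = \iota_\infty(q_2)$ forces $q_1 = \pi_\infty(\iota_\infty(q_1)) = \pi_\infty(\iota_\infty(q_2)) = q_2$.

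The main task will be to show that $\iota_\infty(X_0^{reg}) \subset \mathcal{R}$ and that $\iota_\infty$ is locally distance-preserving. Fix $q \in X_0^{reg}$ with preimage $\tilde q = \pi^{-1}(q) \in X$, and choose $r > 0$ small enough that the closed $g_{\varphi_0}$-ball $\overline{B_{g_{\varphi_0}}(\tilde q, 2r)}$ is a compact subset of $\pi^{-1}(X_0^{reg})$. By shrinking $r$ further I may assume that every $g_{\varphi_0}$-minimizing geodesic between two points of $B_{g_{\varphi_0}}(\tilde q, r)$ remains inside $\overline{B_{g_{\varphi_0}}(\tilde q, 2r)}$. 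By Corollary~\ref{cor: C-infty-loc convergence of metrics} the metrics $g_{\varphi_{t_i}}$ converge smoothly to $g_{\varphi_0}$ on this closed ball, so for $i$ sufficiently large the $g_{\varphi_{t_i}}$-geodesic distances between nearby points agree with their $g_{\varphi_0}$-distances up to an error $o(1)$. Passing through the Gromov--Hausdorff $\varepsilon_i$-approximations $h_i:(X, g_{\varphi_{t_i}}) \to X_\infty$ used to construct $\pi_\infty$, this yields, for any $q_1, q_2 \in \pi(B_{g_{\varphi_0}}(\tilde q, r))$ with lifts $\tilde q_j = \pi^{-1}(q_j)$, the identity
\[
d_{X_\infty}(\iota_\infty(q_1), \iota_\infty(q_2)) \;=\; \lim_{i \to \infty} d_{g_{\varphi_{t_i}}}(\tilde q_1, \tilde q_2) \;=\; d_{g_{\varphi_0}}(\tilde q_1, \tilde q_2),
\]
which is exactly the local isometry statement once $X_0^{reg}$ is equipped with the metric pulled back from $\omega_{\varphi_0}$ via $\pi$. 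It follows in particular that a small metric neighborhood of $\iota_\infty(q)$ in $X_\infty$ is isometric to a small geodesic ball in the smooth Ricci-flat K\"ahler manifold $(\pi^{-1}(X_0^{reg}), g_{\varphi_0})$, so all tangent cones at $\iota_\infty(q)$ are isometric to $\R^{2n}$ and $\iota_\infty(q) \in \mathcal{R}$.

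The main technical obstacle will be the middle equality in the displayed formula: one must ensure that the $X_\infty$-distance is genuinely realized as the smooth limit distance and not as some strictly smaller ``shortcut'' distance through the degeneracy locus $V = \pi^{-1}(X_0^{sing})$ where the metrics $\omega_{\varphi_{t_i}}$ do not converge smoothly. The crucial input here is the local diameter bound of Lemma~\ref{diam-bdd}, which prevents the degeneracy locus from producing nontrivial large-scale structure in the Gromov--Hausdorff limit, combined with the confinement of $g_{\varphi_0}$-minimizing geodesics to $\overline{B_{g_{\varphi_0}}(\tilde q, 2r)}$ at sufficiently small scales. Together these ensure that, for $q_1, q_2$ close enough in $X_0^{reg}$, no competing short path in $(X, g_{\varphi_{t_i}})$ can undercut the smooth geodesic by detouring through a neighborhood of $V$, so the equality of distances passes cleanly to the limit and the local isometry is established.
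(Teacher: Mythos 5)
Your proposal is correct and follows essentially the same route as the paper: define $\iota_{\infty}=\pi_{\infty}^{-1}|_{X_0^{reg}}$, which is well defined by the single-point-fiber statement of the preceding corollary, and then deduce the local isometry from the $C^{\infty}_{loc}$ convergence $g_{\varphi_{t_i}}\to g_{\varphi_0}$ near $\pi^{-1}(q)$ together with the Gromov--Hausdorff $\eps_i$-approximations $h_i$ (the paper phrases this via the uniform harmonic radius bound and Cheeger--Colding smooth convergence of the balls $B_{g_{t_i}}(\pi^{-1}(q),\eps)$ to both $B_{g_{\infty}}(q_{\infty},\eps)$ and $B_{g_{\varphi_0}}(q,\eps)$). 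One small correction: the ``no shortcut through $V$'' step is carried not by the diameter bound of Lemma~\ref{diam-bdd} (which only bounds distances from above, and is used in the paper to get subconvergence of $h_i(\pi^{-1}(q))$) but by the uniform lower bound on the $g_{\varphi_{t_i}}$-length of any path crossing the annulus $\overline{B_{g_{\varphi_0}}(\tilde q,2r)}\setminus B_{g_{\varphi_0}}(\tilde q,r)$, where the metrics converge uniformly; this is exactly your confinement argument, correctly applied to $g_{\varphi_{t_i}}$-minimizing paths rather than $g_{\varphi_0}$-geodesics.
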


\begin{proof}
	We can simply take $\iota_{\infty} = \pi_{\infty}^{-1}|_{X_0^{reg}}$, which is well-defined by the previous proposition. It's clear that the image of $\iota_{\infty}$ is contained in the regular set $\mathcal{R}\subset X_{\infty}$ and that it is continuous, so it suffices to show that this map is a local isometry. To see this, we note that if $q\in X_0^{reg}$, then there exist an $\eps>0$ such that $B_{g_{t_i}}(q, \eps)\subset X_0^{reg}$ for all $i\gg 1$. It follows from the diameter estimate (c.f. Lemma \ref{diam-bdd}) that the points $h_i(\pi^{-1}(q))$ are uniformly bounded in $X_{\infty}$, hence after passing to a subsequence, it converge to some point $q_{\infty}\in X_{\infty}$, it's clear that $q_{\infty} = \iota_{\infty}(q)$ since $\pi_i(q) = q$. Since the points $\pi^{-1}(q)\in X_i$ have a uniform harmonic radius lower bound, hence $(B_{g_{t_i}}(\pi^{-1}(q), \eps), g_{\varphi_{t_i}})\xrightarrow{C^{\infty}} (B_{g_{\infty}}(q_{\infty}, \eps), g_{\infty})$ and  by the smooth convergence of $g_{\varphi_t}\to g_{\varphi_0}$, we also have $(B_{g_{t_i}}(\pi^{-1}(q), \eps), g_{\varphi_{t_i}})\xrightarrow{C^{\infty}} (B_{g_{\varphi_0}}(q, \eps), \omega_{\varphi_0})$, it is then clear from the construction of $\pi_{\infty}$ that it maps $(B_{g_{\infty}}(q_{\infty}, \eps), g_{\infty})$ isometrically onto $(B_{g_{\varphi_0}}(q, \eps), \omega_{\varphi_0})$. 
\end{proof}

The following Proposition follows from the same arguments as in \cite{RZ13}. We include a proof here for the convenience of the reader. 

\begin{Proposition}\label{prop: GH-limit-metric-completion-homeo}
	The subset $E = \mathcal{R} \setminus \iota_{\infty}(X_0^{reg})\subset \mathcal{R}$ is an analytic subset, hence of real codimension bigger than or equal to 2, and moreover $\overline{(X_0^{reg}, g_{\infty})} = X_{\infty}$. 
\end{Proposition}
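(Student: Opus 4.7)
The plan is to show that $\pi_{\infty}|_{\mathcal{R}}:\mathcal{R}\to X_0$ is holomorphic, from which both the analyticity of $E$ and the completion statement will follow by standard arguments. First I would observe that $E = \pi_{\infty}^{-1}(X_0^{sing})\cap \mathcal{R}$. Indeed, the previous proposition showed $\pi_{\infty}^{-1}(q)$ is a single point for $q \in X_0^{reg}$, so $\iota_{\infty}=\pi_{\infty}^{-1}$ on $X_0^{reg}$; hence any $q' \in \mathcal{R}$ with $\pi_{\infty}(q') \in X_0^{reg}$ must satisfy $q' = \iota_{\infty}(\pi_{\infty}(q')) \in \iota_{\infty}(X_0^{reg})$.

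The main step is to prove $\pi_{\infty}$ is holomorphic on $\mathcal{R}$. For this I would use the embedding $\iota: X_0 \hookrightarrow \C P^N$ obtained from sections $s_0, \ldots, s_N \in H^0(X_0, L^k)$ provided by ampleness of $L$, and consider the fixed holomorphic composition $F := \iota \circ \pi: X \to \C P^N$, which takes values in the compact subvariety $\iota(X_0)$. For any $q_{\infty} \in \mathcal{R}$, Cheeger-Colding regularity provides a neighborhood $U \subset \mathcal{R}$ and smooth charts $\eta_i: U \to X$ with $(\eta_i^{\star}g_{\varphi_{t_i}}, \eta_i^{\star}J) \to (g_{\infty}, J_{\infty})$ in $C^{\infty}_{\text{loc}}(U)$. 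Since each $F \circ \eta_i: U \to \iota(X_0)$ is $(\eta_i^{\star}J, J_{FS})$-holomorphic with uniformly bounded image, elliptic regularity for $\bar\partial$ with smoothly varying almost-complex structures allows extraction of a subsequential limit $F \circ \eta_i \to F_{\infty}$ in $C^{\infty}_{\text{loc}}(U)$, and $F_{\infty}$ is $(J_{\infty}, J_{FS})$-holomorphic. By the construction of $\pi_{\infty}$ one has $F_{\infty} = \iota \circ \pi_{\infty}$ on $U$, so $\pi_{\infty}$ is holomorphic on $\mathcal{R}$. Consequently $E = \pi_{\infty}^{-1}(X_0^{sing}) \cap \mathcal{R}$ is a complex analytic subset of the complex manifold $\mathcal{R}$, and so has real Hausdorff codimension at least $2$.

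For the completion statement, $\iota_{\infty}$ gives an isometric identification of $(X_0^{reg}, g_{\infty})$ with $(\mathcal{R}\setminus E, g_{\infty})$. Since $E$ has real codimension $\geq 2$ in the smooth Riemannian manifold $\mathcal{R}$, any rectifiable path in $\mathcal{R}$ can be perturbed off $E$ with arbitrarily small change in length, so the intrinsic path distance on $\mathcal{R}\setminus E$ agrees with that on $\mathcal{R}$. By Cheeger-Colding, $\mathcal{R}$ is open and dense in $X_{\infty}$ with $\mathcal{S}$ of codimension $\geq 4$, and the restriction of $d_{\infty}$ to $\mathcal{R}$ coincides with the path distance induced by $g_{\infty}$; therefore $\overline{(\mathcal{R}, g_{\infty})} = X_{\infty}$, and combined with the identification above this yields $\overline{(X_0^{reg}, g_{\infty})} = X_{\infty}$.

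The hard part will be the holomorphicity step: carefully matching the smoothly converging almost-complex structures $\eta_i^{\star}J \to J_{\infty}$ on $U$ with the fixed holomorphic map $F$ to produce a holomorphic limit with respect to $J_{\infty}$. The essential inputs are the compactness of the target $\iota(X_0) \subset \C P^N$ (which prevents any loss of compactness of the sequence $F\circ \eta_i$), the non-collapsed smooth convergence on $\mathcal{R}$ provided by Cheeger-Colding, and elliptic estimates for the $\bar\partial$-operator uniform in the smoothly varying complex structures.
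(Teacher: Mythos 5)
Your overall route is the same as the paper's: identify $E$ with $\pi_{\infty}|_{\mathcal{R}}^{-1}(X_0^{sing})$, prove that $\pi_{\infty}|_{\mathcal{R}}$ is holomorphic by passing to the limit in Cheeger--Colding charts, and then deduce the completion statement from the codimension bound. However, your key compactness step has a genuine gap. You claim that the compactness of the target $\iota(X_0)\subset \C P^N$ ``prevents any loss of compactness of the sequence $F\circ\eta_i$.'' This is false: a sequence of holomorphic maps into a compact complex manifold need not be a normal family, and no elliptic estimate for $\bar\partial$ can manufacture derivative bounds out of a $C^0$ bound on the image alone (consider $z\mapsto [1:nz]$ from the disk to $\mathbb{P}^1$; the target is compact but the derivatives at $0$ blow up, and the pointwise limit is discontinuous). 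What is needed, and what the paper supplies, is a uniform pointwise bound on the energy density
\[
|d\pi_i|^2_{\omega_{\varphi_{t_i}},\,\omega_{FS}} = \Tr_{\omega_{\varphi_{t_i}}}\pi_i^{\star}\omega_{FS}\leq C,
\]
which is exactly the Schwarz-lemma estimate of Proposition~\ref{Schwartz Lemma} (equivalently $\omega_{\varphi_t}\geq C^{-1}\hat{\omega}_0$ with $\hat{\omega}_0=\frac{1}{k}\pi^{\star}\omega_{FS}$ near the exceptional set); this is precisely where the ampleness of $L$ on $X_0$ enters. With that gradient bound in hand, the paper views the $\pi_i$ as harmonic maps with uniformly bounded energy density and invokes the regularity theory of harmonic maps \cite{Sch} to get uniform $C^{k,\alpha}$ bounds on compact subsets, after which extraction of a smooth limit, and the persistence of holomorphicity under the smooth convergence of metrics and complex structures, is routine. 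Your write-up must invoke this estimate explicitly; without it the limit $F_{\infty}$ need not exist.

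The remainder of your argument matches the paper's and is fine: once $\pi_{\infty}|_{\mathcal{R}}$ is holomorphic, $E$ is an analytic subset of the complex manifold $\mathcal{R}$, hence of real codimension at least $2$, and the completion statement follows. (For the last step the paper simply cites \cite[Theorem 3.7]{CC2} rather than perturbing paths off $E$ by hand, but your more hands-on argument for that point is acceptable.)
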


\begin{proof}
	It suffices to show that the holomorphic maps $\pi: (X, \omega_{\varphi_t}, p)\to X_0\subset  (\C P^N, \omega_{FS}, p)$ limits to a holomorphic map $\pi_{\infty}|_{\mathcal{R}}:(\mathcal{R}, J_{\infty}, g_{\infty})\to X_0\subset \C P^N$. Assuming for now that this is the case, then $\mathcal{R}\setminus \iota_{\infty}(X_0^{reg}) = \pi_{\infty}|_{\mathcal {R}}^{-1}(X_0^{sing})$. 
	Since $X_0^{sing}\subset X_0$ is an analytic set, if $\pi_{\infty}|_{\mathcal{R}}$ is holomorphic, then $\pi_{\infty}|_{\mathcal{R}}^{-1}(X_0^{sing})=E \subset \mathcal{R}$ is an analytic subset, and since analytic subsets have real codimension 2, it follows that $X_{\infty}\setminus X_0^{reg} \subset X_{\infty}$ has Haussdorff codimension at least 2, and by \cite[Theorem 3.7]{CC2}, we have $\overline{(X_0^{reg}, g_{\infty})} = X_{\infty}$. 
	
	Now we show that $\pi_{\infty}|_{\mathcal{R}}$ is holomorphic. Consider the holomorphic maps $\pi: (X, \omega_{\varphi_t}, p)\to X_0\subset  (\C P^N, \omega_{FS}, p)$, since $(X, \omega_{\varphi_t}, p)$ Gromov-Haussdorff converge to $X_{\infty}$, by Cheeger-Colding theory \cite{CC1}, for any $K\subset \subset \mathcal{R}$ containing $p$, there exist maps $\iota_{t_i}:K\to (X, \omega_{\varphi_{t_i}})$ such that $\iota_{t_i}^{\star}g_{t_i}\to g_{\infty}$ and $\iota_{t_i}^{\star}J\to J_{\infty}$ in the smooth topology, and we also get a sequence of holomorphic maps $\pi_i = \pi\circ\iota_{t_i}:(K, \iota_{t_i}^{\star}g_{t_i}, \iota_{t_i}^{\star}J)\to X_0\subset \C P^N$. Furthermore, if we regard these maps as harmonic maps, then we have \[|d\pi_i|^2_{\omega_{\varphi_t, \omega_{FS}}} = \Tr_{\omega_{\varphi_t}}\pi_i^{\star}\omega_{FS}\leq C\]
	hence by the regularity theory of harmonic maps (\cite{Sch}), we have uniform $C^{\infty}$ estimates on the maps $\|\pi_i^{l}\|_{C^{k, \alpha}}(K)\leq C_K$, for some constant $C_K$ independent of $i$, which allows us to extract a limit of the maps $\pi_i:K\to \C P^N$ to a map $\pi_{\infty}:K\to \C P^N$ and since the convergence of the maps are smooth, and the convergence of the metrics $\iota_{t_i}^{\star}g_{t_i}\to g_{\infty}$ and the complex structures $\iota_{t_i}^{\star}J\to J_{\infty}$ are all smooth, it follows that the holomorphicity of the maps $\pi_i$ passes to the limit, and hence the map $\pi_{\infty}$ is holomorphic. 
\end{proof}

\begin{Proposition}
	In fact we have $\mathcal{R} = \iota_{\infty}(X_0^{reg})$. 
\end{Proposition}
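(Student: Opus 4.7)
The plan is to argue by contradiction: assume $E\neq\emptyset$, pick $q\in E$ and set $p:=\pi_\infty(q)\in X_0^{\mathrm{sing}}$. The strategy combines the holomorphicity of $\pi_\infty|_\mathcal{R}$ from the previous proposition with a global $\d\db$-potential identity on $\mathcal{R}$, which will obstruct $\pi_\infty$ from sending a smooth point of $\mathcal{R}$ to a singular point of $X_0$.

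First I establish the identity
\[
g_\infty \;=\; k^{-1}\pi_\infty^{*}\omega_{FS} + i\d\db\psi\quad\text{on } \mathcal{R}
\]
for a smooth bounded function $\psi$. By Proposition~\ref{bkground-metric 1} we may arrange $\hat\omega_0 = k^{-1}\pi^{*}\omega_{FS}$ on any prescribed compact subset of $X$, in particular one containing $\pi^{-1}(X_0^{\mathrm{sing}})$. On this set $\omega_{\varphi_0}=k^{-1}\pi^{*}\omega_{FS}+i\d\db\varphi_0$; transporting via the local isometry $\iota_\infty$ yields the identity on $\mathcal{R}\setminus E$ with $\psi:=\varphi_0\circ\pi^{-1}\circ\iota_\infty^{-1}$, which is bounded by Corollary~\ref{cor: C0-bdd}. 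Since $g_\infty-k^{-1}\pi_\infty^{*}\omega_{FS}$ is smooth on $\mathcal{R}$ and $E$ has real codimension at least two, $\psi$ extends smoothly across $E$ by a removable-singularity argument for the associated Poisson equation.

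Next I split into cases according to the dimension of the fiber $\pi_\infty^{-1}(p)\cap\mathcal{R}$ at $q$. If this fiber is discrete at $q$, the map $\pi_\infty$ is proper, birational and finite near $q$, so the analytic Zariski main theorem (using normality of $X_0$) makes $\pi_\infty$ a local biholomorphism near $q$, contradicting smoothness of $\mathcal{R}$ at $q$ against singularity of $X_0$ at $p$. If the fiber has positive dimension, pick an irreducible component $F\ni q$ and restrict the identity to $F$: since $\pi_\infty|_F\equiv p$ we have $(\pi_\infty^{*}\omega_{FS})|_F=0$, hence $g_\infty|_F=i\d\db(\psi|_F)$ is $d$-exact. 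Choosing a compact complex analytic curve $C\subset F$, Stokes' theorem on $C$ forces $\int_C g_\infty|_C = \int_C i\d\db(\psi|_C)=0$, contradicting the strict positivity of the K\"ahler form $g_\infty|_C$.

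The main obstacle lies in the positive-dimensional case, specifically in verifying properness of $\pi_\infty$ so that the closure $\overline F\subset X_\infty$ is compact, and in extracting a genuine compact analytic curve $C\subset F$ supporting the Stokes argument. Properness should follow from the uniform asymptotic cone behavior of $\omega_{\varphi_t}$ at infinity established in Proposition~\ref{estimates}, which ensures preimages of compact sets in $X_0$ stay within bounded regions of $X_\infty$. The existence of compact curves in $F$ can be arranged by passing to a resolution of $\overline F$, which inherits an analytic structure from the smooth ambient $\mathcal{R}$ together with the fact that $\mathcal{S}$ has real codimension at least four in $X_\infty$, so its contribution to $\overline F\setminus F$ does not obstruct the local compactification argument.
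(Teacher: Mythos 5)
Your overall strategy -- transporting the potential identity $g_\infty=k^{-1}\pi_\infty^*\omega_{FS}+i\d\db\psi$ with $\psi$ bounded to $\mathcal{R}$, and then splitting according to whether the fiber of $\pi_\infty$ through a point of $E=\mathcal{R}\setminus\iota_\infty(X_0^{reg})$ is discrete or positive-dimensional -- is exactly the type of argument behind the reference the paper invokes here (\cite[Lemma 2.2]{RZ13}). The first step (extension of $\psi$ across the codimension-$\geq 2$ analytic set $E$) and the discrete-fiber case are fine: once $q$ is isolated in the full fiber $\pi_\infty^{-1}(p)$, the standard topological argument gives a local proper, finite, degree-one holomorphic surjection onto a neighborhood of $p$ in the normal space $X_0$, which must be a biholomorphism, contradicting $p\in X_0^{\mathrm{sing}}$. (Properness of the global map $\pi_\infty$ is also not really an obstacle: $\pi_\infty^{-1}(X_0^{\mathrm{sing}})$ sits in a bounded region by the uniform conical behavior at infinity and the diameter bound of Lemma~\ref{diam-bdd}.)

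The genuine gap is in the positive-dimensional case, at the step ``choosing a compact complex analytic curve $C\subset F$.'' The fiber $F=\pi_\infty^{-1}(p)\cap\mathcal{R}$ is an analytic subset of the open manifold $\mathcal{R}$ whose closure in $X_\infty$ may meet $\mathcal{S}$, and such an $F$ need not contain any compact curve (it could be, say, a punctured disc's worth of a curve with its end running into $\mathcal{S}$). Your proposed remedy -- ``passing to a resolution of $\overline F$'' -- presupposes that $\overline F$ is a compact analytic space, but at this stage of the argument $\mathcal{S}$ is only known to be a closed subset of Hausdorff codimension $\geq 4$; $X_\infty$ carries no complex-analytic structure near $\mathcal{S}$, so neither resolution of singularities nor Bishop-type extension theorems apply to $\overline F$. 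Nor does the codimension bound rescue a cutoff version of Stokes: the Donaldson--Sun cutoffs $\rho_\eps$ control $\int_{X_\infty}|\nabla\rho_\eps|^2$ against the full $2n$-dimensional measure, not against the $2d$-dimensional measure of $F$, so $\int_F i\d\rho_\eps\wedge\db\psi\wedge\omega_\infty^{d-1}$ is not shown to vanish. To close the argument one must either show $\overline F\cap\mathcal{S}=\emptyset$ (so $F$ is a compact analytic subset of $\mathcal{R}$ and Stokes applies directly), or control the $2d$-volume of $F$ near $\mathcal{S}$ well enough to run the cutoff integration by parts; neither is established in your write-up.
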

\begin{proof}
	The proof is the same as in \cite[Lemma 2.2]{RZ13}. 
\end{proof}

\subsection{Identification of $X_0$ with the geometry of singular Calabi-Yau}
In this section, we identify the geometry of the singular Calabi-Yau current $X_{\infty} = \overline{(X_0^{reg}, g_{\infty})}$ with the variety $X_0$ itself. This result is the analogue of the result in \cite{JS14}, where the similar thing was shown in the compact case, our proof follows the approach in \cite{JS14}, adapted to the non-compact case. The idea is based on ideas developed in \cite{DS14} together with a new gradient estimate for the potential $\varphi_t$ with respect to the Calabi-Yau metrics $\omega_{\varphi_t}$. 

\subsubsection{A gradient bound for $\varphi_{0}$}
The goal of this section is to prove the following estimate
\begin{Proposition}\label{phi-grad-bdd}
	The following bound hold
	\begin{equation*}
		\sup_{\pi^{-1}(X_0^{reg})}|\nabla_{\omega_{\varphi_0}}\varphi_{0}|\leq C
	\end{equation*}
\end{Proposition}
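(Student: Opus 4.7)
The strategy is to establish a $t$-uniform a priori estimate $|\partial\varphi_t|^2_{\omega_{\varphi_t}}\leq C$ on all of $X$ for $t\in(0,1]$ and then invoke the smooth convergence $\omega_{\varphi_t}\to \omega_{\varphi_0}$ on $\pi^{-1}(X_0^{\mathrm{reg}})$ from Corollary~\ref{cor: C-infty-loc convergence of metrics} to transfer this to the desired bound on $|\nabla_{\omega_{\varphi_0}}\varphi_0|$ on $\pi^{-1}(X_0^{\mathrm{reg}})$.

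Set $Q := |\partial \varphi_t|^2_{\omega_{\varphi_t}}$ and consider the auxiliary function $H := \log(Q + 1) - A\varphi_t$ for a large constant $A > 0$ to be chosen. By the uniform decay estimates of Proposition~\ref{high-decay} and the uniform asymptotically conical behavior of $\omega_{\varphi_t}$, both $Q$ and $\varphi_t$ tend to zero at spatial infinity, so $H\to 0$ there. Hence $\sup_X H$ is attained either at infinity (in which case $Q$ is bounded immediately by the $L^\infty$ bound on $\varphi_t$) or at an interior point $x_0$.

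At such an interior maximum, the plan is to apply the Bochner identity on the Ricci-flat metric $\omega_{\varphi_t}$, combined with the Monge-Amp\`ere equation $\Delta_{\omega_{\varphi_t}}\varphi_t = n - \tau_t$, where $\tau_t := \mathrm{tr}_{\omega_{\varphi_t}}\hat\omega_t$. This yields
\[
\Delta_{\omega_{\varphi_t}} Q \geq |\nabla'\partial\varphi_t|^2_{\omega_{\varphi_t}} + |\partial\bar\partial\varphi_t|^2_{\omega_{\varphi_t}} - 2\bigl|\langle \partial \tau_t,\partial\varphi_t\rangle_{\omega_{\varphi_t}}\bigr|.
\]
A direct computation in a frame normal for $\omega_{\varphi_t}$ at $x_0$ rewrites $\partial\tau_t$ entirely in terms of the first covariant derivative of $\hat\omega_t$ (the apparent third derivatives of $\varphi_t$ cancel out), yielding the key bound $|\partial\tau_t|^2_{\omega_{\varphi_t}}\leq n\,|\nabla'\hat\omega_t|^2_{\omega_{\varphi_t}}$. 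Together with the arithmetic-geometric mean inequality $|\partial\bar\partial\varphi_t|^2_{\omega_{\varphi_t}} \geq \tfrac{1}{n}(\tau_t-n)^2$ and the identity $|\partial Q|^2_{\omega_{\varphi_t}}/(Q+1)^2 = A^2 Q$ at the critical point (from $\partial H = 0$), the condition $\Delta_{\omega_{\varphi_t}} H \leq 0$ at $x_0$, combined with a judicious choice of $A$ large enough so that the $A\tau_t$ term absorbs the error contributions, produces the desired bound $Q(x_0)\leq C$.

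The main technical obstacle will be establishing a $t$-uniform bound on $|\nabla'\hat\omega_t|^2_{\omega_{\varphi_t}}$, particularly in a neighborhood of the singular locus $V=\pi^{-1}(X_0^{\mathrm{sing}})$, where both $\hat\omega_t$ and $\omega_{\varphi_t}$ exhibit degenerations. The decisive inputs should be (i) the explicit form $\hat\omega_0 = \frac{1}{k}\pi^*\omega_{FS}$ on a large compact neighborhood of $V$ coming from Proposition~\ref{Background metrics}, which has bounded covariant derivatives with respect to a fixed smooth metric since $\omega_{FS}$ has bounded geometry on $\mathbb{CP}^N$, and (ii) the Schwartz-type lower bound $\omega_{\varphi_t}\geq c\hat\omega_0$ from Proposition~\ref{Schwartz Lemma}, which gives the necessary control on the dual metric components appearing in the norm $|\cdot|^2_{\omega_{\varphi_t}}$. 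A secondary subtlety is making sure the maximum principle argument is delicate enough to actually extract an absolute bound on $Q$ rather than a relative one of the form $\tau_t = O(Q)$; a cutoff refinement or an iterative bootstrap may be needed to close this gap cleanly.
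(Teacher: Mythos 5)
Your overall frame — prove a $t$-uniform gradient bound and pass to the limit using the smooth convergence $\omega_{\varphi_t}\to\omega_{\varphi_0}$ on $\pi^{-1}(X_0^{reg})$ — matches the paper, and the treatment of infinity via the decay estimates is fine. But the core of your maximum-principle argument has a genuine gap at the third-order term. The claim that ``the apparent third derivatives of $\varphi_t$ cancel out'' in $\d\tau_t$ is illusory: in a frame normal for $\omega_{\varphi_t}$ one indeed gets $\d_k\tau_t=g_{\varphi_t}^{i\bar j}\nabla'_k(\hat g_t)_{i\bar j}$, but $\nabla'$ is then the Levi--Civita connection of the \emph{unknown} metric $\omega_{\varphi_t}$, so $\nabla'\hat\omega_t=\d\hat\omega_t-\Gamma(\varphi_t)\ast\hat\omega_t$ contains the Christoffel symbols $g_{\varphi_t}^{-1}\d g_{\varphi_t}$, i.e.\ third derivatives of $\varphi_t$, which nothing at hand controls (Proposition~\ref{prop: C2-bdd} only bounds $\d\db\varphi_t$, and even that bound blows up near $V$). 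If instead $\nabla'$ is a fixed connection, the identity is false. This is precisely the classical obstruction that makes direct gradient estimates for complex Monge--Amp\`ere delicate (Blocki, Guan); those arguments need two-sided uniform control and curvature bounds on the background, which fail here since $\hat\omega_t$ degenerates along $V$ as $t\to 0$. Your proposed fixes do not close this: $\omega_{\varphi_t}\geq c\hat\omega_0$ with $\hat\omega_0=\frac1k\pi^*\omega_{FS}$ degenerate along the exceptional fibers gives no upper bound on $g_{\varphi_t}^{i\bar j}$ in the degenerate directions and says nothing about $\d g_{\varphi_t}$. A further (related) problem is that $\tau_t=\Tr_{\omega_{\varphi_t}}\hat\omega_t$ contains $t\,\Tr_{\omega_{\varphi_t}}\omega_1$ with $\omega_1$ a genuine K\"ahler metric, and this is not uniformly controlled near $V$ either.

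The paper's proof is built exactly to avoid these two issues. It replaces $\varphi_t$ by $v_t=\varphi_t-t\dot\varphi_t$, for which the uncontrolled term $t\,\Tr_{\omega_{\varphi_t}}\omega_1$ cancels and one gets $\lapl_{\varphi_t}v_t=n-\Tr_{\omega_{\varphi_t}}\omega_0-\lapl_{\varphi_t}(u_t-t\dot u_t)$, with $\omega_0=\pi^*\omega_{X_0}$ a \emph{fixed} form whose trace is bounded by the Schwarz lemma (Proposition~\ref{Schwartz Lemma}). After a uniform $C^0$ bound for $v_t$ (Moser iteration), Bochner is applied not to $|\nabla v_t|^2$ alone but to $H_t=|\nabla v_t|^2_{g_{\varphi_t}}+A\,\Tr_{\omega_{\varphi_t}}\omega_0$: the dangerous third-order term $|\nabla\Tr_{\omega_{\varphi_t}}\omega_0|^2$ is absorbed by the Aubin--Yau inequality $\lapl_{\omega_{\varphi_t}}\Tr_{\omega_{\varphi_t}}\omega_0\geq -C+c_0|\nabla\Tr_{\omega_{\varphi_t}}\omega_0|^2$. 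The resulting inequality $\lapl_{\varphi_t}H_t\geq -H_t-C$ is then closed by a local $L^1\to L^2\to L^\infty$ Moser iteration (using the $C^0$ bound on $v_t$ to get the $L^1$ bound), not by the maximum principle; finally $v_t\to\varphi_0$ in $C^\infty_{loc}$ of the regular set because $\dot\varphi_t$ is pointwise bounded there (Proposition~\ref{phidot-ptwise-bdd}). To repair your argument you would need to either import a Blocki/Guan-type gradient estimate robust under the degeneration of $\hat\omega_t$, or introduce an auxiliary quantity playing the role of $v_t$ and $A\,\Tr_{\omega_{\varphi_t}}\omega_0$; as written, the step bounding $\langle\d\tau_t,\d\varphi_t\rangle$ does not go through.
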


\begin{Proposition}\label{v-bdd}
	If we set let $v_t = \varphi_t-t\dot{\varphi}_t$, then we have a uniform estimate
	\begin{equation*}
	\sup_X |v_t|\leq C
	\end{equation*}
\end{Proposition}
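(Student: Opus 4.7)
The plan is to establish the bound by proving concavity of $t\mapsto \varphi_t(x)$ on $(0,1]$ for each fixed $x$, and then extracting the estimate on $v_t = \varphi_t - t\dot{\varphi}_t$ from concavity together with the uniform $L^\infty$ bound on $\varphi_t$ from Corollary~\ref{cor: C0-bdd}.

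To establish concavity, fix $0<s_1<s_2\le 1$ and $\lambda\in(0,1)$, and set $t=(1-\lambda)s_1+\lambda s_2$. Since the classes $[\alpha_t]$ vary linearly we may choose the background forms so that $\hat\omega_t=(1-\lambda)\hat\omega_{s_1}+\lambda\hat\omega_{s_2}$. With $\tilde\varphi:=(1-\lambda)\varphi_{s_1}+\lambda\varphi_{s_2}$ we obtain
\[
\hat\omega_t + i\d\db\tilde\varphi = (1-\lambda)\omega_{s_1,CY}+\lambda\omega_{s_2,CY}\ge 0.
\]
Since $\omega_{s,CY}^n = i^{n^2}\Omega\wedge\bar\Omega$ for all $s\in(0,1]$, the Minkowski determinant inequality yields $(\hat\omega_t+i\d\db\tilde\varphi)^n \ge i^{n^2}\Omega\wedge\bar\Omega = (\hat\omega_t+i\d\db\varphi_t)^n$. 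Both $\tilde\varphi$ and $\varphi_t$ lie in $C^\infty_{-\gamma}(X)$, so their difference decays at infinity, and a comparison principle argument in the spirit of Proposition~\ref{prop: weak uniqueness} (multiply by $|\tilde\varphi-\varphi_t|^{p-2}(\tilde\varphi-\varphi_t)$, integrate against a positive $(n{-}1,n{-}1)$-form $T$ built from the two metrics, integrate by parts over $B_R$, and let $R\to\infty$ so the boundary terms vanish thanks to the decay) forces $\tilde\varphi\le\varphi_t$. This gives the desired concavity $\varphi_t\ge (1-\lambda)\varphi_{s_1}+\lambda\varphi_{s_2}$.

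From concavity and the uniform bound $|\varphi_t|\le M$ of Corollary~\ref{cor: C0-bdd}, the tangent line at $t_0\in(0,1]$ lies above the graph. Evaluating at $t=0$ gives
\[
v_{t_0} \;=\; \varphi_{t_0}-t_0\dot\varphi_{t_0} \;\ge\; \liminf_{s\to 0^+}\varphi_s \;\ge\; -M,
\]
supplying the lower bound. For the upper bound, evaluating the tangent at $t=1$ yields $(1-t_0)\dot\varphi_{t_0}\ge \varphi_1-\varphi_{t_0}\ge -2M$, whence $v_{t_0}\le M+2Mt_0/(1-t_0)$, uniformly bounded for $t_0\in(0,1/2]$. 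On the interval $[1/2,1]$ the bound follows from smooth dependence of $\varphi_t$ on $t$ as a $C^\infty_{-\gamma}$-valued map (via the implicit function theorem applied to the Monge--Amp\`ere operator at $t=1$, cf. Theorem~\ref{CH-existence}), which gives a pointwise uniform bound on $\dot\varphi_{t_0}$ on this compact range.

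The main obstacle is justifying the comparison principle in the asymptotically conical setting, where one cannot simply appeal to compactness to locate extrema. What saves us is the $-\gamma$-decay of $\tilde\varphi-\varphi_t$, which lets the integration-by-parts approach of Proposition~\ref{prop: weak uniqueness} go through once one checks that the boundary integrals over $\partial B_R$ vanish as $R\to\infty$. A secondary technical point is that the construction of background metrics in Proposition~\ref{Background metrics} must produce an affine family $\hat\omega_t$; if the construction does not exactly yield linearity, the discrepancy is an exact $(1,1)$-form depending smoothly on $t$ that can be absorbed into a modification of $\varphi_t$ without affecting the conclusions.
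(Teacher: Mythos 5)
Your strategy (concavity of $t\mapsto\varphi_t(x)$ plus elementary calculus) is genuinely different from the paper's proof, which instead derives two expressions for $\lapl_{\varphi_t}v_t$, bounds this Laplacian uniformly via the Schwarz-lemma estimate of Proposition~\ref{Schwartz Lemma} together with the decay of the data, and concludes by Moser iteration using the uniform Sobolev inequality of the Ricci-flat metrics. Your concavity mechanism (Minkowski inequality plus an integration-by-parts comparison principle for decaying potentials) is sound in itself, and the calculus extracting $|v_{t_0}|\le C$ on $(0,1/2]$ from concavity and $\|\varphi_t\|_{L^\infty}\le M$ is correct.

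The gap is in the step you dismiss as ``secondary'': the reduction to an affine background family. The metrics $\hat\omega_t$ of Proposition~\ref{Background metrics} are $(1-t)\omega_0+t\omega_1+i\d\db u_t$, where the corrections $u_t$ from Proposition~\ref{prop: bkground-metric-improvement} satisfy only $|u_t|+|\dot u_t|\le Cr^{-\beta+2}$ with $\beta$ starting at $\nu$. When $\nu<2$ --- exactly the case in which the corrections are needed, and well within the scope of Theorem~\ref{Main Theorem} --- the functions $u_t$ and $\dot u_t$ grow like $r^{2-\nu}$ and are not affine in $t$. Absorbing them into the potentials, i.e.\ passing to $\check\varphi_t=\varphi_t+u_t$ relative to the affine background, has two fatal consequences. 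First, $\check\varphi_t$ no longer decays: the difference $(1-\lambda)\check\varphi_{s_1}+\lambda\check\varphi_{s_2}-\check\varphi_t$ contains the unbounded, sign-indefinite term $(1-\lambda)u_{s_1}+\lambda u_{s_2}-u_t=O(r^{2-\nu})$, the boundary integrals over $\partial B_R$ in the comparison argument then diverge, and concavity of $t\mapsto\check\varphi_t(x)$ is not established (nor is it true in general, since the competitor can exceed the solution near infinity). Second, even granting concavity of $\check\varphi_t$, what it controls is $\check\varphi_t-t\dot{\check\varphi}_t=v_t+(u_t-t\dot u_t)$, and the bounds it produces are in terms of $\sup_s|\check\varphi_s(x)|$; both the discrepancy and the bound grow like $r(x)^{2-\nu}$, so one obtains at best $|v_t|\le C(1+r^{2-\nu})$ rather than $\sup_X|v_t|\le C$. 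To close the argument you would need a separate uniform decay estimate for $v_t$ at infinity --- which is essentially what the paper supplies via its second formula $\lapl_{\varphi_t}v_t=n-\Tr_{\varphi_t}\hat\omega_t-t(\cdots)$, whose right-hand side lies in $C^{\infty}_{-\gamma-2}$, followed by the Moser iteration. (Your argument does go through as written when $\nu>2$, where no correction is needed and $\hat\omega_t$ is genuinely affine; a minor further point is that the one-sided comparison should be run with the positive part $(\tilde\varphi-\varphi_t)_+$ rather than the signed power, and the uniform bound on $\dot\varphi_t$ over $[1/2,1]$ needs the implicit function theorem at every point of that interval plus compactness, not just at $t=1$.)
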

\begin{proof}
	Recall from the construction of $\hat{\omega}_t$ (Proposition~\ref{prop: bkground-metric-improvement}) that 
	\begin{align*}
		\hat{\omega}_t &= \omega_t+i\d\db u_t\\
		& = (1-t)\omega_0+t\omega_1 +i\d\db u_t
	\end{align*}
	where $\omega_0 = \pi^{\star}\omega_{X_0}$ and $\omega_{X_0}$ is a K\"ahler metric on $X_0$. So we have
	\begin{align*}
	\lapl_{\varphi_t}\varphi_t &= n-\Tr_{\varphi_t}\hat{\omega}_t\\
	& = n-(1-t)\Tr_{\varphi_t}\omega_0-t\Tr_{\varphi_t}\omega_1-\lapl_{\varphi_t}u_t. 
	\end{align*}
	Recall that by the construction of $\hat{\omega}_t$, Proposition~\ref{bkground-metric 1}, we have 
	\[\log \frac{(\hat{\omega}_t+i\d\db\varphi_t)^n}{\hat{\omega}_t^n} = f_t \in C^{\infty}_{-\gamma-2}. \]
	for some $0<\gamma < 2n-2$. Differentiating the equation, we have 
	\begin{equation}\label{eq: lapl-varphi-dot-1}
	\lapl_{\varphi_t}\dot{\varphi_t} = \dot{f_t}-\Tr_{\varphi_t}\dd{t}\hat{\omega}_t +\Tr_{\hat{\omega}_t}\dd{t}\hat{\omega}_t \in C^{\infty}_{-\gamma-2}(X)
	\end{equation}
	so we have $\dot{\varphi}\in C^{\infty}_{-\gamma}(X)$ for $t>0$. 
	
	If we differentiate the equation $(\hat{\omega}_t+i\d\db\varphi_t)^n = i^{n^2}\Omega\wedge\bar{\Omega}$ with respect to $t$, we obtain another expression for $\lapl_{\varphi_t}\dot{\varphi}_t$
	\begin{equation}\label{eq: lapl-varphi-dot-2}
	\lapl_{\varphi_t}\dot{\varphi_t} = -\lapl_{\varphi_t}\dot{u_t}+\Tr_{\varphi_t}(\omega_0-\omega_1)
	\end{equation}
	The equations \eqref{eq: lapl-varphi-dot-1} and \eqref{eq: lapl-varphi-dot-2} imply that $v_t$ satisfy the two equations
	\begin{equation}\label{eq: lapl-v_t-1}
	\lapl_{\varphi_t}v_t = n-\Tr_{\varphi_t}\omega_0 -\lapl_{\varphi_t}(u_t-t\dot{u_t})
	\end{equation}
	and
	\begin{equation}\label{eq: lapl-v_t-2}
	\lapl_{\varphi_t}v_t = n-\Tr_{\varphi_t}\hat{\omega}_t-t(\dot{f_t}-\Tr_{\varphi_t}\dd{t}\hat{\omega}_t 
	+\Tr_{\hat{\omega}_t}\dd{t}\hat{\omega}_t)
	\end{equation}
	From the first equation and Proposition~\ref{Schwartz Lemma}, we see that $|\lapl_{\varphi_t}\dot{\varphi_t}|\leq C$ uniformly in $t$. From the second equation we see that $|\lapl_{\varphi_t}v_t|\leq Cr^{-\gamma-2}$ away from a compact set $K$, so we have a uniform bound $|\lapl_{\varphi_t}v_t|_{L^p}\leq C$ for $p>\frac{2n}{\gamma+2}$. 
	Since $v_t\in C^{\infty}_{-\gamma}$, we can do integrate by parts to get 
	\begin{align*}
	-\int_X |v_t|^{p-2}v_t i\d\db v_t\wedge \omega_{\varphi_t}^{n-1} & = \lim_{R\to \infty}(p-1)\int_{B_R}|v_t|^{p-2}i\d v_t\wedge\db v_t\wedge \omega_{\varphi_t}^{n-1} \\
	&\qquad -\lim_{R\to \infty}\left(\int_{\d B_R}|v_t|^{p-2}v_t i\db v_t \wedge \omega_{\varphi_t}^{n-1} \right)\\
	& = \frac{4(p-1)}{p^2}\int_{X}i\d |v_t|^{\frac{p}{2}}\wedge\db |v_t|^{\frac{p}{2}}\wedge \omega_{\varphi_t}^{n-1} 
	\end{align*}
	the boundary term goes away when $p>\frac{2n-2}{\gamma}$ since $|\nabla^k v_t| = O(r^{-\gamma-k})$. Hence we get 
	\begin{equation*}
	\int_X |\d |v_t|^{\frac{p}{2}}|^2\omega_{\varphi_t}^n = -\frac{np^2}{4(p-1)}\int_X|v_t|^{p-2}v_t(\lapl_{\varphi_t}v_t)\omega_{\varphi_t}^n
	\end{equation*}
	combined with the Sobolev inequality, one gets
	\begin{equation*}
	\left(\int_X|v_t|^{p\frac{n}{n-1}}i^{n^2}\Omega\wedge\bar{\Omega}\right)^{\frac{n-1}{n}}\leq C\frac{np^2}{p-1}\int_X|v_t|^{p-1}|\lapl_{\varphi_t}v_t| i^{n^2}\Omega\wedge\bar{\Omega}
	\end{equation*}
	applying Holder, we get
	\begin{equation*}
	\|v_t\|_{L^{p\frac{n}{n-1}}}\leq C\frac{np^2}{p-1}\|\lapl_{\varphi_t}v_t\|_{L^{\frac{np}{n+p+1}}}
	\end{equation*}
	hence for $p>\frac{2n}{\gamma}$, we have
	\begin{equation}\label{eq: v_t-lp-bdd}
	\|v_t\|_{L^p}\leq C_p
	\end{equation}
	where $C_p$ depends on $\|\lapl_{\varphi_t}v_t\|_{L^{\frac{np}{n+p}}}$. 
	and also for $p>\frac{2n-2}{\gamma}$
	\begin{equation*}
	\|v_t\|_{p\frac{n}{n-1}}^p\leq C\frac{np^2}{p-1}\|v_t\|_{L^p}^{p-1}\|\lapl_{\varphi_t}v_t\|_{L^p}
	\end{equation*}
	we can then apply Moser iteration to this to get the estimate
	\begin{equation*}
	\|v_t\|_{L^{\infty}}\leq B_p\|v_t\|_{L^p}\leq B_p C_p
	\end{equation*}
	where $C_p$ is the constant from \eqref{eq: v_t-lp-bdd} and $B_p$ depends only on the $L^p$ norm of $\|\lapl_{\varphi_t}v_t\|_{L^p}$. 
\end{proof}

\begin{Corollary}
	For any compact set $K\subset \subset \pi^{-1}(X_0^{reg})$, we have an estimate 
	\begin{equation*}
		|v_t|_{C^{k, \alpha}(K)}\leq C(K, k, \alpha)
	\end{equation*}
	uniformly in $t$ as $t\to 0$. 
\end{Corollary}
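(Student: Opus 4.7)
The plan is to promote the uniform $L^{\infty}$ bound $\sup_X|v_t|\leq C$ from Proposition~\ref{v-bdd} to local higher-order bounds by standard interior Schauder theory, exploiting the fact that on $\pi^{-1}(X_0^{reg})$ the background data and the Calabi-Yau metrics are uniformly smooth.

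First I would record the key input: $\pi^{-1}(X_0^{reg})\subset X\setminus V$, so by the local smooth estimates in Proposition~\ref{prop: C-inf-loc-bdd} (combined with the lower bound $\omega_{\varphi_t}\geq C^{-1}\hat{\omega}_0$ from Proposition~\ref{Schwartz Lemma}), the metrics $\omega_{\varphi_t}$ are uniformly equivalent to a fixed smooth reference metric and enjoy uniform $C^{k,\alpha}$ bounds on any compact $K\subset\subset\pi^{-1}(X_0^{reg})$. In particular, writing the Laplacian $\lapl_{\varphi_t}$ in local coordinates, its coefficients and its ellipticity constant are controlled uniformly in $t$ on a slightly larger compact neighborhood $K'\supset K$.

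Next I would exploit equation~\eqref{eq: lapl-v_t-1}, namely
\[
\lapl_{\varphi_t} v_t \;=\; n - \Tr_{\varphi_t}\omega_0 - \lapl_{\varphi_t}(u_t - t\dot{u}_t),
\]
where $\omega_0 = \pi^{\star}\omega_{X_0}$ is smooth on $X$ and $u_t,\dot{u}_t$ are uniformly smooth with uniform asymptotic bounds by Proposition~\ref{prop: bkground-metric-improvement}. Together with the uniform $C^{k,\alpha}_{loc}(\pi^{-1}(X_0^{reg}))$ control of $\omega_{\varphi_t}$, this shows that the right-hand side is uniformly bounded in $C^{k,\alpha}(K')$ for every $k,\alpha$. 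Interior Schauder estimates applied to the linear elliptic equation $\lapl_{\varphi_t}v_t = F_t$ then yield
\[
\|v_t\|_{C^{2,\alpha}(K)} \;\leq\; C\bigl(\|F_t\|_{C^{0,\alpha}(K')} + \|v_t\|_{L^{\infty}(K')}\bigr) \;\leq\; C(K,\alpha),
\]
using the uniform $L^\infty$ bound on $v_t$ from Proposition~\ref{v-bdd}.

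To get the $C^{k,\alpha}$ estimate for arbitrary $k$, I would bootstrap by differentiating the equation: once we control $v_t$ in $C^{2,\alpha}$ locally, the coefficients of $\lapl_{\varphi_t}$ are controlled to higher order, $F_t$ is in fact smooth, and iterating Schauder estimates on a nested sequence of compact sets produces the uniform $C^{k,\alpha}(K)$ bound for all $k$. There is no genuine obstacle here: the hard analytic work was already done in Proposition~\ref{v-bdd} to obtain the $L^\infty$ bound (and in establishing smooth convergence of $\omega_{\varphi_t}$ on $X\setminus V$); the corollary is a straightforward elliptic regularity consequence, and I would present it as such.
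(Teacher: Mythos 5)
Your proposal is correct and follows essentially the same route as the paper, which simply invokes equation~\eqref{eq: lapl-v_t-1} together with the uniform $C^{\infty}_{loc}$ control of $\omega_{\varphi_t}$ and of the right-hand side on $\pi^{-1}(X_0^{reg})$, combined with the $L^{\infty}$ bound of Proposition~\ref{v-bdd}. You have merely spelled out the interior Schauder and bootstrapping details that the paper leaves implicit.
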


\begin{proof}
	This follows from the equation \eqref{eq: lapl-v_t-1} and the fact that $\omega_{\varphi_t}$ and the right hand side of the equation is uniformly bounded in $C^{\infty}_{loc}(\pi^{-1}(X_0^{reg}))$. 
\end{proof}

\begin{Proposition}\label{grad-v-bdd}
	We also have the following local uniform gradient estimate for $v_t$. 
	\begin{equation*}
	\sup_K|\nabla_t v_t|\leq C_K
	\end{equation*}
	for any $K\subset \subset X$. 
\end{Proposition}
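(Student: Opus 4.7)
The plan is to combine a uniform bound on $\Delta_{\varphi_t}v_t$ with a Bochner--Moser iteration on the Ricci-flat K\"ahler manifold $(X,\omega_{\varphi_t})$, using the uniform Sobolev inequality~\eqref{eq: Sobolev}.

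First I will establish $\|\Delta_{\varphi_t}v_t\|_{L^\infty(X)}\leq C$. From~\eqref{eq: lapl-v_t-1},
\[
\Delta_{\varphi_t}v_t \;=\; n-\Tr_{\varphi_t}\omega_0 - \Delta_{\varphi_t}(u_t - t\dot u_t),
\]
and the Schwartz lemma (Proposition~\ref{Schwartz Lemma}) gives $\omega_{\varphi_t}\geq c\hat\omega_0$, hence the uniform bound $\Tr_{\varphi_t}\omega_0\leq C$. The term $u_t-t\dot u_t$ is uniformly $C^2$-bounded and supported in the asymptotically conical region of $X$ (Proposition~\ref{prop: bkground-metric-improvement}); on that region $\omega_{\varphi_t}$ is uniformly $C^\infty$-close to the cone metric $\omega_C$ by Proposition~\ref{Background metrics}, so $\Delta_{\varphi_t}(u_t - t\dot u_t)$ is uniformly bounded.

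Now fix $K\subset\subset X$ and a cutoff $\eta$ with $\eta\equiv 1$ on $K$ and $\text{supp}(\eta)\subset K'\subset\subset X$. Because $\omega_{\varphi_t}\geq c\hat\omega_0$, $|\nabla_{\varphi_t}\eta|$ is uniformly bounded. Multiplying the Poisson equation $\Delta_{\varphi_t}v_t=F_t$ by $v_t\eta^2$ and integrating by parts gives the initial $L^2$ bound
\[
\int_X \eta^2|\nabla_{\varphi_t}v_t|^2\,\omega_{\varphi_t}^n \;\leq\; C(\|v_t\|_{L^\infty},\|F_t\|_{L^\infty},\eta).
\]
The Bochner formula on the Ricci-flat K\"ahler metric reads
\[
\Delta_{\varphi_t}|\nabla v_t|^2 \;\geq\; |\partial\bar\partial v_t|^2 + 2\Re\langle\partial v_t,\partial F_t\rangle.
\]
Multiplying by $\eta^2|\nabla v_t|^{2(p-1)}$ and integrating produces a recursive inequality relating the $L^{p}$ and $L^{pn/(n-1)}$ norms of $|\nabla v_t|^2$, via the Sobolev inequality~\eqref{eq: Sobolev} and the Kato-type inequality $|\partial\bar\partial v_t|^2\geq F_t^{\,2}/n$. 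Standard Moser iteration starting from the initial $L^2$ bound then yields $\sup_K|\nabla_{\varphi_t}v_t|\leq C_K$.

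The main obstacle is the cross term $2\Re\langle\partial v_t,\partial F_t\rangle$ in the Bochner inequality: it involves third derivatives of $v_t$ which are not controlled uniformly in $t$ near $\pi^{-1}(X_0^{\mathrm{sing}})$, so a pointwise maximum-principle argument is unavailable. The remedy is to integrate the Bochner identity and transfer the derivative off $F_t$ by integration by parts, reducing the cross term to contributions involving only $F_t^{\,2}$, $\nabla\eta\cdot\partial v_t$, and $\nabla|\nabla v_t|^2\cdot\partial v_t$; these lower-order terms are then absorbed via Cauchy--Schwarz into the good term $\int\eta^2|\nabla v_t|^{2(p-2)}\bigl|\nabla|\nabla v_t|^2\bigr|^2$ produced on the left-hand side after integrating by parts the $\int\eta^2|\nabla v_t|^{2(p-1)}\Delta_{\varphi_t}|\nabla v_t|^2$ term. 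Ricci-flatness of $\omega_{\varphi_t}$ is essential throughout to avoid curvature contributions in Bochner.
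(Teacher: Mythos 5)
Your overall strategy --- Bochner formula for $|\nabla v_t|^2_{g_{\varphi_t}}$ on the Ricci-flat metrics, the uniform Sobolev inequality, Moser iteration, and an initial energy bound obtained by testing $\lapl_{\varphi_t}v_t=F_t$ against $v_t\eta^2$ --- is essentially the paper's. Where you genuinely diverge is the treatment of the cross term $2\Re\langle\partial v_t,\partial F_t\rangle$: the paper absorbs the uncontrolled piece $|\nabla \Tr_{\omega_{\varphi_t}}\omega_0|^2$ pointwise by working with the augmented quantity $H_t=|\nabla v_t|^2+A\Tr_{\omega_{\varphi_t}}\omega_0$ and invoking the Schwartz-lemma inequality $\lapl_{\omega_{\varphi_t}}\Tr_{\omega_{\varphi_t}}\omega_0\geq -C+c_0|\nabla \Tr_{\omega_{\varphi_t}}\omega_0|^2$, which converts the differential inequality into the clean form $\lapl_{\varphi_t}H_t\geq -H_t-C$ before any iteration; you instead keep the cross term and integrate by parts inside the iteration. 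Your device is workable in principle (the terms it produces are of the shape $F_t^2\eta^2w^{p-1}$, $F_t\,(\nabla\eta\cdot\partial v_t)\,w^{p-1}$ and $(p-1)F_t\,(\nabla w\cdot\partial v_t)\,w^{p-2}$ with $w=|\nabla v_t|^2$, all absorbable for $p>1$), but it is more delicate than the paper's trick precisely at the bottom of the iteration.

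Two points need repair. First, the justification ``because $\omega_{\varphi_t}\geq c\hat\omega_0$, $|\nabla_{\varphi_t}\eta|$ is uniformly bounded'' is not correct: $\hat\omega_0$ is only semi-positive and degenerates along $V=\pi^{-1}(X_0^{sing})$, so a lower bound by $\hat\omega_0$ gives no control of $|\nabla\eta|_{\omega_{\varphi_t}}$ for a cutoff whose gradient is supported near $V$. You must, as the paper does, take $K$ large enough to contain $V$ and choose $\eta$ so that $\nabla\eta$ is supported in a region (e.g.\ an annulus in the conical end) where the metrics $\omega_{\varphi_t}$ converge smoothly and uniformly; this is harmless since enlarging $K$ only strengthens the conclusion, but it is a necessary modification. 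Second, your iteration is launched from the $L^1$ norm of $w=|\nabla v_t|^2$ (your ``initial $L^2$ bound''), yet the recursive inequality degenerates at $p=1$: the good term $(p-1)\int\eta^2w^{p-2}|\nabla w|^2$, which you need both to produce $\int|\nabla(\eta w^{p/2})|^2$ for Sobolev and to absorb the integrated cross term, has vanishing coefficient there. The paper closes exactly this gap with the interpolation $\|H_t\|^2_{L^2(B_{2R})}\leq\|H_t\|_{L^\infty(B_{2R})}\|H_t\|_{L^1(B_{2R})}\leq C\|H_t\|_{L^2(B_{4R})}\|H_t\|_{L^1(B_{2R})}$, which bootstraps the $L^1$ bound to the $L^2$ bound needed to start the iteration; you need this step (or an iteration with fractional exponents) to make the last sentence of your argument legitimate.
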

\begin{proof}
	By the Bochner formula, we have
	\begin{align*}
	\lapl_{\varphi_t}|\nabla v_t|_{g_{\varphi_t}}^2 &= |\nabla\nabla v_t|_{g_{\varphi_t}}^2+|\d\db v_t|_{g_{\varphi_t}}^2 - 2\Re(\nabla v_t\cdot \nabla \Tr_{\varphi_t}\omega_0)- 2\Re(\nabla v_t\cdot \nabla \lapl_{\varphi_t}(u_t-t\dot{u_t}))\\
	&\geq -2|\nabla v_t|_{g_{\varphi_t}}^2-|\nabla \Tr_{\omega_{\varphi_t}}\omega_{0}|_{g_{\varphi_t}}^2 - |\nabla \lapl_{\varphi_t}(u_t-t\dot{u_t})|^2_{g_{\varphi_t}}
	\end{align*}
	we also have from \eqref{Schwartz inequality},
	\begin{equation*}
		\lapl_{\omega_{\varphi_t}}\Tr_{\omega_{\varphi_t}}\omega_0 \geq -C + c_0|\nabla \Tr_{\omega_{\varphi_t}}\omega_0|^2
	\end{equation*}
	 If we set $H_t = |\nabla v_t|_{g_{\varphi_t}}^2+A\Tr_{\omega_{\varphi_t}}\omega_0 $, then $H_t\geq 0$ and satisfies
	\begin{equation*}
		\lapl_{\varphi_t}H_t \geq -H_t-C
	\end{equation*}
	We can apply Moser iteration to this, since $\omega_{\varphi_t}$ has uniform Ricci bounds and volume lower bound, this then gives the estimate
	\begin{equation}\label{L^2 to L^inf}
	\|H_t\|_{L^{\infty}(B_{g_{\infty}, R}(p))}\leq C\|H_t\|_{L^2(B_{g_{\infty}, 2R}(p))}
	\end{equation}
	for $R$ sufficiently large. Note that for $R$ sufficiently large $\omega_{\varphi_t}$ converge uniformly in $C^{\infty}$ to $g_{\infty}$ on the region $B_{g_{\infty}, 2R}(p)\setminus B_{g_{\infty}, R}(p)$, hence we can also choose cutoff functions with uniformly controlled gradients and standard Moser iteration gives the inequality. Now it suffices to show that $\|H_t\|_{L^2(B_{g_{\infty}, 2R})}$ is bounded. 
	\begin{align*}
	\int_{B_{2R}}|H_t|^2&\leq \|H_t\|_{L^{\infty}(B_{2R})}\int_{B_{2R}}|H_t|\\
	&\leq C\|H_t\|_{L^2(B_{4R})}\|H_t\|_{L^1(B_{2R})}\\
	&\leq C(\|H_t\|_{L^2(B_{2R})}+\|H_t\|_{L^2(B_{4R}\setminus B_{2R})})\|H_t\|_{L^1(B_{2R})}
	\end{align*}
	if $R$ is sufficiently large, then $B_{4R}\setminus B_{2R}$ doesn't contain any of $\pi^{-1}(X_0^{sing})$, hence $\|H_t\|_{L^2(B_{4R}\setminus B_{2R})}$ is uniformly bounded in $t$ on $B_{4R}\setminus B_{2R}$ by the Corollary above. So we have
	\begin{equation*}
	\|H_t\|^2_{L^2(B_{2R})}\leq C(\|H_t\|_{L^2(B_{2R})}+C)\|H_t\|_{L^1(B_{2R})}
	\end{equation*}
	hence either $\|H_t\|_{L^2(B_{2R})}$ is bounded by 1 and we are done, or we get the bound 
	\begin{equation}\label{L^1 to L^2}
	\|H_t\|_{L^2(B_{2R})}\leq C \|H_t\|_{L^1(B_{2R})}
	\end{equation}
	so it suffices to prove an $L^1$ bound for $H_t$ on compact sets. 
	
	Choose cutoff function $\eta$ such that $\eta = 1$ on $B_{2R}$ for all $t$, then
	\begin{align*}
	\int_{B_{2R}}H_t &\leq \int_{X} \eta^2 H_t\\
	& \leq \int_X \eta^2|\nabla v_t|^2\omega_{\varphi_t}^n +C\\
	&\leq  -\int_X\eta^2v_t(\lapl_{\varphi_t}v_t) + 2\int_X \eta|\nabla \eta||\tilde{v_t}||\nabla \tilde{v_t}|+C
	\end{align*}
	and so we have 
	\[\int_{B_{2R}}H_t\leq C\int_{X}(\eta^2+|\nabla \eta|^2)v_t^2\leq C\]
	which gives us the $L^1$ bound, combined with \eqref{L^1 to L^2} and \eqref{L^2 to L^inf}, we get
	\begin{equation*}
	\|H_t\|_{L^{\infty}(B_{R})}\leq C
	\end{equation*}
	as desired. 
\end{proof}

\begin{Proposition}\label{phidot-ptwise-bdd}
	For any $x\in \iota_{\infty}(X_0^{reg})$, we have a bound 
	\begin{equation*}
		|\dot{\varphi_t}(x)|\leq C
	\end{equation*}
	for some constant $C$ potentially depending on the point $x$. 
\end{Proposition}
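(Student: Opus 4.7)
My strategy will be to derive and exploit the Poisson equation satisfied by $\dot\varphi_t$. Differentiating the Monge-Amp\`ere equation $(\hat\omega_t+i\d\db\varphi_t)^n = i^{n^2}\Omega\wedge\bar\Omega$ in $t$ yields
\[
\Delta_{\omega_{\varphi_t}}\dot\varphi_t \;=\; -\Tr_{\omega_{\varphi_t}}(\d_t\hat\omega_t) \;=:\; F_t,
\]
whose right-hand side is uniformly bounded on any compact subset of $\pi^{-1}(X_0^{reg})$, thanks to the Schwartz-type bound $\omega_{\varphi_t}\geq c\hat\omega_0$ (Proposition~\ref{Schwartz Lemma}) and the smoothness in $t$ of the family $\hat\omega_t$ from Proposition~\ref{Background metrics}.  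From the defining identity $v_t = \varphi_t - t\dot\varphi_t$, Propositions~\ref{v-bdd} and~\ref{grad-v-bdd} together with the uniform local $C^1$-estimate from Proposition~\ref{prop: C-inf-loc-bdd} give the auxiliary bounds $|t\dot\varphi_t|\leq C$ globally on $X$ and $t|\nabla\dot\varphi_t|_{g_{\varphi_t}}\leq C_K$ on any compact $K\subset\pi^{-1}(X_0^{reg})$.

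\textbf{From these inputs to the pointwise bound.} Fix $x\in\iota_\infty(X_0^{reg})$. I would choose a small geodesic ball $B=B_{\omega_{\varphi_t}}(x,r)\subset\pi^{-1}(X_0^{reg})$ whose radius $r$ is $t$-independent, made possible by the $C^\infty_{\mathrm{loc}}$-convergence $\omega_{\varphi_t}\to\omega_{\varphi_0}$ of Corollary~\ref{cor: C-infty-loc convergence of metrics}; on such a ball the operators $\Delta_{\omega_{\varphi_t}}$ are uniformly elliptic with uniformly smooth coefficients, so a standard interior Moser iteration applied to the Poisson equation $\Delta_{\omega_{\varphi_t}}\dot\varphi_t=F_t$ yields
\[
|\dot\varphi_t(x)| \;\leq\; C_B\bigl(\|\dot\varphi_t\|_{L^1(B,\omega_{\varphi_t}^n)} + \|F_t\|_{L^\infty(B)}\bigr).
\]
The second term is already uniformly bounded, so it remains to control the first. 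To do this I plan to test the Poisson equation against a carefully chosen cutoff $\chi$ with compact support in $\pi^{-1}(X_0^{reg})$ and integrate by parts:
\[
\int_X \Delta_{\omega_{\varphi_t}}\chi\cdot\dot\varphi_t\,\omega_{\varphi_t}^n \;=\; \int_X \chi\,F_t\,\omega_{\varphi_t}^n,
\]
whose right-hand side is uniformly bounded.  Solving a suitable auxiliary Dirichlet problem on a slightly larger domain in $\pi^{-1}(X_0^{reg})$ (so that $\Delta_{\omega_{\varphi_t}}\chi$ behaves essentially like $\mathbf{1}_B$ plus a harmless boundary correction) should convert this weak identity into the $t$-uniform $L^1$-control $\|\dot\varphi_t\|_{L^1(B,\omega_{\varphi_t}^n)}\leq C(x)$, closing the estimate.

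\textbf{Main obstacle.} The hard part will be turning the integration-by-parts identity above into a genuine $L^1$-bound: the naive estimate coming from $|t\dot\varphi_t|\leq C$ yields only $\|\dot\varphi_t\|_{L^1(B)}\leq C/t$, which blows up as $t\to 0$.  Overcoming this requires genuinely exploiting the elliptic structure of the Poisson equation and the uniform geometry of $\omega_{\varphi_t}$ on compact subsets of $\pi^{-1}(X_0^{reg})$, together with the fact that the cutoff-correction terms in the integration by parts remain controlled uniformly in $t$ because of the degenerating behaviour of $\omega_{\varphi_t}$ only near the singular locus $V=\pi^{-1}(X_0^{sing})$, which is cut off by $\chi$.
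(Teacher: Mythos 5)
Your setup is correct: differentiating the Monge--Amp\`ere equation does give $\Delta_{\omega_{\varphi_t}}\dot\varphi_t=-\Tr_{\omega_{\varphi_t}}(\d_t\hat\omega_t)$, and the local elliptic estimate on a fixed ball $B$ in the regular part reduces everything to a $t$-uniform bound on $\|\dot\varphi_t\|_{L^1(B)}$. But the step you yourself flag as the ``main obstacle'' is a genuine gap, not a technicality. Testing the equation against cutoffs $\chi$ compactly supported in $\pi^{-1}(X_0^{reg})$ only controls signed integrals $\int\Delta_{\omega_{\varphi_t}}\chi\cdot\dot\varphi_t$; since $\int\Delta_{\omega_{\varphi_t}}\chi\,\omega_{\varphi_t}^n=0$, the weight $\Delta\chi$ necessarily changes sign and cannot isolate $\int_B|\dot\varphi_t|$. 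If instead you solve a Dirichlet problem on a larger domain $\Omega$ so that $\Delta\chi=\mathbf{1}_B$, the integration by parts produces boundary terms on $\d\Omega$ involving $\dot\varphi_t$ itself, which is exactly the quantity you are trying to bound --- the argument becomes circular. Knowing all these signed integrals determines $\dot\varphi_t$ only modulo functions harmonic on the regular part, and nothing in your sketch rules out a large harmonic piece. The only a priori input you have, $|t\dot\varphi_t|\le C$, gives $\|\dot\varphi_t\|_{L^1(B)}\le C/t$, which is useless as $t\to0$.

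The paper closes this by replacing the cutoff with the actual Green's function: since $\dot\varphi_t\in C^{\infty}_{-\gamma}$ decays at infinity, one has the representation $\dot\varphi_t(x)=-\int_X G_t(x,y)\,\Delta_{\omega_{\varphi_t}}\dot\varphi_t(y)\,\omega_{\varphi_t}^n(y)$ with $G_t>0$, and because the metrics $\omega_{\varphi_t}$ are Ricci-flat and uniformly volume non-collapsed one has uniform two-sided bounds $G_t(x,y)\sim d_t(x,y)^{2-2n}$. Positivity of $G_t$ lets one pass to absolute values, and the integral is split into three regions: near $x$ (where $\Delta\dot\varphi_t$ is uniformly bounded and $d^{2-2n}$ is integrable), the rest of a compact set $K\supset V$ (where $G_t$ is bounded and one needs $\int_K\Tr_{\omega_{\varphi_t}}\omega_1\,\omega_{\varphi_t}^n\le C$ --- note $\Tr_{\omega_{\varphi_t}}\omega_1$ is \emph{not} pointwise bounded near $V$, and the integral bound comes from a cohomological integration by parts, $\int_K\omega_1\wedge\omega_{\varphi_t}^{n-1}=\int_K\omega_1\wedge\hat\omega_t^{n-1}+\text{boundary terms}$), and the end (where the uniform decay of $\Delta\dot\varphi_t$ makes the integral converge). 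Both the positivity of the Green's kernel and the integral (rather than pointwise) control of $\Tr_{\omega_{\varphi_t}}\omega_1$ are essential ingredients absent from your proposal.
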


\begin{proof}
	Fix $x\in \iota_{\infty}(X_0^{reg})$, then fix a ball $B_{g_{\infty}, \eps}(x)\subset \iota_{\infty}(X_0^{reg})$ on which the metrics $g_{\varphi_t}$ converge smoothly to $g_{\infty}$, also fix a set $K\subset \subset X$ containing all of $\pi_{\infty}^{-1}(X_0^{sing})$ and also $B_{g_{\infty}, \eps}(x)$. Then by the Green's formula representation formula, we have
	\begin{align*}
		\dot{\varphi_t}(x) &= -\int_{X}\lapl_{\varphi_t}\dot{\varphi_t}(y)G_t(x, y)\omega_{\varphi_t}^n(y)\\
		& = -\int_{B_{g_{\infty}, \eps}(x)}\lapl_{\varphi_t}\dot{\varphi_t}(y)G_t(x, y)\omega_{\varphi_t}^n(y)-\int_{K\setminus B_{g_{\infty}, \eps}(x)}\lapl_{\varphi_t}\dot{\varphi_t}(y)G_t(x, y)\omega_{\varphi_t}^n(y)\\
		&\qquad -\int_{X\setminus K}\lapl_{\varphi_t}\dot{\varphi_t}(y)G_t(x, y)\omega_{\varphi_t}^n(y)
	\end{align*}
	where $G_t(x, y)$ is the positive decaying Green's function on $(X, \omega_{\varphi_t})$. By the estimates for Green's function \cite[p.190]{MSY}, \cite{LiTam, LiYau}, the Greens functions $G_t(x, y)$ satisfy the uniform estimates
	\begin{equation*}
		C^{-1}d_{t}(x, y)^{2-2n}\leq G_t(x, y)\leq Cd_{t}(x, y)^{2-2n}
	\end{equation*}
	where $d_t$ is the distance function induced by $g_{\varphi_t}$. 
	And since $\lapl_{\varphi_t}\dot{\varphi_t} = -\lapl_{\varphi_t}\dot{u_t}+\Tr_{\varphi_t}(\omega_0-\omega_1)$, this implies $|\lapl_{\varphi_t}\dot{\varphi_t}| \leq |\lapl_{\varphi_t}\dot{u_t}|+\Tr_{\varphi_t}(\omega_0+\omega_1)\leq C+\Tr_{\varphi_t}\omega_1$ and we have
	\begin{align}
\notag		|\dot{\varphi_t}(x)| &\leq \int_{B_{g_{\infty}, \eps}(x)}|\lapl_{\varphi_t}\dot{\varphi_t}|(y)d_{t}(x, y)^{2-2n}\omega_{\varphi_t}^n(y) + \int_{K\setminus B_{g_{\infty}, \eps}(x)}|\lapl_{\varphi_t}\dot{\varphi_t}|(y)d_{t}(x, y)^{2-2n}\omega_{\varphi_t}^n(y)\\
\label{eqn:107}		&\qquad +\int_{X\setminus K}|\lapl_{\varphi_t}\dot{\varphi_t}|(y)d_{t}(x, y)^{2-2n}\omega_{\varphi_t}^n(y)
	\end{align}
	and we analyze the three terms in the above formula seperately. For the first term, we note that $\lapl_{\varphi_t}\dot{\varphi_t}$ is uniformly bounded on $B_{g_{\infty}, \eps}(x)$, so
	\begin{equation*}
		 \int_{B_{g_{\infty}, \eps}(x)}|\lapl_{\varphi_t}\dot{\varphi_t}|(y)d_{\varphi_t}(x, y)^{2-2n}\omega_{\varphi_t}^n(y) \leq C\int_{B_{g_{\infty}, \eps}(x)}d_{t}(x, y)^{2-2n}\leq C
	\end{equation*}
	For the second term, observe that on $K\setminus B_{g_{\infty}, \eps}(x)$, $d_t(x, y)^{2-2n}$ is bounded by $C\eps^{2-2n}$, so
	\begin{equation*}
		\int_{K\setminus B_{g_{\infty}, \eps}(x)}|\lapl_{\varphi_t}\dot{\varphi_t}|(y)d_{\varphi_t}(x, y)^{2-2n}\omega_{\varphi_t}^n(y) \leq C\left(1+\int_{K\setminus B_{g_{\infty}, \eps}(x)}\Tr_{\varphi_t}\omega_1\right)
	\end{equation*}
	hence it suffices to bound the integral of $\Tr_{\varphi_t}\omega_1$, to do this, we integrate by parts
	\begin{align*}
		\int_{K}\omega_1\wedge\omega_{\varphi_t}^{n-1}  &= \int_{K}\omega_1\wedge(\hat{\omega}_t+i\d\db\varphi_t)^{n-1} = \int_{K}\omega_1\wedge\hat{\omega}_t^{n-1}\\
		&\qquad +\int_{\d K} \d\varphi_t\wedge \omega_1\wedge \left(\sum_{l=0}^{n-2}\binom{n-1}{l}\hat{\omega}_t^l\wedge(i\d\db\varphi_t)^{n-2-l}\right)\\
		&\leq C
	\end{align*}
	because $\varphi_t$ and its derivatives are all bounded on the boundary of $K$. \\
	The last term in \eqref{eqn:107} is bounded because $|\lapl_{\varphi_t}\dot{\varphi_t}|\leq Cd_t(x, y)^{-2-\beta}$ on $X\setminus K$, so we have
	\begin{equation*}
		 \int_{X\setminus K}|\lapl_{\varphi_t}\dot{\varphi_t}|(y)d_{\varphi_t}(x, y)^{2-2n}\omega_{\varphi_t}^n(y) \leq C\int_{X\setminus K}d_t(x, y)^{-2n-\beta} \leq C
	\end{equation*} 
	and we get our result. 
\end{proof}
\begin{proof}[proof of Proposition \ref{phi-grad-bdd}]
	Note that we already know $|\nabla_{g_{\infty}}\varphi_0|$ is bounded and decaying at infinity, so it suffices to prove that it's bounded near $\pi_{\infty}^{-1}(X_0^{sing})$. Fix a compact set $K$ containing $\pi_{\infty}^{-1}(X_0^{sing})$, then by Proposition \ref{grad-v-bdd}, $|\nabla v_t|\leq C$, but on $X_0^{reg}$, $v_t$ converges to $\varphi_0$ smoothly on compact sets, hence we get our result. 
\end{proof}

The main goal of this gradient bound is to show the following. 
\begin{Proposition}
	For any holomorphic section $s\in H^0(X_0, L^k)$ satisfies
	\begin{equation*}
		\sup_K|s|_{h^k_{\infty}}\leq C
	\end{equation*}
	and 
	\begin{equation*}
		\sup_K |\nabla s|_{h^k_{\infty}, k\omega_{\varphi_0}}\leq C
	\end{equation*}
\end{Proposition}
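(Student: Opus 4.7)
The plan is to identify $h_\infty^k$ explicitly as a bounded conformal perturbation of the Fubini--Study metric pulled back via $\iota\circ\pi$, which will reduce both estimates to the already-established $C^0$ and gradient bounds for $\varphi_0$.

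Since $[\alpha_0] = \pi^* c_1(L)$ and $\frac{1}{k}[\iota^*\omega_{FS}] = c_1(L)$, one may take $\alpha_0 = \frac{1}{k}\pi^*\iota^*\omega_{FS}$ as the semi-positive representative in Assumption~\ref{ass: mainAss}. By Proposition~\ref{bkground-metric 1}, we can arrange $\hat{\omega}_0 = \frac{1}{k}\pi^*\iota^*\omega_{FS}$ on a compact set containing $K$. Then writing $\omega_{\varphi_0} = \hat{\omega}_0 + i\d\db\varphi_0$, the Hermitian metric on $L^k$ whose curvature is $k\omega_{\varphi_0}$ is
\[
h_\infty^k \;=\; \pi^*\iota^*h_{FS}^k \cdot e^{-k\varphi_0}.
\]
The first bound is then immediate: on $K$ one has $|s|^2_{h_\infty^k} = |s|^2_{\pi^*\iota^*h_{FS}^k}\,e^{-k\varphi_0}$, and the first factor is bounded because $\iota(K)\subset\mathbb{C}P^N$ is compact and $s$ is a smooth global section of $\mathcal{O}(k)$, while $\varphi_0$ is uniformly bounded by Corollary~\ref{cor: C0-bdd}.

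For the gradient estimate, since $h_\infty^k$ and $\pi^*\iota^*h_{FS}^k$ differ by the conformal factor $e^{-k\varphi_0}$, their Chern connection $1$-forms differ by $-k\,\d\varphi_0$. Applied to the holomorphic section $s$,
\[
D^{h_\infty^k}s \;=\; D^{\pi^*\iota^*h_{FS}^k}s \;-\; k\, s\otimes \d\varphi_0,
\]
so
\[
|\nabla s|_{h_\infty^k,\,k\omega_{\varphi_0}} \;\leq\; |D^{\pi^*\iota^*h_{FS}^k}s|_{h_\infty^k,\,k\omega_{\varphi_0}} \;+\; k\,|s|_{h_\infty^k}\cdot |\d\varphi_0|_{k\omega_{\varphi_0}}.
\]
For the first term, Proposition~\ref{Schwartz Lemma} gives $k\omega_{\varphi_0}\geq c\,\pi^*\iota^*\omega_{FS}$ on $K$, and $|\varphi_0|\leq C$ makes $h_\infty^k$ comparable to $\pi^*\iota^*h_{FS}^k$; consequently this quantity is controlled by a constant times $|D s|_{\iota^*h_{FS}^k,\iota^*\omega_{FS}}$, which is a smooth quantity on the compact set $\iota(K)\subset\mathbb{C}P^N$. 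For the second term, $|s|_{h_\infty^k}$ is bounded by the first part, and
\[
|\d\varphi_0|_{k\omega_{\varphi_0}} \;=\; k^{-1/2}|\d\varphi_0|_{\omega_{\varphi_0}} \;\leq\; k^{-1/2}|\nabla_{\omega_{\varphi_0}}\varphi_0| \;\leq\; C
\]
by Proposition~\ref{phi-grad-bdd}. Combining the two estimates finishes the proof.

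The argument is essentially algebraic once $h_\infty^k$ is written in the explicit form above; the genuine analytic input is concentrated in Proposition~\ref{phi-grad-bdd}, without which the conformal-factor term $k\,s\otimes\d\varphi_0$ could not be controlled in the singular metric $\omega_{\varphi_0}$. The only subtlety to check is that the choice of $\hat{\omega}_0$ from Proposition~\ref{bkground-metric 1} is genuinely compatible with taking $\alpha_0 = \frac{1}{k}\pi^*\iota^*\omega_{FS}$ on $K$; this is a routine modification of the construction since, up to adding an $i\d\db$ of a smooth compactly supported function, any two semi-positive representatives of $[\alpha_0]$ that are equal on a neighborhood of $V$ can be exchanged without affecting the estimates of the preceding sections.
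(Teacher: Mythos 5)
Your proposal is correct and follows essentially the same route as the paper: both write $h_\infty^k = e^{-k\varphi_0}\,\pi^*h_{FS}^k$ on $K$, deduce the $C^0$ bound from the uniform bound on $\varphi_0$, and split the Chern connection as $\nabla_{h_\infty^k}s = \nabla_{h_{FS}^k}s + k(\d\varphi_0)s$, controlling the first term via the metric lower bound $\omega_{\varphi_0}\geq C^{-1}\hat\omega_0$ from Proposition~\ref{Schwartz Lemma} and the second via the gradient estimate of Proposition~\ref{phi-grad-bdd}. The only difference is that you spell out the compatibility of the choice of $\hat\omega_0$ with $\frac{1}{k}\pi^*\iota^*\omega_{FS}$ on $K$, which the paper takes for granted from its construction.
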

\begin{proof}
	Locally we can write $h_{\infty} = e^{-\varphi_0}\hat{h}_0$ where $-i\d\db \log \hat{h}_0 = \hat{\omega}_0$, since $\hat{\omega}_0 = \pi^{\star}\omega_{FS}$ on $K$, we have simply $h_{\infty} = e^{-\varphi_0}h_{FS}$, and by the $C^0$ bound for $\varphi_0$, it follows that $|s|_{h^k_{\infty}}\leq C|s|_{h^k_{FS}}\leq C$. To see the bound for the gradient, we note
	\[|\nabla_{h^k_{\infty}} s|^2_{h^k_{\infty}, k\omega_{\varphi_0}} = |\nabla_{h^k_{FS}}s+ k (\d\varphi_0)s|_{h^k_{\infty}, k\omega_{\varphi_0}}\leq |\nabla_{h^k_{FS}}s|_{h^k_{FS}, k\omega_{\varphi_0}}+ k |\nabla\varphi_0|_{k\omega_{\varphi_0}}|s|_{h^k_{\infty}}\]
	and by the gradient estimate \eqref{phi-grad-bdd} $|\nabla \varphi_0|_{k\omega_{\varphi_0}}\leq C$, so the second term is bounded, and by the estimate \eqref{lower-bdd}, we have
	$|\nabla_{h^k_{FS}}s|_{h^k_{FS}, k\omega_{\varphi_0}}\leq C|\nabla_{h^k_{FS}}s|_{h^k_{FS}, k\omega_{FS}}\leq C$ and we get the bound that we wanted. 
\end{proof}

We will need the boundedness of $|s|_{h^k_{\infty}}$ and $|\nabla s|_{h^k_{\infty}, k\omega_{\varphi_0}}$ to make the Moser iteration argument work with cutoff functions in the next section. 

\subsubsection{$L^2$ estimates on $X_0$}
The argument of this section follows in the same way as in \cite{JS14}, with minor modifications. 

We first quote a proposition stating the existence of good cutoff functions on $X_{\infty}$ from \cite{DS14}.

\begin{Lemma}\cite[Proposition 3.5]{DS14}
	There exist cutoff functions $\rho_{\eps}$ on $X_{\infty}$ satisfying the following
	\begin{enumerate}
		\item $0\leq \rho_{\eps}\leq 1$
		\item $\text{supp}(\rho_{\eps})\subset \subset \mathcal{R} = X_0^{reg}$
		\item For any compact set $K\subset \subset \mathcal{R}$, there exist $\eps_K>0$ such that for all $\eps<\eps_K$, we have $\rho_{\eps} = 1$ on $K$. 
		\item $\int_{X}|\nabla \rho_{\eps}|^2\to 0$ as $\eps\to 0$. 
	\end{enumerate}
\end{Lemma}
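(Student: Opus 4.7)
The plan is to adapt the logarithmic cutoff construction of Donaldson--Sun~\cite{DS14}, whose essential input is the uniform non-collapsing of $(X,\omega_{\varphi_t})$ and the fact that the singular set $\mathcal{S}=X_\infty\setminus\mathcal{R}$ has real Hausdorff codimension at least $4$. A convenience in our non-compact setting is that $\mathcal{S}=\pi_\infty^{-1}(X_0^{sing})$ is compact in $X_\infty$ (cf.\ Proposition~\ref{prop: GH-limit-metric-completion-homeo} together with the diameter control of Lemma~\ref{diam-bdd}), so the delicate construction is purely local near $\mathcal{S}$; compact support is then achieved by multiplying by a single smooth cutoff supported on a large ball in the asymptotic end where $g_\infty$ is uniformly conical and smooth.

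First I would establish the Minkowski content estimate
\[
\Vol_{g_\infty}\bigl(T_\eps(\mathcal{S})\bigr)\leq C\,\eps^{4}, \qquad T_\eps(\mathcal{S})=\{x\in X_\infty : d_{g_\infty}(x,\mathcal{S})<\eps\},
\]
for all sufficiently small $\eps>0$. This is a standard consequence of the Cheeger--Colding--Tian $\eps$-regularity theorems combined with the volume non-collapsing bound $\Vol_{g_\infty}(B_{g_\infty}(x,s))\geq c\,s^{2n}$, which is inherited under Gromov--Hausdorff convergence from the uniformly Ricci-flat, non-collapsed sequence $(X,\omega_{\varphi_t})$, as already exploited in the proof of Lemma~\ref{diam-bdd}.

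Next I would define the Lipschitz cutoff
\[
\tilde\rho_\eps(x)=\begin{cases} 0, & d_\mathcal{S}(x)\leq \eps^{2}, \\ \dfrac{\log(d_\mathcal{S}(x)/\eps^{2})}{\log(\eps^{-1})}, & \eps^{2}< d_\mathcal{S}(x)< \eps, \\ 1, & d_\mathcal{S}(x)\geq \eps, \end{cases}
\]
where $d_\mathcal{S}(x)=d_{g_\infty}(x,\mathcal{S})$. Since $d_\mathcal{S}$ is $1$-Lipschitz, $|\nabla\tilde\rho_\eps|\leq (d_\mathcal{S}\log\eps^{-1})^{-1}$ almost everywhere on the annulus $A_\eps=\{\eps^{2}<d_\mathcal{S}<\eps\}$. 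Decomposing $A_\eps$ into the dyadic shells $E_k=\{2^{-(k+1)}\eps\leq d_\mathcal{S}<2^{-k}\eps\}$ for $k=0,\ldots,\lfloor\log(\eps^{-1})\rfloor$ and applying the Minkowski bound to each $E_k\subset T_{2^{-k}\eps}(\mathcal{S})$ yields
\[
\int_{X_\infty}|\nabla\tilde\rho_\eps|^{2}\,\omega_{\varphi_0}^{n}\leq \frac{C}{(\log\eps^{-1})^{2}}\sum_{k=0}^{\lfloor\log\eps^{-1}\rfloor}\frac{(2^{-k}\eps)^{4}}{(2^{-(k+1)}\eps)^{2}}\leq \frac{C\eps^{2}}{(\log\eps^{-1})^{2}}\longrightarrow 0.
\]
Conditions (1)--(3) are manifest from the construction, and a routine mollification carried out in the smooth open set $\mathcal{R}$ upgrades $\tilde\rho_\eps$ to the desired smooth cutoff $\rho_\eps$.

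The main obstacle is establishing the codimension-$4$ Minkowski estimate in this non-compact limit space. The crucial input is the Cheeger--Colding--Tian $\eps$-regularity theorem, which shows that the ``singular'' balls of radius $\eps$ (those with harmonic radius less than $\delta\eps$) can be covered by a controlled number of balls of radius $\eps/2$; iterating this covering argument yields the Minkowski dimension bound. Because $\eps$-regularity depends only on non-collapsing and the vanishing Ricci tensor--both uniform along the sequence $(X,\omega_{\varphi_t})$--the argument carries over verbatim from the compact setting of \cite{DS14}, with the added convenience that $\mathcal{S}$ is already compact in $X_\infty$ so that no separate treatment of infinity is required for the main estimate.
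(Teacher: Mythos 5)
The paper offers no proof of this lemma---it is quoted verbatim from \cite{DS14}---so the only comparison available is with the Donaldson--Sun construction itself. Your construction near the singular set is essentially theirs in spirit, and the dyadic energy computation for the logarithmic cutoff is correct. Two caveats there. First, the Minkowski content bound ${\rm Vol}_{g_\infty}(T_\eps(\mathcal{S}))\le C\eps^4$ is \emph{not} a consequence of the Cheeger--Colding--Tian theory you invoke: Hausdorff codimension bounds do not control Minkowski content, and the sharp tubular-neighbourhood estimate is a later theorem (Cheeger--Naber quantitative stratification). This is repairable: either cite those results, or argue as Donaldson--Sun actually do, using $\mathcal{H}^{2n-2}(\mathcal{S})=0$ to produce, for each $\eta>0$, a finite cover of the compact set $\mathcal{S}$ by balls $B_{r_i}(x_i)$ with $\sum_i r_i^{2n-2}<\eta$, and then taking the infimum of the corresponding annular log-cutoffs. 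Your computation is robust enough that any codimension-$(2+\delta)$ content bound, or even the capacity-zero statement, suffices, so this is a citation issue rather than a structural one.

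The genuine gap is your treatment of infinity. Multiplying by ``a single smooth cutoff supported on a large ball'' cannot deliver properties (2), (3) and (4) simultaneously: if the ball is fixed, (3) fails for compact sets $K$ outside it; if its radius $R_\eps\to\infty$, the outer annular cutoff has gradient of order $R_\eps^{-1}$ on a region of volume of order $R_\eps^{2n}$, hence Dirichlet energy of order $R_\eps^{2n-2}\to\infty$, destroying (4). In fact (2)--(4) as literally stated are mutually inconsistent on this non-compact space: $(\mathcal{R},g_\infty)$ has an asymptotically conical end of real dimension $2n\ge 4$ with Euclidean volume growth, hence is non-parabolic, so every compactly supported Lipschitz function equal to $1$ on a fixed ball $B_{R_0}(p)\supset\mathcal{S}$ has Dirichlet energy bounded below by the positive capacity of that ball relative to infinity. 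The statement one should prove---and the one every downstream application actually uses, since $\rho_\eps$ is always paired there with a separate compactly supported cutoff $\eta$---replaces compact support by the requirement that $\rho_\eps\equiv 1$ outside an $\eps$-neighbourhood of $\mathcal{S}$ (in particular near infinity) and that $\overline{\{\rho_\eps>0\}}$ is disjoint from $\mathcal{S}$. With that reading, your interior construction already completes the proof and no outer cutoff is needed.
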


We recall the following version of Hormander's $L^2$ estimates for the $\db$ equation. 

\begin{Theorem}\cite[Cor 5.3]{Dem}\label{L2 est}
	Let $(M, \omega)$ be a K\"ahler manifold. Assume $M$ is weakly pseudoconvex. Let $(L, h)$ be a Hermitian line bundle with curvature with (possibly) singular Hermitian metric $h$, and suppose
	\begin{equation*}
	-i\d\db\log h+\Ric(\omega) \geq \gamma(x)\omega
	\end{equation*}
	then for any $\beta\in \bigwedge^{0, 1}\otimes L$, with $\db \beta = 0$, there exist a section $s\in L$ satisfying $\db s = \beta$ with 
	\begin{equation*}
	\int_{M}|s|^2_{h}\omega^n\leq \int_{M} \frac{1}{\gamma}|\beta|^2_{h, \omega}\omega^n,
	\end{equation*}
provided the integral on the RHS is finite.
\end{Theorem}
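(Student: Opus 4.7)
The plan is to prove this via the standard Hörmander--Bochner--Kodaira approach, with care needed for the weakly pseudoconvex (non-compact) setting and for singular metrics.  The argument has three main pieces: (i) the Bochner--Kodaira--Nakano a priori inequality, (ii) an approximation/density argument to handle weakly pseudoconvex manifolds with singular metrics, and (iii) a Hilbert space duality argument to actually solve $\dbar s = \beta$.

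First, for smooth $h$ and compactly supported smooth $L$-valued $(0,1)$-forms $\phi$, recall the Bochner--Kodaira--Nakano identity in the form
\[
\|\dbar \phi\|_{h,\omega}^2 + \|\dbar^{*}\phi\|_{h,\omega}^2 \;\geq\; \int_M \langle [i\Theta_{h}(L) + \mathrm{Ric}(\omega),\Lambda_{\omega}]\phi,\phi\rangle_{h,\omega}\,\omega^n,
\]
where $\Theta_{h}(L) = -\d\dbar \log h$.  On $(0,1)$-forms the commutator with $\Lambda_{\omega}$ extracts the trace of the curvature, so the curvature hypothesis $-i\d\dbar\log h + \mathrm{Ric}(\omega) \geq \gamma\,\omega$ yields the pointwise bound $\langle [\,\cdot\,,\Lambda]\phi,\phi\rangle \geq \gamma |\phi|^2_{h,\omega}$.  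This gives the core a priori estimate
\[
\int_M \gamma |\phi|_{h,\omega}^2\,\omega^n \;\leq\; \|\dbar \phi\|_{h,\omega}^2 + \|\dbar^{*}\phi\|_{h,\omega}^2
\]
for $\phi$ in the appropriate dense subspace.

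Next, I would extend this to weakly pseudoconvex $M$ with possibly singular $h$.  Weak pseudoconvexity provides a smooth plurisubharmonic exhaustion $\psi: M \to \R$, which lets one exhaust $M$ by relatively compact strongly pseudoconvex sublevel sets $M_c = \{\psi < c\}$.  For singular $h$, approximate $h$ by smooth metrics $h_\nu$ with curvatures bounded below by $\gamma - \epsilon_\nu$ (using a standard convolution/regularization with a complete auxiliary metric, or Demailly's regularization of quasi-plurisubharmonic functions).  The a priori inequality then holds with $h_\nu$ on $M_c$, and a density argument (extending $\dbar^*$'s domain in the Hilbert space $L^2(M_c, h_\nu,\omega)$ and using the completeness of an auxiliary metric on relatively compact sets) justifies applying it to arbitrary $\phi \in \mathrm{Dom}(\dbar^*)\cap \mathrm{Dom}(\dbar)$.

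The solvability step is now Hilbert space duality.  Given $\beta$ with $\dbar\beta=0$, decompose $\phi = \phi_1 + \phi_2$ with $\phi_1 \in \ker\dbar$ and $\phi_2 \perp \ker\dbar$; then $\langle \phi, \beta\rangle = \langle \phi_1,\beta\rangle$, and the a priori inequality together with $\dbar\phi_1=0$ gives
\[
|\langle \phi, \beta\rangle|^2 \;\leq\; \Big(\int_M \tfrac{1}{\gamma}|\beta|_{h,\omega}^2\,\omega^n\Big)\cdot \|\dbar^{*}\phi\|_{h,\omega}^2.
\]
Thus the linear functional $\dbar^{*}\phi \mapsto \langle \phi,\beta\rangle$ is bounded on $\mathrm{Im}(\dbar^*) \subset L^2$, and by Hahn--Banach/Riesz there is $s_{\nu,c} \in L^2(M_c, h_\nu,\omega)$ with $\dbar s_{\nu,c} = \beta$ weakly on $M_c$ and $\int_{M_c}|s_{\nu,c}|_{h_\nu}^2\,\omega^n \leq \int_{M_c}\tfrac{1}{\gamma}|\beta|_{h,\omega}^2\,\omega^n$.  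Finally, take $\nu \to \infty$ and $c\to\infty$; a weak-compactness/diagonal argument (using the uniform bound from the finiteness of the right-hand side) produces a limit $s$ on $M$ with $\dbar s = \beta$ and the desired $L^2$ estimate.

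The main obstacle is the technical piece in step (ii): carefully setting up the functional framework so that $\dbar^*$ has dense domain and the a priori inequality is legitimately applicable to all of $\mathrm{Dom}(\dbar)\cap\mathrm{Dom}(\dbar^*)$ when $h$ is merely locally integrable and $\omega$ need not be complete.  The standard trick is to work first with a complete K\"ahler metric on $M_c$ (which exists by weak pseudoconvexity) to get density, then pass back to $\omega$ by the monotonicity of the $L^2$ norm in the metric for $(0,1)$-forms, but keeping track of all the approximations (curvature, metric, exhaustion) simultaneously is the delicate part.
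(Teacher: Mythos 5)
The paper offers no proof of this statement: it is quoted directly from Demailly's lecture notes (\cite[Cor.\ 5.3]{Dem}) and used as a black box in Section~\ref{sec: metricGeometry}, so there is no in-paper argument to compare yours against. That said, your sketch is, in outline, the standard H\"ormander--Andreotti--Vesentini--Demailly $L^2$ method, and it matches the structure of Demailly's own proof: the Bochner--Kodaira--Nakano a priori inequality, exhaustion by relatively compact pseudoconvex sublevel sets, a family of complete auxiliary K\"ahler metrics to obtain a dense domain for $\dbar^*$, regularization of the singular weight, and Hilbert-space duality via Hahn--Banach/Riesz. Two technical imprecisions are worth flagging. First, the Ricci term enters the commutator estimate because a $(0,1)$-form with values in $L$ should be treated as an $(n,1)$-form with values in $L\otimes K_M^{-1}$ under the canonical isomorphism; the BKN inequality itself is formulated for $(n,q)$-forms and involves only the curvature of the coefficient bundle, so the hypothesis $i\Theta(L)+\Ric(\omega) = i\Theta(L\otimes K_M^{-1}) \geq \gamma\,\omega$ is what the commutator $[\,\cdot\,,\Lambda_\omega]$ actually sees. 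Your writing of the commutator as $[i\Theta(L)+\Ric(\omega),\Lambda]$ acting on a bare $(0,1)$-form in $L$ is not the identity as usually stated; it only becomes correct after the twist by $K_M^{-1}$. Second, the monotonicity that lets you pass from the complete metric $\omega_{\varepsilon}=\omega+\varepsilon\hat{\omega}$ back to $\omega$ as $\varepsilon\to 0$ is a property of $(n,q)$-forms (the quantity $|u|^2_{\omega_{\varepsilon}}\,\omega_{\varepsilon}^n$ is nonincreasing in $\varepsilon$), not of $(0,q)$-forms. Both points are exactly what the reduction to bidegree $(n,q)$ in Demailly's treatment is designed to handle, so the overall route you outline is sound once that bookkeeping is put in.
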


Now we will prove a version of the above theorem on $X$ equipped with a singular metric $\omega_{\varphi_0}$ that we constructed. First we fix a Hermitian metric $h_0$ on $L$ such that $-i\d\db\log h_0 = \hat{\omega}_0$, which is possibly by the $\d\db$-Lemma. 

\begin{Theorem}\label{L2 est on X_0}
	Let $h_{\infty} = e^{-\varphi_0}h_0$, so $-i\d\db \log h_{\infty} = \omega_{\varphi_0}$, and $K\subset\subset X$ a compact subset with pseudoconvex boundary. 
	Then for any  $\beta\in \bigwedge^{0, 1}\otimes L^k$, with compact support and $\text{supp}(\beta)\subset X_0^{reg}\cap K$ and $\db \beta = 0$, there exist a section $u\in H^0(L^k)$ satisfying $\db u = \beta$ with 
	\begin{equation*}
	\int_{K}|u|^2_{h^k_{\infty}}\omega_{\varphi_0}^n\leq \int_{K} |\beta|^2_{h^k_{\infty}, k\omega_{\varphi_0}}\omega_{\varphi_0}^n
	\end{equation*}
\end{Theorem}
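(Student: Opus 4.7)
The plan is to apply Demailly's $L^2$ estimate (Theorem~\ref{L2 est}) with the smooth Calabi-Yau metrics $\omega_{t,CY}$ and smooth Hermitian metrics $h_t^k := e^{-k\varphi_t}h_0^k$ on $L^k$ for $t>0$ small, and then pass to the weak $L^2$ limit as $t\to 0$. Here $h_0$ is a fixed smooth Hermitian metric on $L$ (pulled back to $X$) with curvature $\hat\omega_0$, which exists since $[\hat\omega_0] = \pi^*c_1(L)$. The principal advantages of this regularization are that $\omega_{t,CY}$ is smooth, K\"ahler, and Ricci-flat for $t>0$, and the volume form $\omega_{t,CY}^n = i^{n^2}\Omega\wedge\bar\Omega$ is independent of $t$.

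A direct computation gives
\[
-i\partial\bar\partial \log h_t^k + \Ric(\omega_{t,CY}) = k\hat\omega_0 + k\, i\partial\bar\partial \varphi_t = k\omega_{t,CY} - k(\hat\omega_t-\hat\omega_0),
\]
and Proposition~\ref{Background metrics} yields a uniform $C^\infty_{loc}$ estimate $\hat\omega_t - \hat\omega_0 = O(t)$ on $K$. The first step is to establish the H\"ormander curvature condition
\[
-i\partial\bar\partial \log h_t^k + \Ric(\omega_{t,CY}) \geq \gamma_t\, \omega_{t,CY}, \qquad \gamma_t = k - o(1) \text{ as } t \to 0,
\]
on $K$. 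Applying Theorem~\ref{L2 est} with $\omega = \omega_{t,CY}$ and Hermitian metric $h_t^k$ on the weakly pseudoconvex interior of $K$ then produces smooth solutions $u_t$ of $\bar\partial u_t = \beta$ satisfying
\[
\int_K |u_t|^2_{h_t^k}\,\omega_{t,CY}^n \leq \frac{1}{\gamma_t}\int_K |\beta|^2_{h_t^k,\omega_{t,CY}}\,\omega_{t,CY}^n.
\]

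Next I would pass to the limit. The uniform $L^\infty$ bound on $\varphi_t$ (Corollary~\ref{cor: C0-bdd}) together with $\omega_{t,CY}^n = i^{n^2}\Omega\wedge\bar\Omega$ shows $\{u_t\}$ is uniformly bounded in $L^2(K, i^{n^2}\Omega\wedge\bar\Omega)$; extract a weakly convergent subsequence $u_t \rightharpoonup u$. The equation $\bar\partial u = \beta$ persists distributionally, and elliptic regularity for $\bar\partial$ gives the required smoothness. On the fixed compact support $\mathrm{supp}(\beta) \subset \pi^{-1}(X_0^{reg})$, Corollary~\ref{cor: C-infty-loc convergence of metrics} gives smooth convergence $(\omega_{t,CY}, h_t^k) \to (\omega_{\varphi_0}, h_\infty^k)$, so the right-hand side converges to $\int_K |\beta|^2_{h_\infty^k, k\omega_{\varphi_0}}\omega_{\varphi_0}^n$ (noting that $\tfrac{1}{\gamma_t}|\beta|^2_{\omega_{t,CY}} \to \tfrac{1}{k}|\beta|^2_{\omega_{\varphi_0}} = |\beta|^2_{k\omega_{\varphi_0}}$ for a $(0,1)$-form). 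For the left-hand side, $e^{-k\varphi_t}$ is uniformly bounded and converges almost everywhere to $e^{-k\varphi_0}$ (using $C^\infty_{loc}$-convergence on $X\setminus V$ and that $V$ has Lebesgue measure zero), so $\sqrt{e^{-k\varphi_t}}\,u_t \rightharpoonup \sqrt{e^{-k\varphi_0}}\,u$ weakly in $L^2$, and lower semicontinuity of the $L^2$ norm yields
\[
\int_K |u|^2_{h_\infty^k}\,\omega_{\varphi_0}^n \leq \liminf_{t\to 0}\int_K |u_t|^2_{h_t^k}\,\omega_{t,CY}^n \leq \int_K |\beta|^2_{h_\infty^k, k\omega_{\varphi_0}}\,\omega_{\varphi_0}^n.
\]

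The main obstacle is establishing $\gamma_t \to k$ uniformly on $K$: although $\hat\omega_t - \hat\omega_0 = O(t)$ in $C^\infty$, the smooth K\"ahler form $\omega_{t,CY}$ is not uniformly bounded below by a fixed smooth K\"ahler metric as $t\to 0$ (it degenerates toward the singular current $\omega_{\varphi_0}$ near $V$), so absorbing the $O(t)$ curvature defect against $\omega_{t,CY}$ requires a careful quantitative control of the degeneration. A more robust alternative is to apply Demailly's singular version of the $L^2$ estimate using the bounded psh weight $\varphi_0$ directly (so that $h_\infty^k$ is treated as a genuinely singular Hermitian metric), in which case the curvature condition $k\omega_{\varphi_0} \geq \gamma\,\omega_{t,CY}$ is a current inequality that may be verified via Bedford-Taylor continuity along the smooth approximation $\omega_{t,CY} \to \omega_{\varphi_0}$.
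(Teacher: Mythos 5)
You have the right overall strategy---regularize by smooth Calabi--Yau metrics, apply H\"ormander/Demailly's estimate for $t>0$, and pass to a weak $L^2$ limit---and your limiting argument (uniform $L^2$ bound from the fixed volume form and the $C^0$ bound on the potentials, weak compactness, lower semicontinuity, smooth convergence on the support of $\beta$) is essentially the paper's. The gap is exactly where you locate it, and it is not resolved: with the weight $h_t^k=e^{-k\varphi_t}h_0^k$ the curvature identity leaves the defect $-k(\hat{\omega}_t-\hat{\omega}_0)$, and this cannot be absorbed into $\omega_{t,CY}$ as $t\to 0$. The only available lower bound is $\omega_{t,CY}\geq C^{-1}\hat{\omega}_0$ (Proposition~\ref{Schwartz Lemma}), and $\hat{\omega}_0$ degenerates along $V$ while $\hat{\omega}_t-\hat{\omega}_0$ contains the genuinely K\"ahler piece $t(\omega_1-\omega_0)$; a bound on $t\,\Tr_{\omega_{t,CY}}\omega_1$ uniform in $t$ is not available (the obvious Schwarz-lemma argument produces a constant of size $e^{C/t}$). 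Your fallback---a ``singular version'' of the estimate with a current inequality $k\omega_{\varphi_0}\geq\gamma\,\omega_{t,CY}$ verified by Bedford--Taylor continuity---is not a proof: making H\"ormander's inequality work directly for the singular metric is precisely what the theorem is meant to establish, and no such current inequality is derived.

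The missing idea is to change the weight rather than fight the error term; this is where Assumption~\ref{ass: mainAss} enters. Using the compactly supported barrier $\psi_{\eps}$ of Lemma~\ref{barrier}, which satisfies $\hat{\omega}_0+i\d\db\psi_{\eps}\geq\eps\omega$ and hence $\hat{\omega}_0+t\,i\d\db\psi_{\eps}\geq(1-t)\hat{\omega}_0+t\eps\omega$, one takes $\tilde{\varphi}_t$ to be the Calabi--Yau potential relative to the background $(1-t)\hat{\omega}_0+t\eps\omega$ and sets $\tilde{h}_t=e^{-t\psi_{\eps}-\tilde{\varphi}_t}h_0$. Then
\[
-i\d\db\log\tilde{h}_t^k = k\left(\hat{\omega}_0+t\,i\d\db\psi_{\eps}+i\d\db\tilde{\varphi}_t\right)\geq k\,\omega_{\tilde{\varphi}_t}
\]
holds exactly, with no defect to absorb. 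The singularity of $\psi_{\eps}$ along $V$ is harmless: $\beta$ is supported away from $V$, $e^{-tk\psi_{\eps}}\to 1$ in $L^1_{loc}$, and $e^{-tk\psi_{\eps}-k\tilde{\varphi}_t}$ is bounded below on $K$ since $\psi_{\eps}$ is bounded above, so the uniform $L^2$ bound on $u_t$ and the convergence of the right-hand side go through exactly as in your proposal. Without this (or an equivalent) twist of the weight, your argument does not close.
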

\begin{proof}
	By Assumption~\ref{ass: mainAss}, we know that $\hat{\omega}_0+i\d\db\psi_{\eps}\geq  \eps\omega$, which implies $\hat{\omega}_0+ti\d\db\psi_{\eps}\geq (1-t)\hat{\omega}_0+t\eps\omega$. By the discussion in the previous sections, we can solve
	\[\omega_{\varphi_t}^n  = \left((1-t)\hat{\omega}_0+t\eps\omega +i\d\db\tilde{\varphi}_{t}\right)^n = i^{n^2}\Omega\wedge\bar{\Omega}\]
	with $\tilde{\varphi}_t$ is bounded on any compact set $K\subset \subset X$, uniformly as $t\to 0$ and $\tilde{\varphi}_t \to \varphi_0$ in $L^{\infty}_{loc}(X)$ and in $C^{\infty}_{loc}(X^{reg}_0)$. 
	We pick a metric $h_0$ on $L$ such that $-i\d\db\log h_0 = \hat{\omega}_0$, then if we set $\tilde{h}_t = e^{-t\psi_{\eps}-\tilde{\varphi}_t}h_0$, it satisfies
	\begin{equation*}
		-i\d\db\log \tilde{h}_t^k  = k(\hat{\omega}_0+ti\d\db\psi_{\eps}+ i\d\db\tilde{\varphi}_t) \geq k\omega_{\varphi_t}
	\end{equation*}
	By the previous lemma, we can always solve $\db u_t = \beta$, satisfying the estimate
	\begin{equation*}
		\int_K |u_t|^2_{\tilde{h}_t^k}\omega_{\varphi_t}^n \leq\int_K |\beta|^2_{\tilde{h}_t^k, k\omega_{\varphi_t}}\omega_{\varphi_t}^n =\int_K e^{-tk\psi_{\eps}-k\tilde{\varphi}_t}|\beta|^2_{h^k_0, k\omega_{\varphi_t}}\omega_{\varphi_t}^n
	\end{equation*}
	Since $\beta$ is compactly supported on $X_0^{reg}$,  $\omega_{\varphi_t}\to \omega_{\varphi_0}$ on the support of $\beta$, and $e^{-tk\psi_{\eps}}\to 1$ in $L^1_{loc}$, so we have
	\[\lim_{t\to 0}\int_K |\beta|^2_{\tilde{h}^k_t, k\omega_{\varphi_t}}\omega_{\varphi_t}^n = \int_K e^{- k \varphi_0}|\beta|^2_{h_0^k, k\omega_{\varphi_0}}\omega_{\varphi_0}^n\]
	and since $e^{-tk\psi_{\eps}- k\tilde{\varphi}_t}$ is bounded from below on any compact set $K$, it follows that 
	\[\int_{K}|u_t|_{h_0^k}^2i^{n^2}\Omega\wedge\bar{\Omega} \leq C\int_Ke^{-tk\psi_{\eps}- k\tilde{\varphi}_t}|u_t|^2_{h_0^k}i^{n^2}\Omega\wedge\bar{\Omega} = C\int_{K}|u_t|_{\tilde{h}_t^k}^2\omega_{\varphi_t}^n\leq C\]
	hence there exist a weakly convergent subsequence $u_t \rightharpoonup u$ in $L^2(K, h^k_0)$ and the equation $\db u_t = \beta$ carries through the limit in the weak convergence, so we have $\db u = \beta$. Since the sections $u_t - u$ are holomorphic and weakly converge to $0$, it follows that the convergence is smooth it happens strongly, hence we have
	\begin{equation*}
		\int_Ke^{- k\varphi_0}|u|^2_{h^k_0}i^{n^2}\Omega\wedge\bar{\Omega}\leq \int_Ke
		^{- k \varphi_0}|\beta|^2_{h^k_0, \omega_{\varphi_0}} i^{n^2}\Omega\wedge\bar{\Omega}
	\end{equation*}
\end{proof}

\begin{Proposition}\label{Sobolev}
	The following Sobolev inequality hold for $f\in L^{\infty}\cap H^{1}(X_0^{reg}, \omega_{\infty})$
	\begin{equation*}
	\left(\int_{X_0^{reg}} |f|^{2\frac{n}{n-1}}\omega_{\infty}^n\right)^{\frac{n-1}{n}}\leq C \int_{X_0^{reg}}|\nabla f|_{g_{\infty}}^2\omega_{\infty}^n
	\end{equation*}
\end{Proposition}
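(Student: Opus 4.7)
The plan is to transfer the uniform Sobolev inequality \eqref{eq: Sobolev} on $(X, \omega_{\varphi_t})$ to the singular limit $(X_0^{\mathrm{reg}}, \omega_\infty)$ using the smooth convergence of metrics on the regular locus, together with the cutoff functions from the preceding lemma to handle the singular set.

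First, I would fix a test function $u$ with compact support contained in $\mathcal{R} = \iota_\infty(X_0^{\mathrm{reg}})$. Viewing $u$ as a function on $X$ supported in $\pi^{-1}(X_0^{\mathrm{reg}})$ and extended by zero, the inequality \eqref{eq: Sobolev} gives
\[
\left(\int_X |u|^{2n/(n-1)}\, i^{n^2}\Omega\wedge\bar\Omega\right)^{(n-1)/n} \leq C\int_X |du|_{\omega_{\varphi_t}}^2\, i^{n^2}\Omega\wedge\bar\Omega
\]
for all small $t$, with $C$ independent of $t$. Since $\omega_{\varphi_t} \to \omega_\infty$ smoothly on compact subsets of $\pi^{-1}(X_0^{\mathrm{reg}})$ by Corollary~\ref{cor: C-infty-loc convergence of metrics}, and $\omega_{\varphi_t}^n = \omega_\infty^n = i^{n^2}\Omega\wedge\bar\Omega$, dominated convergence lets me pass to the limit and obtain the Sobolev inequality for $u$ on $(X_0^{\mathrm{reg}}, \omega_\infty)$.

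Next, for a general $f \in L^\infty \cap H^1(X_0^{\mathrm{reg}}, \omega_\infty)$, I would apply the above to the approximations $f_{\epsilon, R} := f\,\rho_\epsilon\,\chi_R$, where $\rho_\epsilon$ are the cutoff functions from the preceding lemma (satisfying $\rho_\epsilon \to 1$ pointwise on $\mathcal{R}$ and $\int |\nabla \rho_\epsilon|^2 \to 0$), and $\chi_R$ is a standard radial cutoff with $\chi_R = 1$ on $\{r \leq R\}$, vanishing outside $\{r \leq 2R\}$, and $|\nabla \chi_R| \leq C/R$. Expanding
\[
|\nabla f_{\epsilon, R}|^2 \leq 3\bigl(\rho_\epsilon^2 \chi_R^2 |\nabla f|^2 + f^2 \chi_R^2 |\nabla \rho_\epsilon|^2 + f^2 \rho_\epsilon^2 |\nabla \chi_R|^2\bigr)
\]
and letting $\epsilon \to 0$ first (with $R$ fixed), the middle term vanishes using $f \in L^\infty$ and the defining property $\int |\nabla \rho_\epsilon|^2 \to 0$, while the other terms converge by dominated convergence. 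This yields the Sobolev inequality for $f \chi_R$.

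The main obstacle will be letting $R \to \infty$, which reduces to controlling the boundary term $\int f^2 |\nabla \chi_R|^2 \leq C R^{-2} \int_{\{R \leq r \leq 2R\}} f^2 \omega_\infty^n$. If $H^1$ is taken to mean $W^{1,2}$ so that $f \in L^2$ is implicit, this tends to zero by absolute continuity. If one only wishes to assume finite Dirichlet energy, I would exploit the asymptotically conical structure from Theorem~\ref{Main Theorem}(2): on $\{r \geq R_0\}$ the metric $\omega_\infty$ is uniformly close to the cone metric $\omega_C$, on which the scale-invariant Sobolev inequality holds. Splitting $f = \chi_{R_0} f + (1-\chi_{R_0}) f$ for large fixed $R_0$, the compactly supported piece $\chi_{R_0} f$ is controlled by the argument above, while the piece $(1-\chi_{R_0})f$ supported in the asymptotically conical end is controlled by the Sobolev inequality on the cone, and combining the two estimates yields the desired result.
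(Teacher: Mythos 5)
Your proposal is correct and follows essentially the same route as the paper: reduce to compactly supported test functions, where the (uniform) Sobolev inequality of Croke--Yau applies, and then remove the cutoff using the Donaldson--Sun functions $\rho_{\eps}$ together with $f\in L^{\infty}$ and $\int_X|\nabla\rho_{\eps}|^2\to 0$. If anything you are more careful than the paper, which obtains the base case by citing Croke directly on the limit metric and does not explicitly treat the cutoff at spatial infinity; your observation that $f\in L^2$ forces $R^{-2}\int_{\{R\leq r\leq 2R\}}f^2\,\omega_{\infty}^n\to 0$ is exactly what is needed to close that step.
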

\begin{proof}
	Without loss of generality, we can assume $f\geq 0$. If $f$ is supported in $X_0^{reg}$, this follows from \cite{Cr}. For $f\in L^{\infty}$, we can define $f_{\eps} = f\rho_{\eps}$, $f_{\eps}$ is supported in $X_0^{reg}$, then we clearly have $\|f_{\eps}\|_{L^2}\to \|f\|_{L^2}$, and we also have
	\begin{equation*}
		\int_{X}|\nabla f_{\eps}|^2 = \int_X\rho_{\eps}^2|\nabla f|^2+\int_Xf^2|\nabla \rho_{\eps}|^2+2\int_Xf\rho_{\eps}\langle\nabla f, \nabla \rho_{\eps}\rangle
	\end{equation*}
	the second and third term goes to $0$ as $\eps\to 0$ because $\int_{X}|\nabla \rho_{\eps}|^2\to 0$, and this gives what we wanted. 
\end{proof}

\begin{Lemma}\label{moser-iteration}
	Suppose $u\geq 0$ is a bounded function on $X_0^{reg}$ that satisfy
	\begin{equation*}
	\lapl_{\omega_\infty}u\geq -Au
	\end{equation*}
	then for $R\geq 1$ sufficiently large (so that $X_0^{sing}\subset B_R(p)$), we have the estimate
	\begin{equation*}
	\|u\|_{L^{\infty}(B_R(p))}\leq C(A+CR^{-2})^{\frac{n}{2}}\|u\|_{L^2(B_{2R}(p))}
	\end{equation*}
\end{Lemma}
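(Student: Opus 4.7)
The plan is to run a standard Moser iteration against the differential inequality $\Delta_{\omega_\infty} u \geq -Au$, using two different cutoffs: the Donaldson--Sun cutoffs $\rho_\epsilon$ from the preceding lemma to legitimize integration by parts on the singular space $X_\infty$, and a spatial cutoff $\eta$ supported in a ball slightly larger than $B_R(p)$ to localize the estimate.

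First I would fix $p \geq 2$ and a Lipschitz cutoff $\eta$ with $0 \leq \eta \leq 1$, $\eta \equiv 1$ on $B_{R'}(p)$ for some $R' \in [R, 2R]$, $\operatorname{supp}(\eta) \subset B_{2R}(p)$, and $|\nabla \eta| \leq C/R$ (valid since $X_\infty$ is a noncollapsed limit with a uniform volume ratio). Multiplying the inequality by $\rho_\epsilon^2 \eta^2 u^{p-1} \geq 0$, which is bounded with compact support contained in $\mathcal{R} = X_0^{reg}$, and integrating by parts, I obtain
\begin{equation*}
(p-1)\!\int \rho_\epsilon^2 \eta^2 u^{p-2} |\nabla u|^2 \leq A\!\int \rho_\epsilon^2 \eta^2 u^p - 2\!\int \rho_\epsilon^2 \eta u^{p-1} \langle \nabla\eta, \nabla u\rangle - 2\!\int \rho_\epsilon \eta^2 u^{p-1}\langle \nabla\rho_\epsilon, \nabla u\rangle.
\end{equation*}
Applying Cauchy--Schwarz with a small absorbing constant to the two cross terms, then letting $\epsilon \to 0$ and using both $u \in L^\infty$ and $\int |\nabla \rho_\epsilon|^2 \to 0$, all terms involving $\nabla\rho_\epsilon$ vanish in the limit, and I arrive at
\begin{equation*}
\int \eta^2 u^{p-2}|\nabla u|^2 \;\leq\; \frac{4A}{p-1}\int \eta^2 u^p + \frac{C}{(p-1)^2}\int |\nabla \eta|^2 u^p.
\end{equation*}

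Next I would apply the Sobolev inequality of Proposition \ref{Sobolev} to $f = \eta u^{p/2} \in L^\infty \cap H^1(X_0^{reg}, \omega_\infty)$, together with the pointwise bound $|\nabla(\eta u^{p/2})|^2 \leq 3 u^p |\nabla \eta|^2 + \tfrac{p^2}{2}\eta^2 u^{p-2}|\nabla u|^2$. Combining these gives, after a short calculation that uses $p/(p-1) \leq 2$ for $p \geq 2$, the iteration step
\begin{equation*}
\left( \int_{B_{R'}(p)} u^{p \cdot n/(n-1)} \, \omega_\infty^n \right)^{(n-1)/n} \leq C\, p\,\bigl( A + R^{-2} \bigr) \int_{B_{2R}(p)} u^p \,\omega_\infty^n.
\end{equation*}
I would then choose a dyadic sequence of radii $R_k = R(1 + 2^{-k})$ with cutoffs $\eta_k$ satisfying $|\nabla \eta_k| \leq C\, 2^k/R$, and exponents $p_k = 2 \bigl(\tfrac{n}{n-1}\bigr)^k$, so that the iteration yields
\begin{equation*}
\|u\|_{L^{p_{k+1}}(B_{R_{k+1}})} \;\leq\; \bigl(C\, p_k\,(A + C\, 4^k R^{-2})\bigr)^{1/p_k} \|u\|_{L^{p_k}(B_{R_k})}.
\end{equation*}
Taking $k \to \infty$, the product of the prefactors converges because $\sum_k 1/p_k = n/2$ and $\sum_k k/p_k < \infty$; isolating the dependence on $A + R^{-2}$ gives
\begin{equation*}
\|u\|_{L^\infty(B_R(p))} \leq C\,(A + C\, R^{-2})^{n/2}\, \|u\|_{L^2(B_{2R}(p))},
\end{equation*}
as desired.

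The main obstacle is not the iteration itself, which is routine, but the justification of the integration by parts against a singular background metric; this is precisely where the Donaldson--Sun cutoffs $\rho_\epsilon$ are indispensable, since they let me test only against functions compactly supported in $\mathcal{R}$ and take limits using the boundedness of $u$. The fact that the constant $C$ depends only on $n$ and the Sobolev constant of $\omega_\infty$ (uniform in $R$ once $R$ is large enough that $X_0^{sing} \subset B_R(p)$) reflects the scale invariance of the Sobolev inequality obtained as a limit of the uniform Sobolev inequalities for $\omega_{\varphi_t}$.
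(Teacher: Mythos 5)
Your proposal is correct and follows essentially the same route as the paper: testing the inequality against $\rho_\epsilon^2\eta^2 u^{p-1}$ (the paper uses $u^{p}$ with a shifted exponent, which is equivalent), discarding the $\nabla\rho_\epsilon$ terms in the limit $\epsilon\to 0$ via the boundedness of $u$ and $\int|\nabla\rho_\epsilon|^2\to 0$, invoking the Sobolev inequality of Proposition \ref{Sobolev}, and iterating over dyadic radii with $p_k = 2(n/(n-1))^k$. The bookkeeping of the prefactors via $\sum_k 1/p_k = n/2$ matches the exponent in the paper's final estimate.
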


\begin{proof}
	Using the Sobolev inequality above and the cutoff function, we can do Moser iteration on $(X_0^{reg}, g_{\infty})$
	\begin{align*}
	A\int_X \eta^2\rho_{\eps}^2u^{p+1}\omega_{\infty}^n&\geq \int_X \eta^2\rho_{\eps}^2u^{p}(-\lapl u)\omega_{\infty}^n\\
	& = \frac{4p}{(p+1)^2}\int_X\eta^2\rho_{\eps}^2|\nabla u^{\frac{p+1}{2}}|^2\omega_{\infty}^n+2\int_X\rho_{\eps}^2\eta(\nabla\eta\cdot\nabla u)u^p\omega_{\infty}^n\\
	&\qquad + \frac{4}{(p+1)}\int_X \eta^2 \rho_{\eps}(\nabla\rho_{\eps}\cdot\nabla u^{\frac{p+1}{2}})u^{\frac{p+1}{2}}\omega_{\infty}^n\\
	& \geq \frac{4p}{(p+1)^2}\int_X\eta^2\rho_{\eps}^2|\nabla u^{\frac{p+1}{2}}|^2\omega_{\infty}^n+2\int_X\rho_{\eps}^2\eta(\nabla\eta\cdot\nabla u)u^p\omega_{\infty}^n\\
	&\qquad - \frac{4}{(p+1)}\left(\int_X\eta^2\rho_{\eps}^2|\nabla u^{\frac{p+1}{2}}|^2\omega_{\infty}^n\right)^{\frac{1}{2}}\left(\int_X \eta^2|\nabla\rho_{\eps}|^2u^{p+1}\omega_{\infty}^n\right)^{\frac{1}{2}}
	\end{align*}
	when $u$ is bounded, we can take a limit as $\eps$ goes to $0$ and the last term will disappear, so we have
	\begin{align*}
	A\int_X \eta^2u^{p+1}\omega_{\infty}^n&\geq \frac{4p}{(p+1)^2}\int_X\eta^2|\nabla u^{\frac{p+1}{2}}|^2\omega_{\infty}^n+\frac{4}{p+1}\int_X\eta(\nabla\eta\cdot \nabla u^{\frac{p+1}{2}})u^{\frac{p+1}{2}}\omega_{\infty}^n\\
	&\geq \frac{3p}{(p+1)^2}\int_X\eta^2|\nabla u^{\frac{p+1}{2}}|^2\omega_{\infty}^n - \frac{16}{p}\int_X|\nabla \eta|^2u^{p+1}\omega_{\infty}^n
	\end{align*}
	which implies
	\[
	\int_X|\nabla \eta u^{\frac{p+1}{2}}|^2\omega_{\infty}^n\leq \frac{(p+1)^2}{p}\int_X(A\eta^2+\frac{17}{p}|\nabla \eta|^2)u^{p+1}\omega_{\infty}^n
	\]
	then by the Sobolev inquality from Proposition \ref{Sobolev}, we have for any $p>0$, 
	\[
	\left(\int_X|\eta u|^{(p+1)\frac{n}{n-1}}\omega_{\infty}^n\right)^{\frac{n-1}{n}}\leq \frac{C(p+1)^2}{p}\int_X(A\eta^2+\frac{17}{p}|\nabla \eta|^2)u^{p+1}\omega_{\infty}^n
	\]
	by carefully choosing cutoff functions $0\leq \eta_k\leq 1$ such that $\text{supp}(\eta_k) \subset B_{(1+2^{-k})R}$, $\eta_k = 1$ on $B_{(1+2^{-k-1})R}$ and $|\nabla \eta_k|\leq CR^{-1}2^k$, and set $p_k = 2(\frac{n}{n-1})^k$, then for $k = 0, 1, 2, \ldots$ we have
	\[
	\|u\|_{L^{p_{k+1}}(B_{(1+2^{-k-1})R})}^{p_k}\leq C(Ap_k+CR^{-2}4^{k})\|u\|_{L^{p_k}(B_{(1+2^{-k})R})}^{p_k}
	\]
	iterating gives 
	\[
	\sup_{B_R}u\leq C_{sob}^{\frac{n}{2}}C(2A+CR^{-2})^{\frac{n}{2}}\|u\|_{L^2(B_{2R})}
	\]
\end{proof}

We now prove $L^2$ estimates for holomorphic sections of $L^k$. 

\begin{Proposition}\label{grad-bdd-for-section}
	If $s$ is a holomorphic section of $(L^k, h^k_{\infty})$, then the following estimates hold on $(X_0^{reg}, kg_{\infty})$ for $R$ large enough so that $B_R(p)$ contains all of $X_0^{sing}$, 
	\begin{equation*}
	\sup_{B_R(p)}|s|_{h^k_{\infty}}\leq C \|s\|_{L^2_{h^k_{\infty}, kg_{\infty}}(B_{2R}(p))}
	\end{equation*}
	\begin{equation*}
	\sup_{B_R(p)}|\nabla s|_{h^k_{\infty}, kg_{\infty}} \leq C\|s\|_{L^2_{h^k_{\infty}, kg_{\infty}}(B_{2R}(p))}
	\end{equation*}
\end{Proposition}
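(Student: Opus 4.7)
The plan is to derive subelliptic differential inequalities for $|s|^2_{h^k_\infty}$ and for $|\nabla s|^2_{h^k_\infty, k\omega_{\varphi_0}}$, and then apply the Moser iteration bound of Lemma~\ref{moser-iteration} to both.

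First, for the pointwise estimate on $|s|$, since $s$ is holomorphic and the Chern curvature of $h^k_\infty$ is $k\omega_{\varphi_0}$, the standard Bochner--Poincar\'e--Lelong identity gives, on $X_0^{reg}$,
\[
\Delta_{k\omega_{\varphi_0}} |s|^2_{h^k_\infty} = |\nabla s|^2_{h^k_\infty, k\omega_{\varphi_0}} - n |s|^2_{h^k_\infty} \geq -n|s|^2_{h^k_\infty}.
\]
The previous proposition provides the $L^\infty$-bound on $|s|$ near $X_0^{sing}$ that is needed to run the Moser iteration across the singular set, so Lemma~\ref{moser-iteration} applies with $A = n$, yielding $\sup_{B_R}|s|^2 \leq C\|\,|s|^2\|_{L^2(B_{2R})}$. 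To upgrade this to the desired $L^2$-to-$L^\infty$ bound on $s$, I would either redo the Moser iteration starting from $L^1$ (which is permissible by the same integration-by-parts scheme with a minor adjustment of exponents), or, more quickly, use the interpolation $\|s\|_{L^4}^2 \leq \|s\|_{L^\infty}\|s\|_{L^2}$ and reabsorb the finite $L^\infty$-norm on the right.

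Next, for the gradient estimate, I would apply Bochner's formula to $|\nabla s|^2_{h^k_\infty, k\omega_{\varphi_0}}$. Since $\omega_{\varphi_0}$ is Ricci-flat on $X_0^{reg}$ (it satisfies~\eqref{eq: mainTheoremCMA}), the Ricci terms in the Bochner identity vanish, and the curvature of $L^k$ together with its contraction with $k\omega_{\varphi_0}$ contributes only zeroth-order terms. This produces an inequality of the form
\[
\Delta_{k\omega_{\varphi_0}} |\nabla s|^2 \geq -C_1 |\nabla s|^2 - C_2 |s|^2
\]
on $X_0^{reg}$, with $C_1, C_2$ depending only on $n$. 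Setting $H := |\nabla s|^2 + A|s|^2$ for $A$ sufficiently large, $H \geq 0$ satisfies $\Delta H \geq -(C_1+A)H$, and $H$ is bounded near $X_0^{sing}$ by the previous proposition. A second application of Lemma~\ref{moser-iteration} to $H$, combined with the $L^\infty$-bound for $|s|$ just established, then gives the claimed bound on $|\nabla s|$.

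The main technical subtlety is justifying the Moser iteration across $X_0^{sing}$: the integration-by-parts at the heart of the iteration must be carried out using the Donaldson--Sun cutoffs $\rho_\epsilon$, and one must check that the error terms this introduces vanish as $\epsilon \to 0$. This step hinges on the uniform boundedness of $|s|_{h^k_\infty}$ and $|\nabla s|_{h^k_\infty, k\omega_{\varphi_0}}$ near $X_0^{sing}$ from the preceding proposition, together with the key property $\int_X |\nabla \rho_\epsilon|^2 \to 0$ of the cutoff functions. Since the analogous computation has already been carried out in the proof of Lemma~\ref{moser-iteration}, the argument adapts essentially verbatim to the present setting.
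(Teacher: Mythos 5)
Your overall strategy is the paper's: derive subelliptic inequalities from holomorphicity and the Bochner formula, and push them through the weighted Moser iteration of Lemma~\ref{moser-iteration}, using the Donaldson--Sun cutoffs $\rho_\epsilon$ and the bounds from the preceding proposition to cross $X_0^{sing}$. The only cosmetic difference is that you work with $|s|^2$ and $|\nabla s|^2$ rather than $|s|$ and $|\nabla s|$; the paper applies the lemma directly to $u=|s|$ and $u=|\nabla s|$ (using $\Delta|s|\geq -n|s|$, $\Delta|\nabla s|\geq -(n+2)|\nabla s|$), which avoids the exponent mismatch you then have to repair by interpolation or by restarting the iteration from $L^1$. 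That repair is doable, but note that the absorption $\sup_{B_R}|s|^2\leq C\|s\|_{L^\infty(B_{2R})}\|s\|_{L^2(B_{2R})}$ has the $L^\infty$ norm on the larger ball, so you need a nested-ball iteration to absorb it cleanly; applying the lemma to $|s|$ itself sidesteps this entirely.

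There is, however, one genuine gap in the gradient estimate. After applying Lemma~\ref{moser-iteration} to $H=|\nabla s|^2+A|s|^2$ you obtain
\[
\sup_{B_R}|\nabla s|^2 \leq C\|H\|_{L^2(B_{2R})} \leq C\bigl(\|\,|\nabla s|^2\|_{L^2(B_{2R})}+A\|\,|s|^2\|_{L^2(B_{2R})}\bigr),
\]
and the first term still involves $\nabla s$; even after interpolating and absorbing you are left with $\|\nabla s\|_{L^\infty(B_R)}\leq C\|\nabla s\|_{L^2(B_{2R})}+C\|s\|_{L^2(B_{2R})}$, not the claimed bound. You are missing the Caccioppoli-type energy estimate
\[
\int_X \eta^2|\nabla s|^2\,\omega_\infty^n \leq C\int_X(\eta^2+|\nabla\eta|^2)|s|^2\,\omega_\infty^n,
\]
which the paper proves by integrating $g^{i\bar j}\nabla_{\bar j}\nabla_i s=-ns$ against $\eta^2\rho_\epsilon^2\bar s$, controlling the cross terms by Cauchy--Schwarz, and letting $\epsilon\to 0$ using $\int|\nabla\rho_\epsilon|^2\to 0$ together with the a priori $L^\infty$ bounds on $|s|$ and $|\nabla s|$ near the singular set. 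This estimate follows readily from the identity $\Delta|s|^2=|\nabla s|^2-n|s|^2$ that you already wrote down (integrate it against $\eta^2\rho_\epsilon^2$, integrating by parts once so that only first derivatives of the cutoffs appear), but it must be stated and used: without it the gradient estimate is not closed in terms of $\|s\|_{L^2}$.
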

\begin{proof}
	For a holomorphic section $s$, we have $\nabla_{\bj}s = 0$, so $g^{i\bj}\nabla_{\bj}\nabla_is = -ns$. It follows then from standard calculations that
	\begin{equation*}
	\lapl|s|\geq -n|s|
	\end{equation*}
	and 
	\begin{equation*}
	\lapl|\nabla s|\geq -(n+2)|\nabla s|
	\end{equation*}
	so now we can apply Lemma \ref{moser-iteration} with $u = |s|$ and $u = |\nabla s|$ to get
	\begin{equation*}
	\|s\|_{L^{\infty}(B_R)}\leq C\|s\|_{L^2(B_{2R})}
	\end{equation*}
	and
	\begin{equation}\label{gradient bound for sections}
	\|\nabla s\|_{L^{\infty}(B_R)}\leq C\|\nabla s\|_{L^2(B_{2R})}
	\end{equation}
	and it suffices to show that $\|\nabla s\|_{L^2(B_{2R})}\leq C\|s\|_{L^2(B_{3R})}$. We use integration by parts
	\begin{align*}
	\int_{X}\eta^2\rho_{\eps}^2|\nabla s|^2 &= \int_X \eta^2\rho_{\eps}^2 hg^{i\bj}_{\infty}\nabla_i s\nabla_{\bj}\bar{s}\omega_{\infty}^n\\
	& = -\int_X \eta^2\rho_{\eps}^2 hg^{i\bj}_{\infty}\nabla_{\bj}\nabla_i s\bar{s}\omega_{\infty}^n -2 \int_X \nabla_{\bj}(\eta^2\rho_{\eps}^2) hg^{i\bj}_{\infty}\nabla_i s\bar{s}\omega_{\infty}^n\\
	& \leq n\int_X \eta^2\rho_{\eps}^2|s|^2 + 2\int_X \eta\rho_{\eps}(\rho_{\eps}|\nabla \eta|+\eta|\nabla\rho_{\eps}|)|s||\nabla s|\\
	&\leq C\int_X (\eta^2+|\nabla \eta|^2)\rho_{\eps}^2|s|^2 + \eps\int_{X}\eta^2\rho_{\eps}^2|\nabla s|^2+ C\int_X\eta^2|\nabla \rho_{\eps}|^2|s|^2
	\end{align*}
	taking $\eps$ to $0$ gives 
	\begin{equation*}
	\int_X \eta^2|\nabla s|^2\leq C\int_X(\eta^2+|\nabla \eta|^2)|s|^2
	\end{equation*}
	by choosing $0\leq \eta\leq 1$ so that $\text{supp}(\eta)\subset B_{4R}$ and $\eta =1$ on $B_{2R}$, this gives $\|\nabla s\|_{L^2(B_{2R})}\leq \|s\|_{L^2(B_{4R})}$ Combined with estimate \eqref{gradient bound for sections}, this gives the desired estimates. 
\end{proof}

\begin{Corollary}
	For any holomorphic sections $s_0,  s_1\in H^0(L^k|_K)$ on $K$, the function $|s_i|_{h^k_{\infty}}$ extends as a lipshitz function on $K$ and this function vanishes precisely on the set $\pi_{\infty}^{-1}(\{s_i = 0\})$. Also, $\frac{s_0}{s_1}$ extends as a locally Lipshitz function defined on the set $\{|s_1|_{h^k_{\infty}}>0\}$. 
\end{Corollary}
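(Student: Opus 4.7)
The plan is to combine the gradient estimate of Proposition~\ref{grad-bdd-for-section} with Kato's inequality to show that $|s_i|_{h^k_\infty}$ is locally Lipschitz on the regular set $\mathcal{R} = X_0^{reg}$, and then extend by density to the metric completion $X_\infty$. First, Kato's inequality gives $|\nabla |s_i|_{h^k_\infty}| \leq |\nabla s_i|_{h^k_\infty, kg_\infty}$ pointwise on $\mathcal{R}$. Combined with Proposition~\ref{grad-bdd-for-section}, this yields a uniform Lipschitz bound for $|s_i|_{h^k_\infty}$ on any ball $B_R(p) \cap \mathcal{R}$, with Lipschitz constant controlled by $\|s_i\|_{L^2(B_{2R})}$. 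Since on a non-collapsed Ricci limit the regular set $\mathcal{R}$ is open in $X_\infty$ and the limit distance $d_\infty$ agrees locally with the Riemannian distance of $g_\infty$, this Lipschitz bound carries over to $d_\infty$, and the function extends uniquely to a Lipschitz function on $K \subset X_\infty$.

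To identify the zero set, I would use the local product formula $|s_i|_{h^k_\infty}^2 = e^{-k\varphi_0}\, |s_i|_{h_0^k}^2$ valid on $\mathcal{R}$, where $h_0$ is the fixed smooth background Hermitian metric on $L$ chosen in Section~\ref{sec: metricGeometry}. Since $\varphi_0$ is uniformly bounded by Corollary~\ref{cor: C0-bdd}, the factor $e^{-k\varphi_0}$ is bounded above and below by positive constants, while $|s_i|_{h_0^k}$ is continuous on all of $X_0$. For any sequence $y_n \in \mathcal{R}$ with $y_n \to x_\infty$, continuity of $\pi_\infty$ gives $|s_i|_{h_0^k}(\pi_\infty(y_n)) \to |s_i|_{h_0^k}(\pi_\infty(x_\infty))$, so the extended value of $|s_i|_{h^k_\infty}$ at $x_\infty$ vanishes if and only if $s_i(\pi_\infty(x_\infty)) = 0$, i.e.\ $x_\infty \in \pi_\infty^{-1}(\{s_i = 0\})$.

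For the ratio $s_0/s_1$, on any relatively compact $K' \subset\subset \{|s_1|_{h^k_\infty} > 0\}$ we have $|s_1|_{h^k_\infty} \geq \epsilon$ for some $\epsilon>0$. The quotient $s_0/s_1$ is then a well-defined scalar holomorphic function on $K' \cap \mathcal{R}$ (the $L^k$-weight cancels), and the quotient rule combined with the $C^0$ and gradient bounds from Proposition~\ref{grad-bdd-for-section} yields, in a local frame,
\[
|d(s_0/s_1)|_{g_\infty} \;\leq\; \frac{|\nabla s_0|_{h^k_\infty, g_\infty}}{|s_1|_{h^k_\infty}} + \frac{|s_0|_{h^k_\infty}\,|\nabla s_1|_{h^k_\infty, g_\infty}}{|s_1|_{h^k_\infty}^{\,2}} \;\leq\; \frac{C}{\epsilon} + \frac{C^2}{\epsilon^2}
\]
on $K' \cap \mathcal{R}$. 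Extending by density, as in the first part, gives the local Lipschitz property of $s_0/s_1$ on the open set $\{|s_1|_{h^k_\infty} > 0\}$.

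The main technical point that requires some care is the passage from gradient control on $\mathcal{R}$ with respect to the Riemannian metric $g_\infty$ to Lipschitz estimates with respect to the Gromov--Hausdorff limit distance $d_\infty$. This rests on the standard fact from Cheeger--Colding theory that on a non-collapsed Ricci limit, geodesic balls centered at regular points inherit the smooth Riemannian distance from $g_\infty$, so any function that is locally Lipschitz on $\mathcal{R}$ in the Riemannian sense is automatically locally Lipschitz with respect to $d_\infty$ and hence extends uniquely to the closure. Once this identification is in hand, the remainder of the argument is a routine application of the $L^2$-estimates and of the uniform $C^0$-bound on $\varphi_0$ already established.
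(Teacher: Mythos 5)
Your proposal is correct and follows essentially the same route as the paper: Kato's inequality together with the uniform gradient bound of Proposition~\ref{grad-bdd-for-section} gives the Lipschitz estimate on the regular set, and the identity $K=\overline{(K\cap X_0^{reg},g_\infty)}$ allows the extension to the completion; the quotient-rule computation for $s_0/s_1$ is likewise identical. Your additional remarks on identifying the zero set via $|s_i|^2_{h^k_\infty}=e^{-k\varphi_0}|s_i|^2_{h_0^k}$ with $\varphi_0$ bounded, and on the compatibility of $d_\infty$ with the Riemannian distance on $\mathcal{R}$, are correct elaborations of points the paper leaves implicit.
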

\begin{proof}This follows immediately from Kato's inequality
	\[
		|\nabla|s|_{h^k_{\infty}}|_{g_{\infty}}\leq |\nabla s|_{h^k_{\infty}, kg_{\infty}}\leq C
	\]
	\[
		\left|\nabla\frac{s_0}{s_1}\right|_{g_{\infty}} \leq \frac{|s_1|_{h^k_{\infty}}|\nabla s_0|_{h^k_{\infty}, kg_{\infty}}+|s_0|_{h^k_{\infty}}|\nabla s_1|_{h^k_{\infty}, kg_{\infty}}}{|s_1|_{h^k_{\infty}}^2}\leq \frac{C}{|s_1|^2_{h^k_{\infty}}}
	\]
	and the fact that $K = \overline{(K\cap X_0^{reg}, g_{\infty})}$. 
\end{proof}
In this section we prove that the map $\pi_{\infty}:X_{\infty}\to X_0$ is injective, hence it is an isomorphism. 
\begin{Proposition}
	For any $p, q\in X_{\infty}$ with $p\neq q$ there exist an $k = k(p, q)>0$ and $s_p, s_q\in H^0(L^k)$ such that
	\begin{equation*}
		|s_p(p)|_{h^{k}_{\infty}}, |s_q(q)|_{h^{k}_{\infty}} \geq \frac{2}{5}
	\end{equation*}
	and 
	\begin{equation*}
		|s_p(q)|_{h^{k}_{\infty}}, |s_q(p)|_{h^{k}_{\infty}} \leq \frac{1}{3}
	\end{equation*}
\end{Proposition}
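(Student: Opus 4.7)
The plan is to construct peak sections of $L^k$ by the H\"ormander $L^2$-technique of Donaldson-Sun \cite{DS14} and Song \cite{JS14}, adapted to the asymptotically conical setting; by symmetry we only describe the construction of $s_p$. Since $X_{\infty} = \overline{(X_0^{reg}, g_{\infty})}$ by Proposition~\ref{prop: GH-limit-metric-completion-homeo}, we may first choose a regular point $x_0 \in \iota_{\infty}(X_0^{reg})$ with $d_{g_{\infty}}(x_0, p) \leq c_0 k^{-1/2}$, for a small $c_0>0$ and large integer $k$ to be fixed later. Pick local holomorphic coordinates $(z^1,\ldots,z^n)$ centered at $x_0$ and a trivialization $e$ of $L$ adapted so that the local potential $\phi$ determined by $|e|^2_{h_\infty} = e^{-\phi}$ satisfies $\phi(0) = 0$, $d\phi(0) = 0$, and $\phi(z) = |z|^2 + O(|z|^3)$. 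Define the model peak section
\begin{equation*}
\sigma = \chi\!\left(\tfrac{\sqrt{k}\,|z|}{R}\right) e^{\otimes k},
\end{equation*}
extended by zero outside the coordinate patch, where $\chi$ is a standard cutoff with $\chi \equiv 1$ on $[0,1]$ and $\chi \equiv 0$ on $[2,\infty)$, and $R>0$ is a large parameter to be fixed. Since $\phi \approx |z|^2$ on the annulus $|z|\sim R/\sqrt{k}$, a direct computation yields
\begin{equation*}
\|\bar\partial \sigma\|^2_{L^2_{h_\infty^k,\,k\omega_{\varphi_0}}} \leq C R^{2n-2} e^{-cR^2} k^{-n}.
\end{equation*}

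Next, we apply Theorem~\ref{L2 est on X_0} on a large compact $K \subset X$ with pseudoconvex boundary containing both $p$ and $q$ (constructed from sublevel sets of a smooth strictly plurisubharmonic exhaustion built out of the ample embedding $X_0 \hookrightarrow \mathbb{P}^N$ given by sections of $L^k$) to solve $\bar\partial u = \bar\partial \sigma$ with
\begin{equation*}
\|u\|^{2}_{L^{2}_{h_\infty^{k},\,\omega_{\varphi_0}}} \leq C R^{2n-2} e^{-cR^2} k^{-n}.
\end{equation*}
Set $s_p := \sigma - u$; since $u$ extends across the codimension-$\geq 2$ singular locus of $X_0$ by Hartogs and descends to $X_0$ by normality, $s_p \in H^0(X_0, L^k)$. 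The crucial observation is that $u$ is holomorphic on the complement of $\mathrm{supp}(\bar\partial \sigma)$, so by the elliptic/mean-value estimate from Proposition~\ref{grad-bdd-for-section}, suitably rescaled to balls of unit radius in $kg_\infty$ (equivalently radius $\sim 1/\sqrt{k}$ in $g_\infty$), we obtain the pointwise bounds
\begin{equation*}
|u(x_0)|_{h_\infty^k} \leq C R^{-1} e^{-cR^2/2}, \qquad |u(q)|_{h_\infty^k} \leq C R^{n-1} e^{-cR^2/2},
\end{equation*}
the latter provided $d_{g_\infty}(q, x_0) > 3R/\sqrt{k}$. Since $|\sigma(x_0)|_{h_\infty^k} = 1$ and $\sigma(q) = 0$, this yields $|s_p(x_0)|_{h_\infty^k} \geq 1 - CR^{-1}e^{-cR^2/2}$ and $|s_p(q)|_{h_\infty^k} \leq CR^{n-1}e^{-cR^2/2}$.

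To transfer the lower bound at $x_0$ to $p$, we use the gradient estimate of Proposition~\ref{grad-bdd-for-section} once more: since $\|s_p\|_{L^2_{h_\infty^k,\,kg_\infty}} = O(1)$, the norm $|\nabla s_p|_{h_\infty^k, kg_\infty}$ is uniformly bounded, so by Kato's inequality $|s_p|_{h_\infty^k}$ is Lipschitz with constant $O(\sqrt{k})$ with respect to $g_\infty$. Hence
\begin{equation*}
|s_p(p)|_{h_\infty^k} \geq |s_p(x_0)|_{h_\infty^k} - C\sqrt{k}\, d_{g_\infty}(p, x_0) \geq 1 - CR^{-1}e^{-cR^2/2} - C c_0.
\end{equation*}
Choosing the parameters in order, first $R$ large so that $CR^{n-1}e^{-cR^2/2} < 1/3$ and $CR^{-1}e^{-cR^2/2} < 1/10$, then $k$ large so that $3R/\sqrt{k} < d_{g_\infty}(p, q)/2$, then $c_0$ small so that $C c_0 < 1/10$, and finally $x_0$ with $d_{g_\infty}(p, x_0) \leq c_0/\sqrt{k}$, we obtain $|s_p(p)|_{h_\infty^k} \geq 2/5$ and $|s_p(q)|_{h_\infty^k} \leq 1/3$. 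The section $s_q$ is constructed in the same way with the roles of $p$ and $q$ interchanged.

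The main obstacle will be producing the pseudoconvex compact $K \subset X$ containing $p$ and $q$ on which Theorem~\ref{L2 est on X_0} can be applied; this uses ampleness of $L$ through its projective embedding, together with the fact that $\hat\omega_0$ coincides with $\frac{1}{k}\pi^{\star}\omega_{FS}$ on a large compact set (from the construction in Proposition~\ref{bkground-metric 1}), to build a smooth strictly plurisubharmonic exhaustion whose smooth sublevel sets furnish the required pseudoconvex compacts. A secondary delicate point is the $\sqrt{k}$-Lipschitz bound on $|s_p|_{h_\infty^k}$ in $g_\infty$, which forces $x_0$ to be chosen within $g_\infty$-distance $O(k^{-1/2})$ of $p$; this is possible precisely because $\iota_{\infty}(X_0^{reg})$ is dense in $X_\infty$.
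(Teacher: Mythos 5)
Your overall strategy (H\"ormander peak sections, corrected via Theorem~\ref{L2 est on X_0} and controlled pointwise via Proposition~\ref{grad-bdd-for-section}) is the right one, and it is in fact what the paper does: its proof is a one-line reduction to \cite[Proposition 3.9]{JS14}, which is exactly this Donaldson--Sun type construction. However, your implementation has a genuine gap in the only case that matters. The statement is needed (in Proposition~\ref{prop: homeomorphism}) to separate points of $\pi_{\infty}^{-1}(X_0^{sing})$, i.e.\ points $p$ lying in the metric singular set $\mathcal{S}\subset X_{\infty}$, since on $\mathcal{R}=\iota_\infty(X_0^{reg})$ the map is already injective. For such $p$ your construction breaks down at the first step: you need holomorphic coordinates and a trivialization of $L$ on the ball $\{|z|\le 2R/\sqrt{k}\}$ around $x_0$ with $\phi=|z|^2+O(|z|^3)$, but since $d_{g_\infty}(x_0,p)\le c_0 k^{-1/2}$ with $c_0\ll R$, that ball contains the singular point $p$ itself, where no such chart exists. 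You cannot escape by moving $x_0$ farther away, because your transfer step $|s_p(p)|\ge |s_p(x_0)|-C\sqrt{k}\,d_{g_\infty}(p,x_0)$ forces $d_{g_\infty}(p,x_0)=O(k^{-1/2})$ with small constant. Relatedly, Theorem~\ref{L2 est on X_0} requires $\bar\partial\sigma$ compactly supported in $X_0^{reg}$, and $\sigma$ itself must be a genuine smooth section on a neighborhood of $p$.

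The standard resolution, which is what \cite{JS14} and \cite{DS14} actually do and what the commented-out sketch in the paper indicates, is to work on the tangent cone at $p$: one chooses $k$ from a sequence of scales $k_i\to\infty$ along which $(X_\infty,\sqrt{k_i}\,d_\infty,p)$ converges to a metric cone $C(Y)$, constructs the approximate peak section on a conical region $C_{r,R}(K)$ of the \emph{regular part} of the cone (avoiding the vertex and the singular rays), and cuts off near the singular set using the good cutoff functions $\rho_\epsilon$ with $\int|\nabla\rho_\epsilon|^2\to 0$, so that the extra contribution to $\|\bar\partial\sigma\|_{L^2}$ is negligible. This is also why the proposition only asserts the existence of \emph{some} $k=k(p,q)$ rather than all large $k$. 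Your argument as written proves the statement only when both $p$ and $q$ are regular points of $X_\infty$, with constants depending on their harmonic radii; to complete it you must replace the single-chart Gaussian construction by the tangent-cone construction (``property (H)'') of Donaldson--Sun.
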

\begin{proof}
	This follows from the same argument as Proposition 3.9 in \cite{JS14}.

\end{proof}
\begin{Proposition}\label{prop: homeomorphism}
	The map $\pi_{\infty}: X_{\infty}\to X_0$ is an homeomorphism. 
\end{Proposition}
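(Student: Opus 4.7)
The plan is to proceed in two stages: first to upgrade the known bijection $\pi_\infty:\mathcal{R}\to X_0^{reg}$ to a global injection, and then to upgrade the resulting continuous bijection to a homeomorphism. For injectivity, I would combine the preceding separation-of-points proposition with an ambient projective embedding of $X_0$. Since $\pi_\infty$ is already bijective $\mathcal{R}\to X_0^{reg}$, the only case to rule out is distinct points $p\ne q$ of $X_\infty$ (at least one lying in $\mathcal{S}$) with $\pi_\infty(p)=\pi_\infty(q)\in X_0^{sing}$. Fix $k$ large enough that $L^k$ is very ample on $X_0$, giving an embedding $\iota:X_0\hookrightarrow \mathbb{P}^N$ via a basis $s_0,\dots,s_N\in H^0(X_0,L^k)$, and (passing to a multiple of $k$ if necessary) enlarge the basis so that it contains the separating sections $s_p,s_q$ supplied by the previous proposition. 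By the preceding Corollary each $|s_i|_{h_\infty^k}$ extends as a Lipschitz function to $X_\infty$, vanishing precisely on $\pi_\infty^{-1}(\{s_i=0\})$, and the ratios $s_i/s_j$ extend locally Lipschitz wherever the denominator is nonzero. Since $L^k$ is base point free on $X_0$, at every $x\in X_\infty$ some $|s_i|_{h_\infty^k}(x)$ is nonzero, so the formula $\Psi(x)=[s_0(x):\cdots:s_N(x)]$ defines a continuous map $\Psi:X_\infty\to \mathbb{P}^N$.

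On the dense subset $\mathcal{R}=\iota_\infty(X_0^{reg})$ we have $\Psi=\iota\circ \pi_\infty$ tautologically, and by continuity this identity propagates to all of $X_\infty$. But the separation property of $s_p,s_q$ forces $\Psi(p)\ne \Psi(q)$, and since $\iota$ is injective we conclude $\pi_\infty(p)\ne \pi_\infty(q)$, a contradiction. Hence $\pi_\infty$ is a continuous bijection.

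To upgrade to a homeomorphism it suffices to show $\pi_\infty$ is proper, for then it is automatically closed between locally compact Hausdorff spaces and its inverse is continuous. Both $X_0$ and $X_\infty$ are asymptotically conical with the same cone at infinity: on the $X_\infty$ side this follows since $\omega_{\varphi_t}\to \omega_C$ uniformly outside a large compact set by Proposition~\ref{high-decay}, and this control passes to $g_\infty$; on the $X_0$ side it follows from compactly supported singularities together with the identification with $X\setminus V$ at infinity. Thus outside a compact neighborhood of the singular locus $\pi_\infty$ is (essentially) an isometry and properness at infinity is automatic. For the compact part, the diameter bound Lemma~\ref{diam-bdd}, applied with $V=\pi^{-1}(X_0^{sing})$, shows that $\pi_\infty^{-1}$ of any bounded neighborhood of $X_0^{sing}$ is bounded in $X_\infty$, hence precompact by Bishop--Gromov.

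The main obstacle is the injectivity argument, specifically verifying that $\Psi$ extends continuously to all of $X_\infty$ (including across $\mathcal{S}$) and coincides with $\iota\circ \pi_\infty$ there. This ultimately rests on the Lipschitz extension of the section norms in the preceding Corollary, which in turn hinges on the uniform gradient bound for holomorphic sections in Proposition~\ref{grad-bdd-for-section} and the gradient estimate for $\varphi_0$ in the Calabi--Yau metric from Proposition~\ref{phi-grad-bdd}. Once $\Psi$ is correctly identified as a continuous map factoring through $\iota\circ\pi_\infty$, injectivity is immediate, and the properness in the second stage is comparatively routine given the asymptotically conical structure and the diameter bound.
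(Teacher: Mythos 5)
Your argument is correct and follows essentially the same route as the paper: injectivity via the separating sections of the preceding proposition together with their Lipschitz extensions (the paper works directly with the ratio $s_p/s_q$, extended across $X_0^{sing}$ using normality of $X_0$, rather than packaging the sections into a projective embedding $\Psi$), and then properness/closedness to upgrade the continuous bijection to a homeomorphism. The one point to be careful about is that the separating sections are a priori only defined on $K\cap X_0^{reg}$, so your step ``enlarge the basis to contain $s_p,s_q$'' should be replaced by the normality argument that extends them (or their ratio) over the singular set, exactly as the paper does.
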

\begin{proof}
	It's clear that the map is surjective and restricts to a homeomorphism on $X_0^{reg}\subset X_{\infty}$, it suffices to show that is seperates points near $X_0^{sing}$. Given $p, q\in K$, suppose for a contradiction that $\pi_{\infty}(p)=\pi_{\infty}(q)$, then for any $k>0$, and any two sections $s_0, s_1\in H^0(K\cap X_0^{reg}, L^k)$, by the normality of $X_0$, we know that these two sections extend over the singular set to two sections of $s_0', s_1'\in H^0(\pi_{\infty}(K), L^k)$, hence we must have $\frac{s_0(p)}{s_1(p)} = \frac{s_0(q)}{s_1(q)}$. But if $d_{X_{\infty}}(p, q)>0$, then by the previous lemma, there exist $k>0$ and we can construct sections $s_p, s_q\in H^0(K\cap X_0^{reg}, L^k)$ such that $|s_p|_{h^k_{\infty}}(p), |s_q|_{h^k_{\infty}}(q) \geq\frac{2}{5}>\frac{1}{3}\geq|s_p|_{h^k_{\infty}}(q), |s_q|_{h^k_{\infty}}(p) $ which contradicts $\frac{s_p(p)}{s_q(p)} = \frac{s_p(q)}{s_q(q)}$. 
	
Observe that the singular set $\mathcal S\subset X_\infty$ is closed and of finite diameter, from which we can see that $\pi_\infty:X_\infty \to X_0$ is a proper map, hence closed, and this implies $\pi_\infty^{-1}$ is also continuous. Thus $\pi_\infty$ is a homeomorphism. 
\end{proof}

\begin{proof}[proof of Theorem \ref{Theorem2}]
	This is just a combination of Proposition \ref{Schwartz Lemma}, Proposition~\ref{prop: GH-limit-metric-completion-homeo} and Proposition~\ref{prop: homeomorphism}. 
\end{proof}

\section{Examples and Applications}\label{sec: Examples}
In this section we apply Theorems~\ref{Main Theorem} and Theorem~\ref{Theorem2} to study certain explicit examples of crepant resolutions.
\subsection{Small Resolutions of Brieskorn-Pham cones}\label{subsec: BP}

Consider the quasi-homogeneous affine varieties
\[
Y_{p,q} = \{xy +z^p-w^q=0\} \subset \mathbb{C}^4,
\]
where we assume that $p\leq q$.  These singularities, which are compound du Val of type $cA_p$ are Gorenstein and log-terminal and by the main result of \cite{CS}, $Y_{p,q}$ admits a conical Calabi-Yau metric if and only if $q<2p$.  Let $r$ denote the radial function of the Calabi-Yau cone metric. The Euler vector field $r \frac{\d}{\d r}$ associated with the cone structure is given by the real part of the holomorphic vector Reeb field $\xi$ acting on the coordinates $(x,y,z,w)$ with weights
\[
\frac{3}{2(p+q)} (pq, pq, 2q,2p);
\]
in particular, the $Y_{p,q}$ are quasi-regular Calabi-Yau cones.   A result of Katz \cite{Katz} says that the $Y_{p,q}$ admits a small (and hence crepant) resolution $\mu: Y\rightarrow Y_{p,p}$ if and only if $p=q$.  In fact, $Y_{p,p}$ admits $p$ inequivalent small resolutions
 \[
\begin{tikzcd}
Y^{1} \arrow[rrd, "\mu_{1}" ] & Y^{2} \arrow[rd, "\mu_2"]& \cdots & Y^{p-1} \arrow[ld, "\mu_{p-1}" '] & \arrow[lld, "\mu_p" '] Y^p\\
\, & \,& Y_{p,p}&\, &\,
\end{tikzcd}
\]
with each pair $Y^i, Y^j$ related by a flop; the $\frac{p(p-1)}{2}$ flops are in correspondence with the reflections in the Weyl group of the $A_{p-1}$ Dynkin diagram \cite{Matsuki2}.  When $p=2$, this recovers the Atiyah flop \cite{Atiyah}.  The exceptional locus of each contraction $\mu_{j}$ is a chain of $p-1$ rational curves with normal bundle $(-1,-1)$ intersecting transversally. Explicitly, let $\zeta = e^{\frac{2\pi\sqrt{-1}}{p}}$, and write
\[
z^p-w^p = \prod_{j=0}^{p-1}(z-\zeta^jw). 
\]
Fix $1\leq \ell \leq p-1$ and consider the rational map $\nu_{\ell}: Y_{p,p}\rightarrow \mathbb{P}^1$ defined by
\begin{equation}\label{eq: KatzRes}
\nu_{\ell}(x,y,z,w)= ([x: \prod_{j=0}^{\ell}(z-\zeta^jw)]) \in \mathbb{P}^1.
\end{equation}
Then a small resolution $ \mu: Y \rightarrow Y_{p,p}$ (say $Y^1$ for concreteness) is obtained by taking the closure of the graph of
\[
\nu_1\times \cdots\times \nu_{p-1} : Y_{p,p}\rightarrow \mathbb{P}_{(1)}^1 \times\cdots \times  \mathbb{P}_{(p-1)}^1.
\]
There are also corresponding partial resolutions $\overline{Y}$ by projecting out some collection of the $\nu_{j}$. Fix $1\leq i \leq p$, and let $\overline{Y}$ be any partial resolution whose contraction $\bar{\pi}: \overline{Y}\rightarrow Y_{p,p}$ factors through $\nu_i$.   Clearly these resolutions are obtained by repeatedly blowing-up along the lines $x= z-\zeta^jw=0$.   There is a divisor $E_i$ defined by $-E_i = \nu_i^{-1}(p)$ for a generic point $p\in \mathbb{P}^1$, and these divisors satisfy $\mathcal{O}_{\overline{Y}}(-E_i)\big|_{{\rm Exc}(\nu_i)} = \mathcal{O}_{\mathbb{P}^1}(1)$, and $\mathcal{O}_{\overline{Y}}(E_i)$ is trivial on any other component of ${\rm Exc}(\bar{\pi})$.   Furthermore, if $\overline{Y}$ is obtained from $\nu_{i_1}\times\cdots \times \nu_{i_k}$ then $\bigotimes_{j=1}^k \mathcal{O}_{\overline{Y}}(-E_{i_j})^{\otimes \ell_j}$ is ample for any $\ell_{j} \in \mathbb{Z}_{>0}$.  These statements follows straightforwardly from the corresponding statements for the blow-ups of the ambient $\mathbb{C}^4$.

Let us fix a small resolution $\mu:Y \rightarrow Y_{p,p}$.  By Hartog's theorem the holomorphic Reeb vector field extends over ${\rm Exc}(\mu)$ and generates a holomorphic retraction onto ${\rm Exc}(\mu)$.  Thus we have
\[
H^{1,1}(Y,\mathbb{R}) = \bigoplus_{i=1}^{p-1}H^{1,1}(\mathbb{P}_{(i)}^1, \mathbb{R}) = \bigoplus_{i=1}^{p-1}\mathbb{R}\cdot [E_i]
\]
By the above discussion, the classes $\sum_{i=1}^{p-1}(-t_i)[E_i]$ are K\"ahler on $Y$, provided $t_i>0$ for all $i$, and semi-positive for $t_i \geq 0$.  Each of these cohomology classes is $2$-almost compactly supported.  Fix a class $[\alpha_0] =\sum_{i=1}^{p-1}(-t_i)[E_i]$ where $t_i\geq 0$, and at least one $t_j=0$ and let $[\omega] \in H^{1,1}(Y,\mathbb{R})$ be any K\"ahler class.  Let $[\omega_t]= (1-t)[\alpha_0] + t[\omega]$ be a linear family of K\"ahler classes. Then by \cite{Go} (see also \cite{CH13}) there is an asymptotically conical Calabi-Yau metric $\omega_{t,CY}$ in $[\omega_t]$ for all $t>0$.

Since the cone at infinity is quasi-regular we can apply Lemma~\ref{lem: DP} to conclude that there is a K\"ahler current in $[\alpha_0]$ which is smooth on the complement of
\[
V:= \left\{ \mathbb{P}^1_{j} \subset Y: \int_{\mathbb{P}^1_{(j)}} \alpha_0 =0\right\}
\]
Let $\overline{Y}$ be the partial resolution obtained by contracting $V$, and let $\hat{\pi}: Y \rightarrow \overline{Y}$ be the contraction map.  If $[\alpha_0] \in H^{1,1}(Y,\mathbb{Q})$ then, by the preceding discussion,  after rescaling we can assume that $[\alpha_0] = \pi^*c_1(L)$ for some ample line bundle $L\rightarrow \overline{Y}$.  Applying Theorem~\ref{Main Theorem} and Theorem~\ref{Theorem2} we obtain

\begin{Proposition}
In the above situation we have
\begin{enumerate}
\item $\overline{Y}_{reg}$ admits a smooth Ricci-flat metric $\bar{\omega}$, asymptotic to the Calabi-Yau metric on $Y_{p,p}$ at infinity, and with $\overline{(\overline{Y}_{reg}, \bar{\omega})}$ homeomorphic to $\overline{Y}$.
\item As $t\rightarrow 0$ $(Y,\omega_{t, CY})$ converges in the Gromov-Hausdorff sense to $\overline{(\overline{Y}_{reg}, \bar{\omega})}$.
\item In particular, if we take $[\alpha_0]=0$, the flops of the $Y_{p,p}$ are continuous in the Gromov-Hausdorff sense.
\end{enumerate}
\end{Proposition}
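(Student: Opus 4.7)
The plan is to verify that the hypotheses of Theorem \ref{Main Theorem} and Theorem \ref{Theorem2} are met in this setting, and then read off the three conclusions in turn. First I would note that by the discussion preceding the proposition, $[\alpha_0] = \sum_{i=1}^{p-1}(-t_i)[E_i]$ is semi-positive and $2$-almost compactly supported (the underlying resolution $Y$ is asymptotically conical, and by \cite[Proposition 2.5]{CH13} all K\"ahler classes are $2$-almost compactly supported), and the Calabi-Yau cone $Y_{p,p}$ is quasi-regular since its Reeb field integrates to a $\mathbb{C}^*$-action with the explicit rational weights given above. Thus Lemma \ref{lem: DP} applies, producing a function $\psi$ with $\alpha_0 + \sqrt{-1}\d\db\psi \geq \epsilon\omega$ for some asymptotically conical K\"ahler form $\omega$, smooth on $Y\setminus V$ and with $\{\psi=-\infty\} = V$, where
\[
V = {\rm Null}([\alpha_0]) = \bigcup_{j:\, \int_{\mathbb{P}^1_{(j)}}\alpha_0=0} \mathbb{P}^1_{(j)}.
\]
Hence Assumption \ref{ass: mainAss} is satisfied for $[\alpha_0]$.

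Next I would identify the partial resolution $\overline{Y}$ with a crepant resolution of a singular Calabi-Yau variety with compactly supported, crepant singularities. Since $V$ is the union of those components of the exceptional locus on which $\alpha_0$ is trivial, the contraction $\hat{\pi}:Y\to\overline{Y}$ which crushes precisely these $\mathbb{P}^1$'s exists (by the preceding description of partial resolutions via the maps $\nu_i$), is crepant, and $\overline{Y}$ is normal, Gorenstein, log-terminal with an isomorphism to $Y$ (hence to $Y_{p,p}^{\,reg}$) outside a compact set. Under the rationality assumption $[\alpha_0]\in H^{1,1}(Y,\mathbb{Q})$, after rescaling we have $[\alpha_0]=\hat{\pi}^{\,*}c_1(L)$ with $L\to \overline{Y}$ ample in the sense of Theorem \ref{Theorem2}. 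Applying Theorem \ref{Main Theorem} then yields an asymptotically conical Calabi-Yau metric $\omega_{0,CY}$ on $Y\setminus V$ obtained as the smooth limit of $\omega_{t,CY}$, and Theorem \ref{Theorem2} promotes this to a K\"ahler current on $\overline{Y}$ that is smooth on $\overline{Y}_{reg}$, whose metric completion $\overline{(\overline{Y}_{reg},\bar\omega)}$ is homeomorphic to $\overline{Y}$, and such that $(Y,\omega_{t,CY})$ Gromov-Hausdorff converges to this completion as $t\to 0$. The asymptotic conicality at infinity and the convergence rate to the Calabi-Yau cone metric on $Y_{p,p}$ follow from part (2) of Theorem \ref{Main Theorem}. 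This gives (1) and (2).

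For (3), I would specialize to $[\alpha_0]=0$. Then $V$ is the entire exceptional locus of $\mu_i:Y^i\to Y_{p,p}$, the partial resolution $\overline{Y}$ coincides with $Y_{p,p}$, the map $\hat{\pi}$ is $\mu_i$ itself, and the uniqueness clause in Theorem \ref{Main Theorem} (3) forces $\bar\omega$ to agree with the conical Calabi-Yau metric $\omega_0$ on $Y_{p,p}^{\,reg}$ constructed in \cite{CS}. Since this identification of the limit is independent of the choice of small resolution $Y^i$, applying the above to two small resolutions $Y^i$ and $Y^j$ simultaneously yields the diagram
\[
\begin{tikzcd}
(Y^{i}, \omega_{t,CY}) \arrow[r, "GH"] &(Y_{p,p}, \omega_{0})& \arrow[l,"GH" '] (Y^{j}, \omega_{t,CY})
\end{tikzcd}
\]
proving the continuity of the flop in the Gromov-Hausdorff topology.

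The only non-routine step is the verification of Assumption \ref{ass: mainAss}, i.e.\ the identification of $V$ with the null locus and the construction of the K\"ahler current with the correct polar set. This is where the quasi-regularity of the $Y_{p,p}$ cone is crucial, so that Lemma \ref{lem: DP} applies; everything else is a direct invocation of Theorems \ref{Main Theorem} and \ref{Theorem2} once the line bundle $L\to\overline{Y}$ on the partial resolution is produced. I expect no further obstacle beyond bookkeeping.
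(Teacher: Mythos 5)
Your proposal is correct and follows essentially the same route as the paper: the verification of Assumption~\ref{ass: mainAss} via Lemma~\ref{lem: DP} (using quasi-regularity of the cone) and the identification of $\overline{Y}$ with its ample line bundle are exactly the setup the paper carries out before the proposition, parts (1) and (2) are then read off from Theorems~\ref{Main Theorem} and~\ref{Theorem2}, and part (3) is deduced from the uniqueness clause of Theorem~\ref{Main Theorem}, which is the paper's primary argument as well. The only difference is that the paper additionally records an alternative proof of (3) by pulling back $t\omega_{1,CY}$ under the flow of the extended Reeb field and invoking the uniqueness of \cite{CH13}, which your argument does not need.
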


\begin{proof}
The only point which is not an immediate consequence of Theorems~\ref{Main Theorem} and~\ref{Theorem2} is the third point.  However, by the uniqueness part of Theorem~\ref{Main Theorem}, the limiting limiting Calabi-Yau metric $\bar{\omega}$ on $Y_{p,p}$ is isometric to the conical Calabi-Yau metric from \cite{CS}.  Alternatively, this can be seen as follows.  Let $\omega_{c}$ denote the Calabi-Yau metric on $Y_{p,p}$.  Clearly $t\omega_{1,CY}$ is a Calabi-Yau metric in $t[\omega]$ asymptotic to $t\omega_{c}$.  Let $\hat{\xi}$ denote the extension of the holomorphic Reeb vector field on $Y$, and, for $\lambda \in \mathbb{C}$ let $\phi_{\lambda}:Y\rightarrow Y$ denote the $\lambda$-flow of $\hat{\xi}$.  Then
\[
\left(\phi_{\frac{1}{\sqrt{t}}}\right)^*t\omega_{1,CY}
\]
is Calabi-Yau, asymptotic to $\omega_c$, and lies in the cohomology class $t[\omega]$ and hence is equal to $\omega_{t,CY}$ by the uniqueness results of \cite{CH13}.  From this description, and the convergence result of Theorem~\ref{Main Theorem} it follows that $\omega_{t,CY}$ converges to $\mu_{i}^*\omega_{c}$ on compact sets of $Y\backslash {\rm Exc}(\mu_i)$.  
\end{proof}

It's not hard to check that if a partial resolution $\overline{Y}$ is obtained by blowing up $0<k<p-1$ lines $x= z-\zeta^jw=0$, then $\overline{Y}$ has an isolated singularity biholomorphic to a neighborhood of the singular point in $Y_{p-k, p-k}$.  More precisely, suppose for simplicity that $\overline{Y}$ is obtained by blowing-up the lines $x= z-\zeta^jw=0$ for $0\leq j\leq k <p-1$.  Then $\overline{Y}$ has an isolated singularity biholomorphic to
\[
\widetilde{Y}_{p-k,p-k}:=\{ xy= \prod_{j=k}^{p-1}(z-\zeta^jw)\} \subset \mathbb{C}^4
\]
which is deformation equivalent to $Y_{p-k,p-k}$ and admits a conical Calabi-Yau metric by argument of \cite{CS}.  The link of this singularity is topologically $(p-k-1)\#(S^2\times S^3)$ and it comes equipped with a Sasaki-Einstein metric.  It was shown in \cite{CS} that the volume of these Sasaki-Einstein metrics is given by
\[
\frac{2 (2(p-k))^3}{27 (p-k)^4} = \frac{16}{27(p-k)}
\]

   Thus $\overline{Y}$ yields a cobordism between $(p-k-1)\#(S^2\times S^3)$ and $\#(p-1)(S^{2}\times S^3)$.  It is natural to expect that that the metric $\bar{\omega}$ on $\overline{Y}$, close to the singular point, is close to the conical Calabi-Yau metric on $\tilde{Y}_{p-k,p-k}$.  At the very least, we expect

\begin{Conjecture}\label{conj: Ypk}
Let $(\overline{Y},d)$ denote the metric space obtained as the completion of $(\overline{Y}_{reg}, \bar{\omega})$.  Then the tangent cone to $(\overline{Y},d)$ at the singular point is isometric to $\widetilde{Y}_{p-k,p-k}$ equipped with its conical Calabi-Yau metric.
\end{Conjecture}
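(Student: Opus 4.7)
The plan is to deduce Conjecture \ref{conj: Ypk} from an adaptation of the Donaldson-Sun theory \cite{DS14} to the asymptotically conical setting, combined with the uniqueness of Calabi-Yau cone metrics on $\widetilde{Y}_{p-k,p-k}$. First I would check that the metric space $(\overline{Y},d)$ near the singular point $p_0$ falls within the regularity framework of Cheeger-Colding-Tian. By Theorem \ref{Theorem2}, $(\overline{Y},d)$ is a pointed Gromov-Hausdorff limit of the smooth asymptotically conical Calabi-Yau manifolds $(Y, \omega_{t,CY})$; Bishop-Gromov together with the uniform non-collapsing at infinity coming from the fixed Calabi-Yau cone structure on $Y_{p,p}$ gives a uniform volume lower bound on unit balls, so the limit is non-collapsed and every tangent cone at $p_0$ is a metric cone $C(Z)$ whose regular part is a smooth Ricci-flat K\"ahler manifold of dimension three.

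Next, I would argue that this tangent cone is in fact algebraic and canonically determined by the analytic germ of $\overline{Y}$ at $p_0$. The scaling analysis of \cite{DS14} takes place in a neighborhood of $p_0$, so only the interior behavior of $\omega_{t,CY}$ matters and the asymptotically conical geometry at spatial infinity is irrelevant. The two main ingredients in \cite{DS14} are a H\"ormander $L^2$-estimate with singular weights, which in our setting is provided by Theorem \ref{L2 est on X_0}, and a mean value / gradient estimate for holomorphic sections, which is provided by Proposition \ref{grad-bdd-for-section}. Running the Donaldson-Sun two-step procedure on a scaling sequence $\lambda_i \to \infty$ converging to a fixed tangent cone $W$ identifies $W$, as a complex analytic space, with $\mathrm{Spec}$ of the graded ring built from germs of holomorphic functions at $p_0$ weighted by their order of vanishing with respect to the metric filtration. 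In particular the tangent cone is independent of the scaling sequence.

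For the identification with $\widetilde{Y}_{p-k,p-k}$ I would use the fact that the singularity of $\overline{Y}$ at $p_0$ is analytically isomorphic to the vertex singularity of the weighted homogeneous hypersurface $\widetilde{Y}_{p-k,p-k}$. Since $\widetilde{Y}_{p-k,p-k}$ is already an affine cone with respect to the Reeb $\mathbb{C}^*$-action arising from \cite{CS}, the graded ring produced by the Donaldson-Sun construction from the germ $(\overline{Y}, p_0)$ reproduces the coordinate ring of $\widetilde{Y}_{p-k,p-k}$ itself, so $W \cong \widetilde{Y}_{p-k,p-k}$ as affine varieties. The Reeb vector field on $W$ must then agree, up to conjugation, with the one minimizing volume, which by \cite{CS} and the volume minimization principle of Martelli-Sparks-Yau determines the Calabi-Yau cone metric uniquely up to isometry. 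Hence the metric tangent cone is isometric to the conical Calabi-Yau metric on $\widetilde{Y}_{p-k,p-k}$.

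The main obstacle is the second step: the Donaldson-Sun machinery in the form stated in \cite{DS14} applies to Gromov-Hausdorff limits of smooth K\"ahler-Einstein manifolds on compact varieties, whereas here we deal with a degenerating family in which the reference K\"ahler class $[\omega_t]$ is collapsing along the exceptional curves contracted by $\hat{\pi}$. One must therefore verify that the partial $C^0$-estimate and the separation of points persist in this degenerating setting. The cleanest route is probably to localise the entire argument to a small neighborhood of $p_0$ on which the singular Calabi-Yau current $\bar{\omega}$ dominates a smooth K\"ahler form (as in Proposition \ref{Schwartz Lemma}) and to combine the $L^2$-estimate of Theorem \ref{L2 est on X_0} with the peak-section construction of \cite{DS14}, exploiting that the Reeb symmetry of the model $\widetilde{Y}_{p-k,p-k}$ provides a natural family of nearly-holomorphic test sections centered at $p_0$ against which one can perturb.
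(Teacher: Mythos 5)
The statement you are addressing is stated in the paper only as Conjecture~\ref{conj: Ypk}; the authors explicitly leave it open, so there is no proof in the paper to compare against, and your proposal must be judged as an attempt to settle an open problem. As such it is a sensible program, but it contains genuine gaps and does not constitute a proof.

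The first gap is that the tangent-cone identification you invoke is not in \cite{DS14}. That paper establishes the partial $C^0$ estimate and the algebraicity and normality of the Gromov--Hausdorff limit; the ``two-step procedure'' identifying the metric tangent cone at a singular point with (a degeneration of) $\mathrm{Spec}$ of the graded ring of the metric vanishing-order filtration comes from Donaldson--Sun's later work, which is not cited in this paper, and whose adaptation to limits of asymptotically conical metrics with degenerating K\"ahler class is exactly the ``main obstacle'' you name but do not resolve. In particular, the scale-invariant partial $C^0$ estimate needed on the rescaled spaces $(\overline{Y},\lambda_i^2 d, p_0)$ is not supplied by Theorem~\ref{L2 est on X_0} or Proposition~\ref{grad-bdd-for-section}: those are estimates at a fixed scale for a fixed power $L^k$, whereas the peak-section argument requires uniform estimates with $k\sim\lambda_i^2\to\infty$, and constructing the approximate peak sections presupposes that the rescaled metrics are already close to a model cone, which is the very statement to be proved. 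The second gap is algebraic: even granting the two-step machinery, it yields only that the metric tangent cone is a possibly nontrivial degeneration of $W=\mathrm{Spec}$ of the associated graded ring, and identifying $W$ with $\widetilde{Y}_{p-k,p-k}$ requires showing that the metric filtration coincides with the Reeb weight filtration --- equivalently, that the normalized-volume-minimizing valuation of the germ is the Reeb valuation of the known Calabi--Yau cone structure and that no further degeneration occurs. This follows from the later stable-degeneration and uniqueness results for K-polystable cones, but it does not follow from \cite{DS14} together with \cite{CS}, and the volume-minimization principle of Martelli--Sparks--Yau, which operates within a fixed Reeb cone on a fixed variety, does not by itself rule out jumping of the complex structure. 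Your outline correctly identifies the expected route to the conjecture, but the two essential inputs --- the localized, scale-invariant partial $C^0$ estimate and the identification of the minimizing valuation --- are asserted rather than supplied.
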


Let $y \in \overline{Y}$ denote the singular point, and consider the function
\[
\mathbb{R}_{>0} \ni r \mapsto v(r) := \frac{{\rm Vol}_{\bar{\omega}}(B_{\bar{\omega}}(y, r))}{r^6}
\]
Since $(\overline{Y}, \overline{\omega})$ is Calabi-Yau, $v(r)$ is monotone decreasing by the Bishop-Gromov comparison theorem.  Furthermore, assuming Conjecture~\ref{conj: Ypk}, since $\bar{\omega}$ is asymptotic to the conical Calabi-Yau metric on $Y_{p,p}$ we have
\[
 \frac{16}{27(p-k)}=\lim_{r\rightarrow 0} v(r) \geq \lim_{r\rightarrow +\infty} v(r) =  \frac{16}{27p}.
\]
Note that the equality case of Bishop-Gromov already shows that if $k=0$, then the metric is conical.  

While deducing $k\geq 0$ in this way is not particularly interesting, this discussion holds for any asymptotically conical Calabi-Yau variety with or without singularities (indeed, a smooth, asymptotically conical Calabi-Yau variety is naturally a cobordism between the standard Sasaki-Einstein structure on the sphere and the link of the cone at infinity).  Suppose $(\overline{Y}, \bar{\omega})$ is a asymptotically conical Calabi-Yau variety with asymptotic cone $C_{\infty}$, and with a singular point $y$.  Assume that a neighborhood of $y$ is biholomorphic to a neighborhood of an isolated singular point in some quasi-homogeneous affine variety $C_0$ admitting a conical Calabi-Yau metric.  Assuming that $\bar{\omega}$ is close to the Calabi-Yau metric on $C_0$ near the singularity at $y$,  the volume ratio of geodesic balls centered at $y$ will decrease (by Bishop-Gromov) from the volume ratio of the cone $v(C_0$) to the volume of ratio of the cone at infinity, $v(C_{\infty})$.  Since these volume ratios are algebraic invariants of the singularities $C_0, C_{\infty}$, this situation is obstructed in general; for example one cannot take $C_0 = Y_{p,p}$ and $C_{\infty} = Y_{p-k, p-k}$.

It is tempting to speculate that the volume function on Sasaki-Einstein structures could give rise to a sort of Morse function on the space of Sasaki-Einstein manifolds.  For two Sasaki-Einstein manifolds $S_0, S_\infty$ with corresponding cones $C_0, C_\infty$ a Calabi-Yau space $(\overline{Y}, \bar{\omega})$ with an isolated singularity $C_0$ and cone $C_\infty$ at infinity could be regarded as a kind of flow line of the Morse function between $S_0$ and $S_\infty$.  We will give further examples of this discussion below.

\subsection{Examples from Fano manifolds}

Let us next indicate how to construct examples starting from Fano manifolds with a different singular structure than the previous examples.  Suppose $X$ is a Fano manifold of dimension $n$.  Let $\tilde{X}= \Bl_{p}X$ be the blow up of $X$ at a point and let $\tilde{E}\subset \tilde{X}$ be the exceptional divisor.  Assume in addition that that $\tilde{X}$ is Fano and $-K_{\tilde{X}}$ is base-point free.  Assume that $\tilde{X}$ has a K\"ahler-Einstein metric, or more generally that the affine cone over  $\tilde{X}$, $\Spec \bigoplus_{m\geq 0} H^{0}(\tilde{X}, -K_{\tilde{X}}^{\otimes m})$, admits a conical Calabi-Yau metric.  This holds, for example, whenever $\tilde{X}$ is toric, by \cite{FOW}.  It is not difficult to generate examples satisfying these assumptions. For example
\begin{itemize}
\item Let $X = \mathbb{P}^n$, with $p$ a torus invariant point.  Then $\tilde{X} = \rm Bl_p\mathbb{P}^n$ is Fano and $-K_{\tilde{X}}$ is base point free.  These manifolds do not admit K\"ahler-Einstein metrics, as can be seen from Matsushima's obstruction.  However, they are toric, and so the theorem of Futaki-Ono-Wang implies the existence of a Calabi-Yau cone metric on the affine cone $C:= \Spec \bigoplus_{m\geq 0} H^{0}(\tilde{X}, -K_{\tilde{X}}^{\otimes m})$.  Note that the conical Calabi-Yau structure on $C$ need not be quasi-regular, as happens for example when $n=2$ \cite{GMSW, FOW, MSYau}.  

\item Let $X$ be a del Pezzo surface with $K_{X}^2 \geq 3$, and $p$ chosen sufficiently generic so that $\tilde{X}= \Bl_p X$ is Fano.  The global generation of $-K_{\tilde{X}}$ follows from Reider's Theorem \cite{Reider}.  Furthermore, a theorems of Tian-Yau \cite{TY87} and Tian \cite{Tian90} say that $X$ admits a K\"ahler-Einstein metric if $K_{X}^2 <8$.  If, however, $K_{X}^2=8,9$ then $X$ does not admit a K\"ahler-Einstein metric by Matsushima's obstruction \cite{Mats}.  On the other hand, in these latter examples, if $p$ is chosen so that $\tilde{X}$ is toric, then the affine cone $\Spec \bigoplus_{m\geq 0} H^{0}(\tilde{X}, -K_{\tilde{X}}^{\otimes m})$ admits a conical Calabi-Yau metric thanks to results of Futaki-Ono-Wang \cite{FOW} (See also \cite{CS}).  In these examples the Calabi-Yau cone structure is not quasi-regular \cite{MSYau, FOW}.
\end{itemize}

Let $Y= K_{\tilde{X}}$ be the total space of the canonical bundle, and let $p: Y\rightarrow \tilde{X}$ be the projection.  The pull-back $p^*$ identifies $H^{1,1}(Y,\mathbb{R})= H^{1,1}(\tilde{X},\mathbb{R})$, and $Y$ admits an asymptotically conical Calabi-Yau metric in any K\"ahler class in $H^{1,1}(Y,\mathbb{R})$ \cite{CH13}.  Suppose $[\alpha]\in H^{1,1}(X,\mathbb{R})$ is a K\"ahler class, so that $p^*[\pi^*\alpha] \in H^{1,1}(Y,\mathbb{R })$ is a nef class on $Y$ admitting a semi-positive representative.  By regarding the exceptional divisor of the blow-up $\pi: \tilde{X}\rightarrow X$ as a subvariety of the zero section in $Y$, we get a natural codimension $2$ subvariety, $E \subset Y$ (explicitly $E = p^{-1}(\tilde{E})\cap \{\text{ zero section }\}$).  Our goal is to show that if $[\omega_t] =(1-t)[p^*\pi^*\alpha]+t[\omega]  \in H^{1,1}(Y,\mathbb{R})$ and $\omega_{t,CY}$ are conical Calabi-Yau metrics in $[\omega_t]$ then, as $t\rightarrow 0$, $(Y,\omega_{t,CY})$ Gromov-Hausdorff converges to a variety $Z$ with an isolated, Gorenstein, log-terminal singularity which is obtained from $Y$ by contracting $E$ to a point.  As a first step, we need to verify that Assumption~\ref{ass: mainAss} holds, since the failure of the cone at infinity to be quasi-regular means that Lemma~\ref{lem: DP} does not apply in general.

\begin{Lemma}\label{lem: exampleAss1}
The cohomology class $p^*[\pi^*\alpha]$ contains a K\"ahler current which is smooth outside of $E$. 
\end{Lemma}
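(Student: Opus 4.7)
The plan is to compactify $Y$ to a smooth projective variety, apply the Demailly--P\u{a}un and Collins--Tosatti theorems on the compactification, and restrict the resulting K\"ahler current back to $Y$. Specifically, I would take $\bar Y := \P(K_{\tilde X} \oplus \mathcal{O}_{\tilde X})$, the smooth projective $\P^1$-bundle with projection $\bar p : \bar Y \to \tilde X$, and the ``section at infinity'' $D_\infty$ coming from the $K_{\tilde X}$-summand, so that $Y = \bar Y \setminus D_\infty$ and the normal bundle $N_{D_\infty/\bar Y} \cong (-K_{\tilde X})|_{D_\infty}$ is ample by the Fano hypothesis. The class $\bar p^*\pi^*[\alpha]$ is a nef extension of $p^*\pi^*[\alpha]$ to $\bar Y$ but fails to be big, so I would work with $[\beta_C] := \bar p^*\pi^*[\alpha] + C[D_\infty]$ for $C > 0$. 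This class is nef on $\bar Y$, as can be checked against irreducible curves (distinguishing fibers of $\bar p$, curves lying in $D_\infty$ where $[D_\infty]$ restricts to the ample $-K_{\tilde X}$, and general curves), and it is big for every $C > 0$ by the intersection computation
\[
\int_{\bar Y}\beta_C^{n+1} \;=\; \sum_{k=1}^{n+1}\binom{n+1}{k}C^k \int_{\tilde X}(\pi^*\alpha)^{n+1-k}\cdot (-K_{\tilde X})^{k-1},
\]
whose $k=1$ term equals $(n+1)C\int_{\tilde X}(\pi^*\alpha)^n > 0$ while the remaining terms are non-negative since $\pi^*[\alpha]$ and $-K_{\tilde X}$ are both nef.

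By the Collins--Tosatti theorem, the nef and big class $[\beta_C]$ admits a K\"ahler current $T_C$ on $\bar Y$ with analytic singularities supported exactly on the null locus $\Null([\beta_C])$. The key technical step will be to show $\Null([\beta_C]) \cap Y = E$. The inclusion $E \subset \Null([\beta_C])$ is immediate, since $\bar p^*\pi^*\alpha$ vanishes on $E$ (as $\pi$ contracts $E$) and $[D_\infty]|_E = 0$ (as $E \subset Y$ is disjoint from $D_\infty$). For the reverse, let $V \subset \bar Y$ be an irreducible compact subvariety of positive dimension with $V \not\subset E$; I must show $\int_V \beta_C^{\dim V} > 0$. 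If $V \subset Y$, consider the function $|t|^2_{p^*h}$ on $Y$, where $h$ is a Hermitian metric on $K_{\tilde X}$ with strictly plurisubharmonic $\log h$ (such an $h$ exists because $-K_{\tilde X}$ is ample); this function is plurisubharmonic on $Y$ and strictly plurisubharmonic on $Y \setminus \tilde X$, so its restriction to the compact connected $V$ must be constant by the maximum principle, and strict plurisubharmonicity forces that constant to be zero, giving $V \subset \tilde X$; since $V \not\subset E = \tilde E$, the class $\pi^*[\alpha]|_V$ is nef and big, yielding $\int_V \beta_C^{\dim V} = \int_V(\pi^*\alpha)^{\dim V} > 0$. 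If $V \subset D_\infty$, then under $\bar p|_{D_\infty} : D_\infty \to \tilde X$ the class $[\beta_C]|_{D_\infty}$ corresponds to $\pi^*[\alpha] + C(-K_{\tilde X})$, which is ample. Finally, if $V \cap D_\infty \neq \emptyset$ and $V \not\subset D_\infty$, then $V \cap D_\infty$ is a nonempty effective divisor on $V$ whose normal bundle in $V$ is the restriction of the ample $N_{D_\infty/\bar Y}$; reducing by iteration onto $V \cap D_\infty$, the top self-intersection $\int_V([D_\infty]|_V)^{\dim V}$ equals the positive top self-intersection of a pullback of $-K_{\tilde X}$ on $\bar p(V \cap D_\infty)$, and all mixed intersections $\int_V(\bar p^*\pi^*\alpha)^{\dim V-k}\cdot[D_\infty]^k$ in the binomial expansion of $\beta_C^{\dim V}$ are non-negative (being intersection numbers of nef with ample restricted to $V \cap D_\infty$), giving $\int_V \beta_C^{\dim V} > 0$.

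To conclude, I write $T_C = \bar p^*\pi^*\alpha + C\Theta + \ddb \phi$ on $\bar Y$, where $\Theta$ is the Chern curvature of a smooth Hermitian metric on $\mathcal{O}_{\bar Y}(D_\infty)$ with defining section $s_\infty$, and $\phi$ is a quasi-plurisubharmonic function on $\bar Y$ which is smooth on $\bar Y \setminus E$. Since $s_\infty$ is nowhere vanishing on $Y$, the Lelong--Poincar\'e identity gives $C\Theta|_Y = -C\ddb \log |s_\infty|^2$ as smooth forms on $Y$, and setting $\psi := \phi|_Y - C \log|s_\infty|^2$ yields $T_C|_Y = p^*\pi^*\alpha + \ddb \psi$ as currents on $Y$, with $\psi$ smooth on $Y \setminus E$. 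Since $T_C$ dominates a K\"ahler form on $\bar Y$, the restriction $T_C|_Y$ dominates a K\"ahler form on $Y$, producing the desired K\"ahler current in $p^*\pi^*[\alpha]$ which is smooth outside $E$. The main obstacle is the null-locus identification in the second paragraph---particularly the case where $V$ meets $D_\infty$ non-trivially---which rests essentially on the ampleness of the normal bundle of $D_\infty$.
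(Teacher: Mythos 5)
Your proof is correct, but it takes a genuinely different route from the paper. The paper's proof is a short, explicit construction: starting from the standard K\"ahler current $\pi^*\alpha+\epsilon\ddb\log|s_{\tilde E}|^2_{h_{\tilde E}}$ on $\tilde X$ (singular along all of $\tilde E$), it pulls back to $Y$ and repairs the singularity along $p^{-1}(\tilde E)\setminus E$ by replacing $\log p^*|s_{\tilde E}|^2$ with $\log(p^*|s_{\tilde E}|^2+|s|^2_{h_{\tilde X}})$, adding $|s|^2_h$ to supply positivity in the fiber directions; the positivity check is left to the reader. You instead compactify $Y$ to $\P(K_{\tilde X}\oplus\mathcal{O})$, perturb the class by $C[D_\infty]$, verify nefness and bigness, invoke Demailly--P\u{a}un and Collins--Tosatti, identify the null locus with $E$, and restrict back via Lelong--Poincar\'e. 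Your case analysis for the null locus is complete (the three cases $V\subset Y$, $V\subset D_\infty$, and $V$ meeting $D_\infty$ properly are exhaustive, and the reduction of $[D_\infty]^k$ onto the ample-normal-bundle divisor is sound), and your route has two genuine advantages: it actually computes ${\rm Null}(p^*\pi^*[\alpha])=E$, which is exactly the content of the paper's Conjecture~\ref{conj: EnK=Null} for this family of examples, and it does so without the quasi-regularity hypothesis that Lemma~\ref{lem: DP} requires (the paper explicitly notes the cone at infinity here need not be quasi-regular). The trade-offs are that you rely on the heavy machinery of \cite{DP, CT}, and that your current only dominates the restriction to $Y$ of a K\"ahler form on the compactification, which decays near $D_\infty$ relative to the asymptotically conical reference metric, whereas the paper's explicit current (via the $\ddb|s|^2_h$ term) has conical growth at infinity. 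This weaker domination is still sufficient both for the lemma as stated and for its use in Lemma~\ref{barrier} and Proposition~\ref{prop: C2-bdd}, where the current is only needed on a compact set containing $V$ because the background form is already a genuine metric near infinity.
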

\begin{proof}
It is a standard fact that we can choose a hermitian metric $h_{\tilde{E}}$ on $\mathcal{O}_{\tilde{X}}(\tilde{E})$ such that
\begin{equation}\label{eq: KahlerCurrentX}
\pi^*\alpha + \epsilon \ddb \log h_{\tilde{E}} >  \omega_{\tilde{X}}
\end{equation}
for some $\epsilon>0$ and $\omega_{\tilde{X}}$ a K\"ahler form on $\tilde{X}$.  Let $s_{\tilde{E}}$ denote the defining section of $\tilde{E}\subset \tilde{X}$.  After scaling we may assume that $|s_{\tilde{E}}|^2_{h_{\tilde{E}}} <1$.  The current $\tilde{T} := \pi^*\alpha + \epsilon \ddb \log|s_{\tilde{E}}|^2_{h_{\tilde{E}}}$ is a K\"ahler current on $\tilde{X}$ which is singular along $\tilde{E}\subset \tilde{X}$.  Let $h_{\tilde{X}}$ be a negatively curved metric on $K_{\tilde{X}}$, and let $s$ denote a coordinate on the fibers of $K_{\tilde{X}}$.  We claim that
\begin{equation}\label{eq: KahlerCurrentY}
T= p^*\pi^*\alpha + \ddb (|s|^2_{h} +\epsilon \log(p^*|s_{\tilde{E}}|_{h_{\tilde{E}}}^2 + |s|^2_{h_{\tilde{X}}}))
\end{equation}
 is a K\"ahler current.  This can be verified by a straightforward calculation, which we leave to the reader.

\end{proof}

The next step is to show that there is a space $Z$, and a map $\Phi:Y\rightarrow Z$ which is an isomorphism outside $E$ and contracts $E$ to a point, which is an isolated, Gorenstein log-terminal singularity in $Z$.  Let us begin with a local description of this map and the resulting singularity.  Note that the normal bundle of $E\subset Y$ is given by
\[
N_{E/Y} = \mathcal{O}_{\mathbb{P}^{n-1}}(-1) \oplus \mathcal{O}_{\mathbb{P}^{n-1}}(-(n-1))
\]
which follows from $K_{\tilde{X}} = \pi^*K_{X} + (n-1)\tilde{E}$.  There is a contraction map
\[
\nu:\mathcal{O}_{\mathbb{P}^{n-1}}(-1) \oplus \mathcal{O}_{\mathbb{P}^{n-1}}(-(n-1))\rightarrow C
\]
 contracting the zero section of $N_{E/Y}$ to a point.  Explicitly, this map is given by \cite[Page 314]{Matsuki}
 \[
 \begin{aligned}
 N_{E/Y} &= {\rm Spec} \bigoplus_{m\geq 0} {\rm Sym}^{m} \left(\mathcal{O}_{\mathbb{P}^{n-1}}(1) \oplus \mathcal{O}_{\mathbb{P}^{n-1}}((n-1))\right)\\
 & \rightarrow {\rm Spec} \bigoplus_{m\geq 0} H^{0}\left(\mathbb{P}^{n-1}, {\rm Sym}^{m}\left( \mathcal{O}_{\mathbb{P}^{n-1}}(1) \oplus \mathcal{O}_{\mathbb{P}^{n-1}}((n-1))\right)\right) = C_0
 \end{aligned}
 \]
Since 
\[
H^{0}\left(\mathbb{P}^{n-1}, {\rm Sym}^{m}\left( \mathcal{O}_{\mathbb{P}^{n-1}}(1) \oplus \mathcal{O}_{\mathbb{P}^{n-1}}((n-1))\right)\right) = H^{0}(\mathbb{P}(N_{E/Y}), \mathcal{O}_{\mathbb{P}(N_{E/Y})}(m))
\]
we see that $C_0$ is the affine cone over $\mathbb{P}(N_{E/Y})$ obtained by blowing down the zero section of $\mathcal{O}_{\mathbb{P}(N_{E/Y})}(-1)$.  We claim that $\mathbb{P}(N_{E/Y})$ is Fano.  In general, the canonical bundle of a projective bundle $\pi: \mathbb{P}(V) \rightarrow X$, where $V$ has rank $r$ is given by
\[
K_{\mathbb{P}(V)} = \mathcal{O}_{\mathbb{P}(V)}(-r-1) \otimes \pi^*(\det V^*) \otimes \pi^*K_{X}.
\]
Applying this formula in the current scenario yields
\[
K_{\mathbb{P}(N_{E/Y})} = \mathcal{O}_{\mathbb{P}(N_{E/Y})}(-3).
\]
Since $N_{E/Y}$ is a direct sum of negative line bundles,  $\mathcal{O}_{\mathbb{P}(N_{E/Y})}(3)$ is ample.  It follows from this that $C_0$ has an isolated Gorenstein, log-terminal singularity and $K_{C_0} \sim \mathcal{O}_{C_0}$ is trivial. Finally, since $N_{E/Y}\rightarrow \mathbb{P}^{n-1}$ is a direct sum of line bundles, $\mathbb{P}(N_{E/Y})$ is toric.  Therefore the result of Futaki-Ono-Wang \cite{FOW} (see also \cite{CS}) says that $C_0$ admits a conical Calabi-Yau metric for some choice of Reeb vector field.  

Next we will globalize this construction using the input of an ample line bundle $L$ on $X$. First note that a section  $f \in H^{0}(\tilde{X}, -K_{\tilde{X}}^{\otimes m})$ naturally induces a holomorphic function $f \in H^{0}(Y, \mathcal{O}_{Y})$ vanishing to order $m$ on $\tilde{X} = \{\text{ zero section }\} \subset Y$.  Let $f_1\ldots, f_{M}$ be generators of the coordinate ring $\bigoplus_{m>0}H^{0}(\tilde{X}, -K_{\tilde{X}}^{\otimes m})$. Since $-K_{\tilde{X}}$ is ample and globally generated, the holomorphic functions $f_1\ldots, f_{M}$ separate points and tangent vectors on $Y\backslash \tilde{X}$, and generate the normal bundle to $\tilde{X}$ in $Y$.  Let $L$ be a very ample line bundle on $X$, and let $\{s_0,\ldots, s_{N}\}$ be a basis of $H^{0}(X, L)$.  Fix coordinates $(z_1,\ldots,z_n)$ on $X$ centered at $p$.  Up to making a linear change of coordinates we can assume that $s_0(p)\ne0$, and near $p$ we have
\[
\frac{s_i(z)}{s_0(z)}  = z_i +O(z^2)\quad  1\leq i \leq n, \qquad \frac{s_j(z)}{s_0(p)} = O(z^2) \quad n\leq j\leq N
\]
By inspection the sections $\{p^*\pi^*s_i\}_{0\leq i \leq N}$ separate points and tangents in $\tilde{X}\backslash \tilde{E}$ and generate the normal bundle to $\tilde{E}$ in $\tilde{X}$.  Now consider the map $\Phi: Y \rightarrow \mathbb{P}^N\times \mathbb{P}^{M}$ defined by
\begin{equation}\label{eq: mapPhi}
\Phi(z):= ([p^*\pi^*s_0(z): \cdots :p^*\pi^*s_N(z)],  [1: f_1(z):\cdots:f_M(z)]) \in \mathbb{P}^N\times \mathbb{P}^{M}
\end{equation}
By the preceeding discussion this map is an isomorphism on $Y\backslash E$, and $\Phi(E) = [1:0:\cdots:0] \times [1:0:\cdots:0]$. Since the differential $d\Phi$ is an isomorphism on $N_{E/Y}$, the germ of $\Phi$ agrees with the contraction $\nu$ on $N_{E/Y}$.  Note also that $\Phi\big|_{\tilde{X}} = \pi$ (composed with the imbedding $X$ into projective space by sections of $L$). Let $Z= \Phi(Y)$.  From the local description above $Z$ has an isolated Gorenstein, log-terminal singularity, and $K_{Z} = \mathcal{O}_{Z}$.  The map $\Phi:Y\rightarrow Z$ is therefore a small, and hence crepant, resolution of $Z$.  It follows from the construction that we can describe $Z$ has the relative spectrum
\[
Z= \underline{\Spec}(K_{X}\otimes \mathfrak{m}_{p}) \rightarrow X
\]
where $\mathfrak{m}_p$ is the ideal sheaf of $p\in X$. In order to apply Theorems~\ref{Main Theorem} and~\ref{Theorem2} it suffices to show

\begin{Lemma}\label{lem: pullBackZ}
In the above setting, there is an ample line bundle $L'$ on $Z$ such that $p^*c_1(\pi^*L) = \Phi^*c_1(L')$.
\end{Lemma}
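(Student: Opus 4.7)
The plan is to construct $L'$ as the pullback of $L$ under the natural affine structure map $\rho\colon Z=\underline{\Spec}(K_X\otimes \mathfrak m_p)\to X$, so $L':=\rho^*L$. The first point to verify is the commutative diagram $\rho\circ \Phi = \pi\circ p$ as morphisms $Y\to X$. This is transparent from the explicit description~\eqref{eq: mapPhi}: the first factor of $\Phi$ is $[p^*\pi^*s_0:\cdots:p^*\pi^*s_N]$, which by definition is the composition of $\pi\circ p\colon Y\to X$ with the projective embedding $\iota_X\colon X\hookrightarrow \mathbb P^N$ furnished by $|L|$, and this embedding factors through $\rho$ because the first factor of $\Phi$ depends only on the image of a point under $\pi\circ p$. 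Granted the commuting diagram, $\Phi^*L' = \Phi^*\rho^*L = (\rho\circ\Phi)^*L = (\pi\circ p)^*L = p^*\pi^*L$, giving the cohomological identity.

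It remains to check that $L'$ is ample in the sense of the paper. The natural candidate embedding is precisely the factored version of $\Phi$:
\[
\bar\Phi \colon Z \longrightarrow \mathbb P^N\times \mathbb P^M, \qquad \bar\Phi = (\iota_X\circ \rho,\; [1:f_1:\cdots:f_M]),
\]
followed by the Segre embedding $\sigma\colon \mathbb P^N\times \mathbb P^M\hookrightarrow \mathbb P^{(N+1)(M+1)-1}$. For the Chern class computation, observe that $\sigma^*\mathcal O(1)=\mathcal O(1,1)$, and on $Z$ one has $(p_1\circ\bar\Phi)^*\mathcal O_{\mathbb P^N}(1)=\rho^*\iota_X^*\mathcal O(1)=L'$, while $(p_2\circ \bar\Phi)^*\mathcal O_{\mathbb P^M}(1)=\mathcal O_Z$ is trivial (the distinguished section $f_0=1$ never vanishes, so the image of $[1:f_1:\cdots:f_M]$ lies in the affine chart $\{y_0\ne 0\}\cong\mathbb A^M$). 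Hence $(\sigma\circ\bar\Phi)^*\mathcal O(1)=L'$, and pulling back the Fubini--Study form yields $[(\sigma\circ\bar\Phi)^*\omega_{FS}]=c_1(L')$.

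The remaining task, and the main obstacle, is to verify that $\sigma\circ\bar\Phi$ is an embedding of $Z$. Away from the singular point $z_0=\Phi(E)$, this is immediate: $\Phi$ is an isomorphism onto $Z\setminus\{z_0\}$ when restricted to $Y\setminus E$, and it was already observed in constructing~\eqref{eq: mapPhi} that the sections $\{p^*\pi^*s_i\}$ together with $\{f_j\}$ separate points and tangents on $Y\setminus E$, so the same holds on $Z\setminus\{z_0\}$. At $z_0$ the map $\Phi$ was shown to agree with the contraction $\nu\colon N_{E/Y}\to C_0$ of the zero section, and $\bar\Phi$ near $z_0$ therefore identifies with the canonical affine embedding of the cone $C_0={\rm Spec}\bigoplus_{m\ge 0}H^0(\mathbb P^{n-1},{\rm Sym}^m(\mathcal O(1)\oplus\mathcal O(n-1)))$; the coordinates $f_j$ generate the graded ring $\bigoplus_{m>0}H^0(\tilde X, -K_{\tilde X}^{\otimes m})$ by construction, and the global generation of $-K_{\tilde X}$ ensures these restrict to generators of the coordinate ring of $C_0$, so $\bar\Phi$ is an embedding of a neighborhood of $z_0$ as well. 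Since $\sigma$ is a closed immersion, the composition $\sigma\circ\bar\Phi$ is an embedding of $Z$ into projective space, which together with the identity $(\sigma\circ\bar\Phi)^*\omega_{FS}\in c_1(L')$ completes the verification of ampleness.
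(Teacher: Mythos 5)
Your proof is correct and follows essentially the same route as the paper: define $L'$ by pulling back $L$ under the projection $Z\to X$ (your $\rho$ is the paper's $\hat p$), compute $\Phi^*c_1(L')=p^*c_1(\pi^*L)$ from the commuting triangle, and deduce ampleness by composing the inclusion $Z\hookrightarrow\mathbb P^N\times\mathbb P^M$ with the Segre embedding, noting that the $\mathbb P^M$ factor contributes a trivial line bundle because the second component lands in the chart $\{y_0\neq 0\}$. The extra verification you carry out near $z_0$ is not strictly needed, since by construction $Z=\Phi(Y)$ is already a closed subvariety of $\mathbb P^N\times\mathbb P^M$ and $\bar\Phi$ is just the inclusion, so $\sigma\circ\bar\Phi$ is automatically an embedding.
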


\begin{proof}
Since $Z$ is normal and $\Phi$ is projective with connected fibers we have $\Phi_{*}\mathcal{O}_{Y} = \mathcal{O}_{Z}$, and $f_1,\ldots, f_{M}$ extend over the singular point to global sections of $\mathcal{O}_{Z}$.  Furthermore, there is a natural projection
\[
\hat{p}:Z\rightarrow X
\]
obtained by projecting from $Z$ onto the $\mathbb{P}^N$ factor in~\eqref{eq: mapPhi} and we have $\pi\circ p =\Phi \circ p = \hat{p}\circ \Phi$.  Thus
\[
[p^*\pi^*s_0: \cdots :p^*\pi^*s_N] = [\hat{p}^*s_0: \cdots :\hat{p}^*s_N].
\]
Combining this observation with the Segre embedding $\mathbb{P}^N\times \mathbb{P}^{M}\hookrightarrow \mathbb{P}^{(N+1)(M+1)-1}$ it follows that $L':=\hat{p}^*L$ is ample on $Z$.  Since
\[
p^*c_1(\pi^*L) = \Phi^*c_1(\hat{p}^*L)
\]
the lemma follows.
\end{proof}

We can now conclude

\begin{Corollary}
With notation as above, consider the family of K\"ahler classes $[\omega_t] = (1-t)p^*c_1(\pi^*L) + t[\omega] \in H^{1,1}(Y,\mathbb{R})$ for $t>0$.  Let $\omega_{t,CY}$ be the asymptotically conical K\"ahler metrics in $[\omega_t]$.  Then there is a incomplete, asymptotically conical Calabi-Yau metric $\overline{\omega}$ on $Z_{reg}$ such that $\overline{(Z_{reg} ,\bar{\omega})} = (Z, d)$ and 
\[
(Y, \omega_{t, CY}) \rightarrow_{GH} (Z,d).
\]
\end{Corollary}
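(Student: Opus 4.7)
The plan is to verify that the hypotheses of Theorem~\ref{Theorem2} are satisfied in this setting and invoke it directly. The preceding discussion has already done most of the work: the variety $Z = \underline{\Spec}(K_X \otimes \mathfrak{m}_p)$ is normal, Gorenstein, log-terminal with $K_Z \cong \mathcal{O}_Z$, and has a single isolated singular point (so compactly supported singularities); the map $\Phi: Y \to Z$ constructed in~\eqref{eq: mapPhi} is small, hence a crepant resolution; and $(Y,J,\omega,\Omega)$ is asymptotically conical Calabi-Yau with cone $C = \Spec \bigoplus_{m\geq 0} H^0(\tilde{X}, -K_{\tilde X}^{\otimes m})$ equipped with its conical Calabi-Yau metric guaranteed by the hypotheses on $\tilde{X}$. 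By Lemma~\ref{lem: pullBackZ}, we have $[\alpha_0] = p^{*}\pi^{*}c_1(L) = \Phi^{*}c_1(L')$ for the ample line bundle $L' = \hat{p}^{*}L$ on $Z$, which places us in the framework of Theorem~\ref{Theorem2}.

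The next step is to check Assumption~\ref{ass: mainAss} for $[\alpha_0]$. This is precisely the content of Lemma~\ref{lem: exampleAss1}: the explicit K\"ahler current $T$ in~\eqref{eq: KahlerCurrentY} provides the required potential $\psi$. One should note that $T$ is smooth away from $E = \Phi^{-1}(\mathrm{sing}(Z)) = p^{-1}(\tilde{E}) \cap \{\text{zero section}\}$ and $\{\psi = -\infty\} = E$, which is a compact analytic subvariety, and satisfies $\alpha_0 + \sqrt{-1}\d\db\psi \geq \eps_0 \omega$ for a fixed K\"ahler form $\omega$ and some $\eps_0 > 0$ on $Y$. The rate assumption $[\omega_t]\in H^{1,1}_\nu(Y,\mathbb{R})$ for appropriate $\nu > 0$ follows since $[\alpha_0]$ is represented by a form supported near the zero section of $p:Y\to\tilde X$ (after subtracting a smooth closed form of compact support, which is always possible by Proposition~\ref{ddb-lemma}), hence is $\nu$-almost compactly supported for all $\nu < 2n$.

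With all hypotheses verified, Theorem~\ref{Theorem2} applies and yields everything in the statement: the singular Ricci-flat current $\omega_{0,CY}$ constructed from Theorem~\ref{Main Theorem} descends to a K\"ahler current on $Z$, is smooth on $Z_{\mathrm{reg}} = \Phi(Y \setminus E)$, is asymptotically conical at infinity (by part (2) of Theorem~\ref{Main Theorem}, since the cone at infinity is the same Calabi-Yau cone $C$), and the metric completion $\overline{(Z_{\mathrm{reg}}, \bar{\omega})}$ is homeomorphic to $Z$. Moreover, $(Y, \omega_{t,CY})$ Gromov-Hausdorff converges to $Z$ equipped with the distance induced by $\bar{\omega}$.

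No significant new obstacle arises; the essential geometric content — that $E$ can be holomorphically contracted to a Gorenstein, log-terminal singular point and that the resulting variety admits a polarization compatible with $[\alpha_0]$ — has already been extracted in the preceding paragraphs. The one point that deserves a brief verification is that $[\alpha_0]$ lies in $H^{1,1}_\nu(Y,\mathbb{R})$ for the $\nu$ needed to run Propositions~\ref{Background metrics} and~\ref{estimates}; this is a routine consequence of the compactness of $\tilde{X}\subset Y$ together with Proposition~\ref{bkground-metric 1}, which supplies a semi-positive asymptotically conical representative of $[\alpha_0]$ agreeing with $\omega_C$ outside a compact set.
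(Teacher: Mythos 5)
Your proposal is correct and follows exactly the paper's route: the paper's own proof is the one-line "combine Lemma~\ref{lem: exampleAss1} and Lemma~\ref{lem: pullBackZ} with Theorems~\ref{Main Theorem} and~\ref{Theorem2}," and you have simply spelled out the hypothesis-checking that this combination presupposes. The only quibble is your parenthetical claim that $[\alpha_0]$ is $\nu$-almost compactly supported for all $\nu<2n$ because it is "represented by a form supported near the zero section" — in general $p^*\pi^*c_1(L)$ restricts nontrivially to the link at infinity, so one only gets the decay rate furnished by \cite{CH13} (which the paper invokes in the setup and which suffices for the theorems), not arbitrary $\nu$.
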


\begin{proof}
Combine Lemmas~\ref{lem: exampleAss1}~\ref{lem: pullBackZ} with Theorems~\ref{Main Theorem} and~\ref{Theorem2}.
\end{proof}

It is again natural to conjecture

\begin{Conjecture}
Let $(Z,d)$ be the metric space structure on $Z$ induced from $Y$ by Theorem~\ref{Theorem2}.  Then the tangent cone to $(Z,d)$ at the singular point $z \in Z$ is isometric to the blow down of the zero section in $\mathcal{O}_{\mathbb{P}(V)}(-1)$ where
\[
 V := \mathcal{O}_{\mathbb{P}^{n-1}}(-1) \oplus \mathcal{O}_{\mathbb{P}^{n-1}}(-(n-1))
 \]
 equipped with its conical Calabi-Yau metric. 
 \end{Conjecture}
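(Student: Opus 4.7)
The plan is to combine Cheeger--Colding theory with a Donaldson--Sun style algebraic identification of the tangent cone, followed by a uniqueness argument for Calabi--Yau cone metrics. First, I would observe that $(Z, d)$ is a non-collapsed Gromov--Hausdorff limit of smooth Ricci-flat K\"ahler manifolds $(Y, \omega_{t,CY})$ with uniform lower volume bounds coming from the asymptotically conical behaviour at infinity and Bishop--Gromov. Thus Cheeger--Colding theory guarantees that tangent cones at $z$ exist and are metric cones $W = C(Y_W)$ with smooth Ricci-flat K\"ahler structure on the regular set. Rescaling $\bar\omega$ by a suitable sequence $\lambda_k \to 0$ and using that $\Omega$ is holomorphic and non-vanishing at $z$, the rescaled CMA equation $(\lambda_k^2 \bar\omega)^n = \lambda_k^{2n} i^{n^2}\Omega\wedge\bar\Omega$ produces, in the limit, a Ricci-flat K\"ahler cone metric $\omega_W$ on the smooth part of $W$ with respect to a conical holomorphic volume form.

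Second, I would identify $W$ algebraically. By Donaldson--Sun, the tangent cone is canonically isomorphic to $\Spec$ of the associated graded ring of $\mathcal{O}_{Z,z}$ with respect to the filtration determined by the asymptotic growth rates of holomorphic functions under rescaling. A local biholomorphism $\phi : U \to C_0$ near $z$ identifies $\mathcal{O}_{Z,z} \cong \mathcal{O}_{C_0, 0}$, and $C_0$ is weighted-homogeneous with all positive weights, hence isomorphic to its own associated graded ring for the weighted-degree filtration. The crucial point is then to show the metric filtration agrees with the weighted-degree filtration. I would construct peak holomorphic sections on the rescaled balls at $z$ via H\"ormander $L^2$-estimates (the K\"ahler current with bounded potentials from Assumption~\ref{ass: mainAss}, verified in Lemma~\ref{lem: DP}, provides the necessary positivity), match their leading order asymptotics with the weighted coordinates on $C_0$, and conclude $W \cong C_0$ as affine varieties.

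Third, to pin down the metric $\omega_W$, I would use uniqueness. Since $\omega_W$ is a Ricci-flat K\"ahler cone metric on $C_0$, its Reeb vector field lies in the Reeb cone of $C_0$, and by Martelli--Sparks--Yau the Reeb of any Calabi--Yau cone structure on a given affine variety is determined by volume minimisation. The FOW conical Calabi--Yau metric on $C_0$ realises this minimiser, so $\omega_W$ must share the FOW Reeb, and then uniqueness of conical Calabi--Yau metrics with fixed Reeb (an adaptation of Theorem~\ref{thm: uniqueness} to the cone setting, or Conlon--Hein's uniqueness) gives the asserted isometry.

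The main obstacle is the algebraic identification in the second paragraph. Although the biholomorphism type of the germ $(Z,z)$ is explicitly $(C_0, 0)$, the metric filtration is defined through limit metric geometry, not directly from the algebraic $\mathbb{C}^*$-action on $C_0$. Matching the two filtrations requires a full Donaldson--Sun style two-step degeneration analysis, which in principle permits an intermediate ``semistable'' cone distinct from $C_0$; ruling this out in the non-compact asymptotically conical setting, where the global $L^2$ theory is more delicate than in the compact case, is the principal technical difficulty.
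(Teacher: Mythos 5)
The statement you are trying to prove is explicitly labelled a \emph{Conjecture} in the paper: the authors offer no proof of it, and indeed the whole point of stating it as a conjecture is that the identification of the metric tangent cone at $z$ is not established by their methods. So there is no ``paper proof'' to compare against; what matters is whether your argument actually closes the question, and it does not. Your own final paragraph concedes the decisive point: the Donaldson--Sun framework identifies the tangent cone with ${\rm Spec}$ of the associated graded ring of $\mathcal{O}_{Z,z}$ for the \emph{metric} filtration, and a priori this is only a (two-step) degeneration of $C_0$, not $C_0$ itself. To conclude the conjecture you must show that $(C_0,\xi_{FOW})$ is its own K-polystable degeneration --- equivalently, that the normalized volume of the klt singularity $(Z,z)$ is minimized by the quasi-monomial valuation attached to the FOW Reeb field and that the associated graded ring does not jump. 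Your step 3 invokes Martelli--Sparks--Yau volume minimization, but that only determines the Reeb field \emph{among cone structures on a fixed affine variety}; it does not rule out that the minimizing valuation degenerates $C_0$ to a different affine cone. This is exactly the gap that makes the statement a conjecture, and no argument in your sketch fills it.

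There are two further points where the plan is thinner than it appears. First, Donaldson--Sun's local tangent-cone theory (and the later algebraicity and uniqueness results) is developed for non-collapsed limits of polarized K\"ahler--Einstein manifolds; transplanting it to the present asymptotically conical, non-compact setting requires re-proving the local $L^2$/partial-$C^0$ estimates near $z$ for the degenerating family $\omega_{t,CY}$ --- the paper's Section~\ref{sec: metricGeometry} establishes only enough ($L^2$ estimates, gradient bounds for sections) to separate points and prove the homeomorphism of Theorem~\ref{Theorem2}, not the refined scale-invariant estimates needed for the tangent-cone analysis. Second, your appeal to Lemma~\ref{lem: DP} is misplaced for this class of examples: the paper notes that the cone at infinity here need not be quasi-regular, which is precisely why the authors verify Assumption~\ref{ass: mainAss} by the separate explicit construction of Lemma~\ref{lem: exampleAss1} rather than by Lemma~\ref{lem: DP}. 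In short, your proposal is a reasonable research program consistent with how one would expect the conjecture to eventually be settled, but it is not a proof, and the missing step is the substantive one.
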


Assuming this conjecture, the space $Z$ can be viewed as a kind of cobordism between Sasaki-Einstein manifolds, and the speculative discussion from Section~\ref{subsec: BP} can be applied in the same way.

\end{document}